\def\@settitle{\begin{center}%
  \baselineskip14\p@\relax
  \bfseries
  \uppercasenonmath\@title
  \@title
  \ifx\@subtitle\@empty\else
     \\[1ex]\uppercasenonmath\@subtitle
     \footnotesize\mdseries\@subtitle
  \fi
  \end{center}%
}
\def\subtitle#1{\gdef\@subtitle{#1}}
\def\@subtitle{}
\newtheorem{proposition}{Proposition}
\newtheorem{lemma}{Lemme} 
\newtheorem{thm}{Théorème}
\newtheorem{cor}{Corollaire}
\newtheorem*{thm*}{Théorème}
\theoremstyle{definition}
\newtheorem{defi}{Définition}
\newtheorem{rmk}{Remarque}
\newtheorem{convention}{Convention}
\DeclareMathOperator{\ch}{ch}
\DeclareMathOperator{\ind}{Ind}
\DeclareMathOperator{\res}{res}
\DeclareMathOperator{\diag}{diag}
\DeclareMathOperator{\Gr}{Gr}
\DeclareMathOperator{\KF}{FC}
\DeclareMathOperator{\SL}{SL}
\DeclareMathOperator{\image}{Im}
\DeclareMathOperator{\pr}{pr}
\DeclareMathOperator{\Ext}{Ext}
\DeclareMathOperator{\Hom}{Hom}
\title{Cohomologie des fibrés en droites sur \( \SL_{3}/B \) en
  caractéristique positive}
\author{Linyuan Liu}
\address{Institut de Mathématiques de Jussieu-Paris Rive Gauche\\
Sorbonne Université -- Campus Pierre et Marie Curie\\
4, place Jussieu -- Boîte Courrier 247\\
F-75252 Paris Cedex 05\\
France}
\email{linyuan.liu@imj-prg.fr}
\date{} 
\begin{document}
\maketitle
\section{Introduction}
\subsection{Histoire et motivations du problème}
Soient \( G \) un schéma en groupes semi-simple déployé sur un corps \( k \)
 de caractéristique positive, \( B\) un
sous-groupe de Borel et \( T\subset B \) un tore maximal déployé. Soit \( X(T)
\) le groupe des caractères de \( T \). Pour tout \( \mu\in X(T) \),
considéré comme caractère de \( B \), on
note \( \mathcal{L}(\mu) \) le fibré en droites \( G \)-équivariant
induit par \( \mu \) et l'on pose \(
H^{i}(\mu):=H^{i}(G/B,\mathcal{L}(\mu)) \).

Non seulement ces groupes de cohomologie sont des objets intéressants
et fondamentaux 
dans la géométrie algébrique, mais ils sont également munis  d'une
structure de \( G \)-modules, ce qui en fait une classe d'objets
importante dans la théorie des représentations de \( G \). Par
exemple, les \( G \)-modules simples sont paramétrisés par les poids
dominants, et pour tout \( \lambda \) dominant, le \( G \)-module
simple \( L(\lambda) \) correspondant est isomorphe à l'unique
sous-module simple de \( H^{0}(\mu) \), dont le caractère est donné
par le formule de caractère de Weyl. Donc si on comprend bien les
structures de ces groupes de cohomologie, on pourra comprendre les
caractères des modules simples, qui est l'une des questions les plus
importantes dans la théorie des représentations modulaire.

En caractéristique \( 0 \), ce problème est complètement résolu, et la structure de \( H^{i}(\mu) \) est
simplement donnée par le Théorème de Borel-Weil-Bott (cf. \cite{Jan03}
II.5.5). Mais en caractéristique positive, le Théorème de
Borel-Weil-Bott n'est plus vrai, parce que s'il était vrai, alors pour
tout \( \mu \), il existerait au plus un \( i \) tel que \(
H^{i}(\mu)\neq 0 \). En 1978, Griffith (\cite{Gri80}) a étudié le
cas de \( G=\SL_{3} \) et déterminé la région de \( X(T) \), que l'on
appellera \og la région de Griffith\fg{}, où \( H^{1} \) et \( H^{2}
\) sont tous les deux non nuls. Presque simultanément en 1979,
Andersen (\cite{And79}) a découvert,
pour tout \( G \), une condition nécessaire et suffisante
pour que \( H^{1}(\mu)\neq 0 \). Il a aussi montré que chaque \(
H^{1}(\mu)\) non nul admet un socle simple. Ensuite, des résultats
concernant la structure de \( G \)-module de \( H^{i}(\mu) \) sous
certaines hypothèses de généricité ont été obtenus par différents
auteurs : \cite{Jan80},
\cite{KH84}, \cite{Irv86}, \cite{And86a}, \cite{And86b}, \cite{DS88}, \cite{Lin90}, \cite{Lin91}.
En 2002, Donkin a découvert une nouvelle approche, qui a donné dans
\cite{Don06}, des formules récursives pour les caractères de tous les
\( H^{i}(\mu) \) dans le cas de \( G=\SL_{3} \).

À ce stade, presque rien n'est connu pour la structure de \( G
\)-module de \( H^{i}(\mu) \) si \( i\neq 0 \) ou \( \dim G/B \) en
dehors du cas générique dans la \( p^{2} \)-alcôve du bas sauf le
socle de \( H^{1}(\mu) \).

\subsection{Résultats principaux}
Dans cet article, on étudiera le cas de \( G=\SL_{3} \), qui est le
premier cas non trivial, et on donnera
une description complète  récursive de la structure de \( H^{i}(\mu) \)
pour tout \( i \) et tout \( \mu \). Le théorème le plus important de
cet article est le suivant (voir le paragraphe \ref{subsection:Dfiltrationconclusion}):

\begin{thm*}
  Soit \( \mu\in X(T) \). Soit
\( 0=N_{0}\subset N_{1}\subset N_{2}\subset\cdots \subset N_{\ell}=\widehat{Z}(\mu) \)
une D-filtration de \(\widehat{Z}(\mu)\) (cf. le paragraphe
\ref{subsection:Zhat})
telle que \(N_{i}/N_{i-1}\cong
\widehat{L}(\nu_{i}^{0})\otimes E_{\delta_{i}}(\nu_{i}^{1})^{(1)}\) où
\(\delta_{i}\in \{0,\alpha,\beta\}\). Alors pour tout \( j\in
\mathbb{N} \), il existe une filtration 
\( 0=\widetilde{N_{0}}\subset \widetilde{N_{1}}\subset
\widetilde{N_{1}}\subset\cdots\subset \widetilde{N_{\ell}}=H^{j}(\mu) \)
où \( \widetilde{N_{i}}\cong H^{j}(G/BG_{1},N_{i}) \) et  \(\widetilde{N_{i}}/\widetilde{N_{i-1}}\cong L(\nu_{i}^{0})\otimes
H^{j}(E_{\delta_{i}}(\nu_{i}^{1}))^{(1)}.\)
\end{thm*}
Ce théorème généralise la \( p \)-filtration introduite par
Jantzen, pour tout \( \mu \) et tout \( i \).  On verra que les formules de récurrence
de Donkin correspondent à ces filtrations de \( H^{i}(\mu) \).
On obtiendra aussi comme corollaire une
autre démonstration de l'existence de la \( p \)-filtration de \( H^{0}(\mu) \) découvert
par Jantzen (\cite{Jan80}).
\begin{rmk}
  L'énoncé de ce théorème peut être généralisé à \( G \) arbitraire,
  qui fournira une conjecture de la structure pour tout
  \( H^{i}(\mu) \) en cas général. La filtration dans ce théorème est
  une version modifiée de la \( p \)-filtration de Jantzen même dans
  le cas où \(i=0 \), car on considère non seulement les fibrés en
  droites sur \( G/B \), mais aussi des fibrés vectoriels de rangs
  supérieurs (cette idée a été premièrement utilisée par Donkin dans \cite{Don02}). L'avantage de cette modification est claire dans le cas
  \( G=\SL_{3} \): on peut obtenir une description universelle pour
  tout \( i \) et \( \mu \) indépendamment de la position de
  \( \mu \). En particulier, cette nouvelle filtration explique les
  \og effacements\fg{} bizarres dans la \( p \)-filtration de Jantzen
  pour \( H^{0}(\mu) \) lorsque \( \mu \) est proche du mur
  (cf. \autoref{cor:pweylfiltrationjantzen}). Cette nouvelle idée
  devient encore plus intéressante après un contre-exemple de la
  \( p \)-filtration de Jantzen a récemment été trouvé dans l'article
  \cite{BNPS19}.
\end{rmk}

On montrera
aussi l'existence d'une filtration à deux étages de \( H^{1}(\mu) \)
et \( H^{2}(\mu) \) lorsque \( \mu \) est dans la région de Griffith
(\autoref{cor:2étages}). Cela fournira aussi des formules de
récurrence de \(\ch  H^{i}(\mu) \) pour tout \( i \) et \( \mu \), qui
sont complètement différentes de celles de
Donkin. Les formules de Donkin ont été utilisées par quelques travaux
récents (cf.\cite{AH19} et \cite{Har16}). Donc les nouvelles formules
de récurrence
obtenues par des résultats de cet article, qui sont plus simples que
celles de Donkin, seront utiles pour les autres chercheurs dans la
théorie des représentations géométrique. 


\section{Notations et Préliminaires}
Dans cet article,  \( k \) désigne
un corps de caractéristique \( p>0 \),  \( G \)
désigne le \( k \)-schéma en groupes \( \SL_{3}\) sur \( k \), \( B \subset G\) est
le sous-groupe de Borel des matrices triangulaires inférieures, et \(
T\subset B \) est le tore maximal des matrices diagonales.

On note
  \( X(T) \) le groupe des
  caractères de \( T \) et \( Y(T)\) celui des cocaractères.  Notons
   \(  \langle \cdot,\cdot\rangle : X(T)\times Y(T) \to \mathbb{Z} \)
  le couplage naturel.
  Pour \( i\in\{1,2,3\} \), notons \(
  \epsilon_{i} \) l'élément de \( X(T) \) tel que \(
  \epsilon_{i}(\diag(a_{1},a_{2},a_{3}))=a_{i} \).

  Posons \(
  \alpha=\epsilon_{1}-\epsilon_{2} \), \(
  \beta=\epsilon_{2}-\epsilon_{3} \), \( \gamma=\alpha+\beta \), \(
  R^{+}=\{\alpha,\beta,\gamma\} \), et \( R^{-}=-R^{+} \).
  Alors \(R=\{\pm\alpha,\pm\beta,\pm\gamma\}\) est le système de
  racines de \( G \) par rapport à \( T \) 
  et le sous-groupe de Borel \( B \)
  correspond à \( R^{-} \).
  Notons \( \Delta=\{\alpha,\beta\} \)  l'ensemble des
  racines simples.  Définissons l'ordre partiel \( \leq \) sur \(
  X(T) \) par \( \mu\leq \lambda \) si et seulement si \(
  \lambda-\mu\in\mathbb{N}\alpha+\mathbb{N}\beta \).

  Pour tout \( \delta\in R \), notons \( \delta^{\vee}\in Y(T) \) la
  coracine correspondante. On désigne
  par \( \omega_{1},\omega_{2}\in X(T) \) les poids fondamentaux
  correspondant à \( \alpha^{\vee} \) et \( \beta^{\vee} \).
  Alors on a  \(
  X(T)=\mathbb{Z}\omega_{1}\oplus\mathbb{Z}\omega_{2} \). Pour tout \(
  a,b\in\mathbb{Z} \), notons \( (a,b)\) le poids \( a\omega_{1}+b\omega_{2}
  \). Posons \( \rho=\frac{1}{2}(\alpha+\beta+\gamma)=\gamma=(1,1)
  \). Notons \( X(T)^{+} \)
l'ensemble des poids dominants. Pour tout \( d\in\mathbb{N}^{*} \),
notons
\begin{displaymath}
  X_{d}(T)=\{\mu\in X(T)\mid 0\leq \langle\mu,\delta^{\vee}\rangle< p^{d}
  ,\forall \delta\in \Delta\}=\{(a,b)\in X(T)|0\leq a,b<p^{d}\}
\end{displaymath}
l'ensemble des poids dominants et \( p^{d} \)-restreints.

  Pour \( \delta\in R \), notons \( s_{\delta}\) la réflexion  par
  rapport à \( \delta\), c'est-à-dire, pour tout \( \mu\in X(T) \), 
  \( s_{\delta}(\mu)=\mu-\langle \mu,\delta^{\vee}\rangle \delta \).
  
 Soit
   \(  W\)
  le groupe de Weyl de \( R \), il est engendré par l'ensemble \( S \)
  des
  réflexions simples. La longueur \(
  \ell(w) \) d'un \( w\in W \) est le plus petit entier \( m \)
  tel que \( w \) s'écrive 
  \( s_{\alpha_{1}}s_{\alpha_{2}}\cdots s_{\alpha_{m}} \) avec \(
  \alpha_{i}\in S \). Soit \(
  w_{0}=s_{\alpha}s_{\beta}s_{\alpha}=s_{\beta}s_{\alpha}s_{\beta} \)
  l'unique élément de \( W \) de plus grande longueur.

  Pour \( \delta\in R \) et \( r\in\mathbb{Z} \), notons \(
  s_{\delta,r} \) la réflexion affine de \( X(T) \) définie par
   \(  s_{\delta,r}(\mu)=\mu-(\langle\mu,\delta^{\vee}\rangle-r)\delta \)
  pour tout \( \mu\in X(T) \). Désignons par \( W_{p} \) le groupe
  engendré par tous les \( s_{\delta,np} \) avec \( \delta\in R \) et
  \( n\in \mathbb{Z} \). Pour \( w\in W_{p} \), définissons l'action
  décalée par \( w\cdot \mu=w(\mu+\rho)-\rho \) pour tout \(
  \mu\in X(T) \). On note \( C=-\rho+X(T)^{+} \).

\medskip
  Tout \( G \)-module \( V \) est aussi un \( T \)-module de façon
  naturelle. Pour tout \( \mu\in X(T) \), on note
   \(  V_{\mu}\)
  l'espace de poids \( \mu \) de \( V \) et l'on dit que \( \mu \) est un poids de \( V \) si \( V_{\mu}\neq 0
  \). On dit que \( \mu \) est un plus haut poids de \( V \) si \( \mu
  \) est un poids de \( V \) qui est maximal par rapport à l'ordre \(
  \leq \) sur \( X(T) \). On définit le caractère de \( V \) par
   \(  \ch V=\sum_{\mu\in X(T)}\dim( V_{\mu})\, e_{\mu}\in \mathbb{Z}[X(T)] \).

  Soit \( H\subset G \) un sous-groupe fermé. Si \( V \) est un \( G \)-module, alors il admet naturellement une
  structure de \( H \)-module. On note \( \res_{H}^{G}(V) \) le \( H
  \)-module ainsi obtenu.
  
  Pour tout \( H
  \)-module \( N \), on note \( \ind_{H}^{G}(N) \) le \( G \)-module induit par
  \( N \). 
  Pour \( i\in\mathbb{N} \),
  on note
   \(  H^{i}(G/H, N)=H^{i}(G/H, \mathcal{L}_{G/H}(N)) \)
  où \( \mathcal{L}_{G/H}(N) \) est le fibré vectoriel \( G
  \)-équivariant  sur \( G/H \) associé à \( N \) (cf. \cite{Jan03} I.5). Alors on a
   \(  H^{i}(G/H, N)\cong R^{i}\ind_{H}^{G}(N) \). 
Pour un \( B \)-module \( N \), on note \( H^{i}(N)=H^{i}(G/B,N)
\). Si \( \mu\in X(T) \), alors \( \mu \) est aussi un caractère de \(
B \) par la composition \( B\twoheadrightarrow T\xrightarrow{\mu} \mathbb{G}_{m} \), et on désigne encore par \( \mu \) le \( B
\)-module de dimension \( 1 \) tel que \( g\in B\) agit comme le
scalaire \( \mu(g) \). Donc \( H^{i}(\mu) \) est défini comme
ci-dessus.

Pour \( \mu\in X(T)^{+} \), notons  \( L(\mu) \) le \( G \)-module
simple de plus haut poids \( \mu \). Notons aussi \(
V(\mu)=H^{3}(w_{0}\cdot \mu) \) le module de Weyl de plus haut poids \( \mu \).

Pour un \( G \)-module \( V \) de dimension finie, on note \( \KF(V) \) l'ensemble des
facteurs de composition de \( V \).

Pour \( i\in\{0,1,2,3\} \), on appelle \og \( H^{i} \)-chambre\fg{} tout
sous-ensemble de \( X(T) \) de la forme \( w\cdot C \) avec \(
\ell(w)=i \). Pour \( d\in\mathbb{N}^{*} \),
 une \og \( p^{d} \)-alcôve\fg{} est un ensemble de la forme
\begin{multline*}
  \{\mu\in X(T)\mid ap^{d}<\langle \mu+\rho,\alpha^{\vee}\rangle <(a+1)p^{d}, \quad
                bp^{d}<\langle \mu+\rho,\beta^{\vee}\rangle<(b+1)p^{d},\\
  cp^{d}<\langle \mu+\rho,\gamma^{\vee}\rangle<(c+1)p^{d}\}
\end{multline*}
pour certains \( a,b,c\in\mathbb{Z} \).

Pour tout \( G \)-module \( V \),  l'espace dual \(
\Hom_{k}(V,k) \)  est naturellement muni de la structure de \( G \)-module définie par \(
(g\cdot \phi )(v)=\phi(g^{-1}v)\). On le note \( V^{*} \) et on
l'appelle le dual de \( V \). La dualité de Serre sur \( G/B \) est compatible avec l'action de \( G
\), et donne
 \(  H^{i}(\mu)\cong H^{3-i}(-2\rho-\mu)^{*} \).

D'autre part, l'application \( g\mapsto {}^{t}g \) est un anti-automorphisme de \(
G=\SL_{3} \) qui est l'identité sur \( T \). On peut aussi  munir l'espace dual \(
\Hom_{k}(V,k) \) de la structure de \( G \)-module définie par \(
(g\cdot \phi)(v)=\phi(^{t}gv) \). On le note \( V^{t} \) et on
l'appelle \og le dual contravariant\fg{} de \( V \). Alors, \og la dualité de Serre
contravariante\fg{} s'écrit (cf. \cite{DS88} 2.1)
  \( H^{i}(\mu)\cong H^{3-i}(w_{0}\cdot \mu)^{t} \).

\medskip
  Soit \( F:G\to G \) le morphisme de Frobenius de \( G \). Pour tout \( r\in\mathbb{N}^{*} \), notons
\( G_{r}=\ker(F^{r}) \) le \( r \)-ième noyau de
Frobenius. Pour tout \( \mu\in X(T) \), notons \( \widehat{L}(\mu) \)
l'unique \( BG_{1} \)-module simple de plus haut poids \( \mu \), où \( BG_{1}=F^{-1}(B) \).  Si on écrit \( \mu=\mu^{0}+p\mu^{1} \)
avec \( \mu^{0}\in X_{1}(T) \) et \( \mu^{1}\in X(T) \), alors on a un
isomorphisme de \( BG_{1} \)-modules
  \( \widehat{L}(\mu)\cong \widehat{L}(\mu^{0})\otimes p\mu^{1} \).
De plus, si \( \mu\in X_{1}(T) \), alors on a un isomorphisme de \( BG_{1} \)-modules \(
\widehat{L}(\mu)\cong\res_{BG_{1}}^{G}(L(\mu)) \).

\section{Une filtration à deux étages}\label{chapitre:3étages}
\subsection{Énoncé du théorème pricipal}\label{subsection:2étagesénoncé}

 \begin{defi}[degré]\label{defi:degré}
   Soit \( n\in\mathbb{N} \). Si \( n\geq 1 \), on appelle degré de \(n\) l'unique
   \(d\in \mathbb{N}\) tel que \(p^{d}\leq n
   <p^{d+1}\). Si \( n=0 \), on
   dit que \( n \) est de degré \( -\infty \).

   Soit \(\mu\in X(T)\) tel que \( \mu\neq (-1,-1) \). Il existe un
   unique \( \lambda=(a,b)\in C\cap W\cdot \mu\). Le degré
   de \( \mu \) est défini comme le degré de \( a+b+1\in\mathbb{N} \). 
 \end{defi}

 \begin{rmk}
   Si \( \mu=(m,-n-2) \) avec \( m,n\in\mathbb{N} \), alors \(
   \mu=s_{\beta}\cdot (m-n-1,n)=s_{\beta}s_{\alpha}\cdot(n-m-1,m)
   \). Donc dans ce cas, le degré de \( \mu \) est celui de \( \max(m,n) \).
 \end{rmk}

 \begin{defi}[Condition de Griffith]\label{defi:Griffith}
   \begin{enumerate}
   \item On dit qu'un poids \(\mu\) vérifie la condition de Griffith
      s'il existe \(m,n,d\in \mathbb{N}^{*}\) et
     \mbox{\( a\in \{1,2,\cdots,p-1\} \)} tels que
     \begin{itemize}
     \item \(ap^{d}\leq m,n\leq (a+1)p^{d}-2\) ;
     \item \(\mu=(m,-n-2)\) ou \(\mu=(-n-2,m)\).
     \end{itemize}
  
     On appelle \og région de Griffith\fg{}, et l'on note \(\Gr\),
     l'ensemble des poids vérifiant la condition de Griffith.

    \item  On note \( \overline{\Gr} \) l'ensemble des poids \( \mu \) tels
     qu'il existe \(m,n,d\in \mathbb{N}^{*}\) et
     \mbox{\( a\in \{1,2,\cdots,p-1\} \)}  tels que
     \begin{itemize}
     \item \(ap^{d}-1\leq m,n\leq (a+1)p^{d}-1\) ;
     \item \(\mu=(m,-n-2)\) ou \(\mu=(-n-2,m)\).
     \end{itemize}

    \item  On note \( \widehat{\Gr} \) l'ensemble des poids \( \mu \) tels
     qu'il existe \(m,n,d\in \mathbb{N}^{*}\) et
     \mbox{\( a\in \{1,2,\cdots,p-1\} \)}  tels que
     \begin{itemize}
     \item \(ap^{d}\leq m,n\leq (a+1)p^{d}-1\) ;
     \item \(\mu=(m,-n-2)\) ou \(\mu=(-n-2,m)\).
     \end{itemize}
   \end{enumerate}

 \end{defi}
 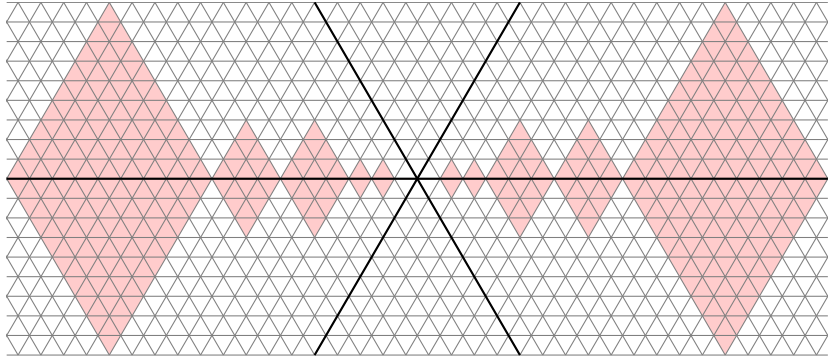
\begin{figure}[H]
    \centering
    \begin{tikzpicture}[scale=0.3]
      \clip (-18,-9*sin{60}) rectangle (18,9*sin{60});
  \draw (0,0)--(18,0);
  \fill [red, opacity=0.2] (1,0)--(1.5,
  sin{60})--(2,0)--(1.5,-sin{60})--cycle;
  \fill [red, opacity=0.2] (2,0)--(2.5,
  sin{60})--(3,0)--(2.5,-sin{60})--cycle;
  \fill [red,opacity=0.2]
  (3,0)--(4.5,3*sin{60})--(6,0)--(4.5,-3*sin{60})--cycle;
  \fill [red,opacity=0.2]
  (6,0)--(7.5,3*sin{60})--(9,0)--(7.5,-3*sin{60})--cycle;
   \fill [red,opacity=0.2]
   (9,0)--(13.5,9*sin{60})--(18,0)--(13.5,-9*sin{60})--cycle;

   \draw (0,0)--(-18,0);
  \fill [red, opacity=0.2] (-1,0)--(-1.5,
  sin{60})--(-2,0)--(-1.5,-sin{60})--cycle;
  \fill [red, opacity=0.2] (-2,0)--(-2.5,
  sin{60})--(-3,0)--(-2.5,-sin{60})--cycle;
  \fill [red,opacity=0.2]
  (-3,0)--(-4.5,3*sin{60})--(-6,0)--(-4.5,-3*sin{60})--cycle;
  \fill [red,opacity=0.2]
  (-6,0)--(-7.5,3*sin{60})--(-9,0)--(-7.5,-3*sin{60})--cycle;
   \fill [red,opacity=0.2]
   (-9,0)--(-13.5,9*sin{60})--(-18,0)--(-13.5,-9*sin{60})--cycle;
   \foreach \i in
    {-23,...,23}{\draw[gray] (\i -4.5, 9*sin{60})--(\i +4.5,-9*sin{60});
     \draw[gray] (\i -4.5, -9*sin{60})--(\i +4.5,9*sin{60});} \foreach \i
   in {-9,...,9}{\draw[gray] (-18, \i *sin{60})--(18, \i *sin{60});}

  \draw [line width=0.8] (-4.5,9*sin{60})--(4.5,-9*sin{60});
 \draw [line width=0.8] (-4.5,-9*sin{60})--(4.5,9*sin{60}); 
  \draw [line width=0.8] (-18,0)--(18,0);
\end{tikzpicture}
    \caption{Région de Griffith pour \( p=3 \)}
  \end{figure}
\begin{rmk}\label{label:griffith}
  Dans la Définition \ref{defi:Griffith}, le degré de \(\mu\) est \(d\).
\end{rmk}

\begin{rmk}\label{rmk:Griffithcondition}
D'après \cite{Gri80} Theorem 1.3 ou \cite{And79} Theorem 3.6, on sait
que  \( H^{1}(\mu)\) et \( H^{2}(\mu) \) sont tous les deux non
nuls  si et seulement si \(
\mu\in\Gr \). Si \( \mu \) est dans une \( H^{1} \)-chambre (resp. \( H^{2} \)-chambre) et \(
\mu\notin \Gr \), alors \( H^{2}(\mu)=0 \) (resp. \( H^{1}(\mu)=0 \)).
  
\end{rmk}

Le théorème principal de \S\ref{chapitre:3étages} est le suivant.
\begin{thm} \label{cor:2étages}
 Soit \(\mu=(m,-n-2)\in \overline{\Gr}\), où
  \(m=ap^{d}+r\) et \(n=ap^{d}+s\) avec \(d\geq 1\), \(0\leq a\leq
  p-1\) et \(-1\leq r,s\leq p^{d}-1\). Posons \(\mu'=(r,-s-2)\),
  \(\mu''=(-p^{d}+r,p^{d}-s-2)\), \(\lambda=(s,p^{d}-r-2)\) et
  \(^{t}\lambda=(r,p^{d}-s-2)\). Alors :

  \begin{enumerate}
  \item Il existe une suite exacte courte de \( G
    \)-modules:
    \[
  \begin{tikzcd}
    0\ar[r]&M\ar[r]&H^{2}(\mu)\ar[r] &L(0,a-1)^{(d)}\otimes V(\lambda)\ar[r]&0
  \end{tikzcd}
\]
telle que
\[
M\cong  L(0,a)^{(d)} \otimes H^2(\mu') \bigoplus L(0,a-2)^{(d)} \otimes H^2(\mu'').
\]
De plus, le quotient de \( H^{2}(\mu) \) par \( L(0,a)^{(d)}\otimes
H^{2}(\mu') \) est un quotient du module de Weyl \( V(s,ap^{d}-r-2) \).   
  \item Il existe une suite exacte courte de \( G
\)-modules:
\[
  \begin{tikzcd}
    0\ar[r]&L(0,a-1)^{(d)}\otimes
    H^{0}(^{t}\lambda)\ar[r]&H^{1}(\mu)\ar[r]&Q\ar[r]&0
  \end{tikzcd}
\]
telle que
\[Q\cong  L(0,a)^{(d)} \otimes H^1(\mu') \bigoplus L(0,a-2)^{(d)}
  \otimes H^1(\mu'').\]
De plus, le noyau de la projection \( H^{1}(\mu)\to
L(0,a)^{(d)}\otimes H^{1}(\mu') \) est un sous-module de \( H^{0}(r,ap^{d}-s-2) \).
\end{enumerate}

\end{thm}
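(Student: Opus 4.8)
The plan is to prove part~(1) in detail by analysing the line bundle $\mathcal{L}(\mu)$ along the Frobenius kernels, and then to deduce part~(2) from~(1) by Serre duality. Indeed $H^{1}(\mu)\cong H^{2}(-2\rho-\mu)^{*}$ with $-2\rho-\mu=(-m-2,n)$, which lies again in $\overline{\Gr}$ but in the ``other'' orientation (the roles of $r$ and $s$ being interchanged); applying the diagram automorphism of $G=\SL_{3}$ to bring it to standard shape, applying part~(1) to it, dualizing the resulting short exact sequence, and using $V(\nu)^{*}\cong H^{0}(-w_{0}\nu)$ together with $H^{2}(\nu)^{*}\cong H^{1}(-2\rho-\nu)$ to recognise the dual terms, one recovers exactly the exact sequence of part~(2) with $\lambda$ replaced by ${}^{t}\lambda$ and $V(\lambda)$ by $H^{0}({}^{t}\lambda)$. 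So it suffices to treat~(1), and I would do this by induction on the degree $d$ of $\mu$, the cases where $\mu'$ or $\mu''$ falls outside $\overline{\Gr}$ being dispatched by \autoref{rmk:Griffithcondition}.

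For~(1): write $BG_{d}=F^{-d}(B)$. Since $BG_{d}/B\cong G_{d}/B_{d}$ is a finite infinitesimal scheme, the projection $\pi\colon G/B\to G/BG_{d}\cong(G/B)^{(d)}$ is affine, so $R^{>0}\pi_{*}\mathcal{L}(\mu)=0$ and $H^{i}(\mu)\cong H^{i}\bigl(G/BG_{d},\mathcal{L}_{G/BG_{d}}(\ind_{B}^{BG_{d}}(\mu))\bigr)$ for every $i$. The crux is then to construct a short filtration of the $BG_{d}$-module $\ind_{B}^{BG_{d}}(\mu)$ — the $p^{d}$-level analogue of the module $\widehat{Z}(\mu)$ of the introduction — whose layers are, up to Frobenius-twisted characters, of the form $\widehat{L}_{d}(\cdot)\otimes E^{(d)}$ for suitable $B$-modules $E$ (line bundles, or their $R^{1}$ along a $\beta$-direction $\mathbb{P}^{1}$, i.e. the modules $E_{\delta}$ of the introduction with $\delta\in\{0,\beta\}$), arranged so that $H^{\bullet}(G/BG_{d},-)$ produces the three contributions of the statement: $L(0,a)^{(d)}\otimes H^{j}(\mu')$, $L(0,a-2)^{(d)}\otimes H^{j}(\mu'')$, and a ``Weyl layer'' whose $j$-th cohomology is $L(0,a-1)^{(d)}\otimes H^{j+1}(w_{0}\cdot\lambda)$, which equals $L(0,a-1)^{(d)}\otimes V(\lambda)$ when $j=2$ and vanishes for $j=0,1,3$ (since $\lambda$ is dominant). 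The three ``$\beta$-digits'' $a,a-1,a-2$ and the weights $\mu',\mu'',\lambda$ all originate from the $\SL_{2}$-theory in the direction of $\beta$ at scale $p^{d}$ applied to the single negative pairing $\langle\mu,\beta^{\vee}\rangle=-n-2$ with $n=ap^{d}+s$: pushing $\mathcal{L}(\mu)$ down $G/B\to G/P_{\beta}$ and running the Jantzen-type analysis of $R^{1}$ of a line bundle of degree $\equiv-s-2\pmod{p^{d}}$, relatively over $G/BG_{d}$, produces these layers, $a$ being the digit dictated by $n$ and $a\pm1$ the affine neighbours. Constructing this filtration and checking its layers is the bulk of the proof, and it is here that the precise position $\mu\in\overline{\Gr}$ is used.

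Granting the filtration, apply $H^{j}(G/BG_{d},-)$. For a $BG_{d}$-module $\widehat{L}_{d}(\nu^{0})\otimes E^{(d)}$ with $\nu^{0}\in X_{d}(T)$ the tensor identity gives $H^{j}(G/BG_{d},\widehat{L}_{d}(\nu^{0})\otimes E^{(d)})\cong L(\nu^{0})\otimes H^{j}(G/B,E)^{(d)}$, so the layers contribute exactly the three listed modules. There results, for each $j$, a long exact sequence linking $H^{j}(\mu)$ to them, and the theorem amounts to showing these long exact sequences collapse into the two stated short exact sequences; I expect this collapse — the vanishing of all connecting homomorphisms — to be the main obstacle. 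The ingredients are: \autoref{rmk:Griffithcondition}, giving $H^{0}(\mu)=H^{3}(\mu)=0$ and the analogous vanishing for $\mu',\mu''$ (of degree $<d$, handled by the induction hypothesis); Andersen's theorem that each nonzero $H^{1}$ has simple socle, which forces $L(0,a-1)^{(d)}\otimes H^{0}({}^{t}\lambda)$ to sit in $H^{1}(\mu)$ as a \emph{submodule}; and a count of characters, by additivity of $\ch$ along the filtration (equivalently, by matching with Donkin's recursion), which pins down the Euler characteristics term by term. Together these force the connecting maps to be zero, so that $L(0,a)^{(d)}\otimes H^{2}(\mu')$ embeds in $H^{2}(\mu)$, $M$ splits as the sum with $L(0,a-2)^{(d)}\otimes H^{2}(\mu'')$, and the cokernel is $L(0,a-1)^{(d)}\otimes V(\lambda)$; dually for $H^{1}(\mu)$.

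Finally, the ``moreover'' clauses follow from highest-weight considerations. The quotient $H^{2}(\mu)/\bigl(L(0,a)^{(d)}\otimes H^{2}(\mu')\bigr)$ is a $G$-module generated by a vector of weight $(s,ap^{d}-r-2)$ — this being the highest weight of the remaining layers — hence a homomorphic image of the Weyl module $V(s,ap^{d}-r-2)$. Dually, $\ker\bigl(H^{1}(\mu)\to L(0,a)^{(d)}\otimes H^{1}(\mu')\bigr)=L(0,a-1)^{(d)}\otimes H^{0}({}^{t}\lambda)$, which by the Frobenius–tensor identity for induced modules embeds into $H^{0}(r,ap^{d}-s-2)$ as the submodule of that highest weight.
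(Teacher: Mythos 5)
Il y a un vrai problème au cœur de votre construction pour (1). Vous proposez de filtrer le \(BG_{d}\)-module \(\ind_{B}^{BG_{d}}(\mu)\) par des couches de la forme \(\widehat{L}_{d}(\nu^{0})\otimes E^{(d)}\) dont la cohomologie donnerait les trois contributions \(L(0,a)^{(d)}\otimes H^{j}(\mu')\), \(L(0,a-2)^{(d)}\otimes H^{j}(\mu'')\) et \(L(0,a-1)^{(d)}\otimes V(\lambda)\). Mais l'identité tensorielle donne \(H^{j}(G/BG_{d},\widehat{L}_{d}(\nu^{0})\otimes E^{(d)})\cong L(\nu^{0})\otimes H^{j}(E)^{(d)}\) : la torsion de Frobenius porte sur le facteur de cohomologie, alors que dans l'énoncé elle porte sur le facteur simple, le facteur \(H^{j}(\mu')\) (avec \(\mu'=(r,-s-2)\), \(r,s\) pouvant aller jusqu'à \(p^{d}-1\)) n'étant en général ni simple ni une torsion. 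De plus, une couche dont toute la cohomologie serait \(L(0,a)^{(d)}\otimes H^{\bullet}(\mu')\) devrait avoir le caractère de \(\res_{BG_{d}}(L(0,a)^{(d)})\otimes\ind_{B}^{BG_{d}}(\mu')\), de dimension \(\tfrac{(a+1)(a+2)}{2}p^{3d}>p^{3d}=\dim\ind_{B}^{BG_{d}}(\mu)\) : une telle couche ne peut pas être un sous-quotient de \(\ind_{B}^{BG_{d}}(\mu)\). Le mécanisme que vous invoquez (« \(\SL_{2}\)-théorie en direction de \(\beta\) à l'échelle \(p^{d}\) ») produit naturellement des filtrations du type de la section 4 (petit facteur simple non tordu \(\otimes\) cohomologie tordue), pas celle du théorème. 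L'article filtre un objet différent : le fibré \(L(0,a)^{(d)}\otimes\mathcal{L}(\mu')\) sur \(G/B\), via la structure de \(B\)-module de \(L(0,a)\) (les suites exactes avec \(K_{a}\), \(M_{a}\)), \(\mathcal{L}(\mu)\) apparaissant comme quotient du sous-fibré \(\widetilde{K}_{a}\) ; c'est de là que sortent les trois chiffres \(a\), \(a-1\), \(a-2\) et les poids \(\mu'\), \(\mu''\), \(\lambda\), et l'annulation du morphisme de liaison se démontre non par un comptage de caractères mais parce que source et but n'ont aucun facteur de composition commun (\autoref{lemma:poidsrestreints}).

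Deuxième lacune : même en admettant une filtration à trois étages (c'est le contenu du \autoref{thm2}), le point qui distingue l'énoncé est que les deux étages du milieu forment une somme directe ; vous l'affirmez (« \(M\) splits as the sum ») sans aucun argument. Cela exige l'annulation de \(\Ext^{1}_{G}(L(0,a)^{(d)}\otimes H^{i}(\mu'),\,L(0,a-2)^{(d)}\otimes H^{i}(\mu''))\), qui repose sur la \(p^{d}\)-restriction des plus hauts poids des facteurs de composition (\autoref{lemma:poidsrestreints}) et sur le calcul d'extensions entre simples de \autoref{lemma:extension} (via Yehia) ; ni le comptage de caractères ni le socle simple d'Andersen (qui ne place d'ailleurs pas automatiquement le module non simple \(L(0,a-1)^{(d)}\otimes H^{0}({}^{t}\lambda)\) en sous-module) ne donnent cette scission. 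La réduction de (2) à (1) par dualité de Serre (équivalente à la dualité contravariante utilisée dans l'article) et la clause « quotient d'un module de Weyl » par un argument de plus haut poids sont, elles, raisonnables.
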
 

\begin{convention}\label{convention:nondominant}
  Si \(\eta\) n'est pas dominant, on pose \(L(\eta)=V(\eta)=0\). En
  particulier, si \( a=1 \) on a \( L(0,a-2)=0 \).
\end{convention}

Afin de démontrer le \autoref{cor:2étages} on a besoin de quelques
lemmes.

\begin{lemma}\label{lemma:poidsrestreints}
  Soient \( \mu'=(r,-s-2) \) et \(
  \mu''=(-p^{d}+r,p^{d}-s-2) \) avec \( -1\leq r,s\leq p^{d}-1
  \) et \( d\geq 1 \).
  \begin{enumerate}
  \item Si \( L(\eta) \) est un facteur de composition de \(H^{i}(\mu')\) ou
    \(H^{i}(\mu'')\), alors \(\eta\) est \(p^{d}\)-restreint.
  \item Si \( s\leq r+1 \) et si  \(
  L(\eta) \) est un facteur de composition de \(V(\lambda)= V(s,p^{d}-r-2) \),
  alors \( \eta \) est \( p^{d} \)-restreint.
  \end{enumerate}
\end{lemma}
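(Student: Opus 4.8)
The plan hinges on one elementary remark: a dominant weight $\eta$ is $p^{d}$-restricted as soon as $\langle\eta,\gamma^\vee\rangle<p^{d}$, since then $\langle\eta,\alpha^\vee\rangle$ and $\langle\eta,\beta^\vee\rangle$ are nonnegative integers whose sum is $<p^{d}$. Point~(2) follows at once: if $\lambda=(s,p^{d}-r-2)$ is not dominant then $V(\lambda)=0$ by \autoref{convention:nondominant}; otherwise any composition factor $L(\eta)$ of $V(\lambda)$ satisfies $\eta\le\lambda$, so $\langle\eta,\gamma^\vee\rangle\le\langle\lambda,\gamma^\vee\rangle=s+(p^{d}-r-2)\le p^{d}-1$, the final inequality being exactly the hypothesis $s\le r+1$.

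For point~(1) I would argue by induction on $d$, distinguishing three cases for $\mu'$ (and symmetrically for $\mu''$). Since $-s-2<0$, the weight $\mu'$ is never dominant, and since $-2\rho-\mu'=(-r-2,s)$ is never dominant either, Serre duality gives $H^{0}(\mu')=H^{3}(\mu')=0$; the same holds for $\mu''$. If $\mu'$ is singular for $W$, then also $H^{1}(\mu')=H^{2}(\mu')=0$ and there is nothing to prove. If $\mu'$ is $W$-regular but $\mu'\notin\overline{\Gr}$ (hence $\mu'\notin\Gr$), then by \autoref{rmk:Griffithcondition} whichever of $H^{1}(\mu'),H^{2}(\mu')$ does not lie in the chamber of $\mu'$ vanishes, so a single $H^{i}(\mu')$ survives; comparing with the Euler characteristic — which by Weyl's character formula equals $(-1)^{i}\ch V(\lambda_{\mu'})$, where $\lambda_{\mu'}$ is the dominant weight of $W\cdot\mu'$ — one gets $\ch H^{i}(\mu')=\ch V(\lambda_{\mu'})$. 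A direct computation shows $\lambda_{\mu'}=(r-s-1,s)$ or $(s-r-1,r)$, of $\gamma^\vee$-value $\max(r,s)-1\le p^{d}-2<p^{d}$, so every composition factor of $H^{i}(\mu')$ is $p^{d}$-restricted; and likewise $\lambda_{\mu''}$ has $\gamma^\vee$-value $<p^{d}$.

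There remains the case $\mu'$ $W$-regular with $\mu'\in\overline{\Gr}$. Here the degree of $\mu'$ — which is the degree of $\max(r,s)\le p^{d}-1$ — is some $d'<d$, so \autoref{cor:2étages} is available for $\mu'$ in degree $d'$; this is legitimate since the proof of \autoref{cor:2étages} in a given degree invokes the present lemma only in degrees at most that degree, so the induction on $d$ is well-founded. Reading off the two short exact sequences of \autoref{cor:2étages} (applied to $\mu'$ in degree $d'$), every composition factor $L(\eta)$ of $H^{i}(\mu')$ lies inside some $L(0,e)^{(d')}\otimes M$ with $e\in\{a-2,a-1,a\}$, $1\le a\le p-1$, where $M$ is either an $H^{j}$ of a weight of the form $(r_{1},-s_{1}-2)$ or $(-p^{d'}+r_{1},p^{d'}-s_{1}-2)$ — to which the inductive hypothesis~(1) in degree $d'$ applies, so its composition-factor weights lie in $X_{d'}(T)$ — or a Weyl module resp. induced module whose highest weight has $\gamma^\vee$-value $<2p^{d'}$, this last option always coming with the smaller twist $e=a-1$. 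Since $L(0,e)^{(d')}$ has vanishing $\alpha^\vee$-weight and $d'+1\le d$, these bounds combine to give in every case $\langle\eta,\alpha^\vee\rangle<p^{d'+1}\le p^{d}$ and $\langle\eta,\beta^\vee\rangle\le(a+1)p^{d'}-1\le p^{d'+1}-1<p^{d}$, i.e. $\eta\in X_{d}(T)$. When $\mu''$ lies in $\overline{\Gr}$ it has the mirror shape $(-n'-2,m')$, and applying the diagram automorphism of $\SL_{3}$ — which swaps the two coordinates and preserves $p^{d}$-restrictedness — reduces it to the case of $\mu'$.

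The main obstacle is exactly this last, Griffith, case: a uniform bound on $\langle\eta,\gamma^\vee\rangle$ genuinely fails there, so one is forced to track $\langle\eta,\alpha^\vee\rangle$ and $\langle\eta,\beta^\vee\rangle$ separately, and the estimate closes only because the Frobenius-twisted constituents supplied by \autoref{cor:2étages} are of the form $L(0,\ast)^{(d')}$ and because the degree drops strictly, $d'<d$. The only real piece of bookkeeping is to organise the simultaneous induction between this lemma and \autoref{cor:2étages} so that it is well-founded.
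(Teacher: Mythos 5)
Your proof of (2) is correct and matches the paper, and your preliminary reductions for (1) (the remark that \( \langle\eta,\gamma^{\vee}\rangle<p^{d} \) suffices, the vanishing \( H^{0}(\mu')=H^{3}(\mu')=0 \), the Euler-characteristic argument for \( W \)-regular weights outside \( \overline{\Gr} \)) are fine. But the case analysis for (1) has a genuine gap. Your first case, \og si \( \mu' \) est singulier pour \( W \), alors \( H^{1}(\mu')=H^{2}(\mu')=0 \)\fg{}, is false in characteristic \( p \): Bott-type vanishing fails on the wall \( \langle\mu'+\rho,\gamma^{\vee}\rangle=0 \). For instance \( \mu'=(r,-r-2) \) with \( p\leq r\leq p^{d}-1 \) is allowed by the lemma, is \( W \)-singular, and has \( H^{1}(\mu')\neq 0\neq H^{2}(\mu') \) — this is precisely the situation studied in \S\ref{subsection:surlemur} (\autoref{thm:murfilt}). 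Since your cases two and three are explicitly restricted to \( W \)-regular weights, this whole family is left unhandled. A second, related, weakness is the claimed well-foundedness of the simultaneous induction with \autoref{cor:2étages}: in the paper this lemma is an \emph{input} to \autoref{cor:2étages} (it is used in the proof of \autoref{thm2} to show \( f=0 \), and again with \autoref{lemma:extension} in the reduction), always at the same level \( p^{d} \) as the weight treated; the strict drop of level you invoke fails exactly at \( \mu'=(p-1,-p-1) \), whose unique admissible window in \( \overline{\Gr} \) is at level \( 1 \) while its degree is \( 0 \), so at the base \( d=1 \) you would need \autoref{cor:2étages} at level \( 1 \), whose proof needs the lemma at level \( 1 \). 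That weight happens to be on the \( \gamma \)-wall, i.e.\ it is also one of the weights your first case mishandles, so the circle could be broken by a direct check, but not by the argument as written.

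The idea you are missing — and which makes the paper's proof of (1) two lines, with no induction and no appeal to \autoref{cor:2étages} — is the Strong Linkage Principle (\cite{Jan03} II.6.13): for \emph{every} \( i \) and every composition factor \( L(\eta) \) of \( H^{i}(w\cdot\zeta) \) with \( \zeta\in C \), one has \( \eta\leq\zeta \), with no regularity or non-Griffith hypothesis. Combined with your own remark, this gives \( \langle\eta,\alpha^{\vee}\rangle,\langle\eta,\beta^{\vee}\rangle\leq\langle\zeta,\gamma^{\vee}\rangle \), and it only remains to compute \( \langle\zeta,\gamma^{\vee}\rangle=\max(r,s)-1<p^{d} \) for \( \mu' \) and \( \langle\zeta,\gamma^{\vee}\rangle=p^{d}-\min(r,s)-3<p^{d} \) for \( \mu'' \), uniformly in all cases, including the Griffith region and the walls.
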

\begin{proof}
  Soit \(\zeta\in -\rho+X(T)^{+}\). On sait, d'après le \og Strong
  Linkage Principle\fg{} (\cite{Jan03} II.6.13), que pour tout facteur de composition
  \(L(\eta)\) d'un \(H^{i}(w\cdot \zeta)\) on a \(\eta\leq
  \zeta\). Comme \(\gamma\) est dominant, on a donc:
  \[
\langle\eta,\alpha^{\vee}\rangle\leq \langle\eta,\gamma^{\vee}\rangle\leq \langle\zeta,\gamma^{\vee}\rangle
\]
et de même pour \(\langle\eta,\beta^{\vee}\rangle\).

Pour \(\mu'=(r,-s-2)\), le \(\zeta\) correspondant est \((r-s-1,s)\)
si \(r\geq s\) et \((s-r-1,r)\) si \(s\geq r\). Dans les deux cas on a
\(\langle \zeta,\gamma^{\vee}\rangle=\max(r,s)-1<p^{d}\).

De même, pour \(\mu''=(r-p^{d},p^{d}-s-2)\), le poids \(\zeta\)
correspondant est \((p^{d}-r-2,r-s-1)\) si \(r\geq s\) et
\((p^{d}-s-2,s-r-1)\) si \(s\geq r\). Dans les deux cas on a
\(\langle\zeta,\gamma^{\vee}\rangle=p^{d}-\min(r,s)-3<p^{d}\).

Si \( s\leq r+1 \) et \( L(\eta) \) est un facteur de composition de
\(V(\lambda) =V(s,p^{d}-r-2)\cong H^{3}(w_{0}\cdot \lambda) \), alors
dans ce cas
\( \langle \zeta,\gamma^{\vee}\rangle=p^{d}+s-r-2 \leq p^{d}-1\).
 \end{proof}

\begin{lemma}\label{lemma:extension}
 Soit \( d\in \mathbb{N}^{*} \) et  soient \(\lambda,\mu\in X_{d}(T)\). Alors on a
  \[
\Ext^{1}_{G}(L(0,a)^{(d)}\otimes L(\lambda), L(0,a-2)^{(d)}\otimes L(\mu))=0.
  \]
\end{lemma}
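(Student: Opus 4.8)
\medskip
\noindent\emph{Plan de démonstration.}
L'idée est d'utiliser la suite spectrale de Lyndon--Hochschild--Serre pour le noyau de Frobenius $G_d$ (normal dans $G$) et de ramener l'annulation à des énoncés sur le graduage de la catégorie des $G$-modules par $X(T)/\mathbb{Z}R$ (où $\mathbb{Z}R=\mathbb{Z}\alpha\oplus\mathbb{Z}\beta$ est le réseau des racines) et sur la liaison affine. D'après la \autoref{convention:nondominant} on peut supposer $a\ge 2$, de sorte que $(0,a)$ et $(0,a-2)$ sont dominants. Posons $M=L(0,a)^{(d)}\otimes L(\lambda)$ et $N=L(0,a-2)^{(d)}\otimes L(\mu)$. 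En identifiant $G/G_d$ à $G$ via $F^{d}$, la suite spectrale $E_{2}^{i,j}=H^{i}\!\bigl(G/G_d,\Ext^{j}_{G_d}(M,N)\bigr)\Rightarrow\Ext^{i+j}_{G}(M,N)$ ramène le lemme à $E_{2}^{1,0}=E_{2}^{0,1}=0$. Comme $\lambda,\mu\in X_d(T)$, les restrictions $L(\lambda)|_{G_d}$ et $L(\mu)|_{G_d}$ sont simples et les twists de Frobenius sont triviaux sur $G_d$ ; donc, comme $G/G_d$-modules,
\[
  \Ext^{j}_{G_d}(M,N)\;\cong\;\bigl(L(0,a)^{*}\otimes L(0,a-2)\bigr)^{(d)}\otimes\Ext^{j}_{G_d}\bigl(L(\lambda),L(\mu)\bigr).
\]

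Pour $E_{2}^{1,0}=H^{1}\!\bigl(G/G_d,\Hom_{G_d}(M,N)\bigr)$ : si $\lambda\ne\mu$, alors $\Hom_{G_d}(L(\lambda),L(\mu))=0$ ; si $\lambda=\mu$, alors $\Hom_{G_d}(L(\lambda),L(\mu))=k$ comme $G/G_d$-module, donc $E_{2}^{1,0}\cong H^{1}\!\bigl(G/G_d,(L(0,a)^{*}\otimes L(0,a-2))^{(d)}\bigr)\cong\Ext^{1}_{G}\!\bigl(L(0,a),L(0,a-2)\bigr)$. Ce dernier groupe est nul : comme $(0,a)-(0,a-2)=(0,2)=2\omega_{2}\notin\mathbb{Z}R$ (la classe de $\omega_{2}$ engendre $X(T)/\mathbb{Z}R\cong\mathbb{Z}/3\mathbb{Z}$), les modules $L(0,a)$ et $L(0,a-2)$ appartiennent à des blocs différents de $G$, donc il n'y a pas d'extension non triviale entre eux.

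Reste $E_{2}^{0,1}=\Hom_{G/G_d}\!\bigl(k,\Ext^{1}_{G_d}(M,N)\bigr)$, qui est le cœur de la preuve. En défaisant le twist extérieur par $F^{d}$, ce groupe s'écrit $\Hom_{G}\!\bigl(L(0,a)\otimes L(a-2,0),\,Z\bigr)$, où $Z$ est le $G$-module déduit de $\Ext^{1}_{G_d}(L(\lambda),L(\mu))$ en détordant l'action de $G/G_d$ par $F^{d}$. Tout facteur de composition de $L(0,a)\otimes L(a-2,0)=L(0,a)\otimes L(0,a-2)^{*}$ est de la forme $L(\zeta)$ avec $\zeta$ dominant, $\zeta\le(a-2,a)$ — donc $\langle\zeta,\gamma^{\vee}\rangle\le 2a-2\le 2p-4$ — appartenant à la classe de $(a-2,a)$ modulo $\mathbb{Z}R$, laquelle ne contient pas $0$ (son image dans $\mathbb{Z}/3\mathbb{Z}$ est $2$) ; en particulier un tel $\zeta$ est non nul. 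Comme $\Hom_{G}(A,B)=0$ dès que $\Hom_{G}(L(\zeta),B)=0$ pour tout facteur de composition $L(\zeta)$ de $A$, il suffit de montrer $\Hom_{G}(L(\zeta),Z)=0$ pour chacun de ces $\zeta$. En appliquant la suite spectrale de Lyndon--Hochschild--Serre (pour $G_d\subset G$) au groupe $\Ext^{\bullet}_{G}\!\bigl(L(\zeta)^{(d)}\otimes L(\lambda),L(\mu)\bigr)$ et en utilisant que $H^{1}(G,L(\zeta^{*}))=H^{2}(G,L(\zeta^{*}))=0$ (de nouveau $\zeta^{*}$ n'est pas lié à $0$), on identifie $\Hom_{G}(L(\zeta),Z)$ à $\Ext^{1}_{G}\!\bigl(L(\zeta)^{(d)}\otimes L(\lambda),L(\mu)\bigr)=\Ext^{1}_{G}\!\bigl(L(\lambda+p^{d}\zeta),L(\mu)\bigr)$, la dernière égalité par le théorème du produit tensoriel de Steinberg. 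Ainsi tout le lemme se ramène à
\[
  \Ext^{1}_{G}\!\bigl(L(\lambda+p^{d}\zeta),L(\mu)\bigr)=0\qquad(\lambda,\mu\in X_d(T),\ \zeta\text{ dominant},\ \zeta\ne0,\ \langle\zeta,\gamma^{\vee}\rangle\le 2p-4).
\]

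Cette dernière annulation est l'étape que je m'attends à être le principal obstacle, et c'est une pure question de liaison affine : il faut vérifier que $L(\lambda+p^{d}\zeta)$ et $L(\mu)$ ne sont pas fortement liés, c'est-à-dire que les poids $\lambda+p^{d}\zeta$ et $\mu$ sont dans des $W_p$-orbites distinctes. Le fait que $\zeta\ne0$ soit dominant avec $\langle\zeta,\gamma^{\vee}\rangle<2p$ maintient $\lambda+p^{d}\zeta$ dans une zone contrôlée au-dessus du poids $p^{d}$-restreint $\mu$ ; combiné avec la relation $p^{d}\zeta\equiv\mu-\lambda\pmod{\mathbb{Z}R}$ qu'imposerait toute liaison (et $\zeta\notin\mathbb{Z}R$), ceci ne laisse qu'un nombre fini de configurations limites, que l'on élimine par un examen direct de l'ordre $\uparrow$ et des écritures $p$-adiques de $\lambda+p^{d}\zeta$ et de $\mu$. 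Les petites caractéristiques $p=2,3$ — pour lesquelles très peu de $\zeta$ interviennent, voire aucun — se traitent séparément.
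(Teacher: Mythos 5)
La mise en place par la suite spectrale de Lyndon--Hochschild--Serre est correcte, de même que l'annulation de \(E_{2}^{1,0}\) (et de \(E_{2}^{2,0}\)) par l'argument de blocs~; c'est en substance ce que fait le texte pour le cas \(\lambda=\mu\) via \cite{Jan03} II.10.17. En revanche, la dernière étape --- que vous identifiez vous-même comme l'obstacle principal --- comporte une lacune réelle~: l'annulation de \(\Ext^{1}_{G}\bigl(L(\lambda+p^{d}\zeta),L(\mu)\bigr)\) ne peut pas s'obtenir en montrant que \(\lambda+p^{d}\zeta\) et \(\mu\) sont dans des \(W_{p}\)-orbites distinctes, car c'est faux. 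Prenez \(p=3\), \(d=1\), \(a=2\), \(\lambda=(0,0)\), \(\zeta=(0,2)\)~: alors \(\lambda+p\zeta=(0,6)\) et \((0,6)+\rho=(1,7)\) est \(W_{3}\)-régulier, donc son orbite décalée rencontre chaque alcôve, en particulier les deux alcôves restreintes~; concrètement \(s_{\beta,3}s_{\gamma,6}s_{\beta,6}(1,7)=(2,2)\), de sorte que \((1,1)\in W_{3}\cdot(0,6)\) avec \((1,1)\in X_{1}(T)\). Le même phénomène se produit pour tout \(p\) dès que \(\lambda+p^{d}\zeta+\rho\) est régulier, et la congruence modulo \(\mathbb{Z}R\) n'y change rien (\(p^{d}[\zeta]=0\) si \(p=3\), et sinon elle impose seulement \([\mu]\neq[\lambda]\)).

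Pour conclure il faut donc une information plus fine que la liaison, à savoir la structure de \(\Ext^{1}_{G_{d}}\bigl(L(\lambda),L(\mu)\bigr)\) comme \(G/G_{d}\)-module. C'est précisément ce que fournit la Proposition 4.1.1 de \cite{Ye82} pour \(d=1\)~: la partie tordue par le Frobenius de ce groupe ne fait intervenir que les poids \(\nu_{0}\in\{0,\omega_{1},\omega_{2}\}\), et l'annulation résulte alors d'une comparaison de plus hauts poids (\(\langle\eta,\gamma^{\vee}\rangle\leq a-1<a\) pour tout facteur de composition \(L(\eta)\) de \(L(0,a-2)\otimes L(\nu_{0})\)). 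Pour \(d>1\), le texte procède par récurrence sur \(d\) via \cite{Jan03} II.10.17(2) en séparant les cas \(\lambda^{0}=\mu^{0}\) et \(\lambda^{0}\neq\mu^{0}\), plutôt que d'appliquer la suite spectrale directement à \(G_{d}\), dont les extensions entre simples \(p^{d}\)-restreints ne sont pas connues a priori. Signalons enfin que votre passage aux facteurs de composition \(L(\zeta)\) de \(L(0,a)\otimes L(a-2,0)\), bien que logiquement valable comme condition suffisante, est trop grossier~: \(\zeta=\omega_{1}\) peut être un tel facteur et \(\Ext^{1}_{G}\bigl(L(\lambda+p^{d}\omega_{1}),L(\mu)\bigr)\) n'est pas nul en général~; il faut conserver le \(\Hom\) partant du module \(L(0,a)\) tout entier, comme dans le texte, pour que la comparaison de poids aboutisse.
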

\begin{proof}
  Raisonnons par récurrence sur \(d\).

  Si \(d=1\), alors \(\lambda,\mu\in X_{1}(T)\). Si \(\lambda=\mu\),
  alors d'après \cite{Jan03} II.10.17(2), 
  \[\Ext^{1}_{G}\Big(L(0,a)^{(1)}\otimes L(\lambda), L(0,a-2)^{(1)}\otimes
    L(\lambda)\Big)\cong \Ext^{1}_{G}\Big(L(0,a),L(0,a-2)\Big)=0
  \]
  car \((0,a-2)\notin W_{p}\cdot (0,a)\). Si \(\lambda\neq \mu\),
  d'après \cite{Ye82} Proposition 4.1.1, on sait que si \(\Ext^{1}_{G}(L(0,a)^{(1)}\otimes L(\lambda), L(0,a-2)^{(1)}\otimes
    L(\mu))\) est non nul, alors si \( p\neq 3 \) il est
    parmi les trois possibilités suivantes (et est leur somme directe si \( p=3 \)):
      \begin{align*}
        \Hom_{G}(L(0,a),L(0,a-2)),\\
        \Hom_{G}(L(0,a),L(0,1)\otimes
        L(0,a-2)),\\
        \Hom_{G}(L(0,a),L(1,0)\otimes L(0,a-2)).
      \end{align*}

\noindent Or ceux-ci sont tous nuls car \((0,a)\not\leq \nu_{0}+(0,a-2)\) pour
\(\nu_{0}\in \{(0,0),(0,1),(1,0)\}\).

\smallskip
Supposons que l'énoncé est vrai pour \(d\geq 1\). Soient
\(\lambda,\mu\in X_{d+1}(T)\). Écrivons
\(\lambda=p\lambda^{1}+\lambda^{0}\) et \(\mu=p\mu^{1}+\mu^{0}\) avec
\(\lambda^{0}, \mu^{0}\in X_{1}(T)\). Si \(\lambda_{0}=\mu_{0}\),
alors
\begin{multline*}
  \Ext^{1}_{G}\Big(L(0,a)^{(d+1)}\otimes L(\lambda), L(0,a-2)^{(d+1)}\otimes
  L(\mu)\Big)\\
  \cong \Ext^{1}_{G}\Big(L(0,a)^{(d)}\otimes
  L(\lambda^{1}),L(0,a-2)^{(d)}\otimes L(\mu^{1})\Big)=0
\end{multline*}
d'après \cite{Jan03} II.10.17 (2) et  l'hypothèse de récurrence.

Si \(\lambda_{0}\neq \mu_{0}\), alors d'après \cite{Ye82} Proposition 4.1.1, on sait que si \(\Ext^{1}_{G}(L(0,a)^{(d+1)}\otimes L(\lambda), L(0,a-2)^{(d+1)}\otimes
    L(\mu))\) est non nul, alors  si \( p\neq 3 \) il est
    parmi les trois possibilités suivantes (et est leur somme directe si \( p=3 \)):
    
    \begin{equation}\label{eq:possibleExt}
    \begin{aligned}
      \Hom_{G}\Big(L((0,a)p^{d}+\lambda^{1}), L((0,a-2)p^{d}+\mu^{1})\Big),\\
      \Hom_{G}\Big(L((0,a)p^{d}+\lambda^{1}),
      L((0,a-2)p^{d}+\mu^{1})\otimes L(0,1)\Big),\\
      \Hom_{G}\Big(L((0,a)p^{d}+\lambda^{1}),
      L((0,a-2)p^{d}+\mu^{1})\otimes L(1,0)\Big).
    \end{aligned}
  \end{equation}

Soit \(L(\eta)\)  un facteur de composition de \(L((0,a-2)p^{d}+\mu^{1})\otimes L(\nu_{0})\), où
\(\nu_{0}\in\{(0,0),(0,1),(1,0)\}\). Alors on a
\[\eta\leq (0,a-2)p^{d}+\mu^{1}+\nu_{0}.\]
Donc, comme \(\mu^{1}\) est \(p^{d}\)-restreint, 
\[
\langle \eta,\gamma^{\vee}\rangle \leq
\langle(0,a-2)p^{d}+\mu^{1}+\nu_{0},\gamma^{\vee}\rangle\leq (a-2)p^{d}+2(p^{d}-1)+1=ap^{d}-1.
\]
 Donc comme \(\lambda^{1}\) est dominant, on ne peut pas avoir \(\eta=
(0,a)p^{d}+\lambda^{1}\). Par
conséquent, tous les \( \Hom \) de \eqref{eq:possibleExt} sont nuls, d'où le résultat.

\end{proof}

\subsection{Démonstration du 
 \texorpdfstring{ \autoref{cor:2étages} : réduction au \autoref{thm2}}{}}\label{subsection:2etages}
Dans ce paragraphe, on va montrer que le \autoref{cor:2étages} découle
du théorème un peu plus faible suivant:
\begin{thm}\label{thm2}
  Soit \(\mu=(m,-n-2)\), où
  \(m=ap^{d}+r\) et \(n=ap^{d}+s\) avec \(d\geq 1\), \(1\leq a\leq
  p-1\) et \(-1\leq r,s\leq p^{d}-1\) (c'est-à-dire, \(
  \mu\in\overline{\Gr} \) de degré \( d \)). Posons \(\mu'=(r,-s-2)\),
  \(\mu''=(-p^{d}+r,p^{d}-s-2)\), \(\lambda=(s,p^{d}-r-2)\) et
  \(^{t}\lambda=(r,p^{d}-s-2)\). Alors :
  \begin{enumerate}
  \item Il existe des suites exactes courtes de
\(G\)-modules:
\[
  \begin{tikzcd}
    0\ar[r]&M\ar[r]&H^{1}(\mu)\ar[r]&L(0,a-2)^{(d)}\otimes H^{1}(\mu''),
  \end{tikzcd}
\]
\[
  \begin{tikzcd}
    0\ar[r]&L(0,a-1)^{(d)}\otimes
    H^{0}(^{t}\lambda)\ar[r]&M\ar[r]&L(0,a)^{(d)}\otimes H^{1}(\mu')\ar[r]&0.
  \end{tikzcd}
\]

\item Il existe des suites exactes courtes de
\(G\)-modules:
\[
  \begin{tikzcd}
    0\ar[r]&L(0,a)^{(d)}\otimes H^{2}(\mu')\ar[r]&H^{2}(\mu)\ar[r] &Q\ar[r]&0,
  \end{tikzcd}
\]
\[
  \begin{tikzcd}
    0\ar[r]&L(0,a-2)^{(d)}\otimes
    H^{2}(\mu'')\ar[r]&Q\ar[r]&L(0,a-1)^{(d)}\otimes V(\lambda)\ar[r]&0. 
  \end{tikzcd}
\]
De plus, \(Q\) est un quotient du module de Weyl \(V(s,ap^{d}-r-2)\).
\end{enumerate}
\end{thm}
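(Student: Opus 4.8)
The plan is to deduce part (1) from part (2) by combining the contravariant Serre duality $H^i(\mu)\cong H^{3-i}(w_0\cdot\mu)^t$ with \autoref{lemma:poidsrestreints} and \autoref{lemma:extension}, and then to concentrate on part (2). Concretely, $w_0\cdot\mu=(n,-m-2)$ is obtained from $\mu$ by exchanging $r$ and $s$, hence again satisfies the hypotheses (the inequalities defining $\overline{\Gr}$ are symmetric in $m,n$). Applying $(-)^t$ to the two short exact sequences of part (2) for $w_0\cdot\mu$, and using $(L(0,a)^{(d)})^t\cong L(0,a)^{(d)}$, $V(\eta)^t\cong H^0(\eta)$, $H^2((s,-r-2))^t\cong H^1(\mu')$ and $H^2((s-p^d,p^d-r-2))^t\cong H^1(\mu'')$ (all further instances of the same duality), gives a three-step filtration of $H^1(\mu)$ with graded pieces, from the bottom, $L(0,a-1)^{(d)}\otimes H^0({}^t\lambda)$, then $L(0,a-2)^{(d)}\otimes H^1(\mu'')$, then $L(0,a)^{(d)}\otimes H^1(\mu')$. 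By \autoref{lemma:poidsrestreints} the composition factors of $H^1(\mu')$ and $H^1(\mu'')$ are $p^d$-restricted, so \autoref{lemma:extension} shows the extension between the top two layers splits; re-ordering them yields exactly the two sequences of part (1) (with $M$ the corresponding subobject), and dualises the ``de plus'' clause as well. So the whole content is part (2), i.e.\ the two-step filtration of $H^2(\mu)$.

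For part (2) the plan is to compute $H^\bullet(\mu)$ by passing through the $d$-th Frobenius kernel. Set $BG_d=F^{-d}(B)$, so that $G/B\to G/BG_d$ has fibres $G_d/B_d$ and $G/BG_d$ is canonically the twist $(G/B)^{(d)}$, and study the $BG_d$-modules $R^j\ind_B^{BG_d}(\mu)$ (the cohomology of $\mathcal{L}(\mu)$ on $G_d/B_d$). The crucial geometric fact is that $\mu$, $\mu'=(r,-s-2)$ and $\mu''=(r-p^d,p^d-s-2)$ are collinear, on the line parallel to the wall $\langle\cdot,\gamma^\vee\rangle=r-s-2$, with $\mu=\mu'+ap^d(\omega_1-\omega_2)$ and $\mu''=\mu'-p^d(\omega_1-\omega_2)$; this collinearity is what makes the Frobenius-twisted part of the answer involve only the three weights $(0,a)$, $(0,a-1)$, $(0,a-2)$. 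First I would run the rank-one ($\SL_2$) Bott computation along $\alpha$ --- legitimate because $\langle\mu,\alpha^\vee\rangle=m\ge p^d-1\ge 0$ while $\langle\mu,\beta^\vee\rangle=-n-2<0$, so that only two consecutive fibre-cohomology degrees occur --- to endow $R^\bullet\ind_B^{BG_d}(\mu)$ with a filtration whose graded pieces are $\widehat{L}(\zeta)\otimes(\eta)^{(d)}$, the twisted factors being $(0,a)^{(d)}$, $(0,a-1)^{(d)}$, $(0,a-2)^{(d)}$ tensored respectively with the small line bundles $\mu'$, a wall-crossing bundle built from $\lambda$ and ${}^t\lambda$, and $\mu''$. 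Then I would apply the Grothendieck spectral sequence for $\ind_B^G=\ind_{BG_d}^G\circ\ind_B^{BG_d}$, together with $H^\bullet(G/BG_d,(\eta)^{(d)})\cong H^\bullet(\eta)^{(d)}$ and the tensor identity, and use that the sign conditions above force degeneration in the relevant range, to transport this to the asserted two-step filtrations of $H^1(\mu)$ and $H^2(\mu)$ --- the wall-crossing layer contributing $H^0({}^t\lambda)$ in degree $1$ and a quotient of $V(\lambda)$ in degree $2$. The surjection $V(s,ap^d-r-2)=H^3(r-ap^d,-s-2)\twoheadrightarrow Q$ would come from the long exact cohomology sequence attached to that same rank-one filtration along $\alpha$ (equivalently, from pushing $\mathcal{L}(\mu)$ down to $G/P_\alpha$, where $R^{>0}\pi_{\alpha*}\mathcal{L}(\mu)=0$ since $\langle\mu,\alpha^\vee\rangle\ge 0$, and identifying the universal highest-weight module of the relevant highest weight).

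The hard part will be pinning down the \emph{module structure}, not merely the characters. It is comparatively easy to produce \emph{some} filtration with graded pieces of the predicted characters; the delicate points are: (i) proving that the extremal layer is literally the costandard module $H^0({}^t\lambda)$ in degree $1$ and literally a quotient of the Weyl module $V(s,ap^d-r-2)$ in degree $2$ --- this requires controlling the $G$-module structure of the relevant rank-one modules near the wall and invoking the strong linkage principle to exclude spurious composition factors; (ii) deciding which layer is a subobject and which a quotient, i.e.\ the order of the filtration --- here \autoref{lemma:poidsrestreints} is essential, since it forces all composition factors of $H^i(\mu')$, $H^i(\mu'')$ and (when $s\le r+1$) of $V(\lambda)$ to be $p^d$-restricted, so that the twisted leading terms $L(0,a)^{(d)}$, $L(0,a-1)^{(d)}$, $L(0,a-2)^{(d)}$ genuinely separate the layers and the filtration cannot collapse; and (iii) setting up the reduction through $G_d/B_d$ honestly despite the failure of Kempf vanishing for Frobenius kernels, by working with the full derived functor $R^\bullet\ind_B^{BG_d}$ rather than with $\ind_B^{BG_d}$ alone, and checking the degeneration of the composite-functor spectral sequence. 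Finally, the hypothesis $a\ge 1$ in \autoref{thm2} is exactly what guarantees $m=ap^d+r\ge 0$ and $n\ge 1$, so that the fibre push-forwards behave as above; the degenerate value $a=0$ needed in \autoref{cor:2étages} is handled separately.
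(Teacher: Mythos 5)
Your reduction of (1) to (2) is essentially sound: applying the contravariant Serre duality to part (2) for $w_0\cdot\mu$ (whose hypotheses are indeed symmetric in $r,s$) and splitting the top two layers via \autoref{lemma:poidsrestreints} and \autoref{lemma:extension} does yield (1); this is in substance the paper's own passage from the \autoref{thm2} to the \autoref{cor:2étages}, run in reverse (the paper obtains (1) directly, without needing the Ext-vanishing). But everything then rests on part (2), and there your argument has a genuine gap.

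The mechanism you propose for (2) — pass through $BG_{d}=F^{-d}(B)$, run a rank-one Bott computation along $\alpha$ to filter $R^{\bullet}\ind_{B}^{BG_{d}}(\mu)$ by three layers twisted by $(0,a)$, $(0,a-1)$, $(0,a-2)$, then degenerate the composite-functor spectral sequence — is not substantiated and, as stated, is incoherent: $\ind_{B}^{BG_{d}}$ is exact (\cite{Jan03} II.9.12), so $R^{>0}\ind_{B}^{BG_{d}}(\mu)=0$ and there are no ``two consecutive fibre-cohomology degrees''; the object one would have to understand is the $BG_{d}$-module $\ind_{B}^{BG_{d}}(\mu)$ itself, whose structure for $d\geq 2$ is precisely what is not known (the paper only ever uses $d=1$, where Irving's results apply, and recursion). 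Your claimed three-layer filtration with small bundles $\mu'$, a ``wall-crossing bundle'', $\mu''$, and the identification of the extremal layers with $H^{0}({}^{t}\lambda)$ and a quotient of $V(s,ap^{d}-r-2)$, are exactly the content of the theorem, and you yourself defer them in points (i)--(iii) without resolving them. The paper's proof uses a different and much more concrete device that your plan misses: the $B$-module filtration of the $G$-module $L(0,a)$ by the submodule generated by the $(a,-a)$-weight vector (\autoref{lemma:filtweyl}), giving the three exact sequences of \S\ref{subsection:L(ab)}; twisting by $F^{d}$ and tensoring with $\mu'$ realizes $\mu$ as a subquotient of $L(0,a)^{(d)}\otimes\mu'$, so that ordinary cohomology on $G/B$, the tensor identity, Borel--Weil--Bott vanishing for the weights landing in $C$ or $w_{0}\cdot C$, and the vanishing of the connecting map $f$ in \eqref{suite:main7} (forced by the separation of composition factors from \autoref{lemma:poidsrestreints}) produce the two short exact sequences of (2), with $Q=H^{3}(\widetilde{M}_{a})$. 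Without this (or an honest substitute), your proposal establishes neither the filtration of $H^{2}(\mu)$ nor the Weyl-module statement.
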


Montrons que le \autoref{cor:2étages} découle du \autoref{thm2}.

On pose \( w=s_{\gamma}s_{\beta}=s_{\beta}s_{\alpha} \). Notons \(
\overline{\Gr}_{\alpha}=\overline{\Gr}\cap s_{\alpha}\cdot C \), \( \overline{\Gr}_{\beta}=\overline{\Gr}\cap
s_{\beta}\cdot C \) et \( \overline{\Gr}_{w}=\overline{\Gr}\cap w\cdot C \).

 Posons 
\[
\widetilde{\mu} = w_{0}\cdot \mu = (ap^d + s, -ap^d - r- 2).
\]
Alors \(\widetilde{\mu}\) appartient à \(\overline{\Gr}_w\) (resp. à \(\overline{\Gr}_\beta\)) si et seulement si \(\mu\) appartient à \(\overline{\Gr}_\beta\) 
(resp. à \(\overline{\Gr}_w\)). D'autre part, comme \(\widetilde{\mu}\) se déduit de \(\mu\) en échangeant \(r\) et \(s\), alors 
le poids \((\widetilde{\mu})'\) associé à \( \widetilde{\mu} \) est \((s,-r-2) = s_\gamma \cdot \mu'\); on le notera 
\(\widetilde{\mu}'\). 
De même, le poids \(\widetilde{\mu}''\) associé à \(\widetilde{\mu}\) est 
\[
(-p^d + s, p^d - r-2) = s_\gamma \cdot \mu''. 
\]
 
Par dualité de Serre contravariante, on a:
\[
H^1(\mu) \simeq H^2(\widetilde{\mu})^t \qquad \text{et }\qquad H^2(\mu) \simeq H^1(\widetilde{\mu})^t 
\]
et de même \(H^i(\mu') \simeq H^{3-i}(\widetilde{\mu}')^t\) et \(H^i(\mu'')\simeq H^{3-i}(\widetilde{\mu}'')^t\) pour \(i= 1,2\). 
Comme les modules simples \(L(0,i)\) sont auto-duaux pour la dualité contravariante, 
on obtient que \(H^1(\mu)\) a aussi la filtration à trois étages suivante: 
\[
H^1(\mu) \simeq H^2(\widetilde{\mu})^t = 
\begin{array}{|c|}
\hline 
L(0,a)^{(d)} \otimes H^1(\mu') \vphantom{\int_A^B}
\\
\hline 
L(0,a-2)^{(d)} \otimes H^1(\mu'') \vphantom{\int_A^B}
\\
\hline 
L(0,a-1)^{(d)} \otimes H^0(^{t}\lambda) \vphantom{\int_A^B}
\\
\hline 
\end{array} 
\]
où les deux étages inférieurs sont un sous-module de \(H^0(r, ap^d - s - 2)\), 
et \(H^2(\mu)\) a aussi la filtration à trois étages suivante: 
\[
H^2(\mu) = H^1(\widetilde{\mu})^t = 
\begin{array}{|c|}
\hline 
L(0,a-1)^{(d)} \otimes V(\lambda)  \vphantom{\int_A^B}
\\
\hline 
L(0,a)^{(d)} \otimes H^2(\mu') \vphantom{\int_A^B}
\\
\hline 
L(0,a-2)^{(d)} \otimes H^2(\mu'') \vphantom{\int_A^B}
\\
\hline 
\end{array} .
\]

Donc pour montrer le \autoref{cor:2étages},
  il suffit de
  montrer que  pour \(i\in\{1,2\}\), on a:
  \[
    \Ext_{G}^{1}(L(0,a)^{(d)}\otimes H^{i}(\mu'),L(0,a-2)^{(d)}\otimes
  H^{i}(\mu'') )=0.
  \]
  Or ceci résulte des lemmes
  \ref{lemma:poidsrestreints} et  \ref{lemma:extension}. Ceci montre que le \autoref{cor:2étages}
  découle du \autoref{thm2}. On va montrer le \autoref{thm2} dans le paragraphe \ref{subsection:preuvethm2}.

\subsection{Preuve du Théorème \texorpdfstring{\ref{thm2}}{1}}\label{subsection:preuvethm2}

Commençons par le lemme suivant.
\begin{lemma}\label{lemma:filtweyl}
  Soit \(\lambda = (0,a) \in X^+\) tel que \(1\leq a\leq p-1\).  
 Soit \(K\) le sous-\( B \)-module de \(L(\lambda)\) engendré par le
 vecteur de poids
\((a,-a)\). 
Alors \(L(\lambda)/K\) est isomorphe comme \(B\)-module à \(L(0,a-1) \otimes (0,1)\). 
\end{lemma}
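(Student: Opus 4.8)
Le plan est d'identifier concrètement \(L(0,a)\) à une puissance symétrique du dual du module standard, puis de réaliser le quotient \(L(\lambda)/K\) comme l'image d'une dérivation partielle rendue \(B\)-équivariante. Notons \(V=k^{3}\) le \(G\)-module standard, de base \((e_{1},e_{2},e_{3})\) avec \(e_{i}\) de poids \(\epsilon_{i}\), et \((f_{1},f_{2},f_{3})\) la base duale de \(V^{*}\), de poids respectifs \((-1,0)\), \((1,-1)\) et \((0,1)\) ; ainsi \(V^{*}\cong L(0,1)\). Comme \(1\leq a\leq p-1\), la puissance symétrique \(\mathrm{Sym}^{a}(V^{*})\) est un \(G\)-module simple de plus haut poids \((0,a)\) (pour \(a\leq p-2\), le poids \((0,a)\) est dans l'adhérence de la \(p\)-alcôve du bas et cela résulte du \og Linkage Principle\fg{}, le cas \(a=p-1\) se traitant par un argument de liaison analogue), d'où \(L(0,a)\cong\mathrm{Sym}^{a}(V^{*})\) et de même \(L(0,a-1)\cong\mathrm{Sym}^{a-1}(V^{*})\). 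L'unique vecteur de poids \((a,-a)\) de \(\mathrm{Sym}^{a}(V^{*})\) est alors \(f_{2}^{a}\).

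Ensuite, je construirais un morphisme \(B\)-équivariant \(\Phi\). Le Borel \(B\) des matrices triangulaires inférieures stabilise la droite \(ke_{3}\subset V\), sur laquelle il agit par le caractère \((0,-1)\) ; en dualisant, on obtient une surjection \(B\)-équivariante \(V^{*}\twoheadrightarrow(ke_{3})^{*}\cong(0,1)\), de noyau \(\langle f_{1},f_{2}\rangle\), qui envoie \(f_{3}\) sur \(1\). En composant la composante de bidegré \((a-1,1)\) de la comultiplication \(G\)-équivariante \(\mathrm{Sym}^{a}(V^{*})\to\mathrm{Sym}^{a-1}(V^{*})\otimes V^{*}\) avec \(\mathrm{id}\otimes\bigl(V^{*}\twoheadrightarrow(0,1)\bigr)\), j'obtiens le morphisme \(B\)-équivariant
\[
\Phi\colon\mathrm{Sym}^{a}(V^{*})\longrightarrow\mathrm{Sym}^{a-1}(V^{*})\otimes(0,1),\qquad f_{1}^{n_{1}}f_{2}^{n_{2}}f_{3}^{n_{3}}\longmapsto n_{3}\,f_{1}^{n_{1}}f_{2}^{n_{2}}f_{3}^{n_{3}-1},
\]
c'est-à-dire, à torsion près, la dérivation \(\partial/\partial f_{3}\). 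Comme \(n_{3}\leq a\leq p-1\), on a \(n_{3}\not\equiv 0\pmod{p}\) dès que \(n_{3}\geq 1\), ce qui montre à la fois que \(\Phi\) est surjectif et que \(\ker\Phi=\langle f_{1}^{i}f_{2}^{j}\mid i+j=a\rangle\).

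Il reste à voir que \(K=\ker\Phi\). L'inclusion \(K\subseteq\ker\Phi\) est immédiate puisque \(f_{2}^{a}\in\ker\Phi\) et que \(\ker\Phi\) est un sous-\(B\)-module. Réciproquement, le sous-groupe radiciel \(U_{-\alpha}=\{I+tE_{21}\}\subset B\) agit sur le générateur par
\[
(I+tE_{21})\cdot f_{2}^{a}=(f_{2}-tf_{1})^{a}=\sum_{i=0}^{a}\binom{a}{i}(-t)^{i}\,f_{1}^{i}f_{2}^{a-i};
\]
en restreignant la coaction de \(B\) au sous-schéma \(U_{-\alpha}\cong\mathbb{A}^{1}\), chacun des coefficients \(\binom{a}{i}(-1)^{i}f_{1}^{i}f_{2}^{a-i}\) appartient à \(K\) ; comme \(\binom{a}{i}\neq 0\) dans \(k\) pour \(0\leq i\leq a\leq p-1\), on obtient \(f_{1}^{i}f_{2}^{a-i}\in K\) pour tout \(i\), d'où \(\ker\Phi\subseteq K\). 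Par conséquent \(\Phi\) induit un isomorphisme de \(B\)-modules \(L(0,a)/K=\mathrm{Sym}^{a}(V^{*})/\ker\Phi\xrightarrow{\sim}\image\Phi=\mathrm{Sym}^{a-1}(V^{*})\otimes(0,1)=L(0,a-1)\otimes(0,1)\).

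Le point le plus délicat est l'identification \(L(0,a)\cong\mathrm{Sym}^{a}(V^{*})\) (où l'hypothèse \(a\leq p-1\) est essentielle) et le fait de rendre \(\partial/\partial f_{3}\) \(B\)-équivariante, ce que règle proprement la factorisation \og comultiplication puis évaluation\fg{} ; tout le reste se ramène à la non-annulation des coefficients binomiaux \(\binom{a}{i}\) modulo \(p\) pour \(0\leq i\leq a\leq p-1\).
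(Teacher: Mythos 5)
Votre démonstration est correcte et suit essentiellement la même voie que celle de l'article : celui-ci réalise \(L(0,a)\) comme \(k[x,y,z]_{a}\cong\mathrm{Sym}^{a}(V^{*})\) et obtient le quotient par la dérivation \(\tfrac{1}{a}\,\partial/\partial z\), qui est exactement votre morphisme \(\Phi\). Vous explicitez seulement des points que l'article laisse au lecteur, à savoir la \(B\)-équivariance (via la factorisation comultiplication puis évaluation) et l'identification \(K=\ker\Phi\) par l'action de \(U_{-\alpha}\) et la non-nullité des \(\binom{a}{i}\) modulo \(p\).
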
 

\begin{proof}
  On sait que \( L(0,a)\cong k[x,y,z]_{a} \) l'espace des
  polynômes homogènes de degré \( a \) avec l'action naturelle de \(
  \SL_{3} \). Alors, on a un morphisme surjectif de \( B \)-modules
  \begin{displaymath}
    L(0,a)\to L(0,a-1)\otimes (0,1),\quad x^{i}y^{j}z^{a-i-j}\mapsto \frac{a-i-j}{a}x^{i}y^{j}z^{a-i-j-1}
  \end{displaymath}
  dont le noyau est \( K \).
\end{proof} 
\subsubsection{Trois  suites exactes de \texorpdfstring{\(B\)}{B}-modules }\label{subsection:L(ab)}
Appliquons le \autoref{lemma:filtweyl} à  \(L(0,a)\)  et 
 notons \(K_{a}\) le sous-module engendré par 
 le vecteur de poids \((a,-a)\); il est isomorphe comme \(B\)-module
 au \(P_\alpha\)-module simple 
\(L_\alpha(a,-a)\) de plus haut poids \((a,-a)\) et \(L(0,a)/K_{a}\) est
isomorphe à \(L(0,a-1)\otimes (0,1)\).
On a donc une suite exacte 
\begin{equation}
\xymatrix{0 \ar[r] & K_{a} \ar[r] & L(0,a) \ar[r] & L(0,a-1)\otimes (0,1) \ar[r] & 0. 
}\label{eq:b0c2fdba8e6675fd}
\end{equation}
Notons \(M_{a}\) le sous-module de \(K_{a}\) tel qu'on ait une suite exacte 
\begin{equation}
\xymatrix{0 \ar[r] & M_{a} \ar[r] & K_{a} \ar[r] & (a,-a) \ar[r] & 0.
}\label{eq:0b4aa8ab1e47bee9}
\end{equation}
Comme \(a<p\) on voit que \(M_a \simeq K_{a-1} \otimes (-1,0)\) et donc on a une suite exacte 
\begin{equation}
\xymatrix{ 0\ar[r] & M_a \ar[r] & L(0,a-1) \otimes (-1,0) \ar[r] & L(0,a-2) \otimes (-1,1) \ar[r] & 0 
.}\label{eq:f8ccc21dc40b71a7}
\end{equation}

\subsubsection{Suites exactes longues induites par le foncteur d'induction}\label{subsubsection:foncteurinduction}
Appliquons la \(d\)-ième puissance 
du Frobenius aux suites exactes courtes du paragraphe précédent et tensorisons par le poids \(\mu'=(r,-s-2)\). Posons aussi  
\( \lambda_{0} =  (s, ap^d - r-2) \) et \( \nu=(-p^{d}+r,-s-2) \)
et remarquons que \(w_0\cdot \lambda_{0} = w_0\lambda_{0} - 2\rho = (r-ap^d, -s-2)\). 
On obtient alors des suites exactes:
  \begin{equation}
\xymatrix{0 \ar[r] & \widetilde{K}_{a} \ar[r] & L(0,a)^{(d)} \otimes (r,-s-2)
  \ar[r] & L(0,a-1)^{(d)} \otimes (r,p^d - s -2) \ar[r] & 0}
\label{suite:main1}
\end{equation}
  \begin{equation}
  \xymatrix{0 \ar[r] & \widetilde{M}_{a} \ar[r] & \widetilde{K}_{a} \ar[r] & (m,-n-2) \ar[r] & 0
}\label{suite:main2}
\end{equation}
\begin{equation}
\xymatrix{0 \ar[r] & \widetilde{M}_a \ar[r] & L(0,a-1)^{(d)} \otimes \nu \ar[r] & L(0,a-2)^{(d)} \otimes (-p^d+r, p^d-s-2) \ar[r] & 0} .
\label{suite:star}
\end{equation}

Appliquons le foncteur \(H^0\) à ces suites exactes. Comme \( \nu\in
w_{0}\cdot C \), on a \( H^{i}(\nu)=0 \) pour \( i<3 \). Par
conséquent, \eqref{suite:star} donne l'isomorphisme
\begin{equation}
L(0,a-2)^{(d)} \otimes H^1(\mu'') \simeq
H^2(\widetilde{M}_a)\label{suite:iota}
\end{equation}
et la suite exacte: 
  \begin{equation}
      0 \to L(0,a-2)^{(d)} \otimes H^2(\mu'') \to
      H^3(\widetilde{M}_a) \to 
      L(0,a-1)^{(d)} \otimes V(s, p^d - r-
      2) \to 0
\label{suite:main8}
\end{equation}
où l'on a posé, comme dans le \autoref{thm2},
\(\mu''=(-p^{d}+r,p^{d}-s-2)\).

Comme
\(^{t}\lambda=(r,p^d - s -2)\) appartient à \( C\)  et comme 
\((r,-s-2)\) n'a de la cohomologie qu'en degré \(1\) et \(2\), alors \eqref{suite:main1} donne, 
en utilisant l'identité tensorielle (\cite{Jan03} I.4.8): l'égalité \(H^0(\widetilde{K}_{a}) = 0\), la suite exacte 
\begin{equation}\hskip-4mm
      0 \to L(0,a-1)^{(d)} \otimes H^0(r,p^d - s -2) \to
      H^1(\widetilde{K}_{a}) \to
      L(0,a)^{(d)} \otimes H^1(r,- s -2) \to 0,
\label{suite:main4}
\end{equation}
l'isomorphisme 
  \begin{equation}
 H^2(\widetilde{K}_{a}) \simeq   L(0,a)^{(d)} \otimes H^2(r,- s -2)\label{eq:7}
\end{equation}
 et l'égalité  \(H^3(\widetilde{K}_{a}) = 0\). 

\smallskip Considérons maintenant la suite exacte \eqref{suite:main2}. Comme on a vu que \(H^1(\widetilde{M}_{a}) = 0\), on obtient la suite 
exacte: 
\begin{equation}
    0 \to H^1(\widetilde{K}_{a}) \to H^1(m,-n-2) \to
    H^2(\widetilde{M}_{a}) \xrightarrow{f}
    H^2(\widetilde{K}_{a}) \to H^2(m,-n-2)
    \to H^3(\widetilde{M}_{a}) \to 0.
\label{suite:main7}
\end{equation}

\subsubsection{Annulation de \texorpdfstring{\(f\)}{f}}
\begin{lemma}
  Le morphisme  \(f\) dans la suite exacte \eqref{suite:main7}
  est nul. 
\end{lemma}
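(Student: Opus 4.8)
The map $f\colon H^2(\widetilde M_a)\to H^2(\widetilde K_a)$ is induced by the inclusion $\widetilde M_a\hookrightarrow \widetilde K_a$ from \eqref{suite:main2}. By \eqref{suite:iota} we have $H^2(\widetilde M_a)\cong L(0,a-2)^{(d)}\otimes H^1(\mu'')$, and by \eqref{eq:7} we have $H^2(\widetilde K_a)\cong L(0,a)^{(d)}\otimes H^2(\mu')$. So the plan is to show $\Hom_G$ between these two $G$-modules vanishes, which suffices since $f$ is a $G$-morphism. By the tensor identity and the fact that $L(0,a-2)^{(d)}$ and $L(0,a)^{(d)}$ are twists of simple modules, it is enough to compare composition factors: I will argue that no simple $G$-module $L(\eta)$ can be a composition factor of both $L(0,a-2)^{(d)}\otimes H^1(\mu'')$ and $L(0,a)^{(d)}\otimes H^2(\mu')$.

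The key tool is \autoref{lemma:poidsrestreints}(1): every composition factor $L(\eta')$ of $H^i(\mu')$ or $H^i(\mu'')$ has $\eta'$ that is $p^d$-restricted. Therefore every composition factor of $L(0,a)^{(d)}\otimes H^2(\mu')$ is of the form $L((0,a)p^d + \xi)$ with $\xi$ $p^d$-restricted, by Steinberg's tensor product theorem; in particular the $(d)$-twisted part of such an $\eta$, i.e. the class of $\eta$ modulo $p^d X(T)$ read off via the $p^d$-adic expansion, is $(0,a)$. Likewise every composition factor of $L(0,a-2)^{(d)}\otimes H^1(\mu'')$ has $p^d$-twisted part $(0,a-2)$. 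Since $a\neq a-2$, these two families of highest weights are disjoint, so $\Hom_G\big(L(0,a-2)^{(d)}\otimes H^1(\mu''),\,L(0,a)^{(d)}\otimes H^2(\mu')\big)=0$, hence $f=0$.

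The main thing to be careful about is the bookkeeping with Steinberg's tensor product theorem: writing $\eta = \eta^0 + p^d\eta^1$ with $\eta^0\in X_d(T)$, one has $L(\eta)\cong L(\eta^0)\otimes L(\eta^1)^{(d)}$, and a composition factor of $L(\xi)\otimes L(\zeta)^{(d)}$ with $\xi\in X_d(T)$ must have its own $\eta^1$ of the form $\zeta + (\text{something})$, but more to the point the set of $\eta^1$'s appearing is determined modulo the lower part only through the tensor decomposition of $L(\xi)$ against the twisted factor — the cleanest statement is that all composition factors of $M\otimes L(0,a)^{(d)}$, where every composition factor of $M$ has $p^d$-restricted highest weight, have highest weight $\equiv (0,a)\pmod{p^d X(T)}$ after subtracting a restricted weight, because $M\otimes L(0,a)^{(d)}$ has all its weights in $(0,a)p^d + (\text{restricted cone})$ modulo the relevant lattice. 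I would phrase it via weights: every weight of $H^2(\mu')$ lies in $X_d(T) - \mathbb N\alpha - \mathbb N\beta$ intersected appropriately, but the robust argument is simply that $L((0,a)p^d+\xi)\not\cong L((0,a-2)p^d+\xi')$ for any restricted $\xi,\xi'$ since their highest weights differ by $2p^d\omega_2\notin p^{d}(\text{difference of dominant restricted weights})\cap\{0\}$ — concretely $(0,a)p^d+\xi = (0,a-2)p^d+\xi'$ would force $\xi-\xi' = 2p^d\omega_2$, impossible for $p^d$-restricted $\xi,\xi'$. This is the same disjointness already exploited in the proof of \autoref{lemma:extension}, so no new obstacle arises; the argument is short.
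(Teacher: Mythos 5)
Votre démonstration est correcte et suit essentiellement la même voie que celle de l'article : on identifie $H^{2}(\widetilde{M}_{a})\cong L(0,a-2)^{(d)}\otimes H^{1}(\mu'')$ et $H^{2}(\widetilde{K}_{a})\cong L(0,a)^{(d)}\otimes H^{2}(\mu')$, puis le \autoref{lemma:poidsrestreints} et le théorème de produit tensoriel de Steinberg montrent que leurs facteurs de composition ont des plus hauts poids de la forme $(0,a-2)p^{d}+\xi'$ et $(0,a)p^{d}+\xi$ avec $\xi,\xi'$ $p^{d}$-restreints, donc disjoints, d'où $f=0$.
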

\begin{proof}
  Par \eqref{eq:7}, on sait que \(H^{2}(\widetilde{K}_{a})\cong
  L(0,a)^{(d)}\otimes H^{2}(\mu')\). Donc par le
  \autoref{lemma:poidsrestreints}, 
  si \(L(\eta)\) est un facteur
  de composition de \(H^{2}(\widetilde{K}_{a})\), alors \(\eta=(0,ap^{d})+\eta_{0}\)
  où \(\eta_{0}\) est un poids dominant \(p^{d}\)-restreint. De même,
  comme \(H^{2}(\widetilde{M}_{a})\cong L(0,a-2)^{(d)}\otimes H^{1}(\mu'')\)
  par \eqref{suite:iota}, si \(L(\eta)\) est un facteur de composition
  de \(H^{2}(\widetilde{M}_{a})\), alors \(\eta=(0,(a-2)p^{d})+\eta_{0}\) où
  \(\eta_{0}\) est dominant et \(p^{d}\)-restreint.

  Par conséquent, \(H^{2}(\widetilde{K}_{a})\) et \(H^{2}(\widetilde{M}_{a})\) n'ont
  pas de facteur de composition commun. Donc le morphisme \( f \)
  de \(H^{2}(\widetilde{M}_{a})\) vers \(H^{2}(\widetilde{K}_{a})\)  dans
  \eqref{suite:main7} est  nul. 
\end{proof}

Par conséquent,  la suite exacte \eqref{suite:main7} 
se coupe en deux suites exactes courtes: 
  \begin{equation}
\xymatrix{
0 \ar[r] & L(0,a)^{(d)} \otimes H^2(\mu') \ar[r] &H^2(m,-n-2) \ar[r] &H^3(\widetilde{M}_{a}) \ar[r] &0 
}\label{suite:main9}
\end{equation}
\begin{equation}
\xymatrix{
0 \ar[r] &H^1(\widetilde{K}_{a}) \ar[r] &H^1(m,-n-2) \ar[r]
&L(0,a-2)^{(d)}\otimes H^{1}(\mu'') \ar[r] & 0 
.}\label{suite:main10}
\end{equation}

Celles-ci, avec la suite exacte \eqref{suite:main8} et la suite exacte
\eqref{suite:main4}, terminent la preuve du \autoref{thm2}.

\subsection{Description de \( H^{2}(\mu) \) et \( H^{1}(\mu) \) pour \(
  \mu \) sur le mur}\label{subsection:surlemur}
Lorsque  \(\mu\) se situe sur le mur entre une
\(H^{1}\)-chambre et une \(H^{2}\)-chambre, c'est-à-dire,
\(\mu=(n,-n-2)\) ou \((-n-2,n)\) pour un \(n\in \mathbb{N}\), on peut
donner une version plus précise du \autoref{cor:2étages}. Par la
symétrie entre \(\alpha\) et \(\beta\), il suffit de considérer le cas
où \(\mu=(n,-n-2)\).

Remarquons d'abord que si \(0\leq n\leq p-1\), on peut appliquer le théorème de
Borel-Weil-Bott (cf. \cite{Jan03} II.5.5) à  \(\mu=(n,-n-2)=s_{\beta}\cdot
(-1,n)\). Donc on a
\(H^{i}(\mu)\cong H^{i-1}(-1,n)=0\) pour tout \(i\) dans ce cas.

Si \(n\geq p\), on a  le théorème suivant:

\begin{thm}\label{thm:murfilt}
  Soit \(\mu=(n,-n-2)\)  de degré \(d\geq 1\)
  (c'est-à-dire, \(n\geq p\)). Alors il existe une filtration 
  \(
0=V_{0}\subset V_{1}\subset \cdots\subset V_{\ell-1}\subset V_{\ell}=H^{2}(\mu)
\),
avec \(\ell\leq d\)  telle que pour tout \(i\in\{1,2,\cdots,\ell\}\), on ait
\[V_{i}/V_{i-1}\cong \bigoplus_{j=1}^{q_{i}}
  L(\nu_{ij})^{(d_{ij})}\otimes V(\lambda_{ij}),\]
avec \(q_{i}\leq 2^{\ell-i}\). De plus, 
\(p^{d_{ij}}\nu_{ij}\) est \(p^{d+1}\)-restreint et  \(\lambda_{ij}\) est
\(p^{d_{ij}}\)-restreint  pour tout \(i,j\).

\smallskip
Comme \( H^{1}(\mu)\cong H^{2}(\mu)^{t} \), on obtient aussi une
filtration duale de \( H^{1}(\mu) \).
\end{thm}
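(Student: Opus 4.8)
The statement follows from \autoref{cor:2étages} by induction on the degree $d$. For $d=1$, \autoref{cor:2étages} already gives $H^{2}(\mu)$ a filtration with two layers, one of which is $L(0,a-1)^{(1)}\otimes V(\lambda)$ with $\lambda=(s,p-r-2)$ a $p$-restricted weight, and the other being a submodule $M$ which, up to the extension vanishing established via \autoref{lemma:extension}, is $L(0,a)^{(1)}\otimes H^{2}(\mu')\bigoplus L(0,a-2)^{(1)}\otimes H^{2}(\mu'')$ with $\mu'=(r,-s-2)$, $\mu''=(-p+r,p-s-2)$. But $\mu'$ and $\mu''$ have degree $0$, so $H^{2}(\mu')$ and $H^{2}(\mu'')$ are, by Borel–Weil–Bott applied as in the paragraph preceding the theorem, either zero or isomorphic to a single $V(\lambda')$ with $\lambda'$ appropriately restricted (more precisely, one checks $H^{2}(\mu')\cong H^{1}(-1,?)=0$ unless it sits in an $H^{2}$-chamber, in which case it is a single Weyl module). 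This gives the base case with $\ell\le 1$ and the claimed bounds $q_{i}\le 2^{\ell-i}$.

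For the inductive step, suppose the theorem holds for all weights of degree $<d$. Apply \autoref{cor:2étages}(1) to $\mu=(n,-n-2)$ with $m=n$, so $r=s$, $\mu'=(r,-r-2)$, $\mu''=(-p^{d}+r,p^{d}-r-2)$, $\lambda={}^{t}\lambda=(r,p^{d}-r-2)$. Then $H^{2}(\mu)$ has a two-step filtration with top quotient $L(0,a-1)^{(d)}\otimes V(r,p^{d}-r-2)$ — already of the required form with one term, since $p^{d}(0,a-1)$ is $p^{d+1}$-restricted (as $a-1\le p-2$) and $(r,p^{d}-r-2)$ is $p^{d}$-restricted — and bottom submodule $M\cong L(0,a)^{(d)}\otimes H^{2}(\mu')\bigoplus L(0,a-2)^{(d)}\otimes H^{2}(\mu'')$. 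Now $\mu'=(r,-r-2)$ is again on the wall and has degree $\le d-1$ (indeed its degree is that of $r<p^{d}$), and $\mu''$ has degree $<d$ as well; by \autoref{rmk:Griffithcondition} or direct Bott-type analysis, $H^{2}(\mu')$ and $H^{2}(\mu'')$ admit filtrations of the asserted shape with fewer than $2^{\ell-1}$ Weyl-module summands per layer. Tensoring such a filtration by the fixed twisted simple $L(0,a)^{(d)}$ resp.\ $L(0,a-2)^{(d)}$ and using $L(\nu)^{(d_{ij})}\otimes L(0,a)^{(d)}\cong L(p^{d-d_{ij}}(0,a)+\nu)^{(d_{ij})}$ (valid because $\nu$ is $p^{d-d_{ij}}\cdot p^{?}$-... — concretely, because $p^{d_{ij}}\nu$ is $p^{d}$-restricted, so the Steinberg tensor product applies and the twist levels stack) converts each layer into layers of the same form, with the restriction conditions preserved: if $p^{d_{ij}}\nu_{ij}$ is $p^{d}$-restricted then $p^{d_{ij}}(p^{d-d_{ij}}(0,a)+\nu_{ij})=p^{d}(0,a)+p^{d_{ij}}\nu_{ij}$ is $p^{d+1}$-restricted since $a\le p-1$. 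Concatenating the (shifted) filtration of $M$ below the single top layer gives a filtration of $H^{2}(\mu)$ with $\ell\le d$ steps; the count $q_{i}\le 2^{\ell-i}$ follows because the top layer contributes one summand and each lower layer comes from combining the filtrations of the two summands of $M$, doubling the bound at each descent.

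The restriction bound needs a small uniform argument: across the induction we must know that the weights $\lambda_{ij}$ arising from $\mu'$, $\mu''$ stay $p^{d_{ij}}$-restricted and the twisting characters $\nu_{ij}$ satisfy $p^{d_{ij}}\nu_{ij}$ being $p^{d}$-restricted (not just $p^{d_{ij}+1}$-restricted). This is exactly what \autoref{lemma:poidsrestreints} provides for the degree-$d$ Weyl module $V(\lambda)$ and for $H^{i}(\mu')$, $H^{i}(\mu'')$: every composition factor $L(\eta)$ has $\eta$ itself $p^{d}$-restricted, hence after the Steinberg decomposition $\eta=\eta^{0}+p^{d_{ij}}\eta^{1}$ the "top" part $\eta^{1}$ and the twist stay within the bounds. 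So the $p^{d+1}$-restrictedness of $p^{d_{ij}}\nu_{ij}$ propagates.

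\textbf{Main obstacle.} The delicate point is not the inductive bookkeeping but keeping the restriction conditions sharp through the tensor-by-$L(0,a)^{(d)}$ / $L(0,a-2)^{(d)}$ step: one must verify that multiplying a twisted simple $L(\nu_{ij})^{(d_{ij})}$ (coming from a lower-degree stratum, so $d_{ij}<d$) by $L(0,a)^{(d)}$ genuinely yields another twisted simple with twist level $d_{ij}$ and twisted weight $p^{d-d_{ij}}(0,a)+\nu_{ij}$, and that the resulting weight's $p^{d_{ij}}$-scaled version is $p^{d+1}$-restricted — which hinges on $a\le p-1$ together with the hypothesis $p^{d_{ij}}\nu_{ij}$ being $p^{d}$-restricted (rather than merely dominant). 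This is where \autoref{lemma:poidsrestreints}, \autoref{lemma:extension}, and the Steinberg tensor product theorem must be combined carefully; everything else is routine once these are in place. The dual filtration of $H^{1}(\mu)$ is then obtained by applying the contravariant Serre duality $H^{1}(\mu)\cong H^{2}(\mu)^{t}$, using that the twisted simples $L(0,i)^{(d)}$ and the Weyl modules' duals behave predictably under ${}^{t}(-)$ (a Weyl module's contravariant dual is the corresponding induced module $H^{0}$, so strictly speaking the "dual filtration" of $H^{1}(\mu)$ has layers built from $H^{0}$'s rather than $V$'s — this should be stated as such, matching the convention already used in \S\ref{subsection:2etages}).
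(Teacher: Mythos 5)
Your proposal is correct and follows essentially the same route as the paper: induction on the degree, the base case from \autoref{cor:2étages} plus the vanishing of \(H^{2}(\mu')\) and \(H^{2}(\mu'')\), and the inductive step by applying \autoref{cor:2étages}, then the induction hypothesis to the wall weights \(\mu'\), \(\mu''\) of smaller degree, and the Steinberg tensor product to absorb \(L(0,a)^{(d)}\) and \(L(0,a-2)^{(d)}\) into the twisted simple factors, with the same doubling count \(q_{i}\leq 2^{\ell-i}\). The only (harmless) variation is that you propagate the \(p^{d+1}\)-restrictedness of \(p^{d_{ij}}\nu_{ij}\) through the induction, whereas the paper deduces it directly from \autoref{lemma:poidsrestreints} applied to the composition factors of \(H^{2}(n,-n-2)\); both arguments are valid.
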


\begin{proof}
  Raisonnons par récurrence sur \(d\).
  Si \(d=1\), alors \(n=ap+r\) avec \(1\leq a,r\leq p-1\). D'après le
  \autoref{cor:2étages}, il existe une filtration à deux étages:
\[
    H^2(\mu) =
    \begin{array}{|c|}
      \hline 
      L(0,a-1)^{(1)} \otimes V(\lambda)  \vphantom{\int_A^B}
      \\
      \hline 
      L(0,a)^{(1)} \otimes H^2(\mu') \bigoplus L(0,a-2)^{(1)} \otimes H^2(\mu'') \vphantom{\int_A^B}
      \\
      \hline 
    \end{array}
  \]
  où \(\lambda=(r,p-r-2)\), \(\mu'=(r,-r-2)\) et
  \(\mu''=(-p+r,p-r-2)\). Comme \( r \) et \( p-r-2 \) sont \( \leq p-1 \), on a
  \(H^{2}(\mu')=H^{2}(\mu'')=0\), d'où \(H^{2}(\mu)\cong
  L(0,a-1)^{(1)}\otimes V(\lambda)\). Donc l'énoncé est vrai dans ce
  cas.

  Supposons l'énoncé  vrai pour tout \( n \) de degré \(\leq d\), et
  soit
  \(n=ap^{d+1}+r\) avec \(1\leq a\leq p-1\) et \(0\leq r\leq
  p^{d}-1\). D'après le
  \autoref{cor:2étages}, on a  une filtration à deux étages:
  \[
    H^2(\mu) =
    \begin{array}{|c|}
      \hline 
      L(0,a-1)^{(d+1)} \otimes V(\lambda)  \vphantom{\int_A^B}
      \\
      \hline 
      L(0,a)^{(d+1)} \otimes H^2(\mu') \bigoplus L(0,a-2)^{(d+1)} \otimes H^2(\mu'') \vphantom{\int_A^B}
      \\
      \hline 
    \end{array} 
  \]
où \(\lambda=(r,p^{d+1}-r-2)\), \(\mu'=(r,-r-2)\) et
\(\mu''=(-p^{d+1}+r,p^{d+1}-r-2)=(-m-2,m)\), où \( m=p^{d+1}-r-2 \). Donc
\(\mu'\) et \(\mu''\) sont tous les deux encore sur le mur et de degré
\(\leq d\). D'après l'hypothèse de récurrence, il
existe une filtration de \( H^{2}(\mu')\):
\[
0=M_{0}\subset M_{1}\subset\cdots\subset
M_{\ell'}= H^{2}(\mu')
\]
avec  \(\ell'\leq d\) telle que  pour tout \(i\in\{1,2,\cdots,\ell'\}\), on ait
\[M_{i}/M_{i-1}\cong \bigoplus_{j=1}^{q'_{i}}
  L(\nu'_{ij})^{(d'_{ij})}\otimes V(\lambda'_{ij}),\]
avec \(q'_{i}\leq 2^{\ell'-i}\). De plus, 
\(p^{d'_{ij}}\nu'_{ij}\) est \(p^{d+1}\)-restreint et  \(\lambda'_{ij}\) est
\(p^{d'_{ij}}\)-restreint  pour tout \(i,j\). Pour \( i>\ell' \), posons \(M_{i}=M_{\ell'}=H^{2}(\mu')\) et
\(q'_{i}=0\).

De même, on a une filtration de \(H^{2}(\mu'')\):
\[
0=N_{0}\subset N_{1}\subset\cdots\subset
N_{\ell''}= H^{2}(\mu'')
\]
avec \(\ell''\leq d\) telle que  pour tout \(i\in\{1,2,\cdots,\ell''\}\), on ait
\[N_{i}/N_{i-1}\cong \bigoplus_{j=1}^{q''_{i}}
  L(\nu''_{ij})^{(d''_{ij})}\otimes V(\lambda''_{ij}),\]
avec \(q''_{i}\leq 2^{\ell''-i}\). De plus, 
\(p^{d''_{ij}}\nu''_{ij}\) est \(p^{d+1}\)-restreint et  \(\lambda''_{ij}\) est
\(p^{d''_{ij}}\)-restreint  pour tout \(i,j\). Pour \( i>\ell'' \), posons \(N_{i}=N_{\ell''}=H^{2}(\mu'')\)
et \(q''_{i}=0\).

Posons maintenant \(\ell=\max(\ell',\ell'')+1\leq d+1\). Pour \(0\leq
i\leq \ell-1\), posons
\begin{align*}
V_{i}&=L(0,a)^{(d+1)}\otimes M_{i}\bigoplus
       L(0,a-2)^{(d+1)}\otimes N_{i}\\
  &\subset L(0,a)^{(d+1)}\otimes
    H^{2}(\mu')\bigoplus L(0,a-2)^{(d+1)}\otimes H^{2}(\mu'')\subset H^{2}(\mu).
\end{align*}
Posons aussi \(V_{\ell}=H^{2}(\mu)\).

  Alors pour \( 1\leq i\leq \ell-1 \), on a :
\begin{align*}
  V_{i}/V_{i-1}\cong &L(0,a)^{(d+1)}\otimes \bigoplus_{j=1}^{q'_{i}}
                       L(\nu'_{ij})^{(d'_{ij})}\otimes V(\lambda'_{ij})
  \oplus L(0,a-2)^{(d+1)}\otimes \bigoplus_{j=1}^{q''_{i}}
    L(\nu''_{ij})^{(d''_{ij})}\otimes V(\lambda''_{ij})\\
  \cong &\bigoplus_{j=1}^{q'_{i}}
                       L(\nu'_{ij}+(0,a)p^{d+1-d'_{ij}})^{(d'_{ij})}\otimes
          V(\lambda'_{ij})
  \oplus \bigoplus_{j=1}^{q''_{i}}
    L(\nu''_{ij}+(0,a-2)p^{d+1-d''_{ij}})^{(d''_{ij})}\otimes
    V(\lambda''_{ij}).\\
\end{align*}

Pour \(1\leq i\leq \ell-1\), posons \(q_{i}=q'_{i}+q''_{i}\). Pour
\(1\leq j \leq q'_{i}\), posons
\(\nu_{ij}=\nu'_{ij}+(0,a)p^{d+1-d'_{ij}}\), \(d_{ij}=d'_{ij}\) et
\(\lambda_{ij}=\lambda'_{ij}\). Pour \(q'_{i}<j\leq q_{i}\), posons
\(\nu_{ij}=\nu''_{i,j-q'_{i}}+(0,a-2)p^{d+1-d''_{i,j-q'_{i}}}\),
\(d_{ij}=d''_{i,j-q_{i}}\) et
\(\lambda_{ij}=\lambda''_{i,j-q_{i}}\). Alors l'isomorphisme précédent
se réécrit
\[V_{i}/V_{i-1}\cong \bigoplus_{j=1}^{q_{i}}L(\nu_{ij})^{(d_{ij})}\otimes V(\lambda_{ij}).\]

De plus, on a
\(q_{i}=q'_{i}+q''_{i}\leq 2^{\ell'-i}+2^{\ell''-i}\leq 2\cdot
2^{\max(\ell',\ell'')-i}=2^{\ell-i}\) et \(\lambda_{ij}\) est
\(p^{d_{ij}}\)-restreint par définition. D'après le
\autoref{lemma:poidsrestreints},  \(p^{d_{ij}}\nu_{ij}\) est
\(p^{d+2}\)-restreint puisque \( L(p^{dij}\nu_{ij}+\lambda_{ij}) \)
est un facteur de composition de \( H^{2}(n,-n-2) \), avec \( n=ap^{d+1}+r \).

Enfin, si \(i=\ell\), on a \(V_{i}/V_{i-1}\cong L(0,a-1)^{(d+1)}\otimes
V(r,p^{d+1}-r-2)\).

Donc l'énoncé est  vrai pour \(\mu\). Ceci termine la preuve du \autoref{thm:murfilt}. 

\end{proof}

D'autre part, si \( \mu=(n,-n-2) \) avec \(
n=ap^{d}+r \), où \( 0\leq a\leq p-1 \) et \( 0\leq r\leq p^{d}-1 \),
alors d'après le
\autoref{thm2}, il existe une suite exacte
courte de \( G \)-modules :
\begin{displaymath}
  \begin{tikzcd}
 0\ar[r]&L(0,a)^{(d)}\otimes H^{2}(r,-r-2)\ar[r]& H^{2}(\mu)\ar[r]&W(r,n-2r-2)\ar[r]&0,   
  \end{tikzcd}
\end{displaymath}
où \( W(r,n-2r-2) \) est un quotient du module de Weyl \(
V(r,n-2r-2) \). On a le corollaire suivant :
\begin{cor}\label{cor:autrefiltrationmur}
  Soit \( n=a_{d}p^{d}+a_{d-1}p^{d-1}+\cdots+a_{0} \) avec \( 0\leq
  a_{i}\leq p-1 \). Pour \( k\in\{0,1,\cdots,d\} \), notons \(
  r_{k}=\sum_{i=0}^{k}a_{i}p^{i}  \) (donc \( n=r_{d} \)). Alors \(
  H^{2}(n,-n-2) \) admet une filtration:
  \begin{displaymath}
    0=M_{0}\subset M_{1}\subset\cdots\subset M_{d-1}\subset M_{d}=H^{2}(n,-n-2)
  \end{displaymath}
  telle que
  \begin{displaymath}
    M_{i}/M_{i-1}\cong L(0,n-r_{i})\otimes W(r_{i-1},r_{i}-2r_{i-1}-2)
  \end{displaymath}
  où \( W(r_{i-1},r_{i}-2r_{i-1}-2) \) est un quotient du module de
  Weyl \( V(r_{i-1},r_{i}-2r_{i-1}-2) \).
\end{cor}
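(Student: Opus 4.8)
Pour établir ce corollaire, le plan est de raisonner par récurrence sur \( d \), en itérant la suite exacte courte qui précède son énoncé. Écrivons \( n = a_{d}p^{d} + r_{d-1} \) avec \( 0\leq a_{d}\leq p-1 \) et \( 0\leq r_{d-1}\leq p^{d}-1 \) ; le \autoref{thm2}, appliqué à \( \mu = (n,-n-2) \) avec \( a = a_{d} \) et \( r = r_{d-1} \), fournit
\[
0\to L(0,a_{d})^{(d)}\otimes H^{2}(r_{d-1},-r_{d-1}-2)\to H^{2}(n,-n-2)\to W(r_{d-1},n-2r_{d-1}-2)\to 0,
\]
où \( W(r_{d-1},n-2r_{d-1}-2) \) est un quotient de \( V(r_{d-1},n-2r_{d-1}-2) \) (on a bien \( n-2r_{d-1}-2 = a_{d}p^{d}-r_{d-1}-2 \)). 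Le cas \( d = 0 \) est immédiat : alors \( n = a_{0}\leq p-1 \), donc \( H^{2}(n,-n-2) = 0 \), et la filtration réduite à \( M_{0} = 0 \) convient (la condition sur les \( M_{i}/M_{i-1} \) est vide). Si \( a_{d} = 0 \), la suite exacte ci-dessus dégénère, son terme de droite étant un quotient de \( V(r_{d-1},-r_{d-1}-2) = 0 \) ; on se ramène alors au rang \( d-1 \) en ajoutant le cran trivial \( M_{d} = M_{d-1} \). On supposera donc \( a_{d}\geq 1 \) dans la suite.

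Pour le pas de récurrence, on remarque que \( r_{d-1} = a_{d-1}p^{d-1}+\cdots+a_{0} \) a exactement pour sommes partielles \( r_{0},\dots,r_{d-1} \) ; l'hypothèse de récurrence appliquée à \( (r_{d-1},-r_{d-1}-2) \) fournit donc une filtration \( 0 = M'_{0}\subset M'_{1}\subset\cdots\subset M'_{d-1} = H^{2}(r_{d-1},-r_{d-1}-2) \) avec
\[
M'_{i}/M'_{i-1}\cong L(0,r_{d-1}-r_{i})\otimes W(r_{i-1},r_{i}-2r_{i-1}-2)\qquad(1\leq i\leq d-1),
\]
chaque \( W(r_{i-1},r_{i}-2r_{i-1}-2) \) étant un quotient du module de Weyl correspondant. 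Le foncteur \( L(0,a_{d})^{(d)}\otimes(-) \) étant exact, je l'appliquerais à cette filtration ; via l'injection de la suite exacte courte ci-dessus, on obtient une filtration \( M_{i} := L(0,a_{d})^{(d)}\otimes M'_{i} \) pour \( 0\leq i\leq d-1 \), que l'on complète en posant \( M_{d} := H^{2}(n,-n-2) \).

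Il reste à identifier les quotients. Pour \( i = d \), la suite exacte courte donne \( M_{d}/M_{d-1}\cong W(r_{d-1},n-2r_{d-1}-2) \), qui coïncide avec \( L(0,n-r_{d})\otimes W(r_{d-1},r_{d}-2r_{d-1}-2) \) puisque \( r_{d} = n \). Pour \( 1\leq i\leq d-1 \), on a \( M_{i}/M_{i-1}\cong L(0,a_{d})^{(d)}\otimes L(0,r_{d-1}-r_{i})\otimes W(r_{i-1},r_{i}-2r_{i-1}-2) \), et il s'agit de voir que \( L(0,a_{d})^{(d)}\otimes L(0,r_{d-1}-r_{i})\cong L(0,n-r_{i}) \) : comme \( 0\leq r_{d-1}-r_{i}\leq p^{d}-1 \), le poids \( (0,r_{d-1}-r_{i}) \) est \( p^{d} \)-restreint, et le théorème du produit tensoriel de Steinberg (cf. \cite{Jan03} II.3.17) donne \( L(0,a_{d})^{(d)}\otimes L(0,r_{d-1}-r_{i})\cong L\big((0,r_{d-1}-r_{i})+p^{d}(0,a_{d})\big) = L(0,n-r_{i}) \), d'où la forme voulue. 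L'essentiel de la preuve est donc formel : le point qui demande un peu d'attention est précisément cette vérification que \( (0,r_{d-1}-r_{i}) \) est \( p^{d} \)-restreint, indispensable pour invoquer Steinberg ; tout le reste n'est qu'un contrôle d'indices combiné à l'exactitude du produit tensoriel.
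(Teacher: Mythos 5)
Votre démonstration est correcte et suit essentiellement la même démarche que celle du texte : récurrence sur \( d \), suite exacte courte issue du \autoref{thm2} appliquée à \( \mu=(n,-n-2) \), tensorisation de la filtration fournie par l'hypothèse de récurrence par \( L(0,a_{d})^{(d)} \), puis identification des quotients via \( L(0,a_{d})^{(d)}\otimes L(0,r_{d-1}-r_{i})\cong L(0,n-r_{i}) \). Les seules différences (base de récurrence en \( d=0 \) plutôt que \( d=1 \), traitement explicite du cas \( a_{d}=0 \), et mention explicite du théorème de Steinberg avec la vérification de \( p^{d} \)-restriction que le texte laisse implicite) sont mineures et ne changent pas l'argument.
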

\begin{rmk}
  On utilise toujours la convention que \( V(a,b)=0 \) si \( (a,b) \)
  n'est pas dominant. Donc si \( a_{i}=0 \) pour un \(
  i\in\{1,2,\cdots,d\} \), alors \( r_{i-1}=r_{i} \) et \(
  W(r_{i-1},r_{i}-2r_{i-1}-2)=0 \). Donc \( M_{i}=M_{i-1} \) dans ce
  cas. 
\end{rmk}
\begin{proof}
  Raisonnons par récurrence sur \( d \). Si \( d=1 \), alors \( n=ap+r
  \) avec \( 0\leq a,r\leq p-1 \). Avec les notations ci-dessus, on a
  \( r_{0}=r \) et \( r_{1}=n \). Comme \( H^{2}(r,-r-2)\cong
  H^{0}(-1,r)=0 \), d'après le \autoref{thm2}, \( H^{2}(n,-n-2)\cong
  W(r,n-2r-2)\cong L(0,n-r_{1})\otimes W(r_{0},r_{1}-2r_{0}-2) \) où
  \( W(r,n-2r-2)=W(r_{0},r_{1}-2r_{0}-2) \) est un quotient du module
  de Weyl \( V(r_{0},r_{1}-2r_{0}-2) \). Donc l'énoncé est vrai dans
  ce cas.

  Supposons l'énoncé  vrai pour tout \( n \) de degré \( \leq d \) pour un
  \( d\geq 1 \). Soit \( n=a_{d+1}p^{d+1}+a_{d}p^{d}+\cdots+a_{0}
  \). Alors d'après le \autoref{thm2}, on a une suite exacte courte de
  \( G \)-modules:
  \begin{displaymath}
 0\to L(0,a_{d+1})^{(d+1)}\otimes H^{2}(r_{d},-r_{d}-2)\to
 H^{2}(n,-n-2)\to W(r_{d},n-2r_{d}-2)\to0,   
\end{displaymath}
où \( W(r_{d},n-2r_{d}-2) \) est un quotient de \( V(r_{d},n-2r_{d}-2)
\). En appliquant l'hypothèse de récurrence à \(
r_{d}=a_{d}p^{d}+\cdots+a_{0} \), on obtient une filtration:
\begin{displaymath}
    0=M'_{0}\subset M'_{1}\subset\cdots\subset M'_{d-1}\subset M'_{d}=H^{2}(r_{d},-r_{d}-2)
  \end{displaymath}
  telle que pour \( i=1,2,\cdots,d \),
  \begin{displaymath}
    M'_{i}/M'_{i-1}\cong L(0,r_{d}-r_{i})\otimes W(r_{i-1},r_{i}-2r_{i-1}-2)
  \end{displaymath}
  où \( W(r_{i-1},r_{i}-2r_{i-1}-2) \) est un quotient de  \(
  V(r_{i-1},r_{i}-2r_{i-1}-2) \). Posons \(
  M_{i}=L(0,a_{d+1})^{(d+1)}\otimes M'_{i} \) pour \( i=0,1,\cdots,d
  \) et \( M_{d}=H^{2}(n,-n-2) \), alors on obtient une filtration de \( H^{2}(n,-n-2) \)
 \begin{displaymath}
    0=M_{0}\subset M_{1}\subset\cdots\subset M_{d}\subset M_{d+1}=H^{2}(n,-n-2)
  \end{displaymath} 
  telle que
  \begin{align*}
    M_{d+1}/M_{d}&\cong W(r_{d},n-2r_{d}-2)\cong L(0,n-r_{d+1})\otimes
                   W(r_{d},r_{d+1}-2r_{d}-2),\\
    M_{i}/M_{i-1}&\cong L(0,a_{d+1})^{(d+1)}\otimes
                   (M'_{i}/M'_{i-1})\\
                 &\cong L(0,a_{d+1}p^{d+1})\otimes
                   L(0,r_{d}-r_{i})\otimes
                   W(r_{i-1},r_{i}-2r_{i-1}-2)\\
    &\cong L(0,n-r_{i})\otimes  W(r_{i-1},r_{i}-2r_{i-1}-2)
      \qquad\text{si }i\leq d.
  \end{align*}

  Ceci termine la preuve du \autoref{cor:autrefiltrationmur}.
\end{proof}

\section{Une \texorpdfstring{\(p\)}{p}-\texorpdfstring{\(H^{i}\)}{Hi}-D-filtration}\label{chapitre:pHifiltration}

La filtration obtenue dans le chapitre \ref{chapitre:3étages} ne donne pas
d'informations sur la structure de \(H^{1}(\mu)\) et \(H^{2}(\mu)\) si
\(\mu\) n'est pas dans la région de Griffith. Mais  Jantzen a montré (\cite{Jan80}) que pour \(G = \SL_3\), tout 
module de Weyl  \(V(\lambda)\) possède une \(p\)-Weyl-filtration,
c'est-à-dire une filtration dont les quotients sont de la forme 
\(V(\nu^1)^{(1)} \otimes L(\nu^0)\), où \(\nu^0\) est \(p\)-restreint et l'exposant \(^{(1)}\) désigne la torsion par 
le morphisme de Frobenius.

Dualement, pour \(G = \SL_3\) 
tout module induit \(H^0(\lambda)\) possède une
\(p\)-\(H^0\)-filtration. Il est naturel de se demander si
\(H^{1}(\mu)\) et \(H^{2}(\mu)\) possèdent aussi une filtration
analogue. Pour cela, comme dans \cite{Jan80}, on commence par étudier la
structure du \(BG_{1}\)-module \(\widehat{Z}(\mu)=\ind
_{B}^{BG_{1}}(\mu)\).

Tandis que Jantzen utilise une suite de composition arbitraire de
\(\widehat{Z}(\mu)\) pour induire une \(p\)-filtration de \(H^{0}(\mu)\)
(et de  \(H^{3}(\mu)\) par dualité), j'utiliserai la notion de   \og
D-filtration\fg{} (en l'honneur de Donkin, cf. \cite{Don06}) de
\(\widehat{Z}(\mu)\), qui sera définie dans le  paragraphe
\ref{subsection:Zhat}. On va voir que cette filtration non
seulement redonne la  \(p\)-filtration de Jantzen pour
\(H^{0}(\mu)\) et \( H^{3}(\mu) \) (\autoref{thm:weyl}) si
\(\mu\) est dominant ou anti-dominant,
mais donne aussi une filtration analogue pour \(H^{1}(\mu)\) et
\(H^{2}(\mu)\) si \(\mu\notin C\cup w_{0}\cdot C\). 

\subsection{ \texorpdfstring{\og D-filtration\fg{} de
    \(\widehat{Z}(\mu)=\ind_{B}^{BG_{1}}(\mu)\)}{D-filtration de Z(mu)}}\label{subsection:Zhat}
Pour tout \( \mu\in X(T) \), notons
\(\widehat{Z}(\mu)=\ind_{B}^{BG_{1}}(\mu)\).

Dans ce paragraphe, je vais considérer une filtration de \(
\widehat{Z}(\mu) \) qui se comportera bien pour le foncteur \(
H^{0}(G/BG_{1},\bullet) \). Ce n'est pas une suite de composition
comme \( BG_{1} \)-module car certains facteurs font apparaître des \(
B \)-extensions de dimension \( 2 \), tordues par le Frobenius. Ces
extensions apparaissent, au moins au niveau des formules de caractère,
dans l'article \cite{Don06} de Donkin. Pour cette raison, j'appelle
cette filtration de \( \widehat{Z}(\mu) \) la D-filtration.

\begin{rmk}
  Notre \( \widehat{Z}(\mu) \) est noté \( \widehat{Z}'_{1}(\mu) \) dans
  \cite{Jan03} II.9.
\end{rmk}

 Notons \(E_{\alpha}(\mu)\) l'unique sous-\(B\)-module de dimension \(
 2 \) de \( L(0,1)\otimes (\mu+(-1,1)) \). Donc il existe une
suite exacte non scindée de \( B \)-modules:
\[
\xymatrix{0\ar[r]&\mu-\alpha\ar[r]&E_{\alpha}(\mu)\ar[r]&\mu\ar[r]&0}.
\]
De même, notons \(E_{\beta}(\mu)\) l'unique sous-\(B\)-module de dimension \(
 2 \) de \( L(1,0)\otimes (\mu+(1,-1)) \). Donc il existe une
suite exacte non scindée de \( B \)-modules :
\[
\xymatrix{0\ar[r]&\mu-\beta\ar[r]&E_{\beta}(\mu)\ar[r]&\mu\ar[r]&0}.
\]
Posons aussi \(E_{0}(\mu)=\mu\).

On sait que  \(\widehat{Z}(\mu+p\mu')\cong \widehat{Z}(\mu)\otimes p\mu'\)
comme \(BG_{1}\)-modules (cf. \cite{Jan03} II.9.2), donc il suffit de
considérer six cas pour \(\mu\in X_{1}(T)\), cf. la figure et la
définition ci-dessous.

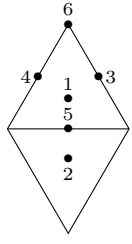
\begin{figure}[H]
  \centering
  \begin{tikzpicture}[scale=0.4]
    \draw (0,0)--(2,4*sin{60})--(0,8*sin{60})--(-2,4*sin{60})--cycle;
    \draw (-2,4*sin{60})--(2,4*sin{60}); \node[font=\tiny] at (0,4*sin{60}+1)
    {\( \bullet \)}; \node[font=\tiny] at (0,4*sin{60}+1.5) {\( 1 \)}; \node[font=\tiny] at
    (0,4*sin{60}-1) {\( \bullet \)}; \node[font=\tiny] at (0,4*sin{60}-1.5)
    {\( 2 \)}; \node[font=\tiny] at (1,6*sin{60}) {\( \bullet \)}; \node[font=\tiny] at
    (1.4,6*sin{60}) {\( 3 \)}; \node[font=\tiny] at (-1,6*sin{60})
    {\( \bullet \)}; \node[font=\tiny] at (-1.4,6*sin{60}) {\( 4 \)}; \node[font=\tiny] at
    (0,4*sin{60}) {\( \bullet \)}; \node[font=\tiny] at (0,4*sin{60}+0.5)
    {\( 5 \)}; \node[font=\tiny] at (0,8*sin{60}){\( \bullet \)}; \node[font=\tiny] at
    (0,8*sin{60}+0.5){\( 6 \)};
  \end{tikzpicture}
 
  \caption{Six cas dans $X_1(T)$}
  \label{figure:sixcas}
\end{figure}

\begin{defi}\label{defi:positions}
    Soit \(\mu=(x,y)\in X(T)\). Écrivons
    \(x=x^{1}p+r\) et
    \(y=y^{1}p+s\) avec
    \(r,s\in\{0,1,\cdots,p-1\}\). On rappelle la terminologie suivante
    (voir par exemple \cite{KH84} 1.1).
    \begin{enumerate}
    \item On dit que \(\mu\) est de type \(\Delta\) si \(r<p-1\),
      \(s<p-1\) et \(r+s>p-2\);
    \item On dit que \(\mu\) est de type \(\nabla\) si
       \(r+s<p-2\);
    \item On dit que \(\mu\) est \(\alpha\)-singulier si \(r=p-1\) et \(s<p-1\);
    \item On dit que \(\mu\) est \(\beta\)-singulier si \(s=p-1\) et \(r<p-1\);
    \item On dit que \(\mu\) est \(\gamma\)-singulier si
      \(r<p-1\), \(s<p-1\) et \(r+s=p-2\);
    \item On dit que \(\mu\) est \(\alpha\)-\(\beta\)-singulier si
      \(r=s=p-1\).
    \end{enumerate}

  \end{defi}

  Pour \( 0\leq r,s\leq p-2 \), on pose \( \overline{r}=p-r-2 \) et \( \overline{s}=p-s-2 \).

\medskip

D'abord, si  \(\mu = (p-1)\rho\) (correspondant au cas  \( 6 \) dans la
\autoref{figure:sixcas}) alors
\(\widehat{Z}(\mu) = L((p-1)\rho)\). Dans ce cas, la D-filtration
est juste la filtration triviale.

\smallskip
Comme \( \hat{Z}(\mu) \) est un \( BG_{1} \)-module de longueur finie,
dont la multiplicité de chaque facteur simple est \( 1 \), la
structure de sous-modules de \( \hat{Z}(\mu) \) peut se décrire par un
graphe, cf. \cite{Irv86} 2.5.

\subsubsection{Cas singulier pour une seule racine} 
\underline{Si \(\mu\) est \(\gamma\)-singulier} (correspondant au cas \( 5 \) dans la \autoref{figure:sixcas})  alors \(\mu = (r,p-2-r)\) avec \(0\leq r \leq p-2\). 
Alors \(s_\alpha\cdot \mu = \mu - (r+1)\alpha = (-r-2,p-1)\) et \(s_\beta\cdot \mu = \mu - (p-1-r)\beta = (p-1, -p + r)\). 
Et \(s_\gamma \cdot \mu = \mu - p\gamma = (-p + r, -r-2)\). 
Alors d'après \cite{Irv86} 3.3, le graphe de \( \widehat{Z}(r,p-2-r) \) comme  \( TG_{1} \)-module, 
 est donné par: 
\[
  \begin{tikzcd}[column sep=tiny]
    & \ar[ld] \widehat{L}(r,\overline{r}) \otimes (-1,-1)^{(1)} \ar[rd] &
    \\
    \widehat{L}(\overline{r},p-1) \otimes (-1,0)^{(1)} \ar[rd] & & \ar[ld] \widehat{L}(p-1,r)
    \otimes (0,-1)^{(1)}
    \\
    & \widehat{L}(r,\overline{r}) &
  \end{tikzcd}
\]
De plus, on a
\[
\Ext_{BG_{1}}^{1}\big(\widehat{L}(\overline{r},p-1)\otimes (-1,0)^{(1)},
\widehat{L}(p-1,r)\otimes (0,-1)^{(1)}\big)=0
\]
et
\[
\Ext_{BG_{1}}^{1}\big(\widehat{L}(p-1,r)\otimes (0,-1)^{(1)}, \widehat{L}(\overline{r},p-1)\otimes (-1,0)^{(1)}
\big)=0\]
d'après \cite{Jan03} II.9.21. Donc le graphe ci-dessus est aussi le graphe de \( \widehat{Z}(r,p-2-r) \) comme  \( BG_{1} \)-module.

Dans ce cas, \underline{une D-filtration} est  n'importe
  quelle suite
  de composition de \(\widehat{Z}(\mu)\).
  \medskip
  
\underline{Si \(\mu\) est \(\alpha\)-singulier} (correspondant au cas   \( 3 \) dans la \autoref{figure:sixcas}) alors \(\mu = (p-1, s)\) avec \(0\leq s \leq p-2\). 
On a 
\[
\begin{cases} 
\mu_3 = s_\beta\cdot \mu = \mu - (s+1)\beta = (p+s, -s-2) 
\\
\mu_4 = s_{\alpha,p}\cdot \mu_3 = \mu_3 - (s+1) \alpha = (p-2-s, -1)
\\
\mu_2 = s_\alpha\cdot \mu_4 = \mu_4 - (p-1-s)\alpha = \mu_3 - p\alpha = (-p+s, p-s-2)
\end{cases}
\]

Alors d'après \cite{Irv86} 5.2, le graphe de \(\widehat{Z}(p-1,s)\) comme  \( BG_{1} \)-module
 est donné par: 
\[
\xymatrix{ \widehat{L}(\overline{s},p-1) \otimes (0,-1)^{(1)} \ar[d] 
\\
\widehat{L}(s,\overline{s}) \otimes (1,-1)^{(1)} \ar@{=>}[d]^-{-p\alpha} 
\\
\widehat{L}(s,\overline{s}) \otimes (-1,0)^{(1)} \ar[d] 
\\
\widehat{L}(p-1,s)  
}
\]
où la flèche \(\implies\) indique une extension non scindée de \(
\widehat{L}(s,\overline{s})\otimes (1,-1)^{(1)} \) par \(
\widehat{L}(s,\overline{s})\otimes (-1,0)^{(1)} \). Or on a
\[
\Ext_{BG_{1}}^{1}(\widehat{L}(s,\overline{s})\otimes (1,-1)^{(1)}
,\widehat{L}(s,\overline{s})\otimes (-1,0)^{(1)})\cong k
\]
d'après \cite{Jan03} II.9.21 et on sait qu'il existe une extension non
scindée
\[
  \begin{tikzcd}
    0\ar[r]&\widehat{L}(s,\overline{s})\otimes (-1,0)^{(1)}
    \ar[r]&\widehat{L}(s,\overline{s})\otimes E_{\alpha}(1,-1)^{(1)}\ar[r]
    & \widehat{L}(s,\overline{s})\otimes (1,-1)^{(1)} \ar[r]&0,
  \end{tikzcd}
\]
donc la flèche \(\implies \) indique l'extension non scindée isomorphe
à \( \widehat{L}(s,\overline{s})\otimes E_{\alpha}(1,-1)^{(1)} \).

Dans ce cas, \underline{la D-filtration} est la suivante:
\begin{equation}
  \label{eq:deltafiltrationalpha}
  \begin{aligned}
    0=N_{0}\subset N_{1}&\subset N_{2}\subset N_{3}=\widehat{Z}(p-1,s)\\
    N_{1}&\cong \widehat{L}(p-1,s)\\
    N_{2}/N_{1}&\cong \widehat{L}(s,\overline{s})\otimes E_{\alpha}(1,-1)^{(1)}\\
    N_{3}/N_{2}&\cong \widehat{L}(\overline{s},p-1)\otimes (0,-1)^{(1)}.
  \end{aligned}
\end{equation}

\underline{De même, si \(\mu\) est \(\beta\)-singulier} (correspondant
au cas  \( 4 \) dans
la \autoref{figure:sixcas}) alors \(\mu = (r,p-1)\) avec \(0\leq r \leq p-2\). 
On a 
\[
\begin{cases} 
\mu_3 = s_\alpha\cdot \mu = \mu - (r+1)\alpha = (-r-2, p+r) 
\\
\mu_4 = s_{\beta,p}\cdot \mu_3 = \mu_3 - (r+1) \beta = (-1, p-2-r)
\\
\mu_2 = s_\beta\cdot \mu_4 = \mu_4 - (p-1-r)\beta = \mu_3 - p\beta = (p-2-r,-p+r)
\end{cases}
\]
Alors  le graphe de 
\(\widehat{Z}(r,p-1)\) comme \( BG_{1} \)-module est donné par: 
\[
\xymatrix{ \widehat{L}(p-1,p-2-r) \otimes (-1,0)^{(1)} \ar[d] 
\\
\widehat{L}(p-2-r,r) \otimes (-1,1)^{(1)} \ar@{=>}[d]^-{-p\beta} 
\\
\widehat{L}(p-2-r,r) \otimes (0,-1)^{(1)} \ar[d] 
\\
\widehat{L}(r,p-1)  
}
\]
où la flèche \(\implies\) indique l'extension non scindée isomorphe
à \( \widehat{L}(p-2-r,r)\otimes E_{\beta}(-1,1)^{(1)} \).

Dans ce cas, \underline{la D-filtration} est la suivante:
\begin{equation}
  \label{eq:deltafiltrationbeta}
  \begin{aligned}
    0=N_{0}\subset N_{1}&\subset N_{2}\subset N_{3}=\widehat{Z}(r,p-1)\\
    N_{1}&\cong \widehat{L}(r,p-1)\\
    N_{2}/N_{1}&\cong \widehat{L}(\overline{r},r)\otimes E_{\beta}(-1,1)^{(1)}\\
    N_{3}/N_{2}&\cong \widehat{L}(p-1,\overline{r})\otimes (-1,0)^{(1)}.
  \end{aligned}
\end{equation}

\subsubsection{Cas de l'alcôve supérieure \texorpdfstring{\(\Delta\)}{Delta}}

 Soient  \(r, s\geq 0\) tels que \(r+s \leq p-3\) et soit 
 \(\mu = (\overline{r}, \overline{s})\). 

Alors d'après
\cite{Irv86} 5.3,  le graphe de \(\widehat{Z}(\overline{r},\overline{s})\)
(correspondant au cas \( 1
\) dans la \autoref{figure:sixcas}) comme \( BG_{1} \)-module est donné par: 
\[
\xymatrix{ 
& \widehat{L}(s,r) \otimes (-1,-1)^{(1)} \ar[ld] \ar[rd] & 
\\
\widehat{L}(\overline{s},r+s+1) \otimes (-1,0)^{(1)} \ar[d] \ar[rdd] \ar[rrdd] && \widehat{L}(r+s+1,\overline{r}) \otimes (0,-1)^{(1)}\ar[d] \ar[ldd] \ar[lldd] 
\\
\widehat{L}(r,p-3-r-s) \otimes (-1,1)^{(1)}\quad  \ar@{=>}[d]^-{-p\beta} && \quad\widehat{L}(p-3-r-s,s) \otimes (1,-1)^{(1)}  \ar@{=>}[d]^-{-p\alpha}
\\
\widehat{L}(r,p-3-r-s) \otimes (0,-1)^{(1)} \ar[rd] & \widehat{L}(s,r) \ar[d] &  \widehat{L}(p-3-r-s,s) \otimes (-1,0)^{(1)}  \ar[ld] 
\\
& \widehat{L}(\overline{r},\overline{s}) & 
}
\]
où la flèche \(\implies\) à gauche indique une extension non scindée
de
\( \widehat{L}(r,p-3-r-s) \otimes (-1,1)^{(1)} \) par \(
\widehat{L}(r,p-3-r-s) \otimes (0,-1)^{(1)} \).
 Or d'après \cite{Jan03} II.9.21, il existe une unique telle extension
à isomorphisme près,
donc cette flèche \( \implies \) indique l'extension isomorphe à \(
\widehat{L}(r,p-3-r-s) \otimes E_{\beta}(-1,1)^{(1)} \). De même, la
flèche \( \implies \) à droite indique une extension isomorphe à \( \widehat{L}(p-3-r-s,s) \otimes E_{\alpha}(1,-1)^{(1)} \).

Dans ce cas, \underline{une D-filtration} est une filtration
induite par le graphe suivant:
\begin{equation}
  \label{eq:deltagraphe}
  \begin{tikzcd}[column sep=tiny]
    &\widehat{L}(s,r)\otimes (-1,-1)^{(1)}\ar[dl]\ar[dr]&\\
    \widehat{L}(\overline{s},r+s+1) \otimes (-1,0)^{(1)} \ar[d] \ar[rd] \ar[rrd] && \widehat{L}(r+s+1,\overline{r}) \otimes (0,-1)^{(1)}\ar[d] \ar[ld] \ar[lld] 
    \\
    \widehat{L}(r,p-3-r-s)\otimes
    E_{\beta}(-1,1)^{(1)}\ar[rd]&\widehat{L}(s,r)\ar[d]&\widehat{L}(p-3-r-s,s)\otimes
    E_{\alpha}(1,-1)^{(1)}\ar[ld]\\
    &\widehat{L}(\overline{r},\overline{s})&
  \end{tikzcd}.
\end{equation}

Par exemple, la filtration suivante est une D-filtration:
\begin{equation}
  \label{eq:deltafiltrationdelta}
  \begin{aligned}
    0=N_{0}\subset N_{1}\subset &\cdots\subset N_{6}\subset
    N_{7}=\widehat{Z}(\overline{r},\overline{s})\\
    N_{1}&\cong \widehat{L}(\overline{r},\overline{s})\\
    N_{2}/N_{1}&\cong \widehat{L}(s,r)\\
    N_{3}/N_{2}&\cong \widehat{L}(p-3-r-s,s)\otimes E_{\alpha}(1,-1)^{(1)}\\
    N_{4}/N_{3}&\cong \widehat{L}(r,p-3-r-s)\otimes E_{\beta}(-1,1)^{(1)}\\
    N_{5}/N_{4}&\cong \widehat{L}(r+s+1,\overline{r})\otimes (0,-1)^{(1)}\\
    N_{6}/N_{5}&\cong \widehat{L}(\overline{s},r+s+1)\otimes (-1,0)^{(1)}\\
    N_{7}/N_{6}&\cong \widehat{L}(s,r)\otimes (-1,-1)^{(1)}.\\
  \end{aligned}
\end{equation}

\subsubsection{Cas de l'alcôve inférieure \texorpdfstring{\(\nabla\)}{nabla}}

Soit \(\mu = (r,s)\)  avec \(r,s\geq 0\) et \(r+ s \leq p-3\)
(correspondant au cas \( 2 \) dans la \autoref{figure:sixcas}).  

Alors d'après \cite{Irv86} 5.3, 
le graphe de \(\widehat{Z}(r,s)\) comme \( BG_{1} \)-module est donné par: 
\[
\xymatrix{
& \widehat{L}(\overline{s},\overline{r}) \otimes (-1,-1)^{(1)} \ar[ld] \ar[rd] \ar[d] & 
\\
\widehat{L}(s,p-3-r-s) \otimes (0,-1)^{(1)} \ar@{=>}[d]^-{-p\alpha} \ar[rrdd] & \widehat{L}(r,s) \otimes (-1,-1)^{(1)} \ar[ldd] \ar[rdd] & \widehat{L}(p-3-r-s,r) \otimes (-1,0)^{(1)} \ar@{=>}[d]^-{-p\beta} \ar[lldd] 
\\
\widehat{L}(s,p-3-r-s) \otimes (-2,0)^{(1)} \ar[d] & & \widehat{L}(p-3-r-s,r) \otimes (0,-2)^{(1)} \ar[d]
\\
\widehat{L}(\overline{r},r+s+1) \otimes (-1,0)^{(1)} \ar[rd] & & \widehat{L}(r+s+1, \overline{s}) \otimes (0,-1)^{(1)} \ar[ld] 
\\
& \widehat{L}(r,s) & 
}
\]
où à nouveau la flèche \(\implies\) à gauche indique l'extension non
scindée   \(\widehat{L}(s,p-3-r-s)\otimes
E_\alpha(0,-1)^{(1)}\) et la flèche \( \implies \) à droite indique
l'extension non scindée \(\widehat{L}(p-3-r-s,r)\otimes E_\beta(-1,0)^{(1)}\)
comme dans le cas de l'alcôve \( \Delta \).

Dans ce cas, \underline{une D-filtration} est une filtration
induite par le graphe suivant:
\begin{equation}
  \label{eq:nablagraphe}
  \begin{tikzcd}[column sep=tiny]
 & \widehat{L}(\overline{s},\overline{r}) \otimes (-1,-1)^{(1)} \ar[ld] \ar[rd] \ar[d] & 
\\
\widehat{L}(s,p-3-r-s) \otimes E_{\alpha}(0,-1)^{(1)} \ar[rrd]\ar[d] & \widehat{L}(r,s) \otimes (-1,-1)^{(1)} \ar[ld] \ar[rd] & \widehat{L}(p-3-r-s,r) \otimes E_{\beta}(-1,0)^{(1)} \ar[lld] \ar[d]
\\
\widehat{L}(\overline{r},r+s+1) \otimes (-1,0)^{(1)} \ar[rd] & & \widehat{L}(r+s+1, \overline{s}) \otimes (0,-1)^{(1)} \ar[ld] 
\\
& \widehat{L}(r,s) &    
  \end{tikzcd}.
\end{equation}

Par exemple, la filtration suivante est une D-filtration:
\begin{equation}
  \label{eq:deltafiltrationnabla}
  \begin{aligned}
    0=N_{0}\subset N_{1}\subset &\cdots\subset N_{6}\subset
    N_{7}=\widehat{Z}(r,s)\\
    N_{1}&\cong \widehat{L}(r,s)\\
    N_{2}/N_{1}&\cong \widehat{L}(\overline{r},r+s+1)\otimes (-1,0)^{(1)}\\
    N_{3}/N_{2}&\cong \widehat{L}(r+s+1,\overline{s})\otimes (0,-1)^{(1)}\\
    N_{4}/N_{3}&\cong \widehat{L}(s,p-3-r-s)\otimes E_{\alpha}(0,-1)^{(1)}\\
    N_{5}/N_{4}&\cong \widehat{L}(p-3-r-s,r)\otimes E_{\beta}(-1,0)^{(1)}\\
    N_{6}/N_{5}&\cong \widehat{L}(r,s)\otimes (-1,-1)^{(1)}\\
    N_{7}/N_{6}&\cong \widehat{L}(\overline{s},\overline{r})\otimes (-1,-1)^{(1)}.\\
  \end{aligned}
\end{equation}

\subsection{Sur la cohomologie des \( B \)-modules \( E_{\alpha}(\mu) \)
  et \( E_{\beta}(\mu) \)}
Pour montrer les résultats principaux, il faut d'abord établir
quelques propriétés des modules  \(H^{i}( E_{\alpha}(\mu) )\) et \( H^{i}(E_{\beta}(\mu)) \).
\begin{lemma}\label{lemmaE0}
  On a \(H^{i}(E_{\alpha}(0,y))=H^{i}(E_{\beta}(x,0))=0\) pour tout
  \(i\in\mathbb{N}\) et \( x,y\in\mathbb{Z} \).
\end{lemma}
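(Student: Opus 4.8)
The plan is to deduce both vanishings from short exact sequences realising \( E_\alpha(0,y) \) and \( E_\beta(x,0) \) inside the rank-\(3\) homogeneous bundles out of which they were defined, combined with the tensor identity and the vanishing of the cohomology of a line bundle singular for an affine wall.

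First I would record, for arbitrary \( \mu \), a short exact sequence of \( B \)-modules
\[
0 \longrightarrow E_\alpha(\mu) \longrightarrow L(0,1)\otimes(\mu+(-1,1)) \longrightarrow \mu+\beta \longrightarrow 0 .
\]
Indeed, by definition \( E_\alpha(\mu) \) is the (unique) two-dimensional sub-\(B\)-module of \( L(0,1)\otimes(\mu+(-1,1)) \); the weights of \( L(0,1) \) are \( (0,1),(1,-1),(-1,0) \), so those of \( L(0,1)\otimes(\mu+(-1,1)) \) are \( \mu+\beta \), \( \mu \), \( \mu-\alpha \), and since \( E_\alpha(\mu) \) carries the weights \( \mu \) and \( \mu-\alpha \), the quotient is the one-dimensional module \( \mu+\beta \). (The analogous sequence \( 0\to E_\beta(\mu)\to L(1,0)\otimes(\mu+(1,-1))\to\mu+\alpha\to 0 \) holds with \( L(1,0) \), of weights \( (1,0),(-1,1),(0,-1) \).)

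Next I would specialise to \( \mu=(0,y) \), so that \( \mu+(-1,1)=(-1,y+1) \) and \( \mu+\beta=(-1,y+2) \); both of these weights \( \nu \) satisfy \( \langle\nu,\alpha^\vee\rangle=-1 \). For any such \( \nu \) one has \( H^i(\nu)=0 \) for all \( i \) — for instance because \( \mathcal{L}(\nu) \) restricts to \( \mathcal{O}(-1) \) on each fibre of the projection \( G/B\to G/P_\alpha \), whence \( R^j\ind_B^{P_\alpha}(\nu)=0 \) for all \( j \) and therefore \( H^i(\nu)=R^i\ind_B^G(\nu)=0 \); this is also \cite{Jan03} II.5.2. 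Then the tensor identity (\cite{Jan03} I.4.8) gives
\[
H^i\bigl(L(0,1)\otimes(-1,y+1)\bigr)\;\cong\;L(0,1)\otimes H^i(-1,y+1)\;=\;0\qquad(i\in\mathbb{N}),
\]
and I also have \( H^i(-1,y+2)=0 \) for all \( i \). Feeding these into the long exact cohomology sequence attached to the short exact sequence above forces \( H^i(E_\alpha(0,y))=0 \) for all \( i \). The case of \( E_\beta(x,0) \) is handled in exactly the same way: for \( \mu=(x,0) \) both \( (x+1,-1)=\mu+(1,-1) \) and \( (x+2,-1)=\mu+\alpha \) satisfy \( \langle\,\cdot\,,\beta^\vee\rangle=-1 \), so the identical argument applies; alternatively this case follows from the symmetry exchanging \( \alpha \) and \( \beta \).

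I do not anticipate a real obstacle here. The one point worth stressing is that one should not try to read the answer off the defining extension \( 0\to \mu-\alpha\to E_\alpha(\mu)\to\mu\to0 \): for \( \mu=(0,y) \) the two one-dimensional constituents \( (0,y) \) and \( (0,y)-\alpha=s_\alpha\cdot(0,y) \) individually have nonzero cohomology (in degree \(0\) and in degree \(1\) respectively when \( y\geq 0 \)), so the vanishing of \( H^\bullet(E_\alpha(0,y)) \) is a genuine cancellation; passing through the larger module \( L(0,1)\otimes(\mu+(-1,1)) \), whose cohomology is visibly zero by the tensor identity, is precisely what makes that cancellation transparent.
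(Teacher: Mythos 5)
Votre preuve est correcte, mais elle suit un chemin légèrement différent de celui de l'article. L'article identifie directement \( E_{\alpha}(0,y)\cong L_{\alpha}(1,0)\otimes(-1,y) \) comme \( P_{\alpha} \)-module tordu par un caractère situé sur le mur de \( \alpha \), puis applique l'identité tensorielle au niveau de \( \ind_{B}^{P_{\alpha}} \) : la cohomologie s'annule déjà sur \( P_{\alpha}/B \), et la nullité sur \( G/B \) s'ensuit par composition des foncteurs d'induction. Vous restez au contraire sur \( G/B \) : vous utilisez le plongement définissant \( E_{\alpha}(\mu)\subset L(0,1)\otimes(\mu+(-1,1)) \), dont le quotient est le caractère \( \mu+\beta \) (vérification de poids correcte), puis vous concluez par la suite exacte longue, en combinant l'identité tensorielle pour le \( G \)-module \( L(0,1) \) avec l'annulation \( H^{\bullet}(\nu)=0 \) pour les deux poids \( (-1,y+1) \) et \( (-1,y+2) \), qui vérifient \( \langle\nu,\alpha^{\vee}\rangle=-1 \); le cas de \( E_{\beta}(x,0) \) est symétrique. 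Les deux arguments reposent en fin de compte sur la même annulation \og sur le mur\fg{} et sur l'identité tensorielle; celui de l'article est plus court et plus structurel (un isomorphisme de \( P_{\alpha} \)-modules suffit, sans cancellation), le vôtre a l'avantage de ne mobiliser que le plongement qui définit \( E_{\alpha} \) et de rendre explicite la cancellation entre les cohomologies non nulles des deux constituants \( \mu \) et \( \mu-\alpha \), point que vous soulignez à juste titre à la fin.
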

\begin{proof}
  On a \(E_{\alpha}(0,y)\cong L_{\alpha}(1,0)\otimes (-1,y),\)
  donc d'après l'identité tensorielle (cf. \cite{Jan03} I.3.6) \(H^{i}(P_{\alpha}/B,E_{\alpha}(0,y))\cong
  L_{\alpha}(1,0)\otimes H^{i}(P_{\alpha}/B,(-1,y))=0\) pour tout
  \(i\). Et de  même  pour \(E_{\beta}(x,0)\).
\end{proof}

\begin{proposition}\label{prop:Eparticulier}
 Supposons que \(\mu_{1}=(ap^{d}+r,-ap^{d})\) et
\(\mu_{2}=((a+1)p^{d},-ap^{d}-s-2)\) avec \(d\geq 0\),
\(a\in\{1,2,\cdots, p-1\}\), \(r\geq -1\) et
\(s\leq p^{d}-1\). Alors
\[H^{2}(E_{\beta}(\mu_{1}))=H^{2}(E_{\alpha}(\mu_{2}))=0.\]
\end{proposition}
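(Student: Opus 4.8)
The plan is to use the defining short exact sequences of $E_\beta(\mu_1)$ and $E_\alpha(\mu_2)$ together with the Borel–Weil–Bott theorem on $G/B$, reducing each vanishing to a statement about line bundles. First I would treat $E_\beta(\mu_1)$ with $\mu_1 = (ap^d+r, -ap^d)$. The short exact sequence of $B$-modules is
\[
\xymatrix{0\ar[r] & \mu_1-\beta \ar[r] & E_\beta(\mu_1)\ar[r]&\mu_1\ar[r]&0},
\]
so it suffices to analyze the long exact sequence in cohomology coming from $H^i(\mu_1-\beta)$ and $H^i(\mu_1)$. Here $\mu_1-\beta = (ap^d+r, -ap^d) - \beta = (ap^d+r-1, -ap^d+2)$ in the $(\omega_1,\omega_2)$-coordinates (since $\beta = -\omega_1 + 2\omega_2$), so $\mu_1 - \beta = (ap^d + r - 1,\, -ap^d + 2)$. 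The key computation will be to locate $\mu_1+\rho$ and $\mu_1-\beta+\rho$ with respect to the affine Weyl group and determine in which degrees their cohomology is concentrated; since these weights differ by a single simple root, Borel–Weil–Bott (or rather the basic facts about $H^i$ of line bundles twisted by $s_\delta\cdot$, cf. \cite{Jan03} II.5.2 and the relation $H^i(s_\delta\cdot\lambda) \leftrightarrow H^{i\pm1}(\lambda)$ via the $P_\delta/B = \mathbb P^1$ fibration) should force both $H^2(\mu_1)$ and $H^2(\mu_1-\beta)$, or enough of the surrounding terms, to vanish. Concretely I expect that $\mu_1$ lies in (or on the wall of) an $H^1$-chamber and $\mu_1 - \beta$ is either dominant-ish with cohomology only in degree $0$, or lies in an $H^0$ or $H^1$ situation; in either case the relevant piece of the long exact sequence
\[
\cdots \to H^2(\mu_1-\beta) \to H^2(E_\beta(\mu_1)) \to H^2(\mu_1) \to \cdots
\]
has both outer terms zero.

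Second, for $E_\alpha(\mu_2)$ with $\mu_2 = ((a+1)p^d,\, -ap^d-s-2)$, the argument is entirely parallel using
\[
\xymatrix{0\ar[r] & \mu_2-\alpha \ar[r] & E_\alpha(\mu_2)\ar[r]&\mu_2\ar[r]&0},
\]
with $\alpha = 2\omega_1 - \omega_2$, so $\mu_2 - \alpha = ((a+1)p^d - 2,\, -ap^d - s - 1)$. Again I reduce to showing $H^2(\mu_2) = H^2(\mu_2-\alpha) = 0$ via Borel–Weil–Bott, by computing $\langle \mu_2+\rho, \delta^\vee\rangle$ for $\delta \in \{\alpha,\beta,\gamma\}$. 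I anticipate using the $P_\alpha/B$-fibration trick: $E_\alpha(\mu_2)$ is, by construction, built from $L_\alpha$-data ($E_\alpha(\mu)$ sits inside $L(0,1)\otimes(\mu+(-1,1))$), so one might alternatively compute $H^i(P_\alpha/B, E_\alpha(\mu_2))$ directly via the tensor identity as in Lemma \ref{lemmaE0}, and then push forward along $G/B \to G/P_\alpha$; but the long-exact-sequence route is cleaner and more uniform with the $E_\beta$ case.

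The main obstacle will be the bookkeeping in the case $r = -1$ (resp.\ $s = p^d-1$), the extreme values, and the degenerate case $d = 0$: there the weights $\mu_1$, $\mu_1-\beta$ (resp.\ $\mu_2$, $\mu_2-\alpha$) may land exactly on affine walls, so Borel–Weil–Bott gives vanishing of \emph{all} $H^i$ for the singular line bundle but one must still check that the nonsingular neighbor does not contribute an $H^2$. I would handle this by splitting into the subcases according to whether $\mu_1 - \beta$ (resp.\ $\mu_2 - \alpha$) is $\rho$-singular: if it is, its entire cohomology vanishes and the long exact sequence gives $H^2(E_\beta(\mu_1)) \cong H^2(\mu_1) = 0$ directly; if it is regular, it sits in a single $H^j$-chamber with $j \le 1$ (which I verify from the coordinate inequalities $a \ge 1$, $r \ge -1$, $s \le p^d-1$), so $H^2$ of it vanishes, and likewise $H^2(\mu_1) = 0$ because $\mu_1$ is on the wall $(n, -n-2)$-type locus or just inside an $H^1$-chamber where $H^2$ is zero by Remark \ref{rmk:Griffithcondition} unless $\mu_1 \in \Gr$ — and the hypothesis $\mu_1 = (ap^d+r, -ap^d)$ with second coordinate $-ap^d$ (not of the form $-n-2$ with the Griffith constraints) keeps us out of $\Gr$. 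Assembling these subcases completes the proof.
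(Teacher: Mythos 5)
There is a genuine gap, and it sits at the heart of your reduction. You propose to deduce $H^{2}(E_{\beta}(\mu_{1}))=0$ from the vanishing of the two outer terms $H^{2}(\mu_{1}-\beta)$ and $H^{2}(\mu_{1})$ in the long exact sequence, and similarly for $E_{\alpha}(\mu_{2})$. But for $d\geq 1$ the neighbour weight does \emph{not} have vanishing $H^{2}$ in general: first, note the sign slip in your computation, since $\beta=-\omega_{1}+2\omega_{2}$ gives $\mu_{1}-\beta=(ap^{d}+r+1,\,-ap^{d}-2)$, not $(ap^{d}+r-1,\,-ap^{d}+2)$. Written as $(m',-n'-2)$ this is $m'=ap^{d}+r+1$, $n'=ap^{d}$, and for $-1\leq r\leq p^{d}-3$ both $m'$ and $n'$ lie in the window $[ap^{d},(a+1)p^{d}-2]$, i.e. $\mu_{1}-\beta$ satisfies the Griffith condition, so $H^{2}(\mu_{1}-\beta)\neq 0$ in general (Remark \ref{rmk:Griffithcondition}). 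Likewise $\mu_{2}-\alpha=((a+1)p^{d}-2,\,-ap^{d}-s-1)$ lies in the Griffith region whenever $1\leq s\leq p^{d}-1$. Thus the real content of the proposition is not a Borel--Weil--Bott vanishing of two line bundles; it is the statement that the connecting map $H^{1}(\mu)\to H^{2}(\mu-\delta)$ is \emph{surjective} onto a generally nonzero module (this is exactly how the proposition is used later: it forces $I_{\delta}(\mu)=H^{2}(\mu-\delta)$ in the proof of Lemma \ref{lemma:saturé}). No amount of chamber bookkeeping on the two weights alone can see this.

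Your argument does prove the case $d=0$, which is precisely the paper's base case: there all four weights are outside the Griffith region and the two $H^{2}$'s vanish. For $d\geq 1$ the paper instead argues by induction on $d$: it applies the $(d+1)$-st Frobenius twist to the three exact sequences of \S\ref{subsection:L(ab)} (the $K_{a}$, $M_{a}$ sequences), tensors with a suitable $E_{\beta}$ or $E_{\alpha}$, and deduces $H^{2}(E_{\beta}(\mu_{1}))=0$ from $H^{2}(\widetilde{K}_{a})=0$ and $H^{3}(\widetilde{M}_{a})=0$; the latter vanishings use Lemma \ref{lemmaE0}, dominance arguments, and, crucially, the induction hypothesis applied with the roles of $\alpha$ and $\beta$ exchanged (the weight $\mu_{1}''$ is of the $\mu_{2}$-type for $\alpha$, and conversely). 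If you want to salvage your plan you would need an independent proof that the boundary map $\partial_{\beta}\colon H^{1}(\mu_{1})\to H^{2}(\mu_{1}-\beta)$ is surjective, which is essentially equivalent to the recursive structure the paper sets up; the $P_{\delta}/B=\mathbb{P}^{1}$ fibration alone does not give it, because $\mu_{1}$ and $\mu_{1}-\beta$ are not related by $s_{\beta}\cdot$ in the relevant way (they are linked by the affine reflection $s_{\beta,ap^{d}}$, which is where the $p$-adic hypotheses enter).
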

\begin{proof}
Raisonnons par récurrence sur \(d\).
Lorsque \(d=0\), on a
\(\mu_{1}=(a+r,-a)\) et \(\mu_{2}=(a+1,-a-s-2)\) avec
\(r\geq -1\) et \(s\leq 0\). Donc
\(H^{2}(\mu_{1})=H^{2}(\mu_{1}-\beta)=H^{2}(\mu_{2})=H^{2}(\mu_{2}-\alpha)=0\)
d'après la \autoref{rmk:Griffithcondition}. Par conséquent,
\(H^{2}(E_{\beta}(\mu_{1}))=H^{2}(E_{\alpha}(\mu_{2}))=0\).

Supposons le résultat établi au cran \( d \) et soient
 \(\mu_{1}=(ap^{d+1}+r,-ap^{d+1})\) et
\(\mu_{2}=((a+1)p^{d+1},-ap^{d+1}-s-2)\) avec \(r\geq -1\) et \(s\leq
p^{d+1}-1\).

\smallskip
1) Montrons d'abord que \(E_{\beta}(\mu_{1})=0\).
Notons \(\mu_{1}'=(r,0)\) et 
\(\mu_{1}''=(-p^{d+1}+r,p^{d+1})\). Comme \(\mu''_{1}=(-(p-1)p^{d}-(p^{d}-r-2)-2, p\cdot
p^{d})\), alors,  en échangeant les
rôles de \(\alpha\) et \(\beta\) et en appliquant l'hypothèse de
récurrence à \(\alpha\), on a \(H^{2}(E_{\beta}(\mu_{1}''))=0\).

Rappelons les trois  suites exactes  du paragraphe
\ref{subsection:L(ab)}:
 \begin{equation}\label{def-pfiltKEbeta} 
\xymatrix{0 \ar[r] & K_{a} \ar[r] & L(0,a) \ar[r] & L(0,a-1)\otimes (0,1) \ar[r] & 0, 
}
\end{equation} 

\begin{equation}\label{def-pfiltMEbeta} 
\xymatrix{0 \ar[r] & M_{a} \ar[r] & K_a \ar[r] & (a,-a) \ar[r] & 0,
}
\end{equation} 
et
\begin{equation}\label{def-pfiltQEbeta} 
\xymatrix{ 0\ar[r] & M_a \ar[r] & L(0,a-1) \otimes (-1,0) \ar[r] & L(0,a-2) \otimes (-1,1) \ar[r] & 0 . 
}
\end{equation}

Appliquons la \((d+1)\)-ième puissance du morphisme de 
Frobenius à \eqref{def-pfiltKEbeta},\eqref{def-pfiltMEbeta},
\eqref{def-pfiltQEbeta} et tensorisons par
\(E_{\beta}(r,0)\). Désignons encore les
modules ainsi obtenus  par \(\widetilde{K}_{a}\),\(\widetilde{M}_{a}\) et
\(\widetilde{Q}_{a}\). On obtient les suites exactes:
\begin{equation}
  \label{eq:betapartiK}
\xymatrix{0\ar[r]&\widetilde{K}_{a}\ar[r]&L(0,a)^{(d+1)}\otimes
  E_{\beta}(\mu_{1}')\ar[r]& L(0,a-1)^{(d+1)}\otimes E_{\beta}(r,p^{d+1})\ar[r]&0.}
\end{equation}
\begin{equation}
  \label{eq:betapartiM}
  \xymatrix{0\ar[r]&\widetilde{M}_{a}\ar[r]&\widetilde{K}_{a}\ar[r]&E_{\beta}(\mu_{1})\ar[r]&0.}
\end{equation}
\begin{equation}
  \label{eq:betapartiM2}
 \xymatrix{0\ar[r]&\widetilde{M}_{a}\ar[r]&L(0,a-1)^{(d+1)}\otimes
   E_{\beta}(r-p^{d+1},0)\ar[r]&L(0,a-2)^{(d+1)}\otimes E_{\beta}(\mu_{1}'')\ar[r]&0.}  
\end{equation}

Comme \(H^{2}(E_{\beta}(\mu''_{1}))=0\) d'après l'hypothèse de
récurrence et
\(H^{3}(E_{\beta}(r-p^{d+1},0))=0\) d'après le \autoref{lemmaE0}, alors (\ref{eq:betapartiM2})
donne
\(H^{3}(\widetilde{M}_{a})=0\).

Comme \((r,p^{d+1})\) et \((r,p^{d+1})-\beta=(r+1,p^{d+1}-2)\) sont
dominants donc  n'ont pas de \(H^{1}\),
on a \(H^{1}(E_{\beta}(r,p^{d+1}))=0\). Par ailleurs
\(H^{2}(E_{\beta}(\mu_{1}'))=0\) d'après le \autoref{lemmaE0}, donc (\ref{eq:betapartiK}) donne
\(H^{2}(\widetilde{K}_{a})=0\).

D'après (\ref{eq:betapartiM}), on a une suite exacte
\(\xymatrix{H^{2}(\widetilde{K}_{a})\ar[r]&H^{2}(E_{\beta}(\mu_{1}))\ar[r]&H^{3}(\widetilde{M}_{a})}\),
d'où \(H^{2}(E_{\beta}(\mu_{1}))=0\).

\smallskip
2) Montrons maintenant que \(H^{2}(E_{\alpha}(\mu_{2}))=0\).
Notons \(\mu_{2}'=(p^{d+1},-s-2)\) et \(\mu_{2}''=(0,p^{d+1}-s-2)\).

 Comme  \(\mu_{2}'=(p\cdot p^{d}, -(p-1)p^{d}-(s-p^{d+1}+p^{d})-2)\)
avec \(s-p^{d+1}+p^{d}\leq p^{d}-1\), alors d'après
l'hypothèse de récurrence, on obtient \(H^{2}(E_{\alpha}(\mu_{2}'))=0\).

Appliquons la \((d+1)\)-ième puissance du morphisme de 
Frobenius à (\ref{def-pfiltKEbeta}),(\ref{def-pfiltMEbeta}),
(\ref{def-pfiltQEbeta}) et tensorisons par \(E_{\alpha}(\mu_{2}')\). On obtient les suites exactes suivantes:
\begin{equation}
  \label{eq:alphapartiK}
\xymatrix{0\ar[r]&\widetilde{K}_{a}\ar[r]&L(0,a)^{(d+1)}\otimes
  E_{\alpha}(\mu_{2}')\ar[r]& L(0,a-1)^{(d+1)}\otimes E_{\alpha}(p^{d+1},p^{d+1}-s-2)\ar[r]&0.}
\end{equation}
\begin{equation}
  \label{eq:alphapartiM}
  \xymatrix{0\ar[r]&\widetilde{M}_{a}\ar[r]&\widetilde{K}_{a}\ar[r]&E_{\alpha}(\mu_{2})\ar[r]&0.}
\end{equation}
\begin{equation}
  \label{eq:alphapartiM2}
 \xymatrix{0\ar[r]&\widetilde{M}_{a}\ar[r]&L(0,a-1)^{(d+1)}\otimes
   E_{\alpha}(0,-s-2)\ar[r]&L(0,a-2)^{(d+1)}\otimes E_{\alpha}(\mu_{2}'')\ar[r]&0.}  
\end{equation}

Comme \(H^{2}(E_{\alpha}(\mu''_{2}))=H^{2}(E_{\alpha}(0,p^{d+1}-s-2))=0\) et
\(H^{3}(E_{\alpha}(0,-s-2))=0\) d'après le \autoref{lemmaE0}, on
déduit de  \eqref{eq:alphapartiM2}  que \(H^{3}(\widetilde{M}_{a})=0\).

Comme \((p^{d+1},p^{d+1}-s-2)\) et
\((p^{d+1},p^{d+1}-s-2)-\alpha=(p^{d+1}-2,p^{d+1}-s-1)\) sont
dominants donc n'ont pas de \(H^{1}\),
on a \(H^{1}(E_{\alpha}(p^{d+1},p^{d+1}-s-2))=0\). Par ailleurs
\(H^{2}(E_{\alpha}(\mu_{2}'))=0\), donc d'après (\ref{eq:alphapartiK}) on a
\(H^{2}(\widetilde{K}_{a})=0\).

Enfin, par (\ref{eq:alphapartiM})  on a une suite exacte
\(\xymatrix{H^{2}(\widetilde{K}_{a})\ar[r]&H^{2}(E_{\alpha}(\mu_{2}))\ar[r]&H^{3}(\widetilde{M}_{a})}\),
 ce qui donne \(H^{2}(E_{\alpha}(\mu_{2}))=0\).
Ceci termine la preuve de la \autoref{prop:Eparticulier}.
\end{proof}

On  déduit de la symétrie entre \( \alpha \) et \( \beta \) le corollaire suivant:
\begin{cor}\label{cor:Eparticulier}
  Soient \( a\in\{1,2,\cdots,p-1\}\), \( d\geq 0 \), \( m\geq ap^{d}-1
  \) et \( n\leq ap^{d}-1 \). Alors
  \[
H^{2}(E_{\beta}(-n-2,ap^{d}))=0, \quad H^{2}(E_{\alpha}(-ap^{d},m))=0.
\]
\end{cor}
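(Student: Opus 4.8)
The plan is to get everything out of \autoref{prop:Eparticulier} by transporting it through the involutive automorphism $\tau$ of $G=\SL_{3}$ given by $\tau(g)=w_{0}\,{}^{t}(g^{-1})\,w_{0}^{-1}$, which preserves $B$ and $T$, acts on $X(T)$ by $\tau^{*}(a,b)=(b,a)$, exchanges the simple modules $L(1,0)$ and $L(0,1)$, and (since it also sends $(-1,1)$ to $(1,-1)$) therefore exchanges the $B$-modules $E_{\alpha}(\mu)$ and $E_{\beta}(\tau^{*}\mu)$. Pulling cohomology back along the automorphism of $G/B$ induced by $\tau$ yields $H^{i}(E_{\beta}(x,y))\cong H^{i}(E_{\alpha}(y,x))$ for all $i$ and all $x,y\in\mathbb{Z}$, so each side vanishes exactly when the other does; it then suffices to match each of the two vanishings of the corollary with one of the two vanishings of \autoref{prop:Eparticulier}. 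For the second one this is immediate: writing $m=ap^{d}+r$ with $r\geq-1$ (legitimate since $m\geq ap^{d}-1$), the isomorphism above turns $H^{2}(E_{\alpha}(-ap^{d},m))$ into $H^{2}(E_{\beta}(ap^{d}+r,-ap^{d}))$, which is zero by the first part of \autoref{prop:Eparticulier}.

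For the first vanishing, the same isomorphism reduces $H^{2}(E_{\beta}(-n-2,ap^{d}))=0$ to $H^{2}(E_{\alpha}(ap^{d},-n-2))=0$, which I would compare with the second part of \autoref{prop:Eparticulier}, i.e.\ $H^{2}(E_{\alpha}((a'+1)p^{d'},-a'p^{d'}-s-2))=0$ for $a'\in\{1,\dots,p-1\}$, $d'\geq0$, $s\leq p^{d'}-1$. As long as $ap^{d}\geq2$ one can write $ap^{d}=(a'+1)p^{d'}$ with $a'\in\{1,\dots,p-1\}$ and $d'\geq0$ (take $a'=a-1$, $d'=d$ when $a\geq2$, and $a'=p-1$, $d'=d-1$ when $a=1$); putting $s=n-a'p^{d'}$, the bound $n\leq ap^{d}-1$ forces $s\leq p^{d'}-1$ and one has $-n-2=-a'p^{d'}-s-2$, so \autoref{prop:Eparticulier} applies on the nose.

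The only case this substitution misses is $ap^{d}=1$, that is $a=1$, $d=0$, where one must show $H^{2}(E_{\alpha}(1,-n-2))=0$ for every $n\leq0$. Here I would argue directly from the defining sequence $0\to(-1,-n-1)\to E_{\alpha}(1,-n-2)\to(1,-n-2)\to0$: since $\langle(-1,-n-1)+\rho,\alpha^{\vee}\rangle=0$ the weight $(-1,-n-1)$ lies on a wall, so all of its cohomology vanishes and $H^{2}(E_{\alpha}(1,-n-2))\cong H^{2}(1,-n-2)$; and for $n\leq0$ the weight $(1,-n-2)$ is either dominant ($n\leq-2$), on a wall ($n=-1$), or in an $H^{1}$-chamber outside $\Gr$ ($n=0$), so $H^{2}(1,-n-2)=0$ in all cases by Kempf vanishing and \autoref{rmk:Griffithcondition}. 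The one place that needs care is the arithmetic of writing $ap^{d}$ as $(a'+1)p^{d'}$ and isolating this single degenerate weight; apart from that, the proof is a purely formal transport of \autoref{prop:Eparticulier} under $\tau$.
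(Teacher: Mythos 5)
Votre preuve est correcte et suit essentiellement l'argument du papier : le corollaire y est déduit de la \autoref{prop:Eparticulier} par la symétrie échangeant $\alpha$ et $\beta$ (l'automorphisme de graphe, qui donne bien $H^{i}(E_{\beta}(x,y))\cong H^{i}(E_{\alpha}(y,x))$), invoquée en une ligne. Votre comptabilité supplémentaire — la réécriture $ap^{d}=(a'+1)p^{d'}$ et la vérification directe du cas dégénéré $a=1$, $d=0$ via la suite $0\to(-1,-n-1)\to E_{\alpha}(1,-n-2)\to(1,-n-2)\to 0$ — explicite correctement le réindexage que le « par symétrie » du papier laisse implicite.
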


\begin{proposition}\label{thm:Edelta}
  Soit \(\mu=(ap^{d}+r,-ap^{d}-s-2)\) avec \(a\in \{1,2,\cdots,p-1\}\)
  et \(d\geq 0\).
  \begin{enumerate}[(i)]
  \item Si \(1\leq r\leq p^{d}\) et \(0\leq s\leq p^{d}-1\),
  alors \( H^{2}(E_{\alpha}(\mu)) \) admet la filtration à trois
  étages suivante:
  \begin{equation}
    \label{eq:reccuEalpha}
     H^{2}(E_{\alpha}(\mu))=
     \begin{array}{|c|}
       \hline
     L(0,a-1)^{(d)}\otimes 
       H^{3}(E_{\alpha}(\mu+(-a-1,a)p^{d}))\vphantom{\int_{A}^{B}}\\
       \hline
       L(0,a-2)^{(d)}\otimes H^{2}(E_{\alpha}(\mu+(-a-1,a+1)p^{d}))\vphantom{\int_{A}^{B}}\\
       \hline
        L(0,a)^{(d)}\otimes
       H^{2}(E_{\alpha}(\mu+(-a,a)p^{d}))\vphantom{\int_{A}^{B}}\\
       \hline
     \end{array}.
  \end{equation}
\item Si \(-1\leq r\leq p^{d}-2\) et \(-2\leq s\leq p^{d}-3\),
  alors \( H^{2}(E_{\beta}(\mu) )\) admet la filtration à trois
  étages suivante:
  \begin{equation}\label{reccuEbeta}
    H^{2}(E_{\beta}(\mu))=
     \begin{array}{|c|}
       \hline
     L(0,a-1)^{(d)}\otimes 
       H^{3}(E_{\beta}(\mu+(-a-1,a)p^{d}))\vphantom{\int_{A}^{B}}\\
       \hline
       L(0,a-2)^{(d)}\otimes H^{2}(E_{\beta}(\mu+(-a-1,a+1)p^{d}))\vphantom{\int_{A}^{B}}\\
       \hline
        L(0,a)^{(d)}\otimes
       H^{2}(E_{\beta}(\mu+(-a,a)p^{d}))\vphantom{\int_{A}^{B}}\\
       \hline
     \end{array}.
  \end{equation}
  \end{enumerate}
\end{proposition}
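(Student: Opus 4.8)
The plan is to follow the template of the proof of \autoref{thm2}: apply the functor $H^{\bullet}(G/B,-)$ to Frobenius twists of the three short exact sequences of \S\ref{subsection:L(ab)}, tensored this time by the $B$-module $E_{\alpha}(r,-s-2)$ (resp.\ $E_{\beta}(r,-s-2)$ for (ii)) rather than by a one-dimensional module. First I would record the two identities $E_{\alpha}(\nu)\otimes p^{d}\eta\cong E_{\alpha}(\nu+p^{d}\eta)$ and $E_{\alpha}(r,-s-2)\otimes(ap^{d},-ap^{d})\cong E_{\alpha}(\mu)$ (and likewise for $E_{\beta}$). Twisting the three sequences of \S\ref{subsection:L(ab)} by the $d$-th power of Frobenius and tensoring by $E_{\alpha}(r,-s-2)$ then yields, exactly as in \S\ref{subsubsection:foncteurinduction}, short exact sequences of $B$-modules $0\to\widetilde{K}_{a}\to L(0,a)^{(d)}\otimes E_{\alpha}(r,-s-2)\to L(0,a-1)^{(d)}\otimes E_{\alpha}(r,p^{d}-s-2)\to 0$, then $0\to\widetilde{M}_{a}\to\widetilde{K}_{a}\to E_{\alpha}(\mu)\to 0$, and $0\to\widetilde{M}_{a}\to L(0,a-1)^{(d)}\otimes E_{\alpha}(\mu+(-a-1,a)p^{d})\to L(0,a-2)^{(d)}\otimes E_{\alpha}(\mu+(-a-1,a+1)p^{d})\to 0$, where $\widetilde{K}_{a}=K_{a}^{(d)}\otimes E_{\alpha}(r,-s-2)$, $\widetilde{M}_{a}=M_{a}^{(d)}\otimes E_{\alpha}(r,-s-2)$, and $E_{\alpha}(r,-s-2)=E_{\alpha}(\mu+(-a,a)p^{d})$.

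Next I would establish the required vanishing. From $1\le r\le p^{d}$ and $0\le s\le p^{d}-1$ one checks, treating the two weights occurring in each $E_{\alpha}$ by Borel--Weil--Bott (\cite{Jan03} II.5.5), and by \autoref{lemmaE0} when a first coordinate vanishes (i.e.\ when $r=p^{d}$), that: (a) $H^{i}(E_{\alpha}(r,p^{d}-s-2))=0$ for all $i\ge 1$, since those weights are dominant or singular; (b) $H^{3}(E_{\alpha}(r,-s-2))=0$, since the relevant Serre duals are not dominant; (c) $H^{i}(E_{\alpha}(\mu+(-a-1,a)p^{d}))=0$ for $i<3$, since those weights lie in $w_{0}\cdot C$ or on a wall; (d) $H^{3}(E_{\alpha}(\mu+(-a-1,a+1)p^{d}))=0$. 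Feeding (a) into the first sequence, the tensor identity (\cite{Jan03} I.4.8) gives $H^{2}(\widetilde{K}_{a})\cong L(0,a)^{(d)}\otimes H^{2}(E_{\alpha}(\mu+(-a,a)p^{d}))$ and $H^{3}(\widetilde{K}_{a})=0$; feeding (c) and (d) into the third sequence gives $H^{2}(\widetilde{M}_{a})\cong L(0,a-2)^{(d)}\otimes H^{1}(E_{\alpha}(\mu+(-a-1,a+1)p^{d}))$ together with a short exact sequence $0\to L(0,a-2)^{(d)}\otimes H^{2}(E_{\alpha}(\mu+(-a-1,a+1)p^{d}))\to H^{3}(\widetilde{M}_{a})\to L(0,a-1)^{(d)}\otimes H^{3}(E_{\alpha}(\mu+(-a-1,a)p^{d}))\to 0$.

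Then I would kill the connecting map $f\colon H^{2}(\widetilde{M}_{a})\to H^{2}(\widetilde{K}_{a})$ in the long exact sequence attached to the middle short exact sequence. By the Strong Linkage Principle (as in \autoref{lemma:poidsrestreints}) all composition factors of $H^{2}(E_{\alpha}(\mu+(-a,a)p^{d}))$ and of $H^{1}(E_{\alpha}(\mu+(-a-1,a+1)p^{d}))$ are $p^{d}$-restricted, so by the Steinberg tensor product theorem every composition factor of $H^{2}(\widetilde{K}_{a})$ has the form $L((0,a)p^{d}+\eta)$ while every one of $H^{2}(\widetilde{M}_{a})$ has the form $L((0,a-2)p^{d}+\eta')$ with $\eta,\eta'$ dominant and $p^{d}$-restricted; hence the two modules share no composition factor (and $H^{2}(\widetilde{M}_{a})=0$ outright when $a=1$, since then $L(0,a-2)=0$). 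Thus $f=0$, and since $H^{3}(\widetilde{K}_{a})=0$ the long exact sequence yields $0\to H^{2}(\widetilde{K}_{a})\to H^{2}(E_{\alpha}(\mu))\to H^{3}(\widetilde{M}_{a})\to 0$. Splicing this with the short exact sequence for $H^{3}(\widetilde{M}_{a})$ obtained above gives exactly the three-step filtration \eqref{eq:reccuEalpha}. Statement (ii) then follows from the verbatim-parallel computation with $\alpha$ and $\beta$ interchanged and with $r,s$ in the ranges of (ii), using the analogues of (a)--(d) (with \autoref{lemmaE0} invoked when a second coordinate vanishes).

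Once the long exact sequences degenerate the whole argument is formal, so the real work lies entirely in the vanishing input (a)--(d): one must go carefully through all the auxiliary weights and confirm the required vanishing uniformly over the stated ranges, in particular at the boundary values $r\in\{1,p^{d}\}$, $s\in\{0,p^{d}-1\}$ (and $r\in\{-1,p^{d}-2\}$, $s\in\{-2,p^{d}-3\}$ for (ii)), where several of those weights become singular and one must appeal to \autoref{lemmaE0} in place of Borel--Weil--Bott.
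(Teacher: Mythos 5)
Votre démonstration est correcte et suit essentiellement la preuve du texte : mêmes trois suites exactes tordues par Frobenius et tensorisées par \(E_{\delta}(\mu')\), mêmes annulations (a)--(d), annulation du morphisme \(f\) par disjonction des facteurs de composition (poids \(p^{d}\)-restreints via le principe de liaison forte), puis recollement des deux suites exactes courtes. Seule petite divergence : pour (ii) au bord \(s=-2\), le texte traite ce cas à part en montrant que tous les termes sont nuls (\autoref{prop:Eparticulier} et \autoref{cor:Eparticulier}), car \(\mu''=(-p^{d}+r,p^{d})\) sort du champ littéral du \autoref{lemma:poidsrestreints}; votre extension \og verbatim\fg{} fonctionne, mais il faut alors refaire à la main le calcul de restriction des poids (ou invoquer le \autoref{lemmaE0}) plutôt que citer le lemme tel quel.
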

\begin{proof}
\fbox{Montrons d'abord (i).} Écrivons
\(E_{\alpha}(\mu)=E(\mu)\) pour abréger.

Soit \(\mu=(ap^{d}+r,-ap^{d}-s-2)\) avec \(1\leq r\leq p^{d}\) et
\(0\leq s\leq p^{d}-1\).

Notons \(\mu'=(r,-s-2)=\mu\otimes (-a,a)p^{d}\) et
\(\mu''=(-p^{d}+r,p^{d}-s-2)=\mu\otimes (-a-1,a+1)p^{d}\).
Alors \(E(\mu')\cong
E(\mu)\otimes (-a,a)p^{d}\) et \(E(\mu'')\cong E(\mu)\otimes
(-a-1,a+1)p^{d}\).
Appliquons la \(d\)-ième puissance du morphisme de 
Frobenius à (\ref{def-pfiltKEbeta}),(\ref{def-pfiltMEbeta}),
(\ref{def-pfiltQEbeta}) et tensorisons par \(E(\mu')\). Désignons les
modules ainsi obtenus  par \(\widetilde{K}_{a}\), \(\widetilde{M}_{a}\) et
\(\widetilde{Q}_{a}\). On obtient des suites exactes:
\begin{equation}\label{KEalpha} 
\xymatrix{0 \ar[r] & \widetilde{K}_{a} \ar[r] & L(0,a)^{(d)}\otimes
  E(\mu') \ar[r] & L(0,a-1)^{(d)}\otimes E(r,p^{d}-s-2) \ar[r] & 0. 
}
\end{equation} 
\begin{equation}\label{MEalpha} 
\xymatrix{0 \ar[r] & \widetilde{M}_{a} \ar[r] & \widetilde{K}_{a} \ar[r] & E(\mu) \ar[r] & 0
}
\end{equation} 
\begin{equation}\label{MKEalpha} 
\xymatrix{0\ar[r] & \widetilde{M}_a \ar[r] & L(0,a-1)^{(d)} \otimes
   E(r-p^{d},-s-2) \ar[r] & L(0,a-2)^{(d)} \otimes
  E(\mu'') \ar[r] & 0. 
}
\end{equation}

Si \( r\leq p^{d}-1 \), alors \((r-p^{d},-s-2)\) et
\((r-p^{d},-s-2)-\alpha=(r-2-p^{d},-s-1)\) sont dans \( w_{0}\cdot C
\) donc n'ont  de la
cohomologie qu'en degré 3. Si \( r=p^{d} \), alors \( H^{i}(E(r-p^{d},-s-2))=0 \)
pour tout \( i \) d'après le \autoref{lemmaE0}. Donc dans tous les cas, on
a \(H^{i}(E(r-p^{d},-s-2))=0\) si
\(i\neq 3\). De plus, comme \( s\leq p^{d}-1\), alors \(
\mu''=(-p^{d}+r,p^{d}-s-2) \) et \(
\mu''-\alpha=(-p^{d}+r-2,p^{d}-s-1) \) n'ont pas de cohomologie en
degré \( 3 \), donc \( H^{3}(E(\mu''))=0 \).
Donc  d'après \eqref{MKEalpha} on obtient l'isomorphisme
\begin{equation*}
  H^{2}(\widetilde{M}_{a})\cong L(0,a-2)^{(d)}\otimes H^{1}(E(\mu''))
\end{equation*}
et la suite exacte
\begin{equation}
  \label{eq:M3Ealpha}
    0\to L(0,a-2)^{(d)}\otimes
    H^{2}(E(\mu''))\to H^{3}(\widetilde{M}_{a})\to
    L(0,a-1)^{(d)}\otimes
    H^{3}(E(r-p^{d},-s-2))\to 0.
\end{equation}

Comme \((r,p^{d}-s-2)\) et \((r,p^{d}-s-2)-\alpha=(r-2,p^{d}-s-1)\) n'ont de la
cohomologie qu'en degré \(0\) car \( r\geq 1 \) et \( s\leq p^{d}-1
\), on a  \(H^{i}(E(r,p^{d}-s-2))=0\) si \(i\neq 0\). De plus, comme \(
\mu'=(r,-s-2) \) et \( \mu'-\alpha=(r-2,-s) \) n'ont pas de
cohomologie en degré \( 3 \), on a \( H^{3}(E(\mu'))=0 \). Donc d'après
(\ref{KEalpha}) on a \(H^{2}(\widetilde{K}_{a})\cong L(0,a)^{(d)}\otimes
H^{2}(E(\mu'))\) et \(H^{3}(\widetilde{K}_{a})\cong L(0,a)^{(d)}\otimes
H^{3}(E(\mu'))=0\).
 D'après (\ref{MEalpha}), on a
\begin{equation}\hskip-9mm
  \label{eq:MKEalphalong}
\xymatrix{H^{2}(\widetilde{M}_{a})\ar[r]^{f}&H^{2}(\widetilde{K}_{a})\ar[r]&H^{2}(E(\mu))\ar[r]&H^{3}(\widetilde{M}_{a})\ar[r]&0}
\end{equation}
car \( H^{3}(\widetilde{K}_{a})=0 \).

Par ailleurs, si \( r=p^{d} \), alors \(
H^{i}(E(\mu''))=H^{i}(E(0,p^{d}-s-2))=0 \) pour tout \( i \) d'après
le \autoref{lemmaE0}. Si \( r\leq p^{d}-1 \), alors
  \begin{align*}
\KF(H^{1}(E(\mu'')))&\subset \KF(H^{1}(\mu''))\cup
    \KF(H^{1}(\mu''-\alpha))\\
   & =\KF(H^{1}(-p^{d}+r,p^{d}-s-2))\cup \KF(H^{1}(-p^{d}+r-2,p^{d}-s-1)),
  \end{align*}
donc tout plus haut poids d'un facteur de
composition de \(H^{1}(E(\mu''))\) est \(p^{d}\)-restreint d'après le
\autoref{lemma:poidsrestreints}. De même,
\begin{align*}
\KF(H^{2}(E(\mu')))&\subset \KF(H^{2}(\mu''))\cup
    \KF(H^{2}(\mu'-\alpha))\\
   & =\KF(H^{2}(r,-s-2))\cup \KF(H^{2}(r-2,-s-1)),
\end{align*}
 donc tout plus haut poids d'un facteur de composition de \(H^{2}(E(\mu'))\)
est \(p^{d}\)-restreint d'après le \autoref{lemma:poidsrestreints} (en
fait, d'après la preuve du \autoref{lemma:poidsrestreints}, on peut
voir que tout plus haut poids de \( H^{i}(p^{d},-s-2) \) est aussi \(
p^{d} \)-restreint si \( s\geq -1 \)). Donc
\[\KF(H^{2}(\widetilde{M}_{a}))\cap \KF(H^{2}(\widetilde{K}_{a}))=\varnothing\]
car \(H^{2}(\widetilde{M}_{a})\cong L(0,a-2)^{(d)}\otimes
H^{1}(E(\mu''))\) et \(H^{2}(\widetilde{K}_{a})\cong L(0,a)^{(d)}\otimes
H^{2}(E(\mu'))\). Donc \(f=0\) dans (\ref{eq:MKEalphalong}).

En conclusion, si \( \mu=(ap^{d}+r,-ap^{d}-s-2) \) avec
\(1\leq r\leq p^{d}\) et \( 0\leq s\leq p^{d}-1 \), 
alors on a une filtration à trois étages de \(H^{2}(E(\mu))\), donnée
par \eqref{eq:M3Ealpha} et par:
\begin{equation}
  \label{eq:filtEalpha1}
  \xymatrix{0\ar[r]&L(0,a)^{(d)}\otimes H^{2}(E(\mu'))\ar[r]&H^{2}(E(\mu))\ar[r]&H^{3}(\widetilde{M}_{a})\ar[r]&0.}
\end{equation}
Ceci preuve (i).

\fbox{Montrons maintenant (ii).} Écrivons \(E_{\beta}(\mu)=E(\mu)\) pour abréger.
Supposons que \(\mu=(ap^{d}+r,-ap^{d}-s-2)\) avec \(-1\leq r\leq p^{d}-2\) et
\(-1\leq s\leq p^{d}-3\) (On traitera le cas \( s=-2 \) à la fin).

Notons \(\mu'=(r,-s-2)=\mu\otimes (-a,a)p^{d}\) et
\(\mu''=(-p^{d}+r,p^{d}-s-2)=\mu\otimes (-a-1,a+1)p^{d}\).
Alors \(E(\mu')\cong
E(\mu)\otimes (-a,a)p^{d}\) et \(E(\mu'')\cong E(\mu)\otimes
(-a-1,a+1)p^{d}\).
Appliquons la \(d\)-ième puissance du morphisme de 
Frobenius à (\ref{def-pfiltKEbeta}),(\ref{def-pfiltMEbeta}),
(\ref{def-pfiltQEbeta}) et tensorisons par \(E(\mu')\). Désignons les
modules ainsi obtenus par \(\widetilde{K}_{a}\), \(\widetilde{M}_{a}\) et
\(\widetilde{Q}_{a}\). On obtient les suites exactes
\begin{equation}\label{KEbeta} 
\xymatrix{0 \ar[r] & \widetilde{K}_{a} \ar[r] & L(0,a)^{(d)}\otimes
  E(\mu') \ar[r] & L(0,a-1)^{(d)}\otimes E(r,p^{d}-s-2) \ar[r] & 0. 
}
\end{equation} 

\begin{equation}\label{MEbeta} 
\xymatrix{0 \ar[r] & \widetilde{M}_{a} \ar[r] & \widetilde{K}_{a} \ar[r] & E(\mu) \ar[r] & 0
}
\end{equation} 

\begin{equation}\label{QEbeta} 
\xymatrix{0 \ar[r] & E(-ap^{d}+r,-s-2) \ar[r] & \widetilde{M}_{a} \ar[r] & \widetilde{Q}_{a} \ar[r] & 0. 
}
\end{equation}
\begin{equation}\label{MKEbeta} 
\xymatrix{0\ar[r] & \widetilde{M}_a \ar[r] & L(0,a-1)^{(d)} \otimes
   E(r-p^{d},-s-2) \ar[r] & L(0,a-2)^{(d)} \otimes
  E(\mu'') \ar[r] & 0. 
}
\end{equation}

Comme \((r-p^{d},-s-2)\) et \((r-p^{d},-s-2)-\beta=(r+1-p^{d},-s-4)\)
sont dans \( w_{0}\cdot C \) donc n'ont  de la
cohomologie qu'en degré \(3\), on a \(H^{i}(E(r-p^{d},-s-2))=0\) si
\(i\neq 3\) et la suite exacte:
\begin{equation}
  \label{eq:Erps2}
  \xymatrix{0\ar[r]&V(s+2,p^{d}-r-3)\ar[r]&H^{3}(E(r-p^{d},-s-2))\ar[r]&V(s,p^{d}-r-2)\ar[r]&0.}
\end{equation}
Donc par (\ref{MKEbeta}) on obtient
\begin{equation*}
  H^{2}(\widetilde{M}_{a})\cong L(0,a-2)^{(d)}\otimes H^{1}(E(\mu''))
\end{equation*}
et la suite exacte
\begin{equation}\hskip-9mm
  \label{eq:M3Ebeta}
  \begin{tikzcd}
    0\ar[r]&L(0,a-2)^{(d)}\otimes
    H^{2}(E(\mu''))\ar[r]&H^{3}(\widetilde{M}_{a})\ar[overlay,out=-10,in=170]{dl}\\
   & L(0,a-1)^{(d)}\otimes H^{3}(E(r-p^{d},-s-2))\ar[r]&0.
  \end{tikzcd}
\end{equation}

Comme \((r,p^{d}-s-2)\) et \((r,p^{d}-s-2)-\beta=(r+1,p^{d}-s-4)\) n'ont de la
cohomologie qu'en degré 0 car \( r\geq -1 \) et \( s\leq p^{d}-3 \), alors \(H^{i}(E(r,p^{d}-s-2))=0\) si \(i\neq 0\). Donc par
(\ref{KEbeta}) on a \(H^{2}(\widetilde{K}_{a})\cong L(0,a)^{(d)}\otimes
H^{2}(E(\mu'))\) et \(H^{3}(\widetilde{K}_{a})\cong L(0,a)^{(d)}\otimes
H^{3}(E(\mu'))=0\) car \(\mu'\) et \(\mu'-\beta\) n'ont pas de
cohomologie en degré \( 3 \).

Comme \(\mu\) et \(\mu-\beta\) n'ont pas de \(H^{0}\) ni de
\(H^{3}\), on a
\(H^{0}(E(\mu))=H^{3}(E(\mu))=0\). Donc par (\ref{MEbeta}), on a une
suite exacte:
\begin{equation}
  \label{eq:MKEbatalong}
\xymatrix{H^{2}(\widetilde{M}_{a})\ar[r]^{f}&H^{2}(\widetilde{K}_{a})\ar[r]&H^{2}(E(\mu))\ar[r]&H^{3}(\widetilde{M}_{a})\ar[r]&0.}
\end{equation}

Par ailleurs, \(\KF(H^{1}(E(\mu'')))\subset \KF(H^{1}(\mu''))\cup
\KF(H^{1}(\mu''-\beta))\), donc tout plus haut poids d'un facteur de
composition de \(H^{1}(E(\mu''))\) est \(p^{d}\)-restreint d'après le \autoref{lemma:poidsrestreints}. De même,
tout plus haut poids d'un facteur de composition de \(H^{2}(E(\mu'))\)
est \(p^{d}\)-restreint. Donc
\[\KF(H^{2}(\widetilde{M}_{a}))\cap \KF(H^{2}(\widetilde{K}_{a}))=\varnothing\]
car \(H^{2}(\widetilde{M}_{a})\cong L(0,a-2)^{(d)}\otimes
H^{1}(E(\mu''))\) et \(H^{2}(\widetilde{K}_{a})\cong L(0,a)^{(d)}\otimes
H^{2}(E(\mu'))\). Donc \(f=0\) dans (\ref{eq:MKEbatalong}).

En conclusion, si \( \mu=(ap^{d}+r,-ap^{d}-s-2) \) avec
\(-1\leq r\leq p^{d}-2\) et \( -1\leq s\leq p^{d}-3 \),  alors on a
une filtration à trois étages pour \(H^{2}(E(\mu))\) donnée par
\eqref{eq:M3Ebeta} et par:
\begin{equation}
  \label{eq:filtEbeta1}
  \xymatrix{0\ar[r]&L(0,a)^{(d)}\otimes H^{2}(E(\mu'))\ar[r]&H^{2}(E(\mu))\ar[r]&H^{3}(\widetilde{M}_{a})\ar[r]&0}.
\end{equation}

Cette filtration implique \eqref{reccuEbeta} pour \( -1\leq r\leq
p^{d}-2 \) et \( -1\leq s\leq p^{d}-3 \).

Il reste à montrer  \eqref{reccuEbeta} pour \( s=-2 \) et \( -1\leq r\leq
p^{d}-2 \). Dans ce cas, \( \mu=(ap^{d}+r,-ap^{d}) \), donc d'après la
\autoref{prop:Eparticulier}, on a \( H^{2}(E_{\beta}(\mu))=0 \).
Comme \( \mu+(-a-1,a)p^{d}=(r-p^{d},0) \) et \( \mu+(-a,a)p^{d}=(r,0)
\), on a \[
  H^{3}(E_{\beta}(\mu+(-a-1,a)p^{d}))=H^{2}(E_{\beta}(\mu+(-a,a)p^{d}))=0 \]
d'après le \autoref{lemmaE0}. 
En outre, posons \[ \mu''=(\mu+(-a-1,a+1)p^{d})=(r-p^{d},p^{d}), \]
alors \( H^{2}(E_{\beta}(\mu''))=0 \) d'après
le \autoref{cor:Eparticulier} car \( r\geq -1 \). Donc les deux
membres de \eqref{reccuEbeta} sont nuls.  Ceci termine la preuve de
(ii) et donc de la \autoref{thm:Edelta}.

\end{proof}

\subsection{La \texorpdfstring{\(p\)}{p}-filtration de Jantzen} \label{subsection:Jantzen}

Tandis que Jantzen utilise une suite de composition arbitraire de
\(\widehat{Z}(\mu)\) pour induire une \(p\)-filtration de \(H^{0}(\mu)\)
(et de \(H^{3}(w_{0}\cdot \mu)\) par dualité) pour \( \mu \) dominant,
je vais utiliser une
D-filtration de \(\widehat{Z}(\mu)\). 

\begin{lemma}\label{lemma:filtrationinduite}
  Soient \( G \) un schéma en groupes réductif déployé sur un corps \( k \) et
  \( H \) un sous-groupe fermé. Soit \( N \) un \( H
  \)-module qui admet une filtration : 
  \(0=N_{0}\subset N_{1}\subset\cdots\subset N_{\ell}=N.\)
  Posons \( L_{i}=N_{i}/N_{i-1} \) pour \( i\in\{1,2,\cdots,\ell\}
  \). Si pour un \( n\in\mathbb{N} \) on a 
    \( \ch R^{n}\ind_{H}^{G}(N)=\sum_{i=1}^{\ell}\ch R^{n}\ind_{H}^{G}(L_{i}) \),
  alors pour \( i=1,2,\cdots,\ell\), \( R^{n}\ind_{H}^{G}(N_{i-1})\)
  est un sous-module de \( R^{n}\ind_{H}^{G}(N_{i}) \) et l'on a :
  \[R^{n}\ind_{H}^{G}(N_{i})/R^{n}\ind_{H}^{G}(N_{i-1})\cong
    R^{n}\ind_{H}^{G}(L_{i}).\]
\end{lemma}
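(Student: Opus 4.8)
Lemme \ref{lemma:filtrationinduite} demande de montrer que, sous une hypothèse numérique sur les caractères, le foncteur dérivé $R^n\ind_H^G$ « respecte » une filtration donnée de $N$.

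Voici le plan. L'idée centrale est que l'on dispose toujours, sans hypothèse, des suites exactes longues de cohomologie associées aux suites exactes courtes $0\to N_{i-1}\to N_i\to L_i\to 0$ ; l'hypothèse sur les caractères sert uniquement à forcer les morphismes de connexion à être nuls aux bons endroits. Plus précisément, je procéderais par récurrence sur $i$. Pour $i=1$ il n'y a rien à faire puisque $N_0=0$ et $N_1=L_1$. Supposons le résultat acquis pour $N_{i-1}$, c'est-à-dire que l'on dispose d'une filtration de $R^n\ind_H^G(N_{i-1})$ de sous-quotients $R^n\ind_H^G(L_1),\dots,R^n\ind_H^G(L_{i-1})$ ; en particulier $\ch R^n\ind_H^G(N_{i-1})=\sum_{j=1}^{i-1}\ch R^n\ind_H^G(L_j)$.

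La suite exacte courte $0\to N_{i-1}\to N_i\to L_i\to 0$ donne la portion de suite exacte longue
\[
R^{n}\ind_H^G(N_{i-1})\xrightarrow{\varphi} R^{n}\ind_H^G(N_{i})\xrightarrow{\psi} R^{n}\ind_H^G(L_i).
\]
Je montrerais d'abord que $\varphi$ est injectif, puis que $\psi$ est surjectif ; cela donnera la suite exacte courte $0\to R^n\ind_H^G(N_{i-1})\to R^n\ind_H^G(N_i)\to R^n\ind_H^G(L_i)\to 0$, donc le sous-quotient voulu et, par récurrence, la filtration complète. L'injectivité de $\varphi$ : le noyau de $\varphi$ est un quotient de $R^{n-1}\ind_H^G(L_i)$ via le morphisme de connexion. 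La surjectivité de $\psi$ : le conoyau de $\psi$ s'injecte dans $R^{n+1}\ind_H^G(N_{i-1})$. Je voudrais utiliser l'hypothèse de caractères pour annuler les deux. L'argument décisif est le suivant : pour tout $j$, la suite exacte longue de $0\to N_{j-1}\to N_j\to L_j\to 0$ et une récurrence (descendante en degré cohomologique, ou bien un argument d'additivité du caractère sur une suite exacte) montrent que $\sum_{m}(-1)^m\ch R^m\ind_H^G(N)=\sum_{j=1}^\ell\sum_m(-1)^m\ch R^m\ind_H^G(L_j)$ ; combiné avec l'hypothèse $\ch R^n\ind_H^G(N)=\sum_j\ch R^n\ind_H^G(L_j)$, et en itérant sur les $N_i$, on obtient $\ch R^n\ind_H^G(N_i)=\sum_{j=1}^i\ch R^n\ind_H^G(L_j)$ pour tout $i$. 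Par additivité du caractère dans la suite exacte à six termes autour de $\varphi$ et $\psi$ (les autres termes étant les images/noyaux des connexions), l'égalité de caractères $\ch R^n\ind_H^G(N_i)=\ch R^n\ind_H^G(N_{i-1})+\ch R^n\ind_H^G(L_i)$ force $\ker\varphi=0$ et $\coker\psi=0$, car ce sont des modules de caractère nul donc nuls.

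Le point délicat — et il faut le traiter avec soin — est la propagation de l'hypothèse de caractères de $N$ aux étages intermédiaires $N_i$ : \emph{a priori} on sait seulement $\ch R^n\ind_H^G(N)=\sum_j\ch R^n\ind_H^G(L_j)$, pas l'analogue pour chaque $N_i$. Je l'obtiendrais ainsi : en degré $n$, additivité dans la suite exacte longue donne toujours l'inégalité (coefficient par coefficient dans $\mathbb{Z}[X(T)]$, tous les termes étant des caractères de modules de dimension finie à coefficients positifs)
\[
\ch R^n\ind_H^G(N)\ \leq\ \sum_{j=1}^\ell \ch R^n\ind_H^G(L_j),
\]
par exemple en découpant $N$ étage par étage et en utilisant que dans $A\xrightarrow{}R^n(N_j)\xrightarrow{}R^n(L_j)$ issu de chaque suite exacte courte on a $\ch R^n(N_j)\le \ch R^n(N_{j-1})+\ch R^n(L_j)$. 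L'hypothèse dit que cette inégalité globale est une égalité ; comme c'est une somme d'inégalités, chacune est une égalité, d'où $\ch R^n\ind_H^G(N_i)=\ch R^n\ind_H^G(N_{i-1})+\ch R^n\ind_H^G(L_i)$ pour tout $i$, ce qui est exactement ce dont on a besoin à l'étape $i$ de la récurrence. Une fois ce lemme numérique en place, la conclusion est immédiate par ce qui précède.
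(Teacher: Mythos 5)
Votre preuve est correcte : le papier ne donne d'ailleurs aucune démonstration (\og la preuve est standard et laissée au lecteur\fg{}), et votre argument est précisément la preuve standard attendue — sous-additivité de \( \ch R^{n}\ind_{H}^{G} \) le long de chaque suite exacte courte \( 0\to N_{i-1}\to N_{i}\to L_{i}\to 0 \), télescopage pour déduire de l'hypothèse l'égalité \( \ch R^{n}\ind_{H}^{G}(N_{i})=\ch R^{n}\ind_{H}^{G}(N_{i-1})+\ch R^{n}\ind_{H}^{G}(L_{i}) \) à chaque cran, puis annulation de \( \ker\varphi \) et \( \coker\psi \) dans la suite exacte longue, puisque ce sont des modules de caractère nul. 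Seule remarque : la digression sur la caractéristique d'Euler est superflue (et, à elle seule, ne suffirait pas, l'hypothèse ne portant que sur le degré \( n \)), mais vous la remplacez ensuite par l'argument d'inégalités coefficient par coefficient, qui est le bon ; tout ceci suppose implicitement des modules de dimension finie (ou du moins à espaces de poids de dimension finie) pour que les caractères aient un sens, ce qui est le cas dans toutes les applications du lemme.
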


La preuve est standard et laissée au lecteur.

\begin{proposition}\label{thm:weyl}
  Soit \(\lambda=(x,y)\) un poids tel que \(x,y \geq -1\).
   D'après le paragraphe \ref{subsection:Zhat}, il existe une D-filtration
\( 0=N_{0}\subset N_{1}\subset N_{2}\subset\cdots \subset
N_{\ell}=\widehat{Z}(w_{0}\cdot \lambda)=\widehat{Z}(-y-2,-x-2) \)
 telle que \(N_{i}/N_{i-1}\cong
\widehat{L}(\nu_{i}^{0})\otimes E_{\delta_{i}}(\nu_{i}^{1})^{(1)}\) où
\(\delta_{i}\in \{0,\alpha,\beta\}\) (donc \(\ell=1,3,4,\) ou \(7\)).
 
Alors il existe une
filtration 
\( 0=\widetilde{N_{0}}\subset \widetilde{N_{1}}\subset\cdots \subset
\widetilde{N_{\ell}}=V(\lambda) \)
 de \(H^{3}(-y-2,-x-2)\cong V(\lambda)\) telle que
\( \widetilde{N_{i}}/\widetilde{N_{i-1}}\cong L(\nu_{i}^{0})\otimes H^{3}(E_{\delta_{i}}(\nu_{i}^{1}))^{(1)} \)
pour tout \(i\in\{1,2,\cdots,\ell\}\).

De plus, pour tout \(i\in\{1,2,\cdots,\ell\}\) et tout \( j\neq 3 \), on a
\(H^{j}(E_{\delta_{i}}(\nu_{i}^{1}))=0\). 
\end{proposition}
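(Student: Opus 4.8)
The plan is to deduce the whole statement from the ``moreover'' clause by an exact‑functor argument, after reducing the cohomology of \(G/B\) to that of \(G/BG_1\). Since \(BG_1/B\) is affine, \(\ind_B^{BG_1}\) is exact and \(\ind_B^{BG_1}(\mu)=\widehat Z(\mu)\), so the Grothendieck spectral sequence for \(\ind_B^G=\ind_{BG_1}^G\circ\ind_B^{BG_1}\) degenerates and \(H^j(\mu)=R^j\ind_{BG_1}^G(\widehat Z(\mu))=H^j(G/BG_1,\widehat Z(\mu))\) for all \(j\). First I would record the computation used throughout: for \(\nu^0\in X_1(T)\) and any \(B\)-module \(V\), \(R^j\ind_{BG_1}^G(\widehat L(\nu^0)\otimes V^{(1)})\cong L(\nu^0)\otimes H^j(G/B,V)^{(1)}\); indeed \(\widehat L(\nu^0)=\res_{BG_1}^G L(\nu^0)\), so the tensor identity gives \(L(\nu^0)\otimes R^j\ind_{BG_1}^G(V^{(1)})\), and under the Frobenius isomorphism \(G/BG_1\cong(G/B)^{(1)}\) one has \(R^j\ind_{BG_1}^G(V^{(1)})\cong H^j(G/B,V)^{(1)}\). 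Every \(\nu_i^0\) occurring in a D‑filtration quotient is \(p\)-restricted by the explicit lists of \S\ref{subsection:Zhat}, so in particular \(R^j\ind_{BG_1}^G(\widehat L(\nu_i^0)\otimes E_{\delta_i}(\nu_i^1)^{(1)})\cong L(\nu_i^0)\otimes H^j(E_{\delta_i}(\nu_i^1))^{(1)}\).

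Then I would prove the vanishing \(H^j(E_{\delta_i}(\nu_i^1))=0\) for all \(i\) and all \(j\neq3\). Writing \(w_0\cdot\lambda=(-y-2,-x-2)=\mu^0+p\mu^1\) with \(\mu^0\in X_1(T)\), the relation \(\widehat Z(w_0\cdot\lambda)\cong\widehat Z(\mu^0)\otimes p\mu^1\) shows the D‑filtration at hand is one of the six base filtrations of \S\ref{subsection:Zhat} tensored by \(p\mu^1\), so each \(\nu_i^1\) equals \(\tau_i+\mu^1\) with \(\tau_i\) one of the finitely many ``twist weights'' attached to the \(\widehat L\)'s in the relevant graph; the hypothesis \(x,y\geq-1\) forces both coordinates of \(\mu^1\) to be \(\leq-1\). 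Going through the six cases, each \(\nu_i^1\) falls into one of two kinds. Either \(\delta_i\in\{\alpha,\beta\}\) and \(\nu_i^1\) has vanishing first, resp.\ second, coordinate, and then \(H^\bullet(E_{\delta_i}(\nu_i^1))=0\) identically by \autoref{lemmaE0}. Or else \(\langle\nu_i^1+\rho,\alpha^\vee\rangle\leq0\), \(\langle\nu_i^1+\rho,\beta^\vee\rangle\leq0\), and, when \(\delta_i\neq0\), the same holds for \(\nu_i^1-\delta_i\); in this case any weight \(\eta\) with \(\langle\eta+\rho,\alpha^\vee\rangle\leq0\) and \(\langle\eta+\rho,\beta^\vee\rangle\leq0\) satisfies \(H^j(\eta)=0\) for \(j\neq3\) — if both inequalities are strict then \(-2\rho-\eta\) is dominant and Serre duality together with Kempf vanishing give \(H^j(\eta)=0\) for \(j<3\), and otherwise \(\eta+\rho\) is singular, so all \(H^j(\eta)=0\) — hence the nonsplit sequence \(0\to\nu_i^1-\delta_i\to E_{\delta_i}(\nu_i^1)\to\nu_i^1\to0\) forces \(H^j(E_{\delta_i}(\nu_i^1))=0\) for \(j\neq3\). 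The one slightly delicate case is \(\delta_i=\alpha\) with twist weight \((1,-1)\) (in the \(\alpha\)-singular graph and in the \(\Delta\)-alcôve graph): writing \(\mu^1=(a,b)\) one has \(\nu_i^1=(1+a,-1+b)\), whose first coordinate is \(0\) exactly when \(a=-1\) (first kind), while for \(a\leq-2\) both \(\nu_i^1+\rho=(2+a,b)\) and \((\nu_i^1-\alpha)+\rho=(a,b+1)\) lie in the second kind; the case \(\delta_i=\beta\) with twist weight \((-1,1)\) is symmetric. This proves the ``moreover'' clause.

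Finally I would apply \(R^\bullet\ind_{BG_1}^G\) to \(0=N_0\subset\cdots\subset N_\ell=\widehat Z(w_0\cdot\lambda)\). By the two previous steps \(R^j\ind_{BG_1}^G(N_i/N_{i-1})\cong L(\nu_i^0)\otimes H^j(E_{\delta_i}(\nu_i^1))^{(1)}=0\) for \(j\neq3\), and \(R^j\ind_{BG_1}^G=0\) for \(j>3=\dim G/BG_1\); the long exact sequences then give, by induction on \(i\), \(R^j\ind_{BG_1}^G(N_i)=0\) for \(j\neq3\) and short exact sequences \(0\to R^3\ind_{BG_1}^G(N_{i-1})\to R^3\ind_{BG_1}^G(N_i)\to L(\nu_i^0)\otimes H^3(E_{\delta_i}(\nu_i^1))^{(1)}\to0\). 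Setting \(\widetilde{N_i}=R^3\ind_{BG_1}^G(N_i)=H^3(G/BG_1,N_i)\) produces the asserted filtration, with \(\widetilde{N_\ell}=H^3(G/BG_1,\widehat Z(w_0\cdot\lambda))=H^3(-y-2,-x-2)\cong V(\lambda)\) (the last isomorphism being the definition of \(V(\lambda)\) when \(\lambda\) is dominant, and reading \(0=0\) otherwise); equivalently one invokes \autoref{lemma:filtrationinduite} with \(n=3\), the needed character identity being automatic once all lower cohomology vanishes. The hard part is the middle step: it is not deep, but requires going carefully through the six D‑filtration patterns of \S\ref{subsection:Zhat} and, for the quotients with \(\delta_i\neq0\), combining \autoref{lemmaE0} with the antidominance of \(\nu_i^1\) and \(\nu_i^1-\delta_i\) granted by the hypothesis \(x,y\geq-1\).
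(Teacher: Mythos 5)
Your proposal is correct and takes essentially the same route as the paper: the heart in both cases is checking, via the explicit D-filtration lists of \S\ref{subsection:Zhat} and the \autoref{lemmaE0} for the borderline factors \(E_{\alpha}(a+1,b-1)\) with \(a=-1\) (resp. \(E_{\beta}(a-1,b+1)\) with \(b=-1\)), that every quotient \(\widehat{L}(\nu_{i}^{0})\otimes E_{\delta_{i}}(\nu_{i}^{1})^{(1)}\) has cohomology over \(G/BG_{1}\) concentrated in degree \(3\). The only (harmless) difference is the concluding step: you perform a direct dévissage with long exact sequences, whereas the paper converts the same vanishing into the character identity \(\chi(E_{\delta_{i}}(\nu_{i}^{1}))=-\ch H^{3}(E_{\delta_{i}}(\nu_{i}^{1}))\) and invokes the \autoref{lemma:filtrationinduite}.
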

\begin{proof}
Posons
\(\widetilde{N_{i}}=H^{3}(G/BG_{1},N_{i})\).  D'après le \autoref{lemma:filtrationinduite}, il suffit de montrer
l'égalité suivante:
   \begin{equation}
    \label{eq:pfiltWeyl}
\ch H^{3}(-y-2,-x-2)=\sum_{i=1}^{\ell}\ch L(\nu_{i}^{0})\ch H^{3}(E_{\delta_{i}}(\nu_{i}^{1}))^{(1)}.
\end{equation}

  La caractéristique d'Euler-Poincaré \( \chi(\cdot)=\sum_{i\geq
    0}(-1)^{i}\ch H^{i}(\cdot) \) est additive,  donc 
  \[
 \chi(-y-2,-x-2)=\sum_{i=1}^{\ell}\ch L(\nu_{i}^{0})\chi(E_{\delta_{i}}(\nu_{i}^{1}))^{(1)}.
\]

 Comme \(x,y\geq -1\), on a \(\chi(-y-2,-x-2)=-\ch
 H^{3}(-y-2,-x-2)\). Si \(-y-2=ap+r\) et \(-x-2=bp+s\) avec \(r,s\in
 \{0,1,2,\cdots,p-1\}\), alors \(a,b\leq -1\). D'après le paragraphe
 \ref{subsection:Zhat}, les
 \(E_{\delta_{i}}(\nu_{i}^{1})\) possibles sont:
 \begin{itemize}
 \item
   \((a,b), (a-1,b),(a,b-1),(a-1,b-1)\)
\item \(E_{\alpha}(a+1,b-1),E_{\alpha}(a,b-1), E_{\beta}(a-1,b+1),E_{\beta}(a-1,b)\).
\end{itemize}
Tout poids de la première ligne n'a de la cohomologie qu'en degré \(3\). Pour la
deuxième ligne: \(E_{\alpha}(a,b-1)\) et \(E_{\beta}(a-1,b)\) n'ont
de la cohomologie qu'en degré \(3\). Et \(E_{\alpha}(a+1,b-1)\) n'a de
la cohomologie qu'en degré \(3\) si
\(a\leq -2\); si \(a=-1\), \((a+1,b-1)=(0,b-1)\), donc
\(H^{2}(E_{\alpha}(a+1,b-1))=0\) par le \autoref{lemmaE0}. Donc
\(E_{\alpha}(a+1,b-1)\) n'a de la cohomologie qu'en degré \(3\)
. De même pour \(E_{\beta}(a-1,b+1)\).

Donc on a toujours \( H^{j}(E_{\delta_{i}}(\nu_{i}^{1}))=0 \) si \( j\neq
3 \). Par conséquent, pour tout \( i\in\{1,2,\cdots,\ell\} \)  on a
pour \( j\neq 3 \)
\[H^{j}(G/BG_{1},N_{i}/N_{i-1})\cong L(\nu_{i}^{0})\otimes
  H^{j}(E_{\delta_{i}}(\nu_{i}^{1}))^{(1)}=0\]
 (cf. \cite{Jan03} II.9.13)  et  
\(\chi(E_{\delta_{i}}(\nu_{i}^{1}))=-\ch
H^{3}(E_{\delta_{i}}(\nu_{i}^{1}))\), d'où l'égalité
\eqref{eq:pfiltWeyl}.

\end{proof}

En utilisant la dualité de Serre contravariante, on obtient la
proposition suivante. En fait, on peut
aussi la montrer directement par une preuve analogue.
\begin{proposition}\label{thm:H0pfilt}
  Soit \(\lambda=(x,y)\) un poids tel que \(x,y \geq -1\).
   D'après le paragraphe \ref{subsection:Zhat}, il existe une D-filtration
\( 0=N_{0}\subset N_{1}\subset N_{2}\subset\cdots \subset
N_{\ell}=\widehat{Z}(\lambda) \)
 telle que \(N_{i}/N_{i-1}\cong
\widehat{L}(\nu_{i}^{0})\otimes E_{\delta_{i}}(\nu_{i}^{1})^{(1)}\) où
\(\delta_{i}\in \{0,\alpha,\beta\}\) (donc \(\ell=1,3,4,\) ou \(7\)). 

Alors il existe une
filtration \( 0=\widetilde{N_{0}}\subset \widetilde{N_{1}}\subset\cdots \subset
\widetilde{N_{\ell}}=H^{0}(\lambda) \) de \(H^{0}(\lambda)\)
telle que
\( \widetilde{N_{i}}/\widetilde{N_{i-1}}\cong L(\nu_{i}^{0})\otimes H^{0}(E_{\delta_{i}}(\nu_{i}^{1}))^{(1)} \)
pour tout \(i\in\{1,2,\cdots,\ell\}\).

De plus, pour tout \(i\in\{1,2,\cdots,\ell\}\) et tout \( j\neq 0 \), on a \(H^{j}(E_{\delta_{i}}(\nu_{i}^{1}))=0\).
\end{proposition}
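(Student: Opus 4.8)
The plan is to reproduce the proof of \autoref{thm:weyl} almost verbatim, with $H^{3}$ replaced by $H^{0}$ and $w_{0}\cdot\lambda$ by $\lambda$ (one can alternatively deduce the statement from \autoref{thm:weyl} by contravariant Serre duality, matching up the data of the two D-filtrations). Set $\widetilde{N_{i}}=H^{0}(G/BG_{1},N_{i})$. Since $\ind_{B}^{BG_{1}}$ is exact, one has $H^{0}(G/BG_{1},N_{i}/N_{i-1})\cong L(\nu_{i}^{0})\otimes H^{0}(E_{\delta_{i}}(\nu_{i}^{1}))^{(1)}$ by the tensor identity together with \cite{Jan03} II.9.13, so by \autoref{lemma:filtrationinduite} (applied with $H=BG_{1}$ and $n=0$) it suffices to establish the character identity
\[
\ch H^{0}(\lambda)=\sum_{i=1}^{\ell}\ch L(\nu_{i}^{0})\,\ch H^{0}(E_{\delta_{i}}(\nu_{i}^{1}))^{(1)}.
\]

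Exactly as in \autoref{thm:weyl}, this identity follows from the additivity of the Euler characteristic $\chi=\sum_{j}(-1)^{j}\ch H^{j}$, which gives $\chi(\lambda)=\sum_{i}\ch L(\nu_{i}^{0})\,\chi(E_{\delta_{i}}(\nu_{i}^{1}))^{(1)}$ (here $\chi(G/BG_{1},\widehat{Z}(\lambda))=\chi(G/B,\lambda)=\chi(\lambda)$). Since $x,y\geq -1$, the weight $\lambda$ is dominant or lies on a wall, whence $H^{j}(\lambda)=0$ for $j\neq 0$ and $\chi(\lambda)=\ch H^{0}(\lambda)$. Thus the whole matter reduces to proving the vanishing $H^{j}(E_{\delta_{i}}(\nu_{i}^{1}))=0$ for all $j\neq 0$ — which is moreover the last assertion of the Proposition — for then $\chi(E_{\delta_{i}}(\nu_{i}^{1}))=\ch H^{0}(E_{\delta_{i}}(\nu_{i}^{1}))$ and the Euler identity is precisely the desired character identity.

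To prove this vanishing I would reduce, via $\widehat{Z}(\mu+p\mu')\cong\widehat{Z}(\mu)\otimes p\mu'$ and $E_{\delta}(\nu)\otimes\mu'\cong E_{\delta}(\nu+\mu')$, to the six cases of \autoref{figure:sixcas}: writing $x=ap+r$ and $y=bp+s$ with $(r,s)\in X_{1}(T)$, one has $a,b\geq -1$ (with $a=-1$ forcing $r=p-1$, and $b=-1$ forcing $s=p-1$), and the $B$-modules attached to a D-filtration of $\widehat{Z}(\lambda)$ are the $E_{\delta_{i}}(\nu_{i}^{1}+(a,b))$, where the $\nu_{i}^{1}$ are read off from \eqref{eq:deltafiltrationalpha}, \eqref{eq:deltafiltrationbeta}, \eqref{eq:deltagraphe}, \eqref{eq:nablagraphe} and the trivial cases $5$ and $6$. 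Running through this finite list one checks: every $E_{0}(\nu)$ occurring has both coordinates $\geq -1$, hence has cohomology concentrated in degree $0$; and every $E_{\alpha}(\nu)$ (resp. $E_{\beta}(\nu)$) occurring is either an extension of two weights each with both coordinates $\geq -1$, which again gives cohomology only in degree $0$, or has vanishing first (resp. second) coordinate, in which case $H^{j}=0$ for all $j$ by \autoref{lemmaE0}. This is the mirror image, under $(x,y)\mapsto(-y-2,-x-2)$ and degrees $3\leftrightarrow 0$, of the degree-$3$ analysis carried out in the proof of \autoref{thm:weyl}. Substituting back and applying \autoref{lemma:filtrationinduite} finishes the argument.

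The only genuine work is the bookkeeping over the six graphs in the reduction step; it presents no new difficulty beyond what is already handled for \autoref{thm:weyl}, the one point needing care being to recognise the boundary instances where \autoref{lemmaE0} must be invoked in place of the ``both coordinates $\geq -1$'' criterion.
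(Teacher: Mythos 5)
Votre démonstration est correcte et suit essentiellement la voie du texte : le papier obtient cette proposition à partir de la \autoref{thm:weyl} par dualité de Serre contravariante en signalant qu'« on peut aussi la montrer directement par une preuve analogue », et votre argument est précisément cette preuve analogue (identité de caractères via la caractéristique d'Euler, \autoref{lemma:filtrationinduite}, et la vérification de l'annulation de \(H^{j}(E_{\delta_{i}}(\nu_{i}^{1}))\) pour \(j\neq 0\), y compris les cas limites traités par le \autoref{lemmaE0}). Rien à ajouter.
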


Avec cette filtration, on peut redémontrer l'existence d'une \( p
\)-Weyl-filtration pour tout \( \lambda\in X(T)^{+} \) si \( G=\SL_{3}
\) (cf. \cite{Jan80} 3.13).

Plus précisément, 
supposons \( \lambda=(a,b)\in X(T)^{+} \) et écrivons \(
a=a^{1}p+r \), \( b=b^{1}p+s \) avec \( 0\leq r,s\leq p-1 \). Pour \( \mu=p\mu^{1}+\mu^{0} \), notons \(
\nabla_{p}(\mu)=L(\mu^{0})\otimes H^{0}(\mu^{1})^{(1)} \).  Distinguons les cas suivants.

1) Si \( \lambda \) est de type \( \Delta \), alors les plus hauts poids
des facteurs de composition de \( \widehat{Z}(\lambda) \) sont donnés
par la figure suivante, où \( \lambda_{1}=\lambda \) et les triangles
équilatéraux sont des \( p \)-alcôves:
\begin{figure}[H]
    \centering
    \begin{tikzpicture}[scale=0.7]
      \draw (0,0)--(3,0)--(1.5,-3*sin{60})--cycle; \draw
      (0.5,-sin{60})--(1.5,sin{60})--(2.5,-sin{60})--cycle; \draw
      (1,0)--(2.5,-3*sin{60})--(3,-2*sin{60})--(0,-2*sin{60})--(0.5,-3*sin{60})--(2,0);
      \node[font=\tiny] at (1.5,0.3){\( \lambda_{1} \)}; \node[font=\tiny] at (0.5,-0.3){\( \lambda_{6} \)}; \node[font=\tiny] at
      (1.5,-0.3){\( \lambda_{2} \)}; \node[font=\tiny] at (2.5,-0.3){\( \lambda_{4} \)}; \node[font=\tiny] at
      (1,-sin{60}+0.3){\( \lambda_{7} \)}; \node[font=\tiny] at (2,-sin{60}+0.3){\( \lambda_{8} \)}; \node[font=\tiny] at
      (1,-sin{60}-0.3){\( \lambda_{3} \)}; \node[font=\tiny] at (2,-sin{60}-0.3){\( \lambda_{5} \)}; \node[font=\tiny] at
      (1.5,-2*sin{60}-0.3){\( \lambda_{9} \)};
    \end{tikzpicture}.  
    \caption{type $\Delta$}
    \label{fig:19c27edb97a6ba80}
  \end{figure}
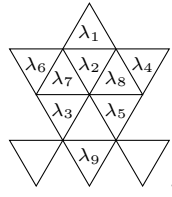

  \begin{rmk}
Si \( \widehat{L}(\lambda') \) est un facteur de composition de \(
\widehat{Z}(\lambda) \), alors \( \lambda'\in W_{p}\cdot \lambda \), donc
il suffit d'indiquer la \( p \)-alcôve contenant \( \lambda' \).
\end{rmk}

 Écrivons \( \lambda_{i}=p\lambda_{i}^{1}+\lambda_{i}^{0} \) avec \(
\lambda_{i}^{0}\in X_{1}(T) \). On sait que \(
\lambda_{5}^{0}=\lambda_{6}^{0} \) et \( \widehat{L}(\lambda_{5}) \) et \(
\widehat{L}(\lambda_{6}) \) forment le facteur \(
\widehat{L}(\lambda_{6}^{0})\otimes E_{\beta}(\lambda_{6}^{1})^{(1)}
\). De même, \( \lambda_{3}^{0}=\lambda_{4}^{0} \) et \( \widehat{L}(\lambda_{3}) \) et \(
\widehat{L}(\lambda_{4}) \) forment le facteur \(
\widehat{L}(\lambda_{4}^{0})\otimes E_{\beta}(\lambda_{4}^{1})^{(1)}
\). Appliquons le foncteur \( \ind_{BG_{1}}^{G}(\bullet) \) aux suites
exactes suivantes:
\begin{displaymath}
  \begin{tikzcd}
    0\ar[r]& \widehat{L}(\lambda_{3})\ar[r]&
    \widehat{L}(\lambda_{4}^{0})\otimes
    E_{\alpha}(\lambda_{4}^{1})^{(1)}\ar[r]&
    \widehat{L}(\lambda_{4})\ar[r]&0,
  \end{tikzcd}
\end{displaymath}

\begin{displaymath}
  \begin{tikzcd}
     0\ar[r]& \widehat{L}(\lambda_{5})\ar[r]&
    \widehat{L}(\lambda_{6}^{0})\otimes
    E_{\beta}(\lambda_{6}^{1})^{(1)}\ar[r]&
    \widehat{L}(\lambda_{6})\ar[r]&0.
  \end{tikzcd}
\end{displaymath}
On obtient
\begin{equation}
  \label{eq:bdd86dd833a6c4fb}
    0\to \nabla_{p}(\lambda_{3})\to L(\lambda_{4}^{0})\otimes
    H^{0}(E_{\alpha}(\lambda_{4}^{1}))^{(1)}\to\nabla_{p}(\lambda_{4})\xrightarrow{\partial_{\alpha}}
    L(\lambda_{3}^{0})\otimes
  H^{1}(\lambda_{3}^{1})^{(1)}\to\cdots,
\end{equation}
et
\begin{equation}
  \label{eq:b544b61499768f3d}
    0\to \nabla_{p}(\lambda_{5})\to L(\lambda_{6}^{0})\otimes
    H^{0}(E_{\beta}(\lambda_{6}^{1}))^{(1)}\to
    \nabla_{p}(\lambda_{6})\xrightarrow{\partial_{\beta}} L(\lambda_{5}^{0})\otimes
  H^{1}(\lambda_{5}^{1})^{(1)}\to\cdots.
\end{equation}

Mais on a \( \lambda_{3}^{1}=(a^{1}-1,b^{1}) \) et \(
\lambda_{5}^{1}=(a^{1},b^{1}-1) \), donc \(
\lambda_{3}^{1},\lambda_{5}^{1} \in C\). Par conséquent, on a \(
H^{1}(\lambda_{3}^{1})=H^{1}(\lambda_{5}^{1})=0 \), d'où \(
\partial_{\alpha}=\partial_{\beta}=0 \). C'est-à-dire, \( L(\lambda_{3}^{0})\otimes
    H^{0}(E_{\alpha}(\lambda_{4}^{1}))^{(1)}  \) est juste une
    extension de \( L(\lambda_{4}^{0})\otimes
    H^{0}(\lambda_{4}^{1})^{(1)} \) par
    \( L(\lambda_{3}^{0})\otimes H^{0}(\lambda_{3}^{1})^{(1)} \), et
    \( L(\lambda_{5}^{0})\otimes
    H^{0}(E_{\beta}(\lambda_{6}^{1}))^{(1)}  \) est juste une
    extension de \( L(\lambda_{6}^{0})\otimes
    H^{0}(\lambda_{6}^{1})^{(1)} \) par
    \( L(\lambda_{5}^{0})\otimes H^{0}(\lambda_{5}^{1})^{(1)} \).

Donc d'après la \autoref{thm:H0pfilt},   il existe dans ce cas
une filtration de \( H^{0}(\lambda) \) dont les quotients sont les  \(
L(\nu_{i}^{0})\otimes H^{0}(\lambda_{i}^{1})^{(1)} \) pour \(
i\in\{1,2,\cdots,9\} \). Certains d'entre eux peuvent être nuls si
l'alcôve en question n'est pas dans \( C \), mais à part cela il n'y a
pas d'effacement.

\smallskip
2) Si \( \lambda \) est de type \( \nabla \), alors les plus hauts poids
des facteurs de composition de \( \widehat{Z}(\lambda) \) sont donnés
par la figure suivante, où \( \lambda_{1}=\lambda \):
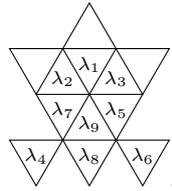
\begin{figure}[H]
  \centering
  \begin{tikzpicture}[scale=0.7]
    \draw (0,0)--(3,0)--(1.5,-3*sin{60})--cycle; \draw
    (0.5,-sin{60})--(1.5,sin{60})--(2.5,-sin{60})--cycle; \draw
    (1,0)--(2.5,-3*sin{60})--(3,-2*sin{60})--(0,-2*sin{60})--(0.5,-3*sin{60})--(2,0);
    \node[font=\tiny] at (1.5,-0.3){\( \lambda_{1} \)}; \node[font=\tiny] at (1,-sin{60}+0.3){\( \lambda_{2} \)}; \node[font=\tiny] at
    (2,-sin{60}+0.3){\( \lambda_{3} \)}; \node[font=\tiny] at (1,-sin{60}-0.3){\( \lambda_{7} \)}; \node[font=\tiny] at
    (2,-sin{60}-0.3){\( \lambda_{5} \)}; \node[font=\tiny] at (1.5,-2*sin{60}+0.3){\( \lambda_{9} \)}; \node[font=\tiny] at
    (1.5,-2*sin{60}-0.3){\( \lambda_{8} \)}; \node[font=\tiny] at (0.5,-2*sin{60}-0.3){\( \lambda_{4} \)}; \node[font=\tiny]
    at (2.5,-2*sin{60}-0.3){\( \lambda_{6} \)};
  \end{tikzpicture}.
  
  \caption{type $\nabla$}
  \label{fig:66d09edc930bac8a}
\end{figure}

Écrivons \( \lambda_{i}=p\lambda_{i}^{1}+\lambda_{i}^{0} \) avec \(
\lambda_{i}^{0}\in X_{1}(T) \). On sait que \(
\lambda_{6}^{0}=\lambda_{7}^{0} \) et \( \widehat{L}(\lambda_{6}) \) et \(
\widehat{L}(\lambda_{7}) \) forment le facteur \(
\widehat{L}(\lambda_{7}^{0})\otimes E_{\beta}(\lambda_{7}^{1})^{(1)}
\). De même, \( \lambda_{4}^{0}=\lambda_{5}^{0} \) et \( \widehat{L}(\lambda_{4}) \) et \(
\widehat{L}(\lambda_{5}) \) forment le facteur \(
\widehat{L}(\lambda_{5}^{0})\otimes E_{\beta}(\lambda_{5}^{1})^{(1)}
\). Appliquons le foncteur \( \ind_{BG_{1}}^{G}(\bullet) \) aux suites
exactes suivantes:
\begin{displaymath}
  \begin{tikzcd}
    0\ar[r]& \widehat{L}(\lambda_{4})\ar[r]&
    \widehat{L}(\lambda_{5}^{0})\otimes
    E_{\alpha}(\lambda_{5}^{1})^{(1)}\ar[r]&
    \widehat{L}(\lambda_{5})\ar[r]&0,
  \end{tikzcd}
\end{displaymath}

\begin{displaymath}
  \begin{tikzcd}
     0\ar[r]& \widehat{L}(\lambda_{6})\ar[r]&
    \widehat{L}(\lambda_{7}^{0})\otimes
    E_{\beta}(\lambda_{7}^{1})^{(1)}\ar[r]&
    \widehat{L}(\lambda_{7})\ar[r]&0.
  \end{tikzcd}
\end{displaymath}
On obtient
\begin{equation}
    0\to \nabla_{p}(\lambda_{4})\to L(\lambda_{5}^{0})\otimes
    H^{0}(E_{\alpha}(\lambda_{5}^{1}))^{(1)}\to \nabla_{p}(\lambda_{5})\xrightarrow{\partial_{\alpha}}
  L(\lambda_{4}^{0})\otimes
  H^{1}(\lambda_{4}^{1})^{(1)}\to\cdots,
\label{eq:d4ddce0a686cbd28}
\end{equation}
et
\begin{equation}
    0\to \nabla_{p}(\lambda_{6})\to L(\lambda_{7}^{0})\otimes
    H^{0}(E_{\beta}(\lambda_{7}^{1}))^{(1)}\to
    \nabla_{p}(\lambda_{7})\xrightarrow{\partial_{\beta}}
  L(\lambda_{6}^{0})\otimes
  H^{1}(\lambda_{6}^{1})^{(1)}\to\cdots.
\label{eq:5631c02386712dbe}
\end{equation}

De plus, on a \( \lambda_{4}^{1}=(a^{1}-2,b^{1}) \) et \(
\lambda_{6}^{1}=(a^{1},b^{1}-2) \).

\underline{Si \( a^{1}\geq 1 \) et \(
b^{1}\geq 1 \),} on a \(
\lambda_{4}^{1},\lambda_{6}^{1} \in C\) et \(
H^{1}(\lambda_{4}^{1})=H^{1}(\lambda_{6}^{1})=0 \), d'où \(
\partial_{\alpha}=\partial_{\beta}=0 \). C'est-à-dire, \( L(\lambda_{4}^{0})\otimes
    H^{0}(E_{\alpha}(\lambda_{5}^{1}))^{(1)}  \) est juste une
    extension de \( L(\lambda_{5}^{0})\otimes
    H^{0}(\lambda_{5}^{1})^{(1)} \) par
    \( L(\lambda_{4}^{0})\otimes H^{0}(\lambda_{4}^{1})^{(1)} \), et
    \( L(\lambda_{6}^{0})\otimes
    H^{0}(E_{\beta}(\lambda_{7}^{1}))^{(1)}  \) est juste une
    extension de \( L(\lambda_{7}^{0})\otimes
    H^{0}(\lambda_{7}^{1})^{(1)} \) par
    \( L(\lambda_{6}^{0})\otimes H^{0}(\lambda_{6}^{1})^{(1)} \).

Donc d'après la \autoref{thm:H0pfilt},   il existe dans ce cas
une filtration de \( H^{0}(\lambda) \) dont les quotients sont \(
L(\nu_{i}^{0})\otimes H^{0}(\lambda_{i}^{1})^{(1)} \) pour \(
i\in\{1,2,\cdots,9\} \) (certains peuvent être nuls).

\underline{Si \( a^{1}=0 \)}, alors
\(\lambda_{5}^{1}=(a^{1},b^{1}-1)=(0,b^{1}-1) \), d'où \(
H^{i}(E_{\alpha}(\lambda_{5}^{1}))=0 \) pour tout \( i \) d'après le
\autoref{lemmaE0}. Donc le morphisme de bord \( \partial_{\alpha} \) dans
\eqref{eq:d4ddce0a686cbd28} est un isomorphisme de \(
L(\lambda_{5}^{0})\otimes H^{0}(\lambda_{5}^{1})^{(1)} \) sur
\( L(\lambda_{4}^{0})\otimes H^{1}(\lambda_{4}^{1})^{(1)} \).
Donc dans ce cas, non seulement le facteur correspondant à \(
\lambda_{4} \) n'apparaît pas, mais le facteur correspondant à \(
\lambda_{5}\) est \og
effacé\fg{} dans \( H^{0}(\lambda) \).

\smallskip
\underline{De même, si \( b^{1}=0 \)}, alors le facteur \(
\lambda_{7}\) est \og
effacé\fg{} dans \( H^{0}(\lambda) \).

\smallskip
3) Si \( \lambda \) est \( \alpha \)-singulier, alors les plus hauts poids
des facteurs de composition de \( \widehat{Z}(\lambda) \) sont donnés
par la figure suivante, où \( \lambda_{1}=\lambda \):
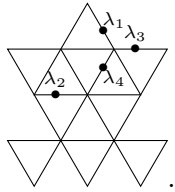
\begin{figure}[H]
  \centering
  \begin{tikzpicture}[scale=0.7]
    \draw (0,0)--(3,0)--(1.5,-3*sin{60})--cycle; \draw
    (0.5,-sin{60})--(1.5,sin{60})--(2.5,-sin{60})--cycle; \draw
    (1,0)--(2.5,-3*sin{60})--(3,-2*sin{60})--(0,-2*sin{60})--(0.5,-3*sin{60})--(2,0);
    \node[font=\tiny] at (1.8,0.4*sin{60}){\( \bullet \)}; \node[font=\tiny] at
    (2,0.4*sin{60}+0.2){\( \lambda_{1} \)}; \node[font=\tiny] at (2.4,0){\( \bullet \)}; \node[font=\tiny] at
    (2.4,0.3){\( \lambda_{3} \)}; \node[font=\tiny] at (1.8,-0.4*sin{60}){\( \bullet \)}; \node[font=\tiny] at
    (2,-0.4*sin{60}-0.2){\( \lambda_{4} \)}; \node[font=\tiny] at (0.9,-sin{60}){\( \bullet \)};
    \node[font=\tiny] at (0.9,-sin{60}+0.3){\( \lambda_{2} \)};
  \end{tikzpicture}.  
  \caption{$\alpha$-singulier}
  \label{fig:c6fb49c3d7ebc0b1}
\end{figure}

Écrivons \( \lambda_{i}=p\lambda_{i}^{1}+\lambda_{i}^{0} \) avec \(
\lambda_{i}^{0}\in X_{1}(T) \). On sait que \(
\lambda_{2}^{0}=\lambda_{3}^{0}=(s,\overline{s}) \) et \( \widehat{L}(\lambda_{2}) \) et \(
\widehat{L}(\lambda_{3}) \) forment le facteur \(
\widehat{L}(\lambda_{3}^{0})\otimes E_{\alpha}(\lambda_{3}^{1})^{(1)}
\).  Appliquons le foncteur \( \ind_{BG_{1}}^{G}(\bullet) \) à la suite
exacte suivante:
\begin{displaymath}
  \begin{tikzcd}
    0\ar[r]& \widehat{L}(\lambda_{2})\ar[r]&
    \widehat{L}(\lambda_{3}^{0})\otimes
    E_{\alpha}(\lambda_{3}^{1})^{(1)}\ar[r]&
    \widehat{L}(\lambda_{3})\ar[r]&0.
  \end{tikzcd}
\end{displaymath}
On obtient
\begin{equation}
  \label{eq:ef983ade747f7c18}
    0\to \nabla_{p}(\lambda_{2})\to L(\lambda_{3}^{0})\otimes
    H^{0}(E_{\alpha}(\lambda_{3}^{1}))^{(1)}\to
    \nabla_{p}(\lambda_{3})\xrightarrow{\partial_{\alpha}} L(\lambda_{2}^{0})\otimes
  H^{1}(\lambda_{2}^{1})^{(1)}\to \cdots.
\end{equation}

De plus, on a \( \lambda_{2}^{1}=(a^{1}-1,b^{1})\in C \) car \(
a^{1},b^{1}\geq 0 \), d'où \( H^{1}(\lambda_{2}^{1})=0 \). C'est-à-dire, \( L(\lambda_{2}^{0})\otimes
    H^{0}(E_{\alpha}(\lambda_{3}^{1}))^{(1)}  \) est juste une
    extension de \( L(\lambda_{3}^{0})\otimes
    H^{0}(\lambda_{3}^{1})^{(1)} \) par
    \( L(\lambda_{2}^{0})\otimes H^{0}(\lambda_{2}^{1})^{(1)} \).

Donc d'après la \autoref{thm:H0pfilt},   il existe dans ce cas
une filtration de \( H^{0}(\lambda) \) dont les quotients sont les \(
L(\nu_{i}^{0})\otimes H^{0}(\lambda_{i}^{1})^{(1)} \) pour \(
i\in\{1,2,\cdots,4\} \).

\smallskip
4) Si \( \lambda \) est \( \beta \)-singulier, alors les plus hauts poids
des facteurs de composition de \( \widehat{Z}(\lambda) \) sont donnés
par la figure suivante, où \( \lambda_{1}=\lambda \):
\begin{figure}[H]
  \centering
  \begin{tikzpicture}[scale=0.7]
    \draw (0,0)--(3,0)--(1.5,-3*sin{60})--cycle; \draw
    (0.5,-sin{60})--(1.5,sin{60})--(2.5,-sin{60})--cycle; \draw
    (1,0)--(2.5,-3*sin{60})--(3,-2*sin{60})--(0,-2*sin{60})--(0.5,-3*sin{60})--(2,0);
    \node[font=\tiny] at (1.2,0.4*sin{60}){\( \bullet \)}; \node[font=\tiny] at
    (1,0.4*sin{60}+0.2){\(\lambda_{1}  \)}; \node[font=\tiny] at (0.6,0){\( \bullet \)}; \node[font=\tiny] at
    (0.6,0.3){\( \lambda_{3} \)}; \node[font=\tiny] at (1.2,-0.4*sin{60}){\( \bullet \)}; \node[font=\tiny] at
    (1,-0.4*sin{60}-0.2){\( \lambda_{4} \)}; \node[font=\tiny] at (2.1,-sin{60}){\( \bullet \)};
    \node[font=\tiny] at (2.1,-sin{60}+0.3){\( \lambda_{2} \)};
  \end{tikzpicture}. 
  \caption{$\beta$-singulier}
  \label{fig:0dd758a606ef7b71}
\end{figure}
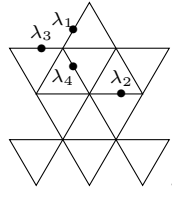

Comme dans le cas \( \alpha \)-singulier,  il existe dans ce cas
une filtration de \( H^{0}(\lambda) \) dont les quotients sont les \(
L(\nu_{i}^{0})\otimes H^{0}(\lambda_{i}^{1})^{(1)} \) pour \(
i\in\{1,2,\cdots,4\} \).

5) Si \( \lambda \) est \( \gamma \)-singulier, alors les plus hauts poids
des facteurs de composition de \( \widehat{Z}(\lambda) \) sont donnés
par la figure suivante, où \( \lambda_{1}=\lambda \):
\begin{figure}[H]
  \centering
  \begin{tikzpicture}[scale=0.7]
    \draw (0,0)--(3,0)--(1.5,-3*sin{60})--cycle; \draw
    (0.5,-sin{60})--(1.5,sin{60})--(2.5,-sin{60})--cycle; \draw
    (1,0)--(2.5,-3*sin{60})--(3,-2*sin{60})--(0,-2*sin{60})--(0.5,-3*sin{60})--(2,0);
    \node[font=\tiny] at (1.5,0){\( \bullet \)}; \node[font=\tiny] at (1.5,0.3){\( \lambda_{1} \)}; \node[font=\tiny] at
    (2.25,-0.5*sin{60}){\( \bullet \)}; \node[font=\tiny] at
    (2.05,-0.5*sin{60}-0.2){\( \lambda_{3} \)}; \node[font=\tiny] at
    (1.5,-2*sin{60}){\( \bullet \)}; \node[font=\tiny] at (1.5,-2*sin{60}+0.3){\( \lambda_{4} \)};
    \node[font=\tiny] at (0.75,-0.5*sin{60}){\( \bullet \)}; \node[font=\tiny] at
    (0.95,-0.5*sin{60}-0.2){\( \lambda_{2} \)};
  \end{tikzpicture}.
  
  \caption{$\gamma$-singulier}
  \label{fig:747f3465e550aee7}
\end{figure}
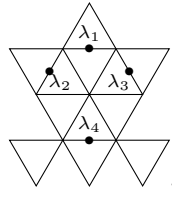

Comme il n'y a pas de facteur \( E_{\alpha}(\nu) \) ou \(
E_{\beta}(\nu) \) dans ce cas, alors d'après la \autoref{thm:H0pfilt} il existe
une filtration de \( H^{0}(\lambda) \) dont les quotients sont les \(
L(\nu_{i}^{0})\otimes H^{0}(\lambda_{i}^{1})^{(1)} \) pour \(
i\in\{1,2,3,4\} \).

6) Si \( \lambda \) est \( \alpha \)-\( \beta \)-singulier, alors \[
\widehat{Z}(\lambda)= \widehat{L}(p-1,p-1)\otimes
(a^{1},b^{1})^{(1)}=\widehat{L}(\lambda^{0})\otimes p\lambda^{1}. \]
\begin{figure}[H]
  \centering
  \begin{tikzpicture}[scale=0.7]
    \draw (0,0)--(3,0)--(1.5,-3*sin{60})--cycle; \draw
    (0.5,-sin{60})--(1.5,sin{60})--(2.5,-sin{60})--cycle; \draw
    (1,0)--(2.5,-3*sin{60})--(3,-2*sin{60})--(0,-2*sin{60})--(0.5,-3*sin{60})--(2,0);
    \node[font=\tiny] at (1.5,-sin{60}){\( \bullet \)};
    \node[font=\tiny] at (1.9,-sin{60}+0.25){\( \lambda \)};
  \end{tikzpicture}
  
  \caption{$\alpha$-$\beta$-singulier}
  \label{fig:6a2e4c2bd6db7dd8}
\end{figure}
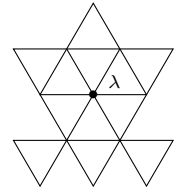

Dans ce cas, on a \[ H^{0}(\lambda)\cong L(\lambda^{0})\otimes H^{0}(\lambda^{1})^{(1)}. \]

En conclusion, on obtient comme corollaire une autre démonstration du
résultat suivant de  Jantzen (\cite{Jan80} 3.13, voir aussi
\cite{KH84} 2.4):

\begin{cor}[Jantzen]\label{cor:pweylfiltrationjantzen}
  Soit \( \lambda=(a,b)\in X(T)^{+} \). Écrivons \( a=a^{1}p+r \)
  et \( b=b^{1}p+s \) avec \( 0\leq r,s\leq p-1 \). Soit
\( 0=N_{0}\subset N_{1}\subset\cdots\subset N_{\ell-1}\subset N_{\ell}=\widehat{Z}(\lambda) \)
une suite de composition de \( \widehat{Z}(\lambda) \) induite par une D-filtration. Notons \( N_{i}/N_{i-1}\cong
\widehat{L}(\nu_{i}^{0})\otimes p\nu_{i}^{1} \) pour \(
i\in\{1,2,\cdots,\ell\} \) et \( \nu_{i}=\nu_{i}^{0}+p\nu_{i}^{1} \). Posons
  \( \widetilde{N_{i}}=\ind_{BG_{1}}^{G}(N_{i})\cong H^{0}(G/BG_{1},N_{i}) \).
Alors \( H^{0}(\lambda) \) possède une filtration
\( 0=\widetilde{N_{0}}\subset \widetilde{N_{1}}\subset\cdots\subset \widetilde{N_{\ell-1}}\subset\widetilde{N_{\ell}}=H^{0}(\lambda) \)
telle que \( \widetilde{N_{i}}/\widetilde{N_{i-1}}\cong L(\nu_{i}^{0})\otimes
M_{i}^{(1)} \) où
\begin{displaymath}
  M_{i}=
  \begin{cases}
    0 & \text{si }\nu_{i}^{1}\notin X(T)^{+}, \\
    0 & \text{si } \lambda \text{ est de type } \nabla,\, a^{1}=0 \text{
      et } \nu_{i}=\lambda' \text{ dans la figure
      \ref{fig:d78dc2295e2ed985}},\\
    0 & \text{si } \lambda \text{ est de type } \nabla,\,
    b^{1}=0 \text{
      et } \nu_{i}=\lambda' \text{ dans la figure
      \ref{fig:2b380fca10c86e4c}},\\
    H^{0}(\nu_{i}^{1}) & \text{sinon.}
  \end{cases}
\end{displaymath}

\begin{figure}[H]
  \centering
  \begin{tikzpicture}[scale=0.7]
    \draw [line width=0.5mm] (-0.8,1.6*sin{60})--(0.8,-1.6*sin{60});
    \draw (-0.5,sin{60})--(0.5,sin{60})--(0,0)--(1,0)--(0.5,-sin{60});
    \draw (0.5,sin{60})--(1,0); \node[font=\tiny] at (0,sin{60}-0.3)
    {\( \lambda \)}; \node[font=\tiny] at (0.5,-0.3) {\( \lambda' \)};
  \end{tikzpicture}
  
  \caption{Alcôve $\nabla$ touchant le mur pour \( \alpha \)}
  \label{fig:d78dc2295e2ed985}
\end{figure}
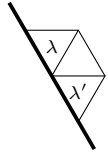

\begin{figure}[H]
  \centering
   \begin{tikzpicture}[scale=0.7]
    \draw [line width=0.5mm] (0.8,1.6*sin{60})--(-0.8,-1.6*sin{60});
    \draw (0.5,sin{60})--(-0.5,sin{60})--(0,0)--(-1,0)--(-0.5,-sin{60});
    \draw (-0.5,sin{60})--(-1,0); \node[font=\tiny] at (0,sin{60}-0.3)
    {\( \lambda \)}; \node[font=\tiny] at (-0.5,-0.3) {\( \lambda' \)};
  \end{tikzpicture}
  \caption{Alcôve $\nabla$ touchant le mur pour \( \beta \)}
  \label{fig:2b380fca10c86e4c}
\end{figure}
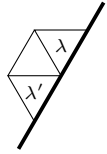
\end{cor}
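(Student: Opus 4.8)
Le plan est de partir de la filtration de $H^0(\lambda)$ fournie par la \autoref{thm:H0pfilt} et de la raffiner. Cette proposition donne déjà, à partir d'une D-filtration de $\widehat{Z}(\lambda)$ de facteurs $\widehat{L}(\nu^0)\otimes E_\delta(\nu^1)^{(1)}$, une filtration de $H^0(\lambda)=H^0(G/BG_1,\widehat{Z}(\lambda))$ dont les quotients successifs sont les $L(\nu^0)\otimes H^0(E_\delta(\nu^1))^{(1)}$, avec de surcroît $H^i(E_\delta(\nu^1))=0$ pour $i\neq 0$; c'est le \autoref{lemma:filtrationinduite} appliqué à la D-filtration qui garantit ce fait, \emph{via} l'égalité de caractères $\ch H^0(\lambda)=\sum\ch L(\nu^0)\,\ch H^0(E_\delta(\nu^1))^{(1)}$ issue de l'additivité de la caractéristique d'Euler. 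Pour obtenir une filtration indexée par la suite de composition $N_\bullet$, je raffinerai chaque tel quotient avec $\delta\in\{\alpha,\beta\}$ (les facteurs $E_0$ donnant directement le quotient $L(\nu^0)\otimes H^0(\nu^1)^{(1)}$): comme $E_\delta(\nu^1)$ est uniserial de longueur $2$ et comme chaque facteur simple de $\widehat{Z}(\lambda)$ est de multiplicité $1$ (\cite{Irv86}), le raffinement à l'intérieur de chaque facteur est forcé, à savoir le long de la suite exacte non scindée de $B$-modules $0\to\nu^1-\delta\to E_\delta(\nu^1)\to\nu^1\to 0$.

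L'étape-clé est le calcul de l'homomorphisme de liaison $\partial\colon H^0(\nu^1)\to H^1(\nu^1-\delta)$ de la suite exacte longue associée, mais ce calcul a déjà été fait dans les cas 1)--6) ci-dessus. J'en reprendrai les conclusions: si $\lambda$ est de type $\Delta$, $\alpha$-singulier ou $\beta$-singulier, ou de type $\nabla$ avec $a^1\geq 1$ et $b^1\geq 1$ (le cas $\gamma$-singulier ne comporte aucun facteur $E_\alpha$ ou $E_\beta$, et dans le cas $\alpha$-$\beta$-singulier $\widehat{Z}(\lambda)$ est déjà simple), le poids $\nu^1-\delta$ appartient à $C$, donc $H^1(\nu^1-\delta)=0$, $\partial=0$, et l'on a une suite exacte courte $0\to H^0(\nu^1-\delta)\to H^0(E_\delta(\nu^1))\to H^0(\nu^1)\to 0$; en revanche, si $\lambda$ est de type $\nabla$ avec $a^1=0$ (resp. $b^1=0$), le facteur $E_\delta$ en jeu est $E_\alpha(0,b^1-1)$ (resp. $E_\beta(a^1-1,0)$), sans cohomologie d'après le \autoref{lemmaE0}, de sorte que $\partial$ est un isomorphisme et $H^0(E_\delta(\nu^1))=0$.

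Il restera à recoller: en appliquant $H^0(G/BG_1,-)$ aux inclusions $N_{i-1}\subset N_i\subset N_{i+1}$ à l'intérieur d'un tel facteur et en utilisant que $H^0(G/BG_1,-)$ préserve les injections, un petit argument de dimension (comparant $\widetilde{N_i}=H^0(G/BG_1,N_i)$ au cran intermédiaire fourni par la suite exacte courte ci-dessus, dont on sait déjà, par le \autoref{lemma:filtrationinduite} appliqué à la D-filtration, que les extrémités réalisent les bons étages) montre que $\widetilde{N_i}$ réalise bien les étages voulus, avec quotient $L(\nu_i^0)\otimes H^0(\nu_i^1)^{(1)}$ dans les cas $\partial=0$ et quotient nul dans les cas d'effacement. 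On lit alors $M_i=H^0(\nu_i^1)$ en général — donc $M_i=0$ si $\nu_i^1\notin X(T)^+$ — sauf pour les deux facteurs issus du $E_\delta$ sans cohomologie du cas $\nabla$ avec $a^1=0$ ou $b^1=0$, où $M_i=0$ pour deux raisons distinctes qu'il faut garder séparées: le sous-facteur a pour $\nu_i^1$ le poids non dominant $(-2,b^1)$ (resp. $(a^1-1,-2)$), tandis que le quotient correspond à l'alcôve $\lambda'$ des figures \ref{fig:d78dc2295e2ed985} et \ref{fig:2b380fca10c86e4c} et c'est lui qui est \og effacé\fg{} au sens de Jantzen. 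Le point le plus délicat ne sera donc pas un argument nouveau mais ce décompte soigneux des facteurs: en particulier il faut bien se garder d'appliquer le \autoref{lemma:filtrationinduite} directement à la suite de composition $N_\bullet$, pour laquelle l'égalité de caractères $\ch H^0(\lambda)=\sum_i\ch L(\nu_i^0)\,\ch H^0(\nu_i^1)^{(1)}$ est précisément fausse à cause des effacements.
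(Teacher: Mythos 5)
Votre preuve est correcte et suit pour l'essentiel la même démarche que le texte : partir de la \autoref{thm:H0pfilt}, raffiner chaque facteur $E_{\delta}$ via la suite exacte longue d'induction, constater que le morphisme de bord s'annule dès que le poids $\nu^{1}-\delta$ appartient à $C$ (types $\Delta$, $\alpha$- ou $\beta$-singulier, $\nabla$ avec $a^{1},b^{1}\geq 1$), tandis que dans le cas $\nabla$ avec $a^{1}=0$ (resp. $b^{1}=0$) le facteur $E_{\alpha}(0,b^{1}-1)$ (resp. $E_{\beta}(a^{1}-1,0)$) est sans cohomologie par le \autoref{lemmaE0}, d'où les effacements ; le recollement final par comparaison de caractères pour identifier les $\widetilde{N_{i}}$ est exactement l'argument du \autoref{lemma:filtrationinduite}. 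Seule coquille, sans conséquence : dans le cas $\nabla$ avec $b^{1}=0$, le poids du sous-facteur est $(a^{1}-1,0)-\beta=(a^{1},-2)$ et non $(a^{1}-1,-2)$, mais il reste non dominant, ce qui suffit.
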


Par dualité, on obtient aussi une \( p
\)-Weyl-filtration pour le module de Weyl \( V(\lambda) \).

\subsection{Existence d'une \( p \)-\( H^{i} \)-D-filtration}

Supposons maintenant que \(\mu\notin C\cup w_{0}\cdot C\). Alors
\(\mu=(m,-n-2)\) ou \((-n-2,m)\) avec \(m,n\in \mathbb{N}\). D'après
la symétrie entre \(\alpha\) et \(\beta\), on peut supposer que
\(\mu=(m,-n-2)\) sans perte de  généralité. 

Écrivons \(m=m^{1}p+r\) et  \(n=n^{1}p+s\) avec
  \(0\leq s,r<p\). D'après le paragraphe \ref{subsection:Zhat}, il existe
  une D-filtration 
\( 0=N_{0}\subset N_{1}\subset N_{2}\subset\cdots \subset N_{\ell}=\widehat{Z}(\mu) \)
 telle que \(N_{i}/N_{i-1}\cong
\widehat{L}(\nu_{i}^{0})\otimes E_{\delta_{i}}(\nu_{i}^{1})^{(1)}\) où
\(\delta_{i}\in \{0,\alpha,\beta\}\).   Listons tous les 
  \(\widehat{L}(\nu_{i}^{0})\otimes E_{\delta_{i}}(\nu_{i}^{1})\) possibles:
  \begin{enumerate}[(I)]
  \item  si \(\mu\) est de type \(\Delta\), il y a les sept facteurs suivants: 
    \begin{equation}\label{liste1}\hskip-7mm
      \begin{aligned}
       & \widehat{L}(r,\overline{s})\otimes (m^{1},-n^{1}-1)^{(1)},\quad
        \widehat{L}(s,\overline{r})\otimes
        (m^{1},-n^{1}-1)^{(1)},\quad\widehat{L}(s,\overline{r})\otimes (m^{1}-1,-n^{1}-2)^{(1)}\\
        &\widehat{L}(r-s-1,s)\otimes
        E_{\alpha}(m^{1}+1,-n^{1}-2)^{(1)},\quad
        \widehat{L}(\overline{r},r-s-1)\otimes
        E_{\beta}(m^{1}-1,-n^{1})^{(1)},\\
        &\widehat{L}(\overline{r}+s+1,r)\otimes (m^{1},-n^{1}-2)^{(1)}, \quad
        \widehat{L}(\overline{s},\overline{r}+s+1)\otimes (m^{1}-1,-n^{1}-1)^{(1)};
      \end{aligned}
    \end{equation}
   
  \item  si \(\mu\) est de type \(\nabla\), il y a les sept facteurs suivants:
    \begin{equation}\label{liste2}\hskip-7mm
      \begin{aligned}
        &\widehat{L}(r,\overline{s})\otimes (m^{1},-n^{1}-1)^{(1)},\quad
        \widehat{L}(\overline{r},r+\overline{s}+1)\otimes
        (m^{1}-1,-n^{1}-1)^{(1)},
        \\
        &\widehat{L}(r+\overline{s}+1,s)\otimes (m^{1},-n^{1}-2)^{(1)},\quad
        \widehat{L}(\overline{s},s-r-1)\otimes
        E_{\alpha}(m^{1},-n^{1}-2)^{(1)},\\
        &\widehat{L}(s-r-1,r)\otimes E_{\beta}(m^{1}-1,-n^{1}-1)^{(1)},\quad
        \widehat{L}(r,\overline{s})\otimes (m^{1}-1,-n^{1}-2)^{(1)},\\
        &\widehat{L}(s,\overline{r})\otimes (m^{1}-1,-n^{1}-2)^{(1)};
     \end{aligned}
    \end{equation}
   
  \item si \(\mu\) est \(\alpha\)-singulier, il y a les trois facteurs
    suivants:
    \begin{equation}
      \label{liste3}
      \begin{aligned}
        &\widehat{L}(p-1,\overline{s})\otimes (m^{1},-n^{1}-1)^{(1)},\quad
        \widehat{L}(\overline{s},s)\otimes E_{\alpha}(m^{1}+1,-n^{1}-2)^{(1)},\\
        &\widehat{L}(s,p-1)\otimes (m^{1},-n^{1}-2)^{(1)};
      \end{aligned}
    \end{equation}
    
 \item  si \(\mu\) est \(\beta\)-singulier, il y a les trois facteurs suivants:
   \begin{equation}
     \label{liste4}
     \begin{aligned}
      & \widehat{L}(r,p-1)\otimes
       (m^{1},-n^{1}-2)^{(1)},\quad \widehat{L}(\overline{r},r)\otimes
       E_{\beta}(m^{1}-1,-n^{1}-1)^{(1)},\\
       &\widehat{L}(p-1,\overline{r})\otimes
       (m^{1}-1,-n^{1}-2)^{(1)};
     \end{aligned}
   \end{equation}
  
\item si \(\mu\) est \(\gamma\)-singulier, il y a les quatre facteurs suivants:
  \begin{equation}
    \label{liste5}
    \begin{aligned}
      &\widehat{L}(r,\overline{r})\otimes (m^{1},-n^{1}-1)^{(1)},\quad
      \widehat{L}(p-1,r)\otimes  (m^{1},-n^{1}-2)^{(1)},\\
      &\widehat{L}(\overline{r},p-1)\otimes (m^{1}-1,-n^{1}-1)^{(1)},\quad
      \widehat{L}(r,\overline{r})\otimes (m^{1}-1,-n^{1}-2)^{(1)};
    \end{aligned}
\end{equation}

\item si \(\mu\) est \(\alpha\)-\(\beta\)-singulier, il n'y a que le facteur:
  \begin{equation}
    \label{liste6}
    \widehat{L}(p-1,p-1)\otimes (m^{1},-n^{1}-2)^{(1)}.
  \end{equation}

\end{enumerate}

Donc pour la partie à tordre par le Frobenius, il n'y a que les huit
possibilités suivantes:
 \begin{multline}
      \label{liste0}
      (m^{1},-n^{1}-1),(m^{1}-1,-n^{1}-1),(m^{1},-n^{1}-2),(m^{1}-1,-n^{1}-2)\\
      E_{\alpha}(m^{1}+1,-n^{1}-2),E_{\alpha}(m^{1},-n^{1}-2),
      E_{\beta}(m^{1}-1,-n^{1}), E_{\beta}(m^{1}-1,-n^{1}-1).
    \end{multline}

    Énonçons maintenant le théorème principal de cette section:
\begin{thm}[Existence d'une \(p\)-\(H^{i}\)-D-filtration]\label{thm:pHifiltration}
  Supposons que  \(\mu \notin C\cup w_{0}\cdot C\). Soit
\( 0=N_{0}\subset N_{1}\subset N_{2}\subset\cdots \subset N_{\ell}=\widehat{Z}(\mu) \)
une D-filtration de \(\widehat{Z}(\mu)\) (cf. le paragraphe
\ref{subsection:Zhat})
telle que \(N_{i}/N_{i-1}\cong
\widehat{L}(\nu_{i}^{0})\otimes E_{\delta_{i}}(\nu_{i}^{1})^{(1)}\) où
\(\delta_{i}\in \{0,\alpha,\beta\}\). Alors  \( H^{1}(\mu) \)
possède une filtration
\( 0=\widetilde{N_{0}}\subset \widetilde{N_{1}}\subset
\widetilde{N_{1}}\subset\cdots\subset \widetilde{N_{\ell}}=H^{1}(\mu) \)
où \( \widetilde{N_{i}}\cong H^{1}(G/BG_{1},N_{i}) \) et  l'on a \(\widetilde{N_{i}}/\widetilde{N_{i-1}}\cong L(\nu_{i}^{0})\otimes
H^{1}(E_{\delta_{i}}(\nu_{i}^{1}))^{(1)}.\)

De même,
 \( H^{2}(\mu) \) possède une filtration
\( 0=\widetilde{N_{0}}\subset \widetilde{N_{1}}\subset
\widetilde{N_{1}}\subset\cdots\subset \widetilde{N_{\ell}}=H^{2}(\mu) \)
où \( \widetilde{N_{i}}\cong H^{2}(G/BG_{1},N_{i}) \) et l'on a  \(\widetilde{N_{i}}/\widetilde{N_{i-1}}\cong L(\nu_{i}^{0})\otimes
H^{2}(E_{\delta_{i}}(\nu_{i}^{1}))^{(1)}.\)

De plus, si \(\mu=(m,-n-2)\) avec \(m=m^{1}p+r\), \(n=n^{1}p+s\) et
\(0\leq r,s\leq p-1\), alors la liste des \(\nu_{i}^{0},\nu_{i}^{1}\) se trouve dans
\eqref{liste1},\eqref{liste2},\eqref{liste3},\eqref{liste4},\eqref{liste5}
et \eqref{liste6}.

On appelle cette filtration de \(H^{i}(\mu)\) {\bf \og une
  \(p\)-\(H^{i}\)-D-filtration\fg{} }.

\end{thm}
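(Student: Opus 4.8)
The plan is to apply \autoref{lemma:filtrationinduite} with $G=\SL_{3}$, $H=BG_{1}$, $N=\widehat{Z}(\mu)$ and the given D-filtration. Since $\ind_{B}^{BG_{1}}$ is exact (cf. \cite{Jan03} II.9), one has $H^{n}(\mu)\cong H^{n}(G/BG_{1},\widehat{Z}(\mu))=R^{n}\ind_{BG_{1}}^{G}(\widehat{Z}(\mu))$, and by the tensor identity together with Frobenius one gets for each $i$
\[
R^{n}\ind_{BG_{1}}^{G}(N_{i}/N_{i-1})=R^{n}\ind_{BG_{1}}^{G}\bigl(\widehat{L}(\nu_{i}^{0})\otimes E_{\delta_{i}}(\nu_{i}^{1})^{(1)}\bigr)\cong L(\nu_{i}^{0})\otimes H^{n}(E_{\delta_{i}}(\nu_{i}^{1}))^{(1)}
\]
(cf. \cite{Jan03} II.9.13). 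So by \autoref{lemma:filtrationinduite} it suffices to establish, for $n\in\{1,2\}$, the character identity
\[
\ch H^{n}(\mu)=\sum_{i=1}^{\ell}\ch L(\nu_{i}^{0})\,\ch H^{n}(E_{\delta_{i}}(\nu_{i}^{1}))^{(1)}.
\]
The description of the $\nu_{i}^{0},\nu_{i}^{1}$ in \eqref{liste1}--\eqref{liste6}, hence the last assertion of the theorem, is exactly the tabulation made in the discussion preceding the statement; so nothing remains to be done there.

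First I would check that $H^{0}(E_{\delta_{i}}(\nu_{i}^{1}))=H^{3}(E_{\delta_{i}}(\nu_{i}^{1}))=0$ for every $i$. By the discussion before the theorem, for $\delta_{i}\neq 0$ the highest weight of $E_{\delta_{i}}(\nu_{i}^{1})$ (and for $\delta_{i}=0$ the weight $\nu_{i}^{1}$ itself) is one of the eight weights in \eqref{liste0}, built from $(m^{1},-n^{1}-1)$, $(m^{1},-n^{1}-2)$ and their neighbours. When $\delta_{i}=0$ one checks that each such weight, shifted by $\rho$, lies in an $H^{1}$- or $H^{2}$-chamber or on a singular wall, so Borel--Weil--Bott gives the vanishing in degrees $0$ and $3$. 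When $\delta_{i}\in\{\alpha,\beta\}$: if the relevant coordinate of $\nu_{i}^{1}$ vanishes then $E_{\delta_{i}}(\nu_{i}^{1})$ has no cohomology at all by \autoref{lemmaE0}; otherwise it is an extension of two line bundles each having cohomology only in degrees $1$ or $2$, so the long exact sequence forces $H^{0}=H^{3}=0$. Granting this, the long exact sequence of $0\to N_{i-1}\to N_{i}\to N_{i}/N_{i-1}\to 0$ collapses, writing $H^{\bullet}$ for $H^{\bullet}(G/BG_{1},-)$, to
\[
0\to H^{1}(N_{i-1})\to H^{1}(N_{i})\to H^{1}(N_{i}/N_{i-1})\xrightarrow{\ \partial_{i}\ }H^{2}(N_{i-1})\to H^{2}(N_{i})\to H^{2}(N_{i}/N_{i-1})\to 0,
\]
and in particular $H^{0}(\mu)=H^{3}(\mu)=0$, consistent with $\mu\notin C\cup w_{0}\cdot C$.

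The heart of the proof is then to show that each connecting morphism $\partial_{i}$ vanishes. Once this is known, the six-term sequences split into the two short exact sequences $0\to H^{1}(N_{i-1})\to H^{1}(N_{i})\to H^{1}(N_{i}/N_{i-1})\to 0$ and $0\to H^{2}(N_{i-1})\to H^{2}(N_{i})\to H^{2}(N_{i}/N_{i-1})\to 0$, which is precisely the claimed filtration, and the two character identities above follow a fortiori. To see $\partial_{i}=0$ I would compare composition factors of source and target: every composition factor of $H^{1}(N_{i}/N_{i-1})=L(\nu_{i}^{0})\otimes H^{1}(E_{\delta_{i}}(\nu_{i}^{1}))^{(1)}$ has highest weight of the form $\nu_{i}^{0}+p\eta$ with $L(\eta)$ occurring in $H^{1}(E_{\delta_{i}}(\nu_{i}^{1}))$ — hence $\eta$ bounded, via Strong Linkage, by the regular representative of $\nu_{i}^{1}$ exactly as in the proof of \autoref{lemma:poidsrestreints} — whereas the composition factors of $H^{2}(N_{i-1})$ come from the earlier blocks $L(\nu_{j}^{0})\otimes H^{2}(E_{\delta_{j}}(\nu_{j}^{1}))^{(1)}$ with $j<i$; using the explicit lists \eqref{liste1}--\eqref{liste6} and the restriction estimates of \autoref{lemma:poidsrestreints} one checks that these two families of highest weights are disjoint, so $\partial_{i}=0$. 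This is the same mechanism as the vanishing of the maps $f$ in \S\ref{chapitre:3étages} and in the proof of \autoref{thm:Edelta}; where the bookkeeping is heaviest — for the $E_{\alpha}$/$E_{\beta}$ blocks and for $\mu$ in or near $\overline{\Gr}$ — one may also argue by induction on the degree $d$ of $\mu$, invoking \autoref{thm:Edelta} to control $H^{\bullet}(E_{\delta_{i}}(\nu_{i}^{1}))$ and \autoref{cor:2étages} when $\mu\in\overline{\Gr}$.

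I expect the main obstacle to be exactly this vanishing of the connecting maps $\partial_{i}$. Unlike the $H^{0}$-case of \autoref{cor:pweylfiltrationjantzen}, where the obstruction groups $H^{1}(\nu^{1})$ with $\nu^{1}\in C$ vanish outright, here one is forced to compare $H^{1}$ of one block with $H^{2}$ of the earlier blocks, and the disjointness of composition factors has to be extracted by a case-by-case analysis of highest weights — which is where the six positions of \autoref{defi:positions} and the Griffith region re-enter the argument.
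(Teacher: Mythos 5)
Your reduction is sound as far as it goes: by \autoref{lemma:filtrationinduite} and \cite{Jan03} II.9.13 it suffices to prove the two character identities, the vanishing $H^{0}(G/BG_{1},N_{i}/N_{i-1})=H^{3}(G/BG_{1},N_{i}/N_{i-1})=0$ is exactly the paper's \autoref{lemma:H0H3}, and if every connecting map $\partial_{i}\colon H^{1}(G/BG_{1},N_{i}/N_{i-1})\to H^{2}(G/BG_{1},N_{i-1})$ vanished, the filtrations would indeed follow. But the heart of your argument — killing $\partial_{i}$ by disjointness of composition factors — is a genuine gap. Composition factors of distinct blocks are of the form $L(\nu^{0}+p\eta)$, so blocks with different restricted parts $\nu^{0}$ are automatically disjoint; the danger is concentrated in the one pair of blocks in each of the lists \eqref{liste1}, \eqref{liste2}, \eqref{liste5} that share the same $\nu^{0}$, with Frobenius parts $(m^{1},-n^{1}-1)$ and $(m^{1}-1,-n^{1}-2)$ differing by $\gamma$. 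These two weights are $W_{p}$-linked whenever $p\mid m^{1}-n^{1}$ (via $s_{\gamma,kp}$), and when moreover both lie in or near the Griffith region, $H^{1}$ of the one and $H^{2}$ of the other are nonzero, lie in the same linkage class and satisfy the very same restrictedness bounds of \autoref{lemma:poidsrestreints} — so that lemma cannot separate them, and no disjointness argument of the kind you invoke is available. Nothing you cite rules out a nonzero $\partial_{i}$ there; the vanishing is a consequence of the theorem, not an input you can get cheaply.

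The paper's route is different precisely at this point. After reducing to $m\geq n$ by contravariant Serre duality, it never tries to kill the connecting maps directly: it proves the single character identity $\ch H^{2}(\mu)=\sum_{i}\ch L(\nu_{i}^{0})\ch H^{2}(E_{\delta_{i}}(\nu_{i}^{1}))^{(1)}$ (\autoref{thm:presquepfiltration1}) by induction on the degree of $\mu$, using the recursive filtrations of \autoref{thm2}/\autoref{cor:2étages}, \autoref{thm:Edelta} and \autoref{prop:Eparticulier}, together with \autoref{thm:weyl} for the $H^{3}$ terms, and a case analysis over $\mu\notin\widehat{\Gr}$, $\mu\in\widehat{\Gr}$, etc. The identity for $H^{1}$ is then obtained for free from the Euler-characteristic relation $\chi(\mu)=\sum_{i}\chi_{1}(N_{i}/N_{i-1})$ combined with \autoref{lemma:H0H3} (the $H^{1}$-defect equals the $H^{2}$-defect), and \autoref{lemma:filtrationinduite} concludes. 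Your closing remark that one ``may also argue by induction on the degree $d$, invoking \autoref{thm:Edelta} and \autoref{cor:2étages}'' points at this strategy, but that induction is the actual content of the proof and is not carried out in your proposal; as written, the argument is missing its core.
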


Avant de démontrer ce théorème, prouvons d'abord le lemme suivant:
  \begin{lemma}\label{lemma:H0H3}
    Soit \(\mu=(m,-n-2)\) avec \(m,n\in \mathbb{N}\).
    Utilisons les notations du
    \autoref{thm:pHifiltration}. Alors
\( H^{0}(G/BG_{1}, N_{i}/N_{i-1})=H^{3}(G/BG_{1}, N_{i}/N_{i-1})=0 \)
 pour \(1\leq i\leq \ell\) et 
\( H^{0}(G/BG_{1}, N_{i})=H^{3}(G/BG_{1}, N_{i})=0 \)
pour \(0\leq i\leq \ell\).

  \end{lemma}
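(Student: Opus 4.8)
Le plan est de ramener l'énoncé à l'annulation de \( H^{0} \) et de \( H^{3} \) sur \( G/B \) des quelques \( B \)-modules \( E_{\delta_{i}}(\nu_{i}^{1}) \) susceptibles d'apparaître, puis de régler ces cas en parcourant la liste \eqref{liste0}.

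Première étape. Comme \( \nu_{i}^{0}\in X_{1}(T) \), le \( BG_{1} \)-module \( \widehat{L}(\nu_{i}^{0}) \) est la restriction du \( G \)-module \( L(\nu_{i}^{0}) \), et \( G_{1} \) agit trivialement sur \( E_{\delta_{i}}(\nu_{i}^{1})^{(1)} \) ; donc, par l'identité tensorielle (exactement comme dans la preuve de la \autoref{thm:weyl} ; cf. \cite{Jan03} II.9.13), on a pour tout \( j \)
\[
H^{j}(G/BG_{1}, N_{i}/N_{i-1}) \cong L(\nu_{i}^{0})\otimes H^{j}\bigl(E_{\delta_{i}}(\nu_{i}^{1})\bigr)^{(1)}.
\]
Il suffira donc de montrer que \( H^{0}(E_{\delta_{i}}(\nu_{i}^{1})) = H^{3}(E_{\delta_{i}}(\nu_{i}^{1})) = 0 \) pour tout \( i \) ; l'annulation de \( H^{0}(G/BG_{1},N_{i}) \) et de \( H^{3}(G/BG_{1},N_{i}) \) s'en déduira par récurrence sur \( i \) (avec \( N_{0}=0 \)) au moyen des suites exactes \( 0\to N_{i-1}\to N_{i}\to N_{i}/N_{i-1}\to 0 \), puisque \( H^{0}(G/BG_{1},\bullet) \) est exact à gauche et \( H^{3}(G/BG_{1},\bullet) \) exact à droite (\( R^{j}\ind_{BG_{1}}^{G}=0 \) pour \( j>3 \), \( G/BG_{1} \) étant de dimension \( 3 \)).

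Seconde étape : on parcourt \eqref{liste0}. Écrivons \( m=m^{1}p+r \), \( n=n^{1}p+s \) avec \( 0\leq r,s\leq p-1 \), de sorte que \( m^{1},n^{1}\in\mathbb{N} \) ; rappelons que \( H^{0}(\nu)\neq 0 \) si et seulement si les deux coordonnées de \( \nu \) dans la base \( \omega_{1},\omega_{2} \) sont \( \geq 0 \), tandis que, par dualité de Serre, \( H^{3}(\nu)\neq 0 \) si et seulement si ces deux coordonnées sont \( \leq -2 \). Les quatre facteurs de type fibré en droites, \( (m^{1},-n^{1}-1) \), \( (m^{1}-1,-n^{1}-1) \), \( (m^{1},-n^{1}-2) \) et \( (m^{1}-1,-n^{1}-2) \), se traitent aussitôt : chacun a sa seconde coordonnée \( \leq -1<0 \) et sa première coordonnée \( \geq m^{1}-1\geq -1>-2 \). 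Pour les facteurs \( E_{\alpha}(\nu) \) et \( E_{\beta}(\nu) \), on utilise la suite exacte non scindée \( 0\to \nu-\delta\to E_{\delta}(\nu)\to\nu\to 0 \) : par exactitude à gauche de \( H^{0} \), \( H^{0}(E_{\delta}(\nu))=0 \) dès que \( H^{0}(\nu)=H^{0}(\nu-\delta)=0 \), et par exactitude à droite de \( H^{3} \), \( H^{3}(E_{\delta}(\nu))=0 \) dès que \( H^{3}(\nu)=H^{3}(\nu-\delta)=0 \) (avec \( \alpha=(2,-1) \), \( \beta=(-1,2) \) dans la base \( \omega_{1},\omega_{2} \)). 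Pour \( E_{\alpha}(m^{1}+1,-n^{1}-2) \) (où \( \nu-\alpha=(m^{1}-1,-n^{1}-1) \)) et pour \( E_{\beta}(m^{1}-1,-n^{1}-1) \) (où \( \nu-\beta=(m^{1},-n^{1}-3) \)), les deux étages ont seconde coordonnée \( <0 \) et première coordonnée \( \geq -1>-2 \), donc \( H^{0} \) et \( H^{3} \) y sont nuls.

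Reste le seul point délicat : les deux derniers facteurs \( E_{\alpha}(m^{1},-n^{1}-2) \) et \( E_{\beta}(m^{1}-1,-n^{1}) \), où l'argument précédent peut tomber en défaut dans un cas limite — l'étage \( \nu-\alpha=(m^{1}-2,-n^{1}-1) \) du premier peut avoir un \( H^{3} \) non nul lorsque \( m^{1}=0 \), et l'étage supérieur \( \nu=(m^{1}-1,-n^{1}) \) du second peut être dominant lorsque \( n^{1}=0 \). Mais dans ces cas on a \( E_{\alpha}(m^{1},-n^{1}-2)=E_{\alpha}(0,-n^{1}-2) \), respectivement \( E_{\beta}(m^{1}-1,-n^{1})=E_{\beta}(m^{1}-1,0) \), dont la cohomologie est nulle en tout degré d'après le \autoref{lemmaE0} ; et lorsque \( m^{1}\geq 1 \), respectivement \( n^{1}\geq 1 \), les deux étages ont de nouveau seconde coordonnée \( <0 \) et première coordonnée \( >-2 \), et l'argument d'exactitude s'applique. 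Ceci épuise \eqref{liste0}, d'où le lemme. L'obstacle principal, somme toute modeste, est donc exactement ce traitement à part des positions limites \( m^{1}=0 \) et \( n^{1}=0 \) au moyen du \autoref{lemmaE0} ; tout le reste est formel.
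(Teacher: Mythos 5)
Votre preuve est correcte et suit essentiellement la même démarche que celle de l'article : réduction via \cite{Jan03} II.9.13 à l'annulation de \(H^{0}\) et \(H^{3}\) des modules de la liste \eqref{liste0}, vérification cas par cas à l'aide des critères de dominance, traitement des cas limites \(m^{1}=0\) et \(n^{1}=0\) par le \autoref{lemmaE0}, puis récurrence sur \(i\) pour la seconde assertion. Vous explicitez même le cas \(E_{\beta}(m^{1}-1,-n^{1}-1)\) que l'article passe sous silence, ce qui ne change rien au fond.
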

  \begin{proof}
    Écrivons \(m=m^{1}p+r\) et \(n=n^{1}p+s\) avec
  \(0\leq s,r<p\). 
    
    Comme \(H^{j}(G/BG_{1}, N_{i}/N_{i-1})\cong L(\nu_{i}^{0})\otimes
    H^{j}(E_{\delta_{i}}(\nu_{i}^{1}))^{(1)}\) pour tout \(i,j\)
    (cf. \cite{Jan03} II.9.13), pour
    montrer la première assertion il suffit de montrer que
    \(H^{0}(E)=H^{3}(E)=0\) pour tout
    \(E\) dans \eqref{liste0}.
    
    Comme \(m^{1},n^{1}\geq 0\), on a \(m^{1}-1\geq -1\),
    \(-n^{1}-1\leq -1\) et \(-n^{1}-2\leq -2\). Donc aucun poids dans
    la première ligne de \eqref{liste0} n'a de \(H^{0}\) ou
    \(H^{3}\).
    Les deux poids de \(E_{\alpha}(m^{1}+1,-n^{1}-2)\) sont
    \((m^{1}+1,-n^{1}-2)\) et \((m^{1}-1,-n^{1}-1)\), qui n'ont pas de
    \(H^{0}\) ou \(H^{3}\), d'où l'assertion pour
    \(E_{\alpha}(m^{1}+1,-n^{1}-2)\).

    Les deux poids de \(E_{\alpha}(m^{1},-n^{1}-2)\) sont
    \((m^{1},-n^{1}-2)\) et \((m^{1}-2,-n^{1}-1)\), qui n'ont jamais
    de \(H^{0}\) car \(-n^{1}-1\leq -1\). Si \(m^{1}\geq 1\), ils n'ont pas de \(H^{3}\) non
    plus. Si \(m^{1}=0\), le poids \((m^{1}-2,-n^{1}-1)\) peut avoir
    un \(H^{3}\) non nul. Mais dans ce cas, on a encore que
    \(H^{3}(E_{\alpha}(m^{1},-n^{1}-2)=H^{3}(E_{\alpha}(0,-n^{1}-2))=0\)
    par le \autoref{lemmaE0}.

    Les deux poids de \(E_{\beta}(m^{1}-1,-n^{1})\) sont
    \((m^{1}-1,-n^{1})\) et \((m^{1},-n^{1}-2)\), qui n'ont jamais de
    \(H^{3}\) car \(m^{1}-1\geq -1\). Si \(n^{1}\geq 1\), ils n'ont pas
    de \(H^{0}\) non plus. Si \(n^{1}=0\), on a encore que
    \(H^{0}(E_{\beta}(m^{1}-1,-n^{1}))=H^{0}(E_{\beta}(m^{1}-1,0))=0\)
    par le \autoref{lemmaE0}.

    En conclusion, on a que
    \(
H^{0}(G/BG_{1}, N_{i}/N_{i-1})=H^{3}(G/BG_{1}, N_{i}/N_{i-1})=0
\) pour tout \(1\leq i\leq \ell\).
La deuxième assertion s'en déduit par récurrence sur
\(i\).

\end{proof}

\begin{proof}[Démonstration du \autoref{thm:pHifiltration}]
  Par dualité de Serre contravariante, on a \( H^{i}(m,-n-2) \cong
  H^{3-i}(-m-2,n)\), donc il suffit de traiter le cas où \(
  m\geq n \).

  Pour tout \( BG_{1} \)-module \( M \), notons
  \[\chi_{1}(M)=\sum_{i\geq 0}(-1)^{i}\ch
    H^{i}(G/BG_{1},M).\]
  Comme le foncteur \( \ind_{B}^{BG_{1}} \) est exact (\cite{Jan03}
  II.9.12), alors pour tout \( B \)-module \( M \), on a
 \( \chi(M)=\chi_{1}(\ind_{B}^{BG_{1}}(M)) \).
  Comme la caractéristique  d'Euler-Poincaré \( \chi_{1}(\cdot) \) est additive sur les suites exactes,   on a
  \begin{equation}
    \label{eq:fd1c0255d82f9b29}
    \chi(\mu)=\chi_1
    (\widehat{Z}(\mu))=\sum_{i=1}^{\ell}\chi_{1}(N_{i}/N_{i-1}).
  \end{equation}

  Comme \( \mu\notin C\cup w_{0}\cdot C\), on a
  \begin{equation}
    \label{eq:6a6e73d407e6d757}
    \chi(\mu)=-\ch H^{1}(\mu)+\ch H^{2}(\mu).
 \end{equation}

  En outre, d'après le \autoref{lemma:H0H3}, on a
  \begin{equation}
    \label{eq:f1a2edac09576e70}
    \chi_{1}(N_{i}/N_{i-1})=-\ch H^{1}(G/BG_{1},N_{i}/N_{i-1})+\ch H^{2}(G/BG_{1},N_{i}/N_{i-1})
  \end{equation}
  pour tout \( i \).

  Donc d'après \eqref{eq:fd1c0255d82f9b29},
  \eqref{eq:6a6e73d407e6d757} et \eqref{eq:f1a2edac09576e70}, on a
  \begin{equation}
    \label{eq:6fb12e23f3d1bd71}
    \ch H^{1}(\mu)-\sum_{i=1}^{\ell}\ch
    H^{1}(G/BG_{1},N_{i}/N_{i-1})=\ch H^{2}(\mu)-\sum_{i=1}^{\ell}\ch H^{2}(G/BG_{1},N_{i}/N_{i-1}).
  \end{equation}

  Comme on a 
  \begin{displaymath}
   H^{j}(G/BG_{1},N_{i}/N_{i-1})\cong
   H^{j}(G/BG_{1},\widehat{L}(\nu_{i}^{0})\otimes
   E_{\delta_{i}}(\nu_{i}^{1})^{(1)})\cong L(\nu_{i}^{0})\otimes H^{j}(E_{\delta_{i}}(\nu_{i}^{1}))^{(1)},
 \end{displaymath}
 alors le \autoref{thm:pHifiltration} découle du
 \autoref{lemma:filtrationinduite} du paragraphe \ref{subsection:Jantzen} et de la
 proposition suivante.

\end{proof}

\begin{proposition}\label{thm:presquepfiltration1}
 Soit \(\mu=(m,-n-2)\) avec \(m,n\in \mathbb{N}\). Soit
\( 0=N_{0}\subset N_{1}\subset N_{2}\subset\cdots \subset N_{\ell}=\widehat{Z}(\mu) \)
une D-filtration de \(\widehat{Z}(\mu)\) (cf. le paragraphe
\ref{subsection:Zhat})
telle que \(N_{i}/N_{i-1}\cong
\widehat{L}(\nu_{i}^{0})\otimes E_{\delta_{i}}(\nu_{i}^{1})^{(1)}\) où
\(\delta_{i}\in \{0,\alpha,\beta\}\).
Si \(m\geq n\), alors on a
\begin{equation}
  \label{eq:nablapfilt1}
  \ch H^{2}(\mu)=\sum_{i=1}^{\ell}\ch L(\nu_{i}^{0})\ch H^{2}(E_{\delta_{i}}(\nu_{i}^{1}))^{(1)}.
\end{equation}

\end{proposition}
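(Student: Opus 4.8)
The plan is to reduce \eqref{eq:nablapfilt1} to the vanishing of the connecting maps attached to the D-filtration, and then to prove that vanishing by a count of composition factors. Since $\ind_{B}^{BG_{1}}$ is exact, additivity of the Euler--Poincaré characteristics gives $\chi(\mu)=\chi_{1}(\widehat{Z}(\mu))=\sum_{i}\ch L(\nu_{i}^{0})\,\chi(E_{\delta_{i}}(\nu_{i}^{1}))^{(1)}$. As $\mu=(m,-n-2)$ and $-2\rho-\mu=(-m-2,n)$ are both non-dominant, $H^{0}(\mu)=H^{3}(\mu)=0$, so $\chi(\mu)=-\ch H^{1}(\mu)+\ch H^{2}(\mu)$; and by \autoref{lemma:H0H3} (via $H^{j}(G/BG_{1},N_{i}/N_{i-1})\cong L(\nu_{i}^{0})\otimes H^{j}(E_{\delta_{i}}(\nu_{i}^{1}))^{(1)}$) one gets $H^{0}(E_{\delta_{i}}(\nu_{i}^{1}))=H^{3}(E_{\delta_{i}}(\nu_{i}^{1}))=0$, hence $\chi(E_{\delta_{i}}(\nu_{i}^{1}))=-\ch H^{1}(E_{\delta_{i}}(\nu_{i}^{1}))+\ch H^{2}(E_{\delta_{i}}(\nu_{i}^{1}))$. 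Subtracting, \eqref{eq:nablapfilt1} becomes equivalent to the companion identity for $H^{1}$; and, writing $L_{i}=N_{i}/N_{i-1}$, the long exact cohomology sequence of $0\to N_{i-1}\to N_{i}\to L_{i}\to 0$ collapses, again by \autoref{lemma:H0H3}, to
\[
0\to H^{1}(N_{i-1})\to H^{1}(N_{i})\to H^{1}(L_{i})\xrightarrow{\ \partial_{i}\ }H^{2}(N_{i-1})\to H^{2}(N_{i})\to H^{2}(L_{i})\to 0 .
\]
Therefore \eqref{eq:nablapfilt1} holds if and only if every $\partial_{i}$ is zero (the filtrations of \autoref{thm:pHifiltration} then following from \autoref{lemma:filtrationinduite}).

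To prove $\partial_{i}=0$, observe that $\operatorname{im}\partial_{i}$ is a quotient of $H^{1}(L_{i})\cong L(\nu_{i}^{0})\otimes H^{1}(E_{\delta_{i}}(\nu_{i}^{1}))^{(1)}$ and a submodule of $H^{2}(N_{i-1})$, whose character is, by additivity and the sequences above, a non-negative combination of the $\ch L(\nu_{j}^{0})\,\ch H^{2}(E_{\delta_{j}}(\nu_{j}^{1}))^{(1)}$ with $j<i$. It therefore suffices to show that for $j<i$ the sets $\KF(H^{1}(L_{i}))$ and $\KF(H^{2}(L_{j}))$ are disjoint, and by the Steinberg tensor product theorem this can fail only when $\nu_{i}^{0}=\nu_{j}^{0}$ and some composition factor of $H^{1}(E_{\delta_{i}}(\nu_{i}^{1}))$ equals one of $H^{2}(E_{\delta_{j}}(\nu_{j}^{1}))$. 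Since the twisted parts $\nu^{1}$ run over the short list \eqref{liste0}, this is a finite problem: for each pair among the eight modules of \eqref{liste0} that can occur in the D-filtration with the same restricted companion $\nu^{0}$ and with the first placed above the second, one must check $\KF(H^{1}(\text{first}))\cap\KF(H^{2}(\text{second}))=\varnothing$. (In the regular case the graphs of \S\ref{subsection:Zhat} show exactly one such pair occurs, with twisted parts $(m^{1},-n^{1}-1)$ below and $(m^{1}-1,-n^{1}-2)$ above, so there everything reduces to $\KF(H^{1}(m^{1}-1,-n^{1}-2))\cap\KF(H^{2}(m^{1},-n^{1}-1))=\varnothing$.)

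The hypothesis $m\geq n$ --- equivalently $m^{1}>n^{1}$, or $m^{1}=n^{1}$ and $r\geq s$ --- is precisely what makes these intersections empty. It puts the ``lower'' twisted weights $(m^{1},-n^{1}-1)$, $(m^{1}-1,-n^{1}-1)$, $(m^{1}-1,-n^{1})$, $(m^{1}-1,-n^{1}-2)$ into $H^{1}$-chambers or onto walls, so their $H^{2}$ is nonzero only inside the Griffith region (and frequently vanishes outright, e.g.\ when $n^{1}=0$ or $m^{1}=n^{1}$, by \autoref{lemmaE0} and \autoref{rmk:Griffithcondition}), and symmetrically it keeps the $H^{1}$ of the ``upper'' weights small. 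In the remaining cases where both groups are nonzero I would bound the highest weights of their composition factors by the dominant weights they are linked to, exactly as in the proof of \autoref{lemma:poidsrestreints}, and verify that the $p$-adic digits fixed by $m\geq n$ separate the two weight ranges --- the same separation that makes $H^{2}$ the large cohomology opposite $H^{1}$ inside an $H^{2}$-chamber.

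The main obstacle is precisely this last step: carrying out the weight/linkage estimate uniformly across the six types of $\mu$ and through the various degenerate positions (one of $m^{1},n^{1}$ zero, $\mu$ or an intermediate weight on a wall, $\mu$ in or out of $\overline{\Gr}$), so that no composition factor of an $H^{1}$-contribution to the D-filtration can meet one of an $H^{2}$-contribution sitting strictly below it. I expect to organise it as an induction on the degree $d$ of $\mu$: the base $d=0$ is immediate, since then $m^{1}=n^{1}=0$, every weight in sight lies in the range where Borel--Weil--Bott holds, $H^{2}(\mu)$ and each $H^{2}(E_{\delta_{i}}(\nu_{i}^{1}))$ vanish, and \eqref{eq:nablapfilt1} reads $0=0$; in the inductive step \autoref{thm:Edelta}, \autoref{prop:Eparticulier} and \autoref{cor:Eparticulier} rewrite the $H^{i}$'s of the $E_{\delta_{i}}(\nu_{i}^{1})$ in terms of degree-$(d-1)$ data covered by the inductive hypothesis. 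Granting the separation, the Euler characteristic bookkeeping of the first paragraph and the appeal to \autoref{lemma:filtrationinduite} are routine.
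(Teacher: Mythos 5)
Your reduction of \eqref{eq:nablapfilt1} to the vanishing of the connecting maps of the D-filtration is correct (it is the same bookkeeping as in the paper, via \autoref{lemma:H0H3} and \autoref{lemma:filtrationinduite}), and you correctly locate the only critical pair in the regular cases: by Steinberg, trouble can only come from the two layers sharing the restricted part ($\widehat{L}(s,\overline{r})$ in type $\Delta$, $\widehat{L}(r,\overline{s})$ in type $\nabla$), with twisted parts $(m^{1},-n^{1}-1)$ below and $(m^{1}-1,-n^{1}-2)$ above. But the disjointness you then assert, $\KF\big(H^{1}(m^{1}-1,-n^{1}-2)\big)\cap\KF\big(H^{2}(m^{1},-n^{1}-1)\big)=\varnothing$ under $m\geq n$, is false, and this is the crux of your argument. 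Take $p=3$ and $\mu=(13,-14)$, so $m=13\geq n=12$, $m^{1}=n^{1}=4$, $r=1$, $s=0$ (type $\Delta$). The lower twisted weight is $(4,-5)$, which lies in $\Gr$ (with $a=d=1$), and \autoref{cor:2étages} gives $H^{2}(4,-5)\cong L(0,0)^{(1)}\otimes V(0,0)=k$; the upper twisted weight is $(3,-6)$, and contravariant Serre duality gives $H^{1}(3,-6)\cong H^{2}(w_{0}\cdot(3,-6))^{t}=H^{2}(4,-5)^{t}\cong k$. Since here $\nu^{0}=(s,\overline{r})=(0,0)$, both contributions $L(\nu^{0})\otimes H^{1}(3,-6)^{(1)}$ and $L(\nu^{0})\otimes H^{2}(4,-5)^{(1)}$ are the trivial module, so the composition-factor count cannot exclude a nonzero connecting map. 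The vanishing is true (it follows from the proposition itself), but it cannot be obtained by the mechanism you propose; near the Griffith region the $H^{1}$ of an upper layer and the $H^{2}$ of a lower layer genuinely share factors, precisely because these weights are Serre-dual to one another.

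This is why the paper does not argue via Hom- or factor-disjointness at this point: it proves the character identity \eqref{eq:nablapfilt1} directly, by induction on the degree of $\mu$, after first disposing of the cases $n=0$ and $\mu\notin\widehat{\Gr}$ (where every term on the right vanishes, using \autoref{prop:Eparticulier} and \autoref{rmk:Griffithcondition}). In the inductive step one expands $\ch H^{2}(\mu)$ by the two-step recursion of \autoref{thm2}, applies the induction hypothesis to $\mu'$ and $\mu''$ and \autoref{thm:weyl} to the $H^{3}$-term, and then reassembles the sum using the three-step character recursions for $\ch H^{2}(E_{\delta}(\nu^{1}))$ from \autoref{thm:Edelta}, checking for each entry of the list \eqref{liste0} that the hypotheses of that proposition hold (this is where $m\geq n$ is used). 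Your own inductive skeleton points to the right tools (\autoref{thm:Edelta}, \autoref{prop:Eparticulier}, \autoref{cor:Eparticulier}), but the separation step you rely on to kill the connecting maps does not hold, so the proposal as it stands has a genuine gap; also note that the effective base of the induction is degree $1$ with $\mu\in\widehat{\Gr}$, not $d=0$, the small-degree cases being handled by explicit vanishing rather than by Borel--Weil--Bott alone.
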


\subsection{Preuve de la  \texorpdfstring{\autoref{thm:presquepfiltration1}}{Théorème}}
\begin{proof}

  Écrivons \( m=m^{1}p+r \) et \( n=n^{1}p+s \) avec \( 0\leq r,s\leq p-1 \).

  \fbox{Supposons d'abord que \(n=0\).} Alors
  \(H^{2}(\mu)=H^{2}(m,-2)=0\) d'après la \autoref{rmk:Griffithcondition}.
  Dans ce cas, on a \(n^{1}=s=0\) et
\(\mu\) ne peut pas être
de type \(\nabla\) ou \(\beta\)-singulier, donc  les
\(E_{\delta_{i}}(\nu_{i}^{1})\)  possibles sont:
\begin{equation}\label{liste0'}
(m^{1},-1),(m^{1}-1,-1),(m^{1},-2),(m^{1}-1,-2),
E_{\alpha}(m^{1}+1,-2), E_{\beta}(m^{1}-1,0).
\end{equation}

On sait que \(H^{2}(m^{1},-1)=H^{2}(m^{1}-1,-1)=0\) pour tout
\(m^{1}\) (cf. \cite{Jan03} II.5.4.a)). Comme \(m^{1}\geq 0\), on a
\(H^{2}(m^{1},-2)=H^{2}(m^{1}-1,-2)=0\). De même, 
\(H^{2}(m^{1}+1,-2)=0\), et
\(H^{2}((m^{1}+1,-2)-\alpha)=H^{2}(m^{1}-1,-1)=0\), d'où
\(H^{2}(E_{\alpha}(m^{1}+1,-2))=0\). Enfin, on a
\(H^{2}(E_{\beta}(m^{1}-1,0))=0\) d'après le \autoref{lemmaE0}. Donc
l'égalité \eqref{eq:nablapfilt1} est vraie si \(n=0\).

  \fbox{Si \(n\geq 1\) et \(\mu\notin \widehat{\Gr}\) ,}
alors \(H^{2}(\mu)=0\). Montrons dans ce cas que
\(H^{2}(E)\) est aussi nul pour tout
\(E\) dans la liste \eqref{liste0}.

Comme \(n\geq 1\), il existe \(d\geq 0\) et \(a\in \{1,\cdots,p-1\}\)
 tels que \(ap^{d}\leq n<(a+1)p^{d}\). On a \(m\geq (a+1)p^{d}\) car
 \(\mu\notin \widehat{\Gr}\).

 Si \(d=0\), alors \(n^{1}=0\) et \(m^{1}\geq 0\). Alors on a déjà
 montré que tout \(E\) dans la liste \eqref{liste0'} n'a pas de
 cohomologie en degré \(2\). Dans \eqref{liste0}, il reste encore les deux termes
 \(E_{\alpha}(m^{1},-2)\) et
 \(E_{\beta}(m^{1}-1,-1)\). Mais ces deux termes n'apparaissent
 que si \(\mu\) est de type \(\nabla\) ou \(\beta\)-singulier, avec
 \(r< s\). Comme \(m\geq n\), il faut  que \(m^{1}\geq 1\) pour
 que   \(E_{\alpha}(m^{1},-2)\) ou
 \(E_{\beta}(m^{1}-1,-1)\) apparaissent. On sait que
 \(H^{2}(m^{1},-2)=H^{2}(m^{1}-1,-1)=0\). Comme
 \((m^{1},-2)-\alpha=(m^{1}-2,-1)\) et
 \((m^{1}-1,-1)-\beta=(m^{1},-3)\) n'ont pas de cohomologie en degré
 \(2\) non plus si \(m^{1}\geq 1\) d'après la \autoref{rmk:Griffithcondition}, on a
 \(H^{2}(E_{\alpha}(m^{1},-2))=H^{2}(E_{\beta}(m^{1}-1,-1))=0\). Donc
 l'égalité \eqref{eq:nablapfilt1} est vraie dans ce cas.

 Si \(d\geq 1\), alors \(ap^{d-1}\leq n^{1}<(a+1)p^{d-1}\) et
 \(m^{1}\geq (a+1)p^{d-1}\).  Dans ce cas, les poids suivants:
 \begin{multline*}
   (m^{1},-n^{1}-1),(m^{1}-1,-n^{1}-1),(m^{1},-n^{1}-2),(m^{1}-1,-n^{1}-2)\\
      (m^{1}+1,-n^{1}-2), (m^{1}+1,-n^{1}-2)-\alpha,
      (m^{1}-1,-n^{1}), (m^{1}-1,-n^{1})-\beta
 \end{multline*}
sont tous dans la chambre \( s_{\beta}\cdot C \) et hors de la région
de Griffith, donc n'ont pas de cohomologie en degré 2. Donc il reste à traiter \(E_{\alpha}(m^{1}, -n^{1}-2)\)
et \(E_{\beta}(m^{1}-1,-n^{1}-1)\) dans la liste \eqref{liste0}. 

Si \(m^{1}\geq (a+1)p^{d-1}+1\), alors
\((m^{1},-n^{1}-2)-\alpha=(m^{1}-2,-n^{1}-1)\) qui n'a pas de
cohomologie  en degré \(2\) car il est dans la chambre \(
s_{\beta}\cdot C \)  et hors de la région de
Griffith. Donc \(H^{2}(E_{\alpha}(m^{1},-n^{1}-2))=0\) dans
ce cas. Si \(m^{1}=(a+1)p^{d-1}\), alors on a aussi
\(H^{2}(E_{\alpha}(m^{1},-n^{1}-2))=0\) d'après la
\autoref{prop:Eparticulier}.

Si \(n^{1}\leq (a+1)p^{d-1}-2\), alors
\((m^{1}-1,-n^{1}-1)-\beta=(m^{1},-n^{1}-3)\) n'est pas dans la région
de Griffith, donc il n'a pas de cohomologie en degré 2 et
\(H^{2}(E_{\beta}(m^{1}-1,-n^{1}-1))=0\) dans ce cas. Si
\(n^{1}=(a+1)p^{d-1}-1\), alors
\[H^{2}(E_{\beta}(m^{1}-1,-n^{1}-1))=H^{2}(E_{\beta}(m^{1}-1,-(a+1)p^{d-1}))=0\]
d'après la \autoref{prop:Eparticulier}.

Par conséquent,  (\ref{eq:nablapfilt1}) est toujours vraie si \(\mu\notin\widehat{\Gr}\).

\fbox{Si \(\mu\in \widehat{\Gr}\),} raisonnons  par récurrence sur le degré
\(d\) de \( \mu \).

Si \(d=1\), alors \(\mu=(ap+r,-ap-s-2)\). Donc \(r\geq s\) et \(\mu\) doit être
de type \(\Delta\) ou \(\alpha\)-singulier ou \(\gamma\)-singulier ou
\(\alpha\)-\(\beta\)-singulier. Si \(\mu\) est de type \(\Delta\) ou \(\gamma\)-singulier, on a
\[H^{2}(\mu)\cong L(0,a-1)^{(1)}\otimes V(s,p-r-2)\cong L(s,ap-r-2)\]
et d'après \eqref{liste1} et \eqref{liste5} :
\begin{align*}
  \bigoplus_{i} L(\nu_{i}^{0})\otimes
  H^{2}(E_{\delta_{i}}(\nu_{i}^{1}))^{(1)}&=L(s,p-r-2)\otimes H^{2}(a-1,-a-2)^{(1)}\\
                          &\cong L(s,p-r-2)\otimes V(0,a-1)^{(1)}\\
                          &\cong L(s,ap-r-2)\\
  &\cong H^{2}(\mu),
\end{align*}
d'où (\ref{eq:nablapfilt1}). Dans les deux autres cas, on a \(r=p-1\)
et \(H^{2}(\mu)=0\). D'après \eqref{liste3} et \eqref{liste6}, on a
\(H^{2}(E_{\delta_{i}}(\nu_{i}^{1}))=0\) pour tout \(i\) car
\(m^{1}=n^{1}=a\). Donc les deux
cotés de \eqref{eq:nablapfilt1} sont nuls, et  l'égalité est
aussi vraie. Donc \eqref{eq:nablapfilt1} est vraie si \(\mu\in\widehat{\Gr}\)
est de degré \(d=1\).

Supposons l'égalité (\ref{eq:nablapfilt1})  vraie pour tout \(\mu\) de
degré \(\leq d\) dans une
\(H^{1}\)-chambre, et  montrons-la pour \( \mu \)  de degré
\(d+1\). D'après ce qu'on a déjà montré, il suffit de supposer
que \(\mu\in\widehat{\Gr}\) .

Écrivons  \(m=ap^{d+1}+a_{d}p^{d}+a_{d-1}p^{d-1}+\cdots+a_{1}p+r\)
 et \(n=ap^{d+1}+b_{d}p^{d}+b_{d-1}p^{d-1}+\cdots+b_{1}p+s\).
On a
\begin{equation}
    \label{eq:nablapfilt2}
    \begin{aligned}
      \ch H^{2}(\mu)=&\ch L(0,a-1)^{(d+1)}\ch H^{3}(\mu+(-a-1,a)p^{d+1})\\
      & +\ch L(0,a)^{(d+1)}\ch H^{2}(\mu+(-a,a)p^{d+1})\\
      &+\ch L(0,a-2)^{(d+1)}\ch H^{2}(\mu+(-a-1,a+1)p^{d+1}).
    \end{aligned}
  \end{equation}

  Notons \(\mu'=\mu+(-a,a)p^{d+1}=(m',-n'-2)\) et
  \(\mu''=\mu+(-a-1,a+1)p^{d+1}=(-n''-2,m'')\). Alors
  \[ m'=a_{d}p^{d}+a_{d-1}p^{d-1}+\cdots+a_{1}p+r,\]
  \[n'=b_{d}p^{d}+b_{d-1}p^{d-1}+\cdots+b_{1}p+s,\]
  \[m''=(p-1-b_{d})p^{d}+(p-1-b_{d-1})p^{d-1}+\cdots+(p-1-b_{1})p+p-s-2,\]
  \[n''=(p-1-a_{d})p^{d}+(p-1-a_{d-1})p^{d-1}+\cdots+(p-1-a_{1})p+p-r-2.\]
Donc \(\mu'\) et \(\mu''\) sont des poids de degré
\(\leq d\) dans une \(H^{1}\)-chambre (plus précisément, \( \mu'\in
s_{\beta}\cdot C \) et \( \mu''\in s_{\alpha}\cdot C \)).

Comme \(\mu'=\mu+(-a,a)p^{d+1}\), on sait que la D-filtration
de \(\widehat{Z}(\mu')\) est juste celle de \(\widehat{Z}(\mu)\) tensorisée
par \((-a,a)p^{d+1}\). De même, la D-filtration de
\(\widehat{Z}(\mu'')\) est celle de \(\widehat{Z}(\mu)\) tensorisée par
\((-a-1,a+1)p^{d+1}\) et la D-filtration de
\(\widehat{Z}(\mu+(-a-1,a)p^{d+1})\) est celle de \(\widehat{Z}(\mu)\)
tensorisée par \((-a-1,a)p^{d+1}\). 

Donc l'hypothèse de récurrence pour \(\mu'\) et \(\mu''\) (pour
\(\mu''\) on  utilise la symétrie entre \(\alpha\) et \(\beta\))
nous donne
\begin{equation}
  \label{eq:nablapfilt3}
  \ch H^{2}(\mu')=\sum_{i=1}^{\ell}\ch L(\nu_{i}^{0})\ch H^{2}(E_{\delta_{i}}(\nu^{1}_{i}+(-a,a)p^{d}))^{(1)}
\end{equation}
et
\begin{equation}
  \label{eq:nablapfilt4}
   \ch H^{2}(\mu'')=\sum_{i=1}^{\ell}\ch L(\nu_{i}^{0})\ch H^{2}(E_{\delta_{i}}(\nu^{1}_{i}+(-a-1,a+1)p^{d}))^{(1)}.
 \end{equation}

 De plus,   d'après la \autoref{thm:weyl} du paragraphe \ref{subsection:Jantzen}, on a
\[\ch H^{3}(\mu+(-a-1,a)p^{d+1})=\sum_{i=1}^{\ell}\ch L(\nu^{0}_{i})\ch
  H^{3}(E_{\delta_{i}}(\nu_{i}^{1}+(-a-1,a)p^{d})^{(1)}).\]

Posons \(m^{1}=ap^{d}+a_{d}p^{d-1}+\cdots+a_{1}=ap^{d}+\widetilde{r}\) et
\(n^{1}=ap^{d}+b_{d}p^{d-1}+\cdots+b_{1}=ap^{d}+\widetilde{s}\) avec \(
0\leq \widetilde{r},\widetilde{s}\leq p^{d}-1 \), alors tout poids de la liste
(\ref{liste0})  vérifie les conditions correspondantes de la
\autoref{thm:Edelta} et du \autoref{cor:2étages}. Plus précisément, si \( \delta_{i}=0 \), alors
\begin{equation}
  \nu_{i}^{1}\in\{(m^{1},-n^{1}-1),(m^{1}-1,-n^{1}-1),(m^{1},-n^{1}-2),(m^{1}-1,-n^{1}-2)\},
  \label{eq:35c2a4b1a3c1a6c9}  
\end{equation}
d'après \eqref{liste0}. On a \( m^{1}-1=ap^{d}+\widetilde{r}-1 \) avec \( -1\leq \widetilde{r}-1\leq
p^{d}-2 \) et \( n^{1}-1=ap^{d}+\widetilde{s}-1 \) avec \( -1\leq \widetilde{s}-1\leq
p^{d}-2 \), donc \( \widetilde{r},\widetilde{s},\widetilde{r}-1,\widetilde{s}-1 \)
vérifient l'hypothèse du \autoref{cor:2étages}, d'où
\begin{equation}
  \label{eq:edd4c74d46e14a91}
  \begin{aligned}
    \ch H^{2}(E_{0}(\nu_{i}^{1}))=&\ch L(0,a-1)^{(d)}\ch
    H^{3}(E_{0}(\nu_{i}^{1}+(-a-1,a)p^{d}))\\
    &+\ch L(0,a)^{(d)}\ch H^{2}(E_{0}(\nu_{i}^{1}+(-a,a)p^{d}))\\
    &+\ch L(0,a-2)^{(d)}\ch H^{2}(E_{0}(\nu_{i}^{1}+(-a-1,a+1)p^{d}))
  \end{aligned}
\end{equation}
si \( \delta_{i}=0 \).

Si \( \delta_{i}=\alpha \), alors
\[
E_{\alpha}(\nu_{i}^{1})\in\{E_{\alpha}(m^{1}+1,-n^{1}-2), E_{\alpha}(m^{1},-n^{1}-2)\}
\]
d'après \eqref{liste0}. On a \( m^{1}+1=ap^{d}+\widetilde{r}+1 \) avec \(
1\leq \widetilde{r}+1\leq p^{d} \) et \( n^{1}=ap^{d}+\widetilde{s} \) avec \(
0\leq \widetilde{s}\leq p^{d}-1 \), donc \( (m^{1}+1,-n^{1}-2) \) vérifie
l'hypothèse dans (i) de la \autoref{thm:Edelta}. D'autre part, le facteur \(
E_{\alpha}(m^{1},-n^{1}-2) \) apparaît seulement si \( \mu \) est de
type \( \nabla \), auquel cas on a \( s>r \). Mais \( m^{1}p+r=m\geq
n=n^{1}p+s \), donc \( m^{1}\geq n^{1}+1\geq ap^{d}+1 \), d'où \(
\widetilde{r}\geq 1 \) dans ce cas. Donc s'il existe \( i \) tel que
\( E_{\delta_{i}}(\nu_{i}^{1})=E_{\alpha}(m^{1},-n^{1}-2) \), alors \(
m^{1}=ap^{d}+\widetilde{r} \) avec \( 1\leq \widetilde{r}\leq p^{d}-1 \) et \(
n^{1}=ap^{d}+\widetilde{s} \) avec \( 0\leq \widetilde{s}\leq p^{d}-1 \), donc
\( (m^{1},-n^{1}-2) \) vérifie aussi l'hypothèse dans  (i) de la
\autoref{thm:Edelta}. Par conséquent, on a
\begin{equation}
  \label{eq:96d7df3c519a73ea}
  \begin{aligned}
    \ch H^{2}(E_{\alpha}(\nu_{i}^{1}))=&\ch L(0,a-1)^{(d)}\ch
    H^{3}(E_{\alpha}(\nu_{i}^{1}+(-a-1,a)p^{d}))\\
    &+\ch L(0,a)^{(d)}\ch H^{2}(E_{\alpha}(\nu_{i}^{1}+(-a,a)p^{d}))\\
    &+\ch L(0,a-2)^{(d)}\ch H^{2}(E_{\alpha}(\nu_{i}^{1}+(-a-1,a+1)p^{d}))
  \end{aligned}
\end{equation}
si \( \delta_{i}=\alpha \).

Si \( \delta_{i}=\beta \), alors on a
\[
E_{\beta}(\nu_{i}^{1})\in\{E_{\beta}(m^{1}-1,-n^{1}),E_{\beta}(m^{1}-1,-n^{1}-1)\}
\]
d'après \eqref{liste0}. On a \( m^{1}-1=ap^{d}+\widetilde{r}-1 \) avec \(
-1\leq \widetilde{r}\leq p^{d}-2 \) et \( n^{1}-2=ap^{d}+\widetilde{s}-2 \)
avec \( -2\leq \widetilde{s}\leq p^{d}-3 \), donc \( (m^{1}-1,-n^{1}) \)
vérifie l'hypothèse dans  (ii) de la \autoref{thm:Edelta}. D'autre
part, le facteur \( E_{\beta}(m^{1}-1,-n^{1}-1) \) apparaît seulement
si \( \mu \) est de type \( \nabla \), auquel cas on a \( s\geq r \). Mais \(
m^{1}p+r=m\geq n=n^{1}p+s \), donc on a \( n^{1}\leq m^{1}-1\leq
(a+1)p^{d}-2 \). Donc \( n^{1}-1=ap^{d}+\widetilde{s}-1 \) avec \( -1\leq
\widetilde{s}-1\leq p^{d}-3 \) dans ce cas, et l'hypothèse dans (ii) de la
\autoref{thm:Edelta} est aussi satisfaite. Par conséquent, on a
\begin{equation}
  \label{eq:c1b04cadb9e15d36}
  \begin{aligned}
    \ch H^{2}(E_{\beta}(\nu_{i}^{1}))=&\ch L(0,a-1)^{(d)}\ch
    H^{3}(E_{\beta}(\nu_{i}^{1}+(-a-1,a)p^{d}))\\
    &+\ch L(0,a)^{(d)}\ch H^{2}(E_{\beta}(\nu_{i}^{1}+(-a,a)p^{d}))\\
    &+\ch L(0,a-2)^{(d)}\ch H^{2}(E_{\beta}(\nu_{i}^{1}+(-a-1,a+1)p^{d}))
  \end{aligned}
\end{equation}
 si \( \delta_{i}=\beta \).

Par conséquent, on a
\begin{align*}
  \ch H^{2}(\mu)=& \ch L(0,a-1)^{(d+1)}\ch H^{3}(\mu+(-a-1,a)p^{d+1})\\
  &+\ch L(0,a)^{(d+1)}\ch H^{2}(\mu')+\ch L(0,a-2)^{(d+1)}\ch
    H^{2}(\mu'')\\
  =&\ch L(0,a-1)^{(d+1)}\sum_{i=1}^{\ell}\ch L(\nu^{0}_{i})\ch
     H^{3}( E_{\delta_{i}}(\nu_{i}^{1}+(-a-1,a)p^{d}))^{(1)} \\
  &+\ch L(0,a)^{(d+1)}\sum_{i=1}^{\ell}\ch L(\nu_{i}^{0})\ch
    H^{2}(E_{\delta_{i}}(\nu^{1}_{i}+(-a,a)p^{d}))^{(1)}\\
  &+\ch L(0,a-2)^{(d+1)}\sum_{i=1}^{\ell}\ch L(\nu_{i}^{0})\ch
    H^{2}(E_{\delta_{i}}(\nu^{1}_{i}+(-a-1,a+1)p^{d}))^{(1)}\\
  =&\sum_{i=1}^{\ell}\ch L(\nu_{i}^{0})[\ch L(0,a-1)^{(d)}\ch
     H^{3}(E_{\delta_{i}}(\nu_{i}^{1}+(-a-1,a)p^{d}))\\
  &+\ch L(0,a)^{(d)}\ch
    H^{2}(E_{\delta_{i}}(\nu^{1}_{i}+(-a,a)p^{d}))\\
  &+\ch L(0,a-2)^{(d)}\ch
    H^{2}(E_{\delta_{i}}(\nu^{1}_{i}+(-a-1,a+1)p^{d}))]^{(1)}\\
  =&\sum_{i=1}^{\ell}\ch L(\nu_{i}^{0})\ch
     H^{2}(E_{\delta_{i}}(\nu_{i}^{1})),
\end{align*}
où la dernière égalité résulte de  \eqref{eq:edd4c74d46e14a91},
\eqref{eq:96d7df3c519a73ea} et \eqref{eq:c1b04cadb9e15d36}.
Ceci termine la preuve de la \autoref{thm:presquepfiltration1} et donc
du \autoref{thm:pHifiltration}.

\end{proof}
\subsection{Conclusion}\label{subsection:Dfiltrationconclusion}

En combinant les Propositions \ref{thm:weyl} et  \ref{thm:H0pfilt}, le
\autoref{thm:pHifiltration} et le \autoref{lemma:H0H3}, on obtient le: 
\begin{thm}\label{pHiDfiltration}
  Soit \( \mu\in X(T) \). Soit
\( 0=N_{0}\subset N_{1}\subset N_{2}\subset\cdots \subset N_{\ell}=\widehat{Z}(\mu) \)
une D-filtration de \(\widehat{Z}(\mu)\) (cf. le paragraphe
\ref{subsection:Zhat})
telle que \(N_{i}/N_{i-1}\cong
\widehat{L}(\nu_{i}^{0})\otimes E_{\delta_{i}}(\nu_{i}^{1})^{(1)}\) où
\(\delta_{i}\in \{0,\alpha,\beta\}\). Alors pour tout \( j\in
\mathbb{N} \), il existe une filtration 
\( 0=\widetilde{N_{0}}\subset \widetilde{N_{1}}\subset
\widetilde{N_{1}}\subset\cdots\subset \widetilde{N_{\ell}}=H^{j}(\mu) \)
où \( \widetilde{N_{i}}\cong H^{j}(G/BG_{1},N_{i}) \) et  \(\widetilde{N_{i}}/\widetilde{N_{i-1}}\cong L(\nu_{i}^{0})\otimes
H^{j}(E_{\delta_{i}}(\nu_{i}^{1}))^{(1)}.\)
\end{thm}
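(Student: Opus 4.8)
The plan is simply to assemble the three existence results already proved — \autoref{thm:H0pfilt} for \(j=0\), \autoref{thm:weyl} for \(j=3\), and \autoref{thm:pHifiltration} for \(j=1,2\) — organised by the position of \(\mu\) relative to \(C\) and \(w_{0}\cdot C\), with \autoref{lemma:filtrationinduite} as the common tool: in each cohomological degree it reduces the existence of the filtration \(\widetilde{N_{i}}=H^{j}(G/BG_{1},N_{i})\) with the prescribed subquotients to the single character identity \(\ch H^{j}(\mu)=\sum_{i}\ch L(\nu_{i}^{0})\,\ch H^{j}(E_{\delta_{i}}(\nu_{i}^{1}))^{(1)}\).

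First I would treat the case \(\mu\in C\). Writing \(\mu=(x,y)\) with \(x,y\geq -1\) we have \(H^{j}(\mu)=0\) for \(j\geq 1\) — by Kempf vanishing when \(\mu\) is dominant, and by \cite{Jan03} II.5.4 (since then \(\langle\mu,\alpha^{\vee}\rangle=-1\) or \(\langle\mu,\beta^{\vee}\rangle=-1\)) when \(\mu\) lies on a wall of \(C\). For \(j=0\) the conclusion is exactly \autoref{thm:H0pfilt}, which moreover asserts \(H^{j}(E_{\delta_{i}}(\nu_{i}^{1}))=0\) for every \(i\) and every \(j\neq 0\). Via \cite{Jan03} II.9.13 this gives \(H^{j}(G/BG_{1},N_{i}/N_{i-1})=0\) for \(j\neq 0\), hence, by induction on \(i\) along the long exact cohomology sequences, \(H^{j}(G/BG_{1},N_{i})=0\); so in degree \(j\neq 0\) the filtration is the zero filtration with zero subquotients and there is nothing to prove. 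The case \(\mu\in w_{0}\cdot C\) is symmetric: write \(\mu=(-y-2,-x-2)\) with \(x,y\geq -1\), note \(-2\rho-\mu\in C\) so that Serre duality gives \(H^{j}(\mu)=0\) for \(j\neq 3\), apply \autoref{thm:weyl} (to \(\lambda=(x,y)\), for which \(\widehat{Z}(\mu)=\widehat{Z}(w_{0}\cdot\lambda)\)) for \(j=3\), and use its vanishing statement \(H^{j}(E_{\delta_{i}}(\nu_{i}^{1}))=0\) for \(j\neq 3\) exactly as above.

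It remains to handle \(\mu\notin C\cup w_{0}\cdot C\). Then \(\mu=(m,-n-2)\) or \((-n-2,m)\) with \(m,n\in\mathbb{N}\), and by the symmetry between \(\alpha\) and \(\beta\) I may assume \(\mu=(m,-n-2)\). Here \(\mu\) lies in an \(H^{1}\)- or an \(H^{2}\)-chamber, so \(H^{0}(\mu)=H^{3}(\mu)=0\); \autoref{lemma:H0H3} gives \(H^{0}(G/BG_{1},N_{i})=H^{3}(G/BG_{1},N_{i})=0\) together with the analogous vanishing for the graded pieces, so degrees \(0\) and \(3\) are again trivial, while degrees \(1\) and \(2\) are precisely the content of \autoref{thm:pHifiltration}. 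This covers all \(\mu\) and all \(j\), proving the theorem.

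I do not expect a genuine obstacle at this stage: the whole difficulty is upstream, concentrated in \autoref{thm:pHifiltration} (and hence in \autoref{thm:presquepfiltration1}, which rests on \autoref{thm:Edelta} and \autoref{cor:2étages}). The only point that needs a moment's care is that the two vanishing ingredients — that \(H^{j}(E_{\delta_{i}}(\nu_{i}^{1}))\) vanishes in the "wrong" degrees, and that this vanishing propagates from the graded pieces \(N_{i}/N_{i-1}\) up to the \(N_{i}\) — together force the trivial filtration in those degrees; this is immediate once \cite{Jan03} II.9.13 is invoked to identify \(H^{j}(G/BG_{1},N_{i}/N_{i-1})\) with \(L(\nu_{i}^{0})\otimes H^{j}(E_{\delta_{i}}(\nu_{i}^{1}))^{(1)}\).
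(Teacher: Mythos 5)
Votre preuve est correcte et suit essentiellement la même démarche que l'article, qui déduit ce théorème précisément en combinant les Propositions \ref{thm:weyl} et \ref{thm:H0pfilt}, le \autoref{thm:pHifiltration} et le \autoref{lemma:H0H3}. Vous ne faites qu'expliciter la vérification, laissée implicite dans le texte, que dans les degrés où \( H^{j}(\mu)=0 \) les deux membres s'annulent, en identifiant \( H^{j}(G/BG_{1},N_{i}/N_{i-1}) \) à \( L(\nu_{i}^{0})\otimes H^{j}(E_{\delta_{i}}(\nu_{i}^{1}))^{(1)} \) par \cite{Jan03} II.9.13 puis en propageant l'annulation aux \( N_{i} \) par récurrence sur \( i \).
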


\section{La  cohomologie des  \texorpdfstring{\(B\)-modules
    \(E_{\delta}(\mu)\)}{B-modules Edelta}}
\subsection{Motivation et premières propriétés}
Dans la section \ref{chapitre:pHifiltration}, on a montré que pour
tout \(\mu\in X(T)\), \(H^{i}(\mu)\) admet une filtration dont les quotients
sont de la forme \(L(\nu^{0})\otimes
H^{i}(E_{\delta}(\nu^{1}))^{(1)}\). Cette filtration introduit des
modules inconnus \(H^{i}(E_{\delta}(\nu))\), donc il faut étudier leur
structure pour bien connaître celle de \(H^{i}(\mu)\).

Pour \( i=0
\),
d'après la discussion suivant la \autoref{thm:H0pfilt} (\ref{subsection:Jantzen}), pour \(
\delta\in\{\alpha,\beta\} \),
tout \(
H^{0}(E_{\delta}(\nu)) \) qui apparaît dans la \( p \)-filtration de
\( H^{0}(\mu) \) est soit nul, soit une extension de \( H^{0}(\nu) \)
par \( H^{0}(\nu-\delta) \). Donc le problème pour \( i=0 \) ou \( 3
\) est déjà complètement résolu.

Pour \(i=1  \) ou \( i=2 \), la situation est plus
compliquée.

Rappelons qu'il existe des
suites exactes non scindées de \( B \)-modules:
\[
\xymatrix{0\ar[r]&\mu-\alpha\ar[r]&E_{\alpha}(\mu)\ar[r]&\mu\ar[r]&0}
\]
et
\[
\xymatrix{0\ar[r]&\mu-\beta\ar[r]&E_{\beta}(\mu)\ar[r]&\mu\ar[r]&0}.
\]

Appliquons le foncteur \(H^{0}(G/B,\bullet)\) aux suites exactes
ci-dessus. On obtient les suites exactes longues :
\begin{equation}
  \label{suite:Ealpha0}
    \cdots\to H^{1}(\mu-\alpha)\to H^{1}(E_{\alpha}(\mu))\to H^{1}(\mu)\xrightarrow{\partial_{\alpha}}
    H^{2}(\mu-\alpha)\to H^{2}(E_{\alpha}(\mu))\to H^{2}(\mu)\to\cdots
\end{equation}
et
\begin{equation}
  \label{suite:Ebeta0}
    \cdots\to H^{1}(\mu-\beta)\to H^{1}(E_{\beta}(\mu))\to H^{1}(\mu)\xrightarrow{\partial_{\beta}}
    H^{2}(\mu-\beta)\to H^{2}(E_{\beta}(\mu))\to H^{2}(\mu)\to \cdots
\end{equation}

Donc pour connaître la structure de \( H^{1}(E_{\delta}(\mu)) \) et \(
H^{2}(E_{\delta}(\mu)) \), il \og suffit\fg{} de connaître le
morphisme de bord \( \partial_{\delta} \). D'après le  \og Strong
Linkage Principle\fg{} (cf \cite{Jan03} II.6.13), on
sait que \(\partial_{\alpha}=0\) (resp. \(\partial_{\beta}=0\)) si
\(\mu-\alpha\notin W_{p}\cdot \mu\) (resp. \(\mu-\beta\notin
W_{p}\cdot \mu\)).  En outre, pour
\(\delta\in\{\alpha,\beta\}\), \(\mu-\delta\in
W_{p}\cdot \mu\) si et seulement si \(\langle \mu,
\delta^{\vee}\rangle\) est divisible par \(p\). Donc si
 \(p\nmid
\langle \mu,\delta^{\vee}\rangle\), alors \(H^{i}(E_{\delta}(\mu))\)
est la somme directe de \(H^{i}(\mu-\delta)\) et \(H^{i}(\mu)\) .

Soit \( T_{\nu}^{\mu} \) le foncteur de translation de \( \nu \) à \(
\mu \). On a la proposition suivante.
\begin{proposition}
Supposons que \(\mu=(x,y)\) et \(p\mid x\). Posons \(\nu=(x-1,y)\), c'est un
poids sur le mur entre \(\mu\) et \(\mu-\alpha\). Alors
\(H^{i}(E_{\alpha}(\nu))\cong T_{\nu}^{\mu}(H^{i}(\nu))\) si
 \(p\nmid y+1\).
\end{proposition}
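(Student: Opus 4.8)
The plan is to realise $E_{\alpha}(\mu)$ as a quotient of the $B$-module $\nu\otimes L(\omega_{1})$, where $L(\omega_{1})=L(1,0)$ is the natural $3$-dimensional module and $\omega_{1}=\mu-\nu$ is dominant, and then to apply $R^{\bullet}\ind_{B}^{G}=H^{\bullet}(G/B,-)$ followed by the projection $\pr_{\mu}$ onto the linkage class of $\mu$. First I would record the $B$-module structure of $\res_{B}^{G}L(\omega_{1})$: it is uniserial — the flag of $k^{3}$ for the lower triangular Borel — with one-dimensional layers of weights $(0,-1)$, $(-1,1)$, $(1,0)$ read from socle to head, the socle being the lowest weight line $w_{0}\omega_{1}=(0,-1)$. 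Tensoring by the character $\nu=(x-1,y)$ gives a uniserial $B$-module $\nu\otimes L(\omega_{1})$ with socle $(\mu-\gamma)$ (since $\nu+(0,-1)=\mu-\gamma$), middle layer $(\mu-\alpha)$ and head $(\mu)$. Its socle is the unique line on which $B$ acts through a character, so the quotient of $\nu\otimes L(\omega_{1})$ by this line is an indecomposable two-dimensional $B$-module which is an extension of $(\mu)$ by $(\mu-\alpha)$; since $\Ext^{1}_{B}((\mu),(\mu-\alpha))$ is one-dimensional, it is the unique non-split such extension, i.e. $E_{\alpha}(\mu)$. This produces a short exact sequence of $B$-modules
\[
0\longrightarrow(\mu-\gamma)\longrightarrow\nu\otimes L(\omega_{1})\longrightarrow E_{\alpha}(\mu)\longrightarrow 0 .
\]

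I would then apply $R^{\bullet}\ind_{B}^{G}$. By the tensor identity (\cite{Jan03} I.4.8), $H^{i}(\nu\otimes L(\omega_{1}))\cong H^{i}(\nu)\otimes L(\omega_{1})$, so the long exact sequence reads
\[
\cdots\to H^{i}(\mu-\gamma)\to H^{i}(\nu)\otimes L(\omega_{1})\to H^{i}(E_{\alpha}(\mu))\to H^{i+1}(\mu-\gamma)\to\cdots .
\]
Now apply the exact functor $\pr_{\mu}$. This is where the hypothesis enters: since $p\mid x$, the assumption $p\nmid y+1$ forces $p\nmid x+y+1=\langle\mu+\rho,\gamma^{\vee}\rangle$, hence $\mu-\gamma=s_{\gamma,x+y+1}\cdot\mu\notin W_{p}\cdot\mu$, and a short residue check (using $p\mid x$ and $p\nmid y+1$) gives $\mu\notin W_{p}\cdot(\mu-\gamma)$. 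By the strong linkage principle (\cite{Jan03} II.6.13), every composition factor of every $H^{j}(\mu-\gamma)$ lies in the linkage class of $\mu-\gamma$, which is disjoint from that of $\mu$; therefore $\pr_{\mu}H^{j}(\mu-\gamma)=0$ for all $j$. Conversely, the $B$-module $E_{\alpha}(\mu)$ has composition factors of weights $\mu$ and $\mu-\alpha=s_{\alpha,x}\cdot\mu$ (again $p\mid x$), both in the linkage class of $\mu$, so $\pr_{\mu}H^{i}(E_{\alpha}(\mu))=H^{i}(E_{\alpha}(\mu))$.

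Finally, $H^{i}(\nu)$ lies in the block of $\nu$, so $\pr_{\nu}H^{i}(\nu)=H^{i}(\nu)$, and as $\mu-\nu=\omega_{1}$ is dominant one has $\pr_{\mu}\big(H^{i}(\nu)\otimes L(\omega_{1})\big)=\pr_{\mu}\big(L(\omega_{1})\otimes\pr_{\nu}H^{i}(\nu)\big)=T_{\nu}^{\mu}H^{i}(\nu)$ by the very definition of the translation functor. Applying $\pr_{\mu}$ to the long exact sequence and using the vanishing of the $H^{\bullet}(\mu-\gamma)$ terms collapses it to an isomorphism $T_{\nu}^{\mu}H^{i}(\nu)\xrightarrow{\sim}H^{i}(E_{\alpha}(\mu))$ for every $i$. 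The only non-formal ingredient is the arithmetical implication ``$p\mid x$ and $p\nmid y+1$ $\Rightarrow$ $p\nmid x+y+1$'', which is precisely what makes $\nu$ lie on a single reflecting wall and what lets $\pr_{\mu}$ kill the $(\mu-\gamma)$-contribution; the analogous statement with $E_{\beta}$ in place of $E_{\alpha}$ follows by the symmetry between $\alpha$ and $\beta$.
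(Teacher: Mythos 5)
Your proof is correct and takes essentially the same route as the paper: the same exact sequence $0\to(\mu-\gamma)\to\nu\otimes L(1,0)\to E_{\alpha}(\mu)\to 0$, the tensor identity, the observation that $p\mid x$ and $p\nmid y+1$ force $p\nmid x+y+1$ and hence $\mu-\gamma\notin W_{p}\cdot\mu$, and then the linkage projection $\pr_{\mu}$ to kill the $H^{\bullet}(\mu-\gamma)$ terms and identify $\pr_{\mu}\bigl(H^{i}(\nu)\otimes L(1,0)\bigr)=T_{\nu}^{\mu}H^{i}(\nu)$ with $H^{i}(E_{\alpha}(\mu))$ (the paper performs this last step by hand with the maps $\psi$ and $\phi$ rather than applying the exact functor $\pr_{\mu}$ to the whole long exact sequence, but the mechanism is identical). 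Two harmless remarks: your label $x+y+1=\langle\mu+\rho,\gamma^{\vee}\rangle$ is off by one (the pairing equals $x+y+2$; the integer $x+y+1$ is the $r$ in $\mu-\gamma=s_{\gamma,r}\cdot\mu$, so the arithmetic conclusion is unaffected), and, exactly as in the paper's own proof, what you establish is the statement for $E_{\alpha}(\mu)$ — the $E_{\alpha}(\nu)$ in the proposition's wording is evidently a misprint for $E_{\alpha}(\mu)$.
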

\begin{proof}
 Par  définition de \(E_{\alpha}(\mu)\), on sait qu'il existe une
 suite exacte de \(B\)-modules:
  \[
    \begin{tikzcd}
      0\ar[r]&(0,-1)\ar[r]&L(1,0)\ar[r]&E_{\alpha}(1,0)\ar[r]&0.
    \end{tikzcd}
  \]
  Tensorisons par le poids \(\nu=(x-1,y)\). On obtient:
  \[
    \begin{tikzcd}
      0\ar[r]&\mu-\gamma\ar[r]&L(1,0)\otimes\nu\ar[r]&E_{\alpha}(\mu)\ar[r]&0.
    \end{tikzcd}
  \]

  Appliquant le foncteur \(H^{0}(G/B,\bullet)\) à cette suite exacte,
  on obtient une
  suite exacte longue de cohomologie :
    \[  \cdots\to H^{i-1}(E_{\alpha}(\mu))\xrightarrow{\partial_{i-1}}
        H^{i}(\mu-\gamma)\xrightarrow{\psi} L(1,0)\otimes
        H^{i}(\nu)\xrightarrow{\phi} H^{i}(E_{\alpha}(\mu))\xrightarrow{\partial_{i}}
        H^{i+1}(\mu-\gamma)\to \cdots.
    \]

    Si \(p\nmid y+1\), alors \(p\nmid x+y+1\). Dans ce cas
    \(\mu-\gamma\) n'appartient pas à \( W_{p}\cdot \mu \), d'où \(\partial_{i}=0\). Donc \(\phi\) est surjectif.

    Notons \(N=T_{\nu}^{\mu}(H^{i}(\nu))\). Alors \(N\cong \pr_{\mu}(L(1,0)\otimes
H^{i}(\nu))\subset L(1,0)\otimes H^{i}(\nu)\). Comme
\(\mu-\gamma\) n'appartient pas à \( W_{p}\cdot \mu \), alors \(\image \psi\cap
N=\{0\}\). Donc \(N\) est isomorphe à son image par \(\phi\). Or
\(\pr_{\mu}(H^{i}(E_{\alpha}(\mu)))=H^{i}(E_{\alpha}(\mu))\) est inclus dans l'image de
\(N=\pr_{\mu}(L(1,0)\otimes H^{i}(\nu))\) car \( \phi \) est surjectif. Donc \(N\cong \phi(N)=H^{i}(E_{\alpha}(\mu))\).   
\end{proof}

De même, on a une proposition analogue pour \(E_{\beta}\):
\begin{proposition}
Supposons que \(\mu=(x,y)\) et \(p\mid y\). Posons \(\nu=(x,y-1)\),
c'est  un
poids sur le mur entre \(\mu\) et \(\mu-\beta\). Alors
\(H^{i}(E_{\beta}(\nu))\cong T_{\nu}^{\mu}(H^{i}(\nu))\) si
 \(p\nmid x+1\).
\end{proposition}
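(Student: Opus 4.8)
Le plan est de transposer \emph{mot pour mot} la démonstration de la proposition précédente, en échangeant les rôles de $\alpha$ et de $\beta$. Le point de départ sera l'analogue, pour $\beta$, de la suite exacte de $B$-modules utilisée plus haut: par définition de $E_{\beta}$ (ou, de façon équivalente, parce que $L(0,1)$ est minuscule et que $E_{\beta}(0,1)$ s'obtient en quotientant $L(0,1)$ par son socle comme $B$-module, qui est le poids $(-1,0)=w_{0}\cdot(0,1)+\rho-\rho$, bref la droite de plus bas poids), on dispose d'une suite exacte
\[
\begin{tikzcd}
0\ar[r]&(-1,0)\ar[r]&L(0,1)\ar[r]&E_{\beta}(0,1)\ar[r]&0.
\end{tikzcd}
\]
On la tensorise par le poids $\nu=(x,y-1)$; comme $E_{\beta}(0,1)\otimes\nu\cong E_{\beta}(\mu)$ et $(-1,0)\otimes\nu=\mu-\gamma$, on obtient
\[
\begin{tikzcd}
0\ar[r]&\mu-\gamma\ar[r]&L(0,1)\otimes\nu\ar[r]&E_{\beta}(\mu)\ar[r]&0.
\end{tikzcd}
\]

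On applique ensuite le foncteur $H^{0}(G/B,-)$ à cette suite, ce qui donne, après l'identification $H^{i}(L(0,1)\otimes\nu)\cong L(0,1)\otimes H^{i}(\nu)$ via l'identité tensorielle (\cite{Jan03} I.4.8), une suite exacte longue
\[
\cdots\to H^{i}(\mu-\gamma)\xrightarrow{\psi}L(0,1)\otimes H^{i}(\nu)\xrightarrow{\phi}H^{i}(E_{\beta}(\mu))\xrightarrow{\partial_{i}}H^{i+1}(\mu-\gamma)\to\cdots.
\]
Comme $p\mid y$, le poids $\mu-\beta$ appartient à $W_{p}\cdot\mu$, donc tout facteur de composition de $H^{i}(E_{\beta}(\mu))$ — qui provient de $H^{i}(\mu)$ ou de $H^{i}(\mu-\beta)$ — est dans le bloc de $\mu$; d'autre part, puisque $p\nmid x+1$ et $p\mid y$ entraînent $p\nmid x+y+1$, on a $\mu-\gamma\notin W_{p}\cdot\mu$ exactement comme dans la preuve précédente, de sorte que $H^{i+1}(\mu-\gamma)$ vit dans un autre bloc. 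Le morphisme $G$-équivariant $\partial_{i}$ est donc nul et $\phi$ est surjectif.

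Il restera à reconnaître le module de translation. Comme $\mu-\nu=\omega_{2}$ est dominant, on a $N:=T_{\nu}^{\mu}(H^{i}(\nu))\cong\pr_{\mu}\bigl(L(0,1)\otimes H^{i}(\nu)\bigr)$, c'est-à-dire le facteur direct de bloc $\mu$ de $L(0,1)\otimes H^{i}(\nu)$. Puisque $\image\psi$ est supporté hors du bloc de $\mu$ (toujours parce que $\mu-\gamma\notin W_{p}\cdot\mu$), on a $N\cap\image\psi=0$, donc la restriction de $\phi$ à $N$ est injective (son noyau est $\image\psi$); et comme $H^{i}(E_{\beta}(\mu))=\pr_{\mu}H^{i}(E_{\beta}(\mu))=\pr_{\mu}\phi\bigl(L(0,1)\otimes H^{i}(\nu)\bigr)=\phi(N)$, cette restriction est aussi surjective. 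On conclut $T_{\nu}^{\mu}(H^{i}(\nu))=N\cong H^{i}(E_{\beta}(\mu))$. Je n'attends pas de difficulté réelle ici: l'argument est rigoureusement celui déjà rédigé pour $E_{\alpha}$, et le seul point à vérifier — vérification immédiate — est l'existence de la suite exacte initiale $0\to(-1,0)\to L(0,1)\to E_{\beta}(0,1)\to 0$, qui résulte directement de la structure de $L(0,1)$ comme $B$-module et de la définition de $E_{\beta}$; tout le reste consiste à suivre soigneusement les conventions de coordonnées ($L(0,1)$, $(-1,0)$, $\omega_{2}$ en lieu et place de $L(1,0)$, $(0,-1)$, $\omega_{1}$).
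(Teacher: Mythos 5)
Votre démonstration est correcte et suit exactement la voie du texte : le papier ne démontre que le cas de $E_{\alpha}$ et déclare le cas de $E_{\beta}$ analogue, et vous explicitez précisément cet analogue (suite exacte $0\to(-1,0)\to L(0,1)\to E_{\beta}(0,1)\to 0$ tensorisée par $\nu$, annulation du bord via $\mu-\gamma\notin W_{p}\cdot\mu$ puisque $p\nmid x+y+1$, puis identification de $\pr_{\mu}(L(0,1)\otimes H^{i}(\nu))$ avec l'image de $\phi$), avec les bons échanges de coordonnées. Notez seulement que, comme dans la preuve du cas $\alpha$ du texte, ce que l'on obtient est $H^{i}(E_{\beta}(\mu))\cong T_{\nu}^{\mu}(H^{i}(\nu))$ (et non $E_{\beta}(\nu)$ tel qu'écrit dans l'énoncé), et que la parenthèse identifiant le socle de $L(0,1)$ devrait dire $w_{0}(0,1)=(-1,0)$ (action linéaire, plus bas poids) plutôt que l'action décalée.
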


\subsection{Morphismes de bord \texorpdfstring{\(\partial_{\alpha}\) et \(\partial_{\beta}\)}{}}\label{subsection:bordnul}
Commençons par la proposition suivante.
\begin{proposition} \label{prop:bordnul1}
  Soient \(\mu_{1}=(m_{1},-n_{1}-2)\) et \(\mu_{2}=(m_{2},-n_{2}-2)\)
  vérifiant 
  \begin{enumerate}
  \item \(m_{i}> n_{i}\geq 0\) pour \(i\in \{1,2\}\);
  \item \(k_{1}=v_{p}(m_{1})\geq 1\) et \(k_{2}=v_{p}(n_{2}+2)\geq
    1\);
  \item \(m_{i}-n_{i}\geq p^{k_{i}}\) pour \(i\in \{1,2\}\).
  \end{enumerate}
  Alors
  \begin{equation}
    \label{eq:alpha1}
    \ch H^{2}(E_{\alpha}(\mu_{1}))=\ch H^{2}(\mu_{1})+\ch H^{2}(\mu_{1}-\alpha),
  \end{equation}
  \begin{equation}
    \label{eq:beta1}
    \ch H^{2}(E_{\beta}(\mu_{2}))=\ch H^{2}(\mu_{2})+\ch H^{2}(\mu_{2}-\beta).
  \end{equation}
  C'est-à-dire, les morphismes de bord sont nuls.
\end{proposition}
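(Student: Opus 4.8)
Il s'agit de montrer que $\partial_\alpha = 0$ dans \eqref{suite:Ealpha0} pour $\mu_1$ et que $\partial_\beta = 0$ dans \eqref{suite:Ebeta0} pour $\mu_2$; l'égalité de caractères en résulte immédiatement par additivité de $\ch$ sur les suites exactes longues. Par la symétrie entre $\alpha$ et $\beta$ (et en utilisant la dualité de Serre contravariante $H^i(x,y)\cong H^{3-i}(w_0\cdot(x,y))^t$ qui échange les rôles des deux racines simples), il suffit de traiter l'assertion pour $\partial_\alpha$, c'est-à-dire le cas de $\mu_1 = (m_1, -n_1-2)$ avec $m_1 > n_1 \geq 0$, $k_1 = v_p(m_1)\geq 1$ et $m_1 - n_1 \geq p^{k_1}$.

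Le point de départ naturel est la \autoref{prop:Eparticulier}, ou plutôt sa preuve: sous l'hypothèse $p^{k_1}\mid m_1$, on peut écrire $m_1 = ap^{k_1} + \widetilde{m}$ avec $a\in\{1,\dots,p-1\}$ et $p^{k_1}\mid m_1$ signifie ici $\widetilde m$ provient de la partie inférieure. Plus précisément, j'utiliserais les trois suites exactes fondamentales \eqref{def-pfiltKEbeta}, \eqref{def-pfiltMEbeta}, \eqref{def-pfiltQEbeta}, que l'on tensorise (après torsion par la $k_1$-ième puissance du Frobenius) par le poids approprié, exactement comme dans la preuve de la \autoref{thm:Edelta}(i). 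Cela donne des suites exactes reliant $H^\bullet(E_\alpha(\mu_1))$ aux modules $H^\bullet(E_\alpha(\mu_1'))$ et $H^\bullet(E_\alpha(\mu_1''))$ où $\mu_1' = \mu_1 + (-a,a)p^{k_1}$ et $\mu_1'' = \mu_1 + (-a-1,a+1)p^{k_1}$. Le rôle de l'hypothèse $m_1 - n_1 \geq p^{k_1}$ est de garantir que $\mu_1$ (et donc $\mu_1'$, $\mu_1''$) restent dans $s_\beta\cdot C$, donc n'ont de cohomologie qu'en degrés $1$ et $2$, ce qui contrôle les suites exactes longues.

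L'argument-clé, déjà présent dans les preuves des \autoref{thm:Edelta} et \autoref{thm:presquepfiltration1}, est un argument de \emph{disjonction des facteurs de composition}: les modules $H^2(\widetilde K_a)$ et $H^2(\widetilde M_a)$ (avec les notations des suites exactes induites) ont des plus hauts poids dont les parties de Frobenius diffèrent — l'une est congrue à $(0,a)p^{k_1}$ modulo $p^{k_1}$-restreint, l'autre à $(0,a-2)p^{k_1}$ — grâce au \autoref{lemma:poidsrestreints}. Ils n'ont donc aucun facteur de composition commun, ce qui force le morphisme de liaison $f\colon H^2(\widetilde M_a)\to H^2(\widetilde K_a)$ à être nul. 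De là on déduit que la suite exacte longue se scinde en suites exactes courtes, et la comparaison des caractères $\ch H^2(E_\alpha(\mu_1))$ et $\ch H^2(\mu_1)+\ch H^2(\mu_1-\alpha)$ se ramène, par récurrence sur $k_1$ (ou sur le degré de $\mu_1$), aux égalités analogues pour $\mu_1'$ et $\mu_1''$, dont les degrés sont strictement plus petits. Le cas de base, $k_1$ minimal, relève soit de la \autoref{rmk:Griffithcondition} (annulation des deux côtés hors de la région de Griffith), soit du théorème de Borel--Weil--Bott.

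\textbf{Principal obstacle.} La difficulté n'est pas dans la machinerie — elle est déjà rodée — mais dans la vérification soigneuse que \emph{toutes} les hypothèses numériques (1)--(3) se propagent correctement à $\mu_1'$, $\mu_1''$, $\mu_2'$, $\mu_2''$ sous les opérations de décalage par $(-a,a)p^{k}$ et $(-a-1,a+1)p^{k}$, et en particulier que les conditions de bord du type $-1\leq r,s\leq p^d-1$ qui apparaissent dans les énoncés des \autoref{thm:Edelta} et \autoref{cor:2étages} sont bien satisfaites. Il faudra aussi traiter à part les cas dégénérés où l'un des poids sort de $s_\beta\cdot C$ ou tombe sur un mur (par exemple quand $\widetilde m = 0$ ou quand $a=1$), ce qui obligera à invoquer le \autoref{lemmaE0} ou la \autoref{prop:Eparticulier} au lieu de la récurrence; ce découpage combinatoire en sous-cas est la partie la plus fastidieuse de la démonstration.
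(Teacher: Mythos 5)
Votre stratégie générale (suites exactes tordues par le Frobenius, annulation du morphisme de liaison par disjonction des facteurs de composition via le \autoref{lemma:poidsrestreints}, récurrence ramenant à $\mu_1'$ et $\mu_1''$, recombinaison par le \autoref{thm2}, cas de base hors de la région de Griffith) est bien celle du texte, mais votre réduction initiale est fausse et laisse un vrai trou. La symétrie $\alpha\leftrightarrow\beta$ transforme l'énoncé \eqref{eq:beta1} pour $\mu_2=(m_2,-n_2-2)$ en un énoncé sur $E_{\alpha}$ du poids transposé $(-n_2-2,m_2)$, qui est dans $s_{\alpha}\cdot C$ et non de la forme $(m,-n-2)$ : on retombe sur le même énoncé, pas sur \eqref{eq:alpha1}. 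Quant à la dualité de Serre (contravariante ou non), elle échange les degrés $2$ et $1$, donc ne ramène pas davantage \eqref{eq:beta1} à \eqref{eq:alpha1}. On ne peut donc pas \og se contenter de $\partial_{\alpha}$\fg{}.

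Ce point n'est pas cosmétique, car la récurrence mélange nécessairement les deux énoncés : dans la filtration à trois étages de la \autoref{thm:Edelta}(i), le terme $H^{2}(E_{\alpha}(\mu_1''))$ avec $\mu_1''=(-p^{d}+r,p^{d}-s-2)$ porte sur un poids de l'autre chambre, et après transposition c'est précisément une instance de \eqref{eq:beta1} pour $(p^{d}-s-2,-(p^{d}-r-2)-2)$, de valuation $v_{p}(p^{d}-r)=k_1$ sur la deuxième coordonnée ; symétriquement, la récurrence pour \eqref{eq:beta1} produit une instance de \eqref{eq:alpha1}. Il faut donc démontrer les deux égalités simultanément, par récurrence sur le degré relatif $d-k_i$ (le degré $d$ baisse, la valuation $k_i$ est conservée), comme le fait le texte ; votre récurrence \og sur $k_1$\fg{} ne peut pas aboutir puisque $k_1$ ne décroît pas le long de la réduction. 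Enfin votre décalage est mal normalisé : il faut couper au chiffre dominant, c'est-à-dire écrire $m_1=ap^{d}+r$ avec $ap^{d}\le m_1<(a+1)p^{d}$ et tordre par $(-a,a)p^{d}$, et non écrire $m_1=ap^{k_1}+\widetilde m$ et tordre par $p^{k_1}$, sans quoi $a$ peut dépasser $p-1$ et les hypothèses de la \autoref{thm:Edelta} (et du \autoref{thm2} pour $\mu_1$ et $\mu_1-\alpha$, où l'on utilise $r\ge s+p^{k_1}$) ne sont plus satisfaites.
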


\begin{rmk}\label{rmk:degrérelatif}
  Fixons \( i\in\{1,2\} \), notons \( d \) le degré de \( \mu_{i} \).
  C'est-à-dire, \( ap^{d}\leq m_{i}\leq (a+1)p^{d} \) pour un \(
  a\in\{1,2,\cdots,p-1\}\). Si \( i=1 \), alors \( d\geq
  v_{p}(m_{1})=k_{1} \). Si \( i=2 \) et si \( k_{2}=v_{p}(n_{2}+2)>d \),
  alors \( n_{2}\geq p^{k_{2}}-2\geq p^{d+1}-2 \). Mais dans ce cas, on a
  \(m_{2}\geq n_{2}+p^{k_{2}}\geq 2p^{d+1}-2  \), absurde. Donc 
  on a toujours \(d\geq k_{i}\) pour \( i\in\{1,2\} \).
\end{rmk}

\begin{proof}
  Notons \( d_{i} \) le degré de \( \mu_{i} \). On appelle \(
  d_{i}-k_{i} \) le \og degré relatif\fg{} de \( \mu_{i} \) et on le
  note  \(\widetilde{d}_{i}\).  On montre la proposition
  simultanément pour \(E_{\alpha}\) et \(E_{\beta}\) par récurrence
  sur le degré relatif. D'après la \autoref{rmk:degrérelatif}, on sait
  que le degré relatif est toujours \( \geq 0 \).

  Si \(\widetilde{d}_{i}=0\), alors \( d_{i}=k_{i} \) et \(\mu_{1}=(ap^{d_{1}},-n_{1}-2)\) avec \(n_{1}\leq
  (a-1)p^{d_{1}}\) et \(\mu_{2}=(m_{2},-ap^{d_{2}})\) avec \(m_{2}\geq
  (a+1)p^{d_{2}}-2\).

  Dans ce cas, \(\mu_{1}\), \(\mu_{1}-\alpha\), \( \mu_{2} \) et \(
  \mu_{2}-\beta \) sont tous dans une \( H^{1} \)-chambre
  hors de la région de Griffith. En particulier,  \eqref{eq:alpha1} et  
  \eqref{eq:beta1} sont  triviales.

  Supposons qu'on ait déjà montré la proposition pour tout \(\mu_{i}\)
  tel
  que \(\widetilde{d}_{i}(\mu_{i})\leq\ell\) pour un certain \(\ell\geq 0\). Pour \(
  i\in\{1,2\} \), soit \( \mu_{i}=(m_{i},-n_{i}-2) \) tel que \( \widetilde{d}_{i}(\mu_{i})=\ell+1 \).

  On se concentre d'abord sur \(\mu=\mu_{1}\) et on enlève l'indice \(
  1 \) pour alléger la notation. Écrivons
  \(m=ap^{d}+a_{d-1}p^{d-1}+\cdots +a_{k}p^{k}=ap^{d}+r\) avec
  \(a\neq 0\), \(a_{k}\neq 0\) et \(d-k= \ell+1\geq 1\). Si
  \(\mu\notin \widehat{\Gr}\), alors \( n<ap^{d} \)
  , donc \(\mu-\alpha\notin \widehat{\Gr}\) aussi, car
  \(m\geq ap^{d}+p^{k}\geq ap^{d}+2\), et \eqref{eq:alpha1} est
  vraie dans ce cas. Donc il suffit de
  considérer le cas où \(\mu\in\widehat{\Gr}\),
  d'où \(n=ap^{d}+s\) avec \(0\leq s\leq r-p^{k}\). En
  particulier, on a \(1\leq r\leq p^{d}-1  \) et \( 0\leq s \leq
  p^{d}-2 \), donc  d'après la
 \autoref{thm:Edelta}, on a
  \begin{align*}
    \ch H^{2}(E_{\alpha}(\mu))=&\ch L(0,a-1)^{(d)}\ch
                                 H^{3}(E_{\alpha}(r-p^{d},-s-2))\\
                               &+\ch L(0,a)^{(d)}\ch H^{2}(E_{\alpha}(r,-s-2))\\
                               &+\ch
                                 L(0,a-2)^{(d)}\ch H^{2}(E_{\alpha}(-p^{d}+r,p^{d}-s-2)).
  \end{align*}
   
  Comme \(v_{p}(r)=v_{p}(p^{d}-r)=v_{p}(m)=k\) et
  \((p^{d}-s-2)-(p^{d}-r-2)=r-s\geq p^{k}\), le poids \((r,-s-2)\)
  vérifie les hypothèses pour \(E_{\alpha}\) dans la Proposition et
  est de  degré relatif  majoré par \( \ell \). Le poids \((p^{d}-s-2,-p^{d}+r)\) vérifie les
  hypothèses pour \(E_{\beta}\) et est de  degré relatif aussi majoré par \( \ell\). D'après l'hypothèse
  de récurrence on a donc
  \begin{align*}
     \ch H^{2}(E_{\alpha}(\mu))=&\ch L(0,a-1)^{(d)}(\ch
                                 H^{3}(r-p^{d},-s-2)+\ch H^{3}(r-p^{d}-2,-s-1))\\
                                &+\ch L(0,a)^{(d)}(\ch H^{2}(r,-s-2)
                                  +\ch H^{2}(r-2,-s-1))\\
     &+\ch  L(0,a-2)^{(d)}(\ch H^{2}(-p^{d}+r,p^{d}-s-2)+\ch
       H^{2}(-p^{d}+r-2,p^{d}-s-1))\\
    =&\ch H^{2}(ap^{d}+r,-ap^{d}-s-2)+\ch
       H^{2}(ap^{d}+r-2,-ap^{d}-s-1)\\
    =&\ch H^{2}(\mu)+\ch H^{2}(\mu-\alpha),
  \end{align*}
où la deuxième égalité résulte du \autoref{thm2}  et du fait que \(
0\leq r-2\leq p^{d}-3 \) car \( r\geq s+p^{k}\geq p^{k} \).

  Traitons maintenant \( E_{\beta}(\mu_{2}) \) et enlevons l'indice \(
  2 \) pour alléger la notation.  On a  \(v_{p}(n+2)=k\),
  \(m=ap^{d}+r\) avec \(a\geq 1\), \(0\leq r\leq p^{d}-1\) et
  \(d-k=\ell+1\geq 1\).

  Si \(\mu\notin\widehat{\Gr}\), alors \(n+2\leq ap^{d}+1\). Mais
  comme \(v_{p}(n+2)=k\), on a \(n+2\leq ap^{d}-p^{k}\). Dans ce cas
  \(\mu-\beta=(m+1,-n-4)\) n'est pas dans \( \widehat{\Gr} \) non plus.

  Si \(\mu\in \widehat{\Gr}\) mais \( \mu\notin\Gr \), c'est-à-dire \(r=p^{d}-1\), alors
  \(n=ap^{d}+s\) avec \(0\leq s\leq r-p^{k}\leq p^{d}-3\). Donc
  \(\mu-\beta=((a+1)p^{d},-ap^{d}-s-4)\notin\Gr\) , d'où
  \(H^{2}(\mu)=H^{2}(\mu-\beta)=0\).

  Donc il suffit de considérer  le cas où \(\mu\in\Gr\) et donc \(r\leq p^{d}-2\).
  Dans ce cas \(n=ap^{d}+s\) avec \(v_{p}(s+2)=k\) et \(p^{k}-2\leq
  s\leq r-p^{k}<p^{d}-3\). Alors le poids \((r,s)\) vérifie les hypothèses pour
  \(E_{\beta}\) et le poids \((p^{d}-s-2,-p^{d}+r)\) vérifie les
  hypothèses pour \(E_{\alpha}\), et ils sont de degrés relatifs majorés
  par \( \ell \), et les hypothèses pour l'existence d'une filtration
  à trois étages pour \( H^{2}(E_{\beta}(\mu)) \) sont
  vérifiées. Donc on a
  \begin{align*}
    \ch H^{2}(E_{\beta}(\mu))=&\ch L(0,a-1)^{(d)}\ch
                                 H^{3}(E_{\beta}(r-p^{d},-s-2))\\
                               &+\ch L(0,a)^{(d)}\ch H^{2}(E_{\beta}(r,-s-2))\\
                               &+\ch
                                 L(0,a-2)^{(d)}\ch
                                 H^{2}(E_{\beta}(-p^{d}+r,p^{d}-s-2))\\
    =&\ch L(0,a-1)^{(d)}(\ch H^{3}(r-p^{d},-s-2)+\ch
       H^{3}(r+1-p^{d},-s-4))\\
                              &+\ch L(0,a)^{(d)}(\ch H^{2}(r,-s-2)+\ch H^{2}(r+1,-s-4))\\
    &+\ch L(0,a-2)^{(d)}(\ch H^{2}(-p^{d}+r,p^{d}-s-2)+\ch
      H^{2}(-p^{d}+r+1,p^{d}-s-4))\\
    =&\ch H^{2}(ap^{d}+r,-ap^{d}-s-2)+\ch
    H^{2}(ap^{d}+r+1,-ap^{d}-s-4)\\
    =&\ch H^{2}(\mu)+\ch H^{2}(\mu-\beta),
  \end{align*}
où la première égalité est la filtration à trois étages pour \(
H^{2}(E_{\beta}(\mu)) \), la deuxième égalité résulte de l'hypothèse
de récurrence et du fait que \( H^{2}(r-p^{d},-s-2)=0 \), et la
troisième égalité résulte du \autoref{thm2} et du fait que \(
0<r+1\leq p^{d}-1 \) et \( 0<s+2<p^{d}-1 \). 

  Ceci termine la preuve de la \autoref{prop:bordnul1}.

\end{proof}

\subsubsection{Décomposition  de  l'image du morphisme de bord}

\begin{lemma}\label{lemma:filtimagebord1}
  Si \(\mu=(x,y)\) avec \(x,y\leq -1\), alors pour
  \(\delta\in\{\alpha,\beta\}\), on a :
  \[
\ch H^{3}(E_{\delta}(\mu))=\chi^{3}(\mu)+\chi^{3}(\mu-\delta)
\]
où \( \chi^{i}(\mu)=\ch H^{i}(\mu) \).
\end{lemma}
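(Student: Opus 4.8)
Le plan est d'appliquer le foncteur $H^{0}(G/B,\bullet)$ à la suite exacte courte non scindée de $B$-modules $0 \to \mu - \delta \to E_{\delta}(\mu) \to \mu \to 0$ et d'exploiter le fait que, lorsque $x,y\leq -1$, on a $H^{i}(\mu)=0$ pour $i\leq 2$.

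Dans un premier temps, on établira l'annulation $H^{i}(\mu)=0$ pour $i\in\{0,1,2\}$. Si $x=-1$, alors $\langle\mu,\alpha^{\vee}\rangle=-1$, donc $H^{i}(\mu)=0$ pour tout $i$ d'après \cite{Jan03} II.5.4.a) (l'annulation de la cohomologie sur la fibre $P_{\alpha}/B$) ; de même si $y=-1$. Dans le cas restant $x,y\leq -2$, on a $\langle\mu+\rho,\alpha^{\vee}\rangle=x+1<0$, $\langle\mu+\rho,\beta^{\vee}\rangle=y+1<0$ et $\langle\mu+\rho,\gamma^{\vee}\rangle=x+y+2<0$, donc $\mu\in w_{0}\cdot C$ et par conséquent $H^{i}(\mu)=0$ pour $i\neq 3$ (par exemple par la dualité de Serre $H^{i}(\mu)\cong H^{3-i}(-2\rho-\mu)^{*}$, le poids $-2\rho-\mu$ étant dominant, et l'annulation de Kempf). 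Dans tous les cas, $H^{i}(\mu)=0$ pour $i=0,1,2$.

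Ensuite, la suite exacte longue de cohomologie associée à $0 \to \mu - \delta \to E_{\delta}(\mu) \to \mu \to 0$ s'écrit $\cdots \to H^{i}(\mu-\delta) \to H^{i}(E_{\delta}(\mu)) \to H^{i}(\mu) \to H^{i+1}(\mu-\delta) \to \cdots$. En y injectant $H^{0}(\mu)=H^{1}(\mu)=H^{2}(\mu)=0$ ainsi que $H^{4}(\mu-\delta)=0$ (car $\dim G/B=3$), elle se réduit à des isomorphismes $H^{i}(E_{\delta}(\mu))\cong H^{i}(\mu-\delta)$ pour $i=0,1,2$ et à une suite exacte courte $0 \to H^{3}(\mu-\delta) \to H^{3}(E_{\delta}(\mu)) \to H^{3}(\mu) \to 0$. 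En prenant les caractères dans cette dernière, on obtient $\ch H^{3}(E_{\delta}(\mu))=\ch H^{3}(\mu-\delta)+\ch H^{3}(\mu)=\chi^{3}(\mu)+\chi^{3}(\mu-\delta)$, ce qui est l'identité voulue ; l'argument est uniforme en $\delta\in\{\alpha,\beta\}$ et n'utilise aucune information sur la position du poids $\mu-\delta$.

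Il n'y a ici pas de réelle difficulté : le seul point demandant un peu de soin est la distinction de cas dans l'annulation de $H^{\bullet}(\mu)$, le sous-cas $x=-1$ ou $y=-1$ relevant de l'annulation « sur un mur » et non de l'appartenance à $w_{0}\cdot C$ ; par ailleurs il faut remarquer que la formule ne contraint que la partie en degré $3$, ce qui dispense d'une analyse séparée de $\mu-\delta$.
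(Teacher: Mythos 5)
Votre démonstration est correcte et suit essentiellement la même voie que celle du texte : la suite exacte longue associée à \(0\to\mu-\delta\to E_{\delta}(\mu)\to\mu\to 0\) combinée aux annulations standard (nullité totale de \(H^{\bullet}(\mu)\) si \(x=-1\) ou \(y=-1\), concentration en degré \(3\) si \(x,y\leq -2\)). Votre seule variante est d'observer qu'il suffit de \(H^{i}(\mu)=0\) pour \(i\leq 2\) (avec \(H^{4}=0\)) pour extraire la suite exacte courte en degré \(3\), là où le texte invoque aussi la concentration de la cohomologie de \(\mu-\delta\) ; c'est un raffinement mineur, pas une autre méthode.
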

\begin{proof}
  Si \(x\leq -2\) et \(y\leq -2\), alors \(\mu-\delta\) n'a  de la
  cohomologie qu'en degré \(3\), 
   d'où le résultat.

  Si \(x=-1\) ou \(y=-1\), alors \(H^{i}(\mu)=0\) pour tout \(i\),
  donc \(H^{i}(E_{\delta}(\mu))\cong H^{i}(\mu-\delta)\) pour tout
  \(i\), donc le résultat est aussi vrai dans ce cas.
\end{proof}

\begin{defi}\label{defi:idelta}
  Pour \(\delta\in\{\alpha,\beta\}\), on note \(I_{\delta}(\mu)\subset
  H^{2}(\mu-\delta)\) l'image du morphisme de bord \(H^{1}(\mu)\to
  H^{2}(\mu-\delta)\). Donc si \(\mu-\delta \notin w_{0}\cdot X(T)^{+}\), on a
  \[
\ch I_{\delta}(\mu)=\chi^{2}(\mu)+\chi^{2}(\mu-\delta)-\ch H^{2}(E_{\delta}(\mu)).
  \]
\end{defi}

\begin{proposition}\label{cor:filtimage}
  Soit \(\mu=(ap^{d}+r,-ap^{d}-s-2)\) avec \( 1\leq a\leq p-1 \). Posons \(\mu'=(r,-s-2)\) et
  \(\mu''=(-p^{d}+r,p^{d}-s-2)\).

  Alors si  \(0\leq s< r\leq
  p^{d}-1\), on a
  \begin{equation}
    \label{eq:imageaubord1}
    \ch I_{\alpha}(\mu)=\ch L(0,a)^{(d)}\ch I_{\alpha}(\mu')+\ch L(0,a-2)^{(d)}\ch I_{\alpha}(\mu'').
  \end{equation}

  Si \( -1\leq s<r\leq p^{d}-2 \), alors
  \begin{equation}
    \label{eq:imageaubordbeta}
    \ch I_{\beta}(\mu)=\ch L(0,a)^{(d)}\ch I_{\beta}(\mu')+\ch L(0,a-2)^{(d)}\ch I_{\beta}(\mu'').
  \end{equation}
 
\end{proposition}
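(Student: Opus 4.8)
Le plan est de traduire l'énoncé en une identité de caractères dans laquelle une annulation providentielle fait apparaître le second membre. On peut supposer \( d\geq 1 \), faute de quoi les conditions imposées à \( r,s \) rendent l'énoncé vide. Je traiterais le cas de \( I_{\alpha} \), celui de \( I_{\beta} \) étant analogue en remplaçant le décalage \( (r,s)\mapsto(r-2,s-1) \) (qui correspond à \( \mu\mapsto\mu-\alpha \)) par \( (r,s)\mapsto(r+1,s+2) \) (qui correspond à \( \mu\mapsto\mu-\beta \)), et en invoquant la \autoref{thm:Edelta}(ii) au lieu de (i).

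D'abord, on vérifie que \( \mu-\alpha \), \( \mu'-\alpha \) et \( \mu''-\alpha \) n'appartiennent pas à \( w_{0}\cdot X(T)^{+} \) (il suffit de contrôler une seule coordonnée, en utilisant \( r\geq 1 \), conséquence de \( r>s\geq 0 \), et \( s\leq p^{d}-1 \)), de sorte que la \autoref{defi:idelta} donne \( \ch I_{\alpha}(\nu)=\ch H^{2}(\nu)+\ch H^{2}(\nu-\alpha)-\ch H^{2}(E_{\alpha}(\nu)) \) pour tout \( \nu\in\{\mu,\mu',\mu''\} \). Ensuite, comme \( \mu \) et \( \mu-\alpha=(ap^{d}+(r-2),-ap^{d}-(s-1)-2) \) sont l'un comme l'autre dans \( \overline{\Gr} \) de degré \( d \), avec \( (\mu-\alpha)'=\mu'-\alpha \) et \( (\mu-\alpha)''=\mu''-\alpha \), la conséquence au niveau des caractères du \autoref{thm2} donne \( \ch H^{2}(\mu)=\ch L(0,a)^{(d)}\ch H^{2}(\mu')+\ch L(0,a-2)^{(d)}\ch H^{2}(\mu'')+\ch L(0,a-1)^{(d)}\ch V(s,p^{d}-r-2) \), et la formule analogue pour \( \mu-\alpha \), dont le module de Weyl est \( V(s-1,p^{d}-r) \).

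De même, les hypothèses de la \autoref{thm:Edelta}(i) sont satisfaites par \( \mu \) (puisque \( 1\leq r\leq p^{d} \) et \( 0\leq s\leq p^{d}-1 \)), ce qui fournit la filtration à trois étages de \( H^{2}(E_{\alpha}(\mu)) \), dont le terme supérieur est \( L(0,a-1)^{(d)}\otimes H^{3}(E_{\alpha}(r-p^{d},-s-2)) \). Les deux coordonnées du poids \( (r-p^{d},-s-2) \) étant \( \leq -1 \), le \autoref{lemma:filtimagebord1} calcule \( \ch H^{3}(E_{\alpha}(r-p^{d},-s-2))=\ch H^{3}(r-p^{d},-s-2)+\ch H^{3}(r-p^{d}-2,-s-1) \), que l'identité \( V(\lambda)=H^{3}(w_{0}\cdot\lambda) \) (avec \( w_{0}\cdot(a,b)=(-b-2,-a-2) \)) identifie à \( \ch V(s,p^{d}-r-2)+\ch V(s-1,p^{d}-r) \). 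En reportant ces trois développements dans \( \ch I_{\alpha}(\mu)=\ch H^{2}(\mu)+\ch H^{2}(\mu-\alpha)-\ch H^{2}(E_{\alpha}(\mu)) \), les contributions en \( L(0,a-1)^{(d)} \) s'annulent exactement (les deux caractères de modules de Weyl issus de \( \ch H^{2}(\mu) \) et \( \ch H^{2}(\mu-\alpha) \) sont précisément ceux qu'apporte \( \ch H^{3}(E_{\alpha}(r-p^{d},-s-2)) \)), et il reste \( \ch I_{\alpha}(\mu)=\ch L(0,a)^{(d)}\big(\ch H^{2}(\mu')+\ch H^{2}(\mu'-\alpha)-\ch H^{2}(E_{\alpha}(\mu'))\big)+\ch L(0,a-2)^{(d)}\big(\ch H^{2}(\mu'')+\ch H^{2}(\mu''-\alpha)-\ch H^{2}(E_{\alpha}(\mu''))\big) \), soit \( \ch L(0,a)^{(d)}\ch I_{\alpha}(\mu')+\ch L(0,a-2)^{(d)}\ch I_{\alpha}(\mu'') \) par la \autoref{defi:idelta}.

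Le seul vrai obstacle est la comptabilité des décalages de paramètres: il faut s'assurer que chacun des poids \( \mu,\ \mu-\delta,\ \mu',\ \mu'-\delta,\ \mu'',\ \mu''-\delta \), ainsi que le poids du terme supérieur, tombe dans le domaine d'application de \autoref{thm2}, \autoref{thm:Edelta}, \autoref{lemma:filtimagebord1} et \autoref{defi:idelta}, et que les deux paires de caractères de modules de Weyl coïncident bien. C'est purement calculatoire mais un peu fastidieux, et c'est là que se concentrent les risques d'erreur.
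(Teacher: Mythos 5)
Votre démonstration est correcte et suit essentiellement la même voie que celle du texte : développement de \( \ch H^{2}(E_{\delta}(\mu)) \) par la filtration à trois étages de la \autoref{thm:Edelta}, calcul du terme supérieur via le \autoref{lemma:filtimagebord1}, développement de \( \chi^{2}(\mu) \) et \( \chi^{2}(\mu-\delta) \) par le \autoref{thm2}, puis annulation des contributions en \( L(0,a-1)^{(d)} \) et conclusion par la \autoref{defi:idelta}. Les vérifications de domaine que vous signalez (positions de \( \mu-\delta \), \( \mu'-\delta \), \( \mu''-\delta \), décalages \( (r-2,s-1) \) et \( (r+1,s+2) \), identification \( \chi^{3}=\ch V \)) sont exactement celles effectuées dans le texte et elles passent toutes.
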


\begin{proof}
  Montrons d'abord \eqref{eq:imageaubord1} où \( \delta=\alpha
  \). Comme \( 0\leq s<r\leq p^{d}-1 \), \( \mu  \) vérifie les
  hypothèses pour l'existence de la filtration à trois étages pour
  \(H^{2}(E_{\alpha}(\mu))\) de la \autoref{thm:Edelta}. De plus, comme \( \mu'-\alpha=(r-2,-s-1)
  \) vérifie \( r-2\geq -1 \) et \(
  \mu''-\alpha=(-p^{d}+r-2,p^{d}-s-1) \) vérifie \( p^{d}-s-1\geq 0
  \), on a \( \mu'-\alpha\notin w_{0}\cdot X(T)^{+} \) et \(
  \mu''-\alpha\notin w_{0}\cdot X(T)^{+} \). Donc 
  en utilisant la filtration à trois étages pour \(
  H^{2}(E_{\alpha}(\mu)) \) pour la première égalité, et  le \autoref{lemma:filtimagebord1} et la \autoref{defi:idelta} pour la deuxième
  égalité, on a
  \begin{align*}
    \ch H^{2}(E_{\alpha}(\mu))=&\ch
                                 L(0,a-1)^{(d)}\ch H^{3}(E_{\alpha}(r-p^{d},-s-2))\\
                               &+\ch L(0,a)^{(d)}\ch H^{2}(E_{\alpha}(\mu'))+\ch
                                 L(0,a-2)^{(d)}\ch H^{2}(E_{\alpha}(\mu''))\\
    =&
                                                     \ch L(0,a-1)^{(d)}(\chi^{3}(r-p^{d},-s-2)+\chi^{3}((r-p^{d},-s-2)-\alpha))\\
                               &+\ch
                                 L(0,a)^{(d)}(\chi^{2}(\mu')+\chi^{2}(\mu'-\alpha)-\ch
                                 I_{\alpha}(\mu'))\\
                               &+\ch
                                 L(0,a-2)^{(d)}(\chi^{2}(\mu'')+\chi^{2}(\mu''-\alpha)-\ch
                                 I_{\alpha}(\mu''))\\
    =&\chi^{2}(\mu)+\chi^{2}(\mu-\alpha)-\ch L(0,a)^{(d)}\ch I_{\alpha}(\mu')-\ch L(0,a-2)^{(d)}\ch I_{\alpha}(\mu''),
  \end{align*}
où la dernière égalité résulte du \autoref{cor:2étages} en remarquant que
\( \mu-\alpha=(ap^{d}+r-2,-ap^{d}-s-1) \) avec \( -1\leq r-2, s-1<p^{d}-1
\).
  Donc on a
  \begin{align*}
    \ch L(0,a)^{(d)}\ch I_{\alpha}(\mu')+\ch L(0,a-2)^{(d)}\ch
    I_{\alpha}(\mu'')=& \chi^{2}(\mu)+\chi^{2}(\mu-\alpha)-\ch
                        H^{2}(E_{\alpha}(\mu))\\
    =&\ch I_{\alpha}(\mu),                       
  \end{align*}
  car \( \mu-\alpha\notin w_{0}\cdot X(T)^{+}\).

  Montrons maintenant \eqref{eq:imageaubordbeta} où \( \delta=\beta
  \).
   Comme \( -1\leq s<r\leq p^{d}-2 \), \( \mu  \) vérifie les
  hypothèses pour l'existence de la filtration à trois étages pour
  \(H^{2}(E_{\beta}(\mu))\) de la \autoref{thm:Edelta}. De plus, comme \( \mu'-\beta=(r+1,-s-4)
  \) vérifie \( r+1\geq 0 \) et \(
  \mu''-\beta=(-p^{d}+r+1,p^{d}-s-4) \) vérifie \( p^{d}-s-4\geq -1
  \), on a \( \mu'-\beta\notin w_{0}\cdot X(T)^{+} \) et \(
  \mu''-\beta\notin w_{0}\cdot X(T)^{+} \). Donc 
  en utilisant la filtration à trois étages pour \(
  H^{2}(E_{\beta}(\mu)) \) pour la première égalité, et  le \autoref{lemma:filtimagebord1} et la \autoref{defi:idelta} pour la deuxième
  égalité, on a
  \begin{align*}
    \ch H^{2}(E_{\beta}(\mu))=&\ch
                                 L(0,a-1)^{(d)}\ch H^{3}(E_{\beta}(r-p^{d},-s-2))\\
                               &+\ch L(0,a)^{(d)}\ch H^{2}(E_{\beta}(\mu'))+\ch
                                 L(0,a-2)^{(d)}\ch H^{2}(E_{\beta}(\mu''))\\
    =&
                                                     \ch L(0,a-1)^{(d)}(\chi^{3}(r-p^{d},-s-2)+\chi^{3}((r-p^{d},-s-2)-\beta))\\
                               &+\ch
                                 L(0,a)^{(d)}(\chi^{2}(\mu')+\chi^{2}(\mu'-\beta)-\ch
                                 I_{\beta}(\mu'))\\
                               &+\ch
                                 L(0,a-2)^{(d)}(\chi^{2}(\mu'')+\chi^{2}(\mu''-\beta)-\ch
                                 I_{\beta}(\mu''))\\
    =&\chi^{2}(\mu)+\chi^{2}(\mu-\beta)-\ch L(0,a)^{(d)}\ch I_{\beta}(\mu')-\ch L(0,a-2)^{(d)}\ch I_{\beta}(\mu''),
  \end{align*}
où la dernière égalité résulte du \autoref{cor:2étages} en remarquant que
\( \mu-\beta=(ap^{d}+r+1,-ap^{d}-s-4) \) avec \( 0< r+1, s+2\leq p^{d}-1
\).
  Donc on a
  \begin{align*}
    \ch L(0,a)^{(d)}\ch I_{\beta}(\mu')+\ch L(0,a-2)^{(d)}\ch
    I_{\beta}(\mu'')=& \chi^{2}(\mu)+\chi^{2}(\mu-\beta)-\ch
                        H^{2}(E_{\beta}(\mu))\\
    =&\ch I_{\beta}(\mu),                       
  \end{align*}
  car \( \mu-\beta\notin w_{0}\cdot X(T)^{+}\). Ceci termine la preuve
  de la \autoref{cor:filtimage}.
\end{proof}

\begin{lemma}\label{lemma:saturé}
  Soit \(\mu=(m,-n-2)\) avec \(m>n\geq 0\). Alors pour
  \(\delta\in\{\alpha,\beta\}\) et tout  \(L(\nu)\in
  \KF(I_{\delta}(\mu))\), on a 
\( [I_{\delta}(\mu):L(\nu)]=[H^{2}(\mu-\delta):L(\nu)] \).
\end{lemma}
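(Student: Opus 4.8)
The plan is to argue by induction on the degree $d$ of $\mu$, proving the statement simultaneously for $\delta=\alpha$ and $\delta=\beta$ and, by the symmetry between $\alpha$ and $\beta$ (used as in \S\ref{subsection:2etages}), for the mirror weights $(-n-2,m)$ as well. The assertion is purely one about composition multiplicities: since $I_\delta(\mu)\subset H^2(\mu-\delta)$ one automatically has $[I_\delta(\mu):L(\nu)]\leq[H^2(\mu-\delta):L(\nu)]$, so what must be shown is that this is an equality whenever the left‑hand side is nonzero, i.e.\ that the composition factors of $H^2(\mu-\delta)/I_\delta(\mu)$ avoid those of $I_\delta(\mu)$.

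First I would reduce to the case where \autoref{cor:filtimage} applies. If $I_\delta(\mu)=0$ there is nothing to prove; by the Strong Linkage Principle this holds whenever $p\nmid\langle\mu,\delta^\vee\rangle$, and of course also when $H^1(\mu)=0$ or $H^2(\mu-\delta)=0$. If $\mu\in\Gr$ then $\mu\in\overline{\Gr}$ and, writing $\mu=(ap^d+r,-ap^d-s-2)$, the inequalities $m>n$ and $ap^d\le m,n\le(a+1)p^d-2$ force $0\le s<r\le p^d-2$, so \autoref{cor:filtimage} applies to $(\mu,\delta)$ and \autoref{cor:2étages}(1) applies to $\mu-\delta$ (which equals $(ap^d+(r-2),-ap^d-(s-1)-2)$ for $\delta=\alpha$, resp.\ $(ap^d+(r+1),-ap^d-(s+2)-2)$ for $\delta=\beta$, and whose associated $\mu'$, $\mu''$ are exactly $\mu'-\delta$, $\mu''-\delta$). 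The surviving configurations have $\mu\notin\Gr$, hence $H^2(\mu)=0$ by \autoref{rmk:Griffithcondition}; one checks directly — using \autoref{prop:bordnul1} for part of them, and for the rest observing that $\mu$ then has (for $\delta=\alpha$; symmetrically for $\delta=\beta$) first coordinate of the form $(a+1)p^{d'}$, so that $H^2(E_\delta(\mu))=0$ by \autoref{prop:Eparticulier}, whence the long exact sequences \eqref{suite:Ealpha0}, \eqref{suite:Ebeta0} together with $H^3(\mu-\delta)=0$ give that $\partial_\delta$ is surjective and $I_\delta(\mu)=H^2(\mu-\delta)$ — that $I_\delta(\mu)$ is either $0$ or all of $H^2(\mu-\delta)$, so that the statement is trivial. (When $\mu-\delta$ lands on the wall between an $H^1$- and an $H^2$-chamber one invokes instead \autoref{thm:murfilt} and \autoref{cor:autrefiltrationmur}.)

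For the inductive step assume $\mu\in\Gr$. By \autoref{cor:filtimage},
\[
\ch I_\delta(\mu)=\ch L(0,a)^{(d)}\,\ch I_\delta(\mu')+\ch L(0,a-2)^{(d)}\,\ch I_\delta(\mu''),
\]
while \autoref{cor:2étages}(1) applied to $\mu-\delta$ gives
\[
\ch H^2(\mu-\delta)=\ch L(0,a-1)^{(d)}\,\ch V(\lambda)+\ch L(0,a)^{(d)}\,\ch H^2(\mu'-\delta)+\ch L(0,a-2)^{(d)}\,\ch H^2(\mu''-\delta).
\]
By \autoref{lemma:poidsrestreints}, every composition factor occurring in $H^2(\mu'-\delta)$, in $H^2(\mu''-\delta)$ and in $V(\lambda)$ has $p^d$-restricted highest weight (for $V(\lambda)$ one uses $s<r$, which puts us in case (2) of that lemma); hence by the Steinberg tensor product theorem the three displayed summands of $H^2(\mu-\delta)$ have highest weights lying in the residue classes of $p^d(0,a)$, $p^d(0,a-1)$, $p^d(0,a-2)$ modulo $p^d$, and likewise the two summands of $I_\delta(\mu)$ lie in the classes of $p^d(0,a)$ and $p^d(0,a-2)$. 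These three classes are pairwise disjoint, since two $p^d$-restricted dominant weights cannot differ by $p^d(0,j)$ with $j\in\{1,2\}$ (their values on $\beta^\vee$ would differ by $jp^d\ge p^d$). Consequently a composition factor $L(\nu)$ of $I_\delta(\mu)$ has $\nu=p^d(0,a)+\nu_0$ or $\nu=p^d(0,a-2)+\nu_0$ with $\nu_0$ $p^d$-restricted dominant; in the first case $L(\nu_0)\in\KF(I_\delta(\mu'))$ and
\[
[I_\delta(\mu):L(\nu)]=[I_\delta(\mu'):L(\nu_0)]=[H^2(\mu'-\delta):L(\nu_0)]=[H^2(\mu-\delta):L(\nu)],
\]
the middle equality being the induction hypothesis for $\mu'$ (of degree $<d$, by \autoref{defi:degré}) and the two outer equalities coming from the residue separation just established. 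The second case is identical, using $\mu''$ (also of degree $<d$, after swapping $\alpha$ and $\beta$ to reduce to the form of the statement). The base case $d=0$ is covered by the reduction, since then $p\nmid\langle\mu,\delta^\vee\rangle$ whenever $\mu$ is in an $H^1$-chamber with $m>n\ge0$.

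The main obstacle is the reduction: one must verify that \emph{every} weight $\mu=(m,-n-2)$ with $m>n\ge0$, $p\mid\langle\mu,\delta^\vee\rangle$, $H^1(\mu)\ne0$ and $H^2(\mu-\delta)\ne0$ either lies in $\Gr$ (so that \autoref{cor:filtimage} governs it) or falls into one of the degenerate families above. This is a careful but elementary analysis of the Griffith region and of the $p^d$-alcove combinatorics near the $\gamma$-wall; the inductive core, by contrast, is the soft residue-class argument.
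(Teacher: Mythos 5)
Your argument is, in substance, the paper's own proof: the inductive core — the character identity of \autoref{cor:filtimage}, the \(p^{d}\)-restriction statement of \autoref{lemma:poidsrestreints} used to separate the residue classes \(p^{d}(0,a)\), \(p^{d}(0,a-1)\), \(p^{d}(0,a-2)\) via Steinberg, the recursion to \(\mu'\) and to the transposed \(\mu''\) with \(\alpha\) and \(\beta\) exchanged (hence the need to prove both cases simultaneously), and the identification of the outer multiplicities through the filtration of \(H^{2}(\mu-\delta)\) given by \autoref{thm2}/\autoref{cor:2étages} — is exactly what the paper does. The difference is bookkeeping: the paper inducts on the \emph{relative} degree \(d-v_{p}(\langle\mu,\delta^{\vee}\rangle)\) and splits each step according to whether the hypotheses of \autoref{prop:bordnul1} hold, which packages your ``reduction'' inside the induction; when \autoref{prop:bordnul1} fails and \autoref{cor:filtimage} does not apply, one lands exactly in the families \(\mu=(ap^{d},-n-2)\) with \((a-1)p^{d}<n<ap^{d}\) (for \(\delta=\alpha\)), resp. \(n+2=ap^{d}\) with \(m\geq ap^{d}-1\) (for \(\delta=\beta\)), where \autoref{prop:Eparticulier} gives \(H^{2}(E_{\delta}(\mu))=0\), hence \(I_{\delta}(\mu)=H^{2}(\mu-\delta)\). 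Your degree induction with the dichotomy \(\mu\in\Gr\)/\(\mu\notin\Gr\) works as well, and the elementary verification you left open does check out: for \(p\mid\langle\mu,\delta^{\vee}\rangle\) and \(\mu\notin\Gr\), failure of the hypotheses of \autoref{prop:bordnul1} forces precisely the two families above (for \(a=1\) apply \autoref{prop:Eparticulier} with the parameters \((p-1,d-1)\) rather than \((0,d)\)), and the wall results \autoref{thm:murfilt} and \autoref{cor:autrefiltrationmur} are never needed, since in those degenerate cases the conclusion \(I_{\delta}(\mu)=0\) or \(I_{\delta}(\mu)=H^{2}(\mu-\delta)\) is obtained without computing \(H^{2}(\mu-\delta)\). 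One sentence of yours is wrong, though harmlessly so: the base case \(d=0\) is \emph{not} vacuous for \(\delta=\beta\), since \(\mu=(p-1,-p)\) has \(p\mid n+2\), \(H^{1}(\mu)\cong H^{0}(0,p-2)\neq 0\) and \(\mu-\beta=(p,-p-2)\in\Gr\); this weight is covered not by ``\(p\nmid\langle\mu,\delta^{\vee}\rangle\)'' but by your own \(\beta\)-degenerate family (\(n+2=p\), \autoref{prop:Eparticulier}), so deleting that justification leaves the proof intact.
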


\begin{proof}
  Comme dans la \autoref{prop:bordnul1},
  notons \( k_{1}(\mu)=v_{p}(m) \),  \( k_{2}(\mu)=v_{p}(n+2)
  \). Notons \(
  d \) le degré de \( \mu \), c'est-à-dire, il existe \( a\in
  \{1,2,\cdots,p-1\} \) tel que \( ap^{d}\leq m<(a+1)p^{d} \). Pour \( i\in\{1,2\} \), notons \(
  \widetilde{d}_{i}=d-k_{i} \). Notons aussi \( \alpha_{1}=\alpha \) et \(
  \alpha_{2}=\beta \).

  Considérons l'énoncé suivant qui dépend d'un indice \( \ell \in\mathbb{Z}\):
  \begin{align*}
    \mathcal{P}_{\ell}:&\text{ pour tout } i\in\{1,2\}, \text{ si
    } \mu=(m,-n-2) \text{ avec }m>n\geq 0 \text{ et
                                              }\widetilde{d}_{i}(\mu)\leq
                                              \ell,\\ &\text{ alors pour
                                                         tout } \nu\in X(T)^{+}
                      \text{on a }[I_{\delta}(\mu):L(\nu)]=[H^{2}(\mu-\delta):L(\nu)]. 
  \end{align*}

  Le but est de montrer que \( \mathcal{P}_{\ell} \) est vraie pour tout
  \( \ell \). Raisonnons par récurrence sur \( \ell \).  Il suffit de considérer le cas où \(\mu\) ne vérifie pas les hypothèses
  de la \autoref{prop:bordnul1} car l'énoncé est trivial si \(
  I_{\delta}(\mu)=0 \). 

  D'après la définition de \( d \) et \( k_{1} \), on a toujours \(
  d\geq k_{1} \), d'où \( \widetilde{d}_{1}\geq 0 \). Comme \( m>n \) et
  \( k_{2}=v_{p}(n+2) \), on a  \( d<k_{2} \) seulement s'il existe \(
  d\geq 1 \) tel que \( m=p^{d}-1 \) et \( n=p^{d}-2 \). Dans ce cas,
  on a 
  \[H^{2}(E_{\beta}(\mu))=H^{2}(E_{\beta}(p^{d}-1,-p^{d}))=0\]
  d'après la \autoref{prop:Eparticulier}, d'où \( I_{\beta}(\mu)=H^{2}(\mu-\beta) \)
 et l'énoncé est vrai. Donc \( \mathcal{P}_{-1} \) est vrai.

  Supposons \(\widetilde{d}_{1}(\mu)=0\) et \(\delta=\alpha\). Si \(
  k_{1}=0 \), alors \( p\nmid m \), d'où \( I_{\alpha}(\mu)=0
  \) car \( \mu-\alpha\notin W_{p}\cdot \mu \). Si \( k_{1}\geq 1 \),  alors comme \( \mu
  \) ne vérifie pas les hypothèses de la \autoref{prop:bordnul1}, on a
  \(\mu=(ap^{d},-(a-1)p^{d}-s-2)\) avec \(1\leq s\leq
  p^{d}-1\). D'après la \autoref{prop:Eparticulier}
   on sait que \(H^{2}(E_{\alpha}(\mu))=0\), d'où
  \(I_{\alpha}(\mu)=H^{2}(\mu-\alpha)\) et l'énoncé du lemme est
  évident.

  Supposons \(\widetilde{d}_{2}(\mu)=0\). Si \(
  k_{2}=0 \), alors \( p\nmid n+2 \) et \( \mu-\beta\notin
  W_{p}\cdot\mu \), d'où \( I_{\beta}(\mu)=0 \). Si \( k_{2}\geq 1 \),
  alors comme \( \mu \) ne vérifie pas les hypothèses de la
  \autoref{prop:bordnul1}, on a  \(\mu=(ap^{d}+r,-ap^{d})\)
  avec \(-1\leq r\leq p^{d}-3\) et \( d-v_{p}(m)=d-k_{1}=\ell+1 \). D'après la \autoref{prop:Eparticulier},
  \(H^{2}(E_{\beta}(\mu))=0\), d'où
  \(I_{\beta}(\mu)=H^{2}(\mu-\beta)\) et l'énoncé du lemme est
  évident.

  Donc \( \mathcal{P}_{0} \) est vraie. 

   Supposons que \( \mathcal{P}_{\ell} \) est vraie pour un \( \ell\geq
   0\). Soit \( \mu \) tel que \( \widetilde{d}_{1}(\mu)=\ell+1
   \). Si \( k_{1}=0 \), alors \( p\nmid m \) et \(
   I_{\alpha}(\mu)=0 \). Si \( k_{1}\geq 1 \), comme \(\mu  \) ne
   vérifie pas les hypothèses de la \autoref{prop:bordnul1}, on a
  \[m=ap^{d}+a_{d-1}p^{d-1}+\cdots +a_{k}p^{k}=ap^{d}+r\] et
  \(n=ap^{d}+s\) avec \(0\leq r-p^{k}<s<r\leq p^{d}-1\). Posons
  \(\mu'=(r,-s-2)\), 
  \(\mu''=(-p^{d}+r,p^{d}-s-2)\) et \( ^{t}\mu''=(p^{d}-s-2,-p^{d}+r)
  \). Comme \[ v_{p}(ap^{d}+r)=k_{1}=d-\ell-1\leq d-1, \] on a \[
    v_{p}(r)=v_{p}(-p^{d}+r)=k_{1}, \]  donc \(
  \widetilde{d}_{1}(\mu')\leq d-1-k_{1}=
  \ell \) et \( \widetilde{d}_{2}(^{t}\mu'')\leq d-1-k_{1}=\ell \).
On a 
  \[\ch I_{\alpha}(\mu)=\ch L(0,a)^{(d)}\ch I_{\alpha}(\mu')+\ch
    L(0,a-2)^{(d)}\ch I_{\alpha}(\mu'')\]
  car \( 0<s<r\leq p^{d}-1 \). 
  D'après le \autoref{lemma:poidsrestreints}, tout plus haut poids d'un
  facteur de composition de \( H^{2}(\mu'-\alpha) \) ou de \(
  H^{2}(\mu''-\alpha) \) est \( p^{d} \)-restreint. Donc
  \[
\KF(I_{\alpha}(\mu))=L(0,a)^{(d)}\otimes\KF(I_{\alpha}(\mu'))\amalg
L(0,a-2)^{(d)}\otimes \KF(I_{\alpha}(\mu'')).
\] 
Soit \(L(\nu)\in \KF(I_{\alpha}(\mu))\), alors
\(\nu=\nu^{1}p^{d}+\nu^{0}\) où \(\nu^{1}=(0,a)\) ou \((0,a-2)\) et
\(\nu^{0}\) est \(p^{d}\)-restreint. Si \(\nu^{1}=(0,a)\), alors
\(L(\nu^{0})\in \KF(I_{\alpha}(\mu'))\). Donc
\begin{multline*}
  [I_{\alpha}(\mu):L(\nu)]=[I_{\alpha}(\mu'):L(\nu^{0})]
                          =[H^{2}(\mu'-\alpha):L(\nu^{0})]\\
                          =[L(0,a)^{(d)}\otimes H^{2}(\mu'-\alpha):L(\nu)]
  =[H^{2}(\mu-\alpha):L(\nu)]
\end{multline*}
où la deuxième égalité résulte de l'hypothèse de récurrence pour
\(\mu'\) et la dernière égalité résulte du \autoref{thm2} et du
\autoref{lemma:poidsrestreints} 
appliqués à \( \mu-\alpha \).

Si \(\nu^{1}=(0,a-2)\), alors \( L(\nu^{0})\in \KF(I_{\alpha}(\mu''))
\). Posons \( \tau\nu=(y,x) \) si \( \nu=(x,y) \) comme dans le paragraphe  \ref{subsection:surlemur}, alors
\begin{multline*}
  [I_{\alpha}(\mu):L(\nu)]=[I_{\alpha}(\mu''):L(\nu^{0})]
  =[I_{\beta}(\tau\mu''):L(\tau\nu^{0})]
   =[H^{2}(\tau\mu''-\beta):L(\tau\nu^{0})]\\
                          =[H^{2}(\mu''-\alpha):L(\nu^{0})]
                          =[L(0,a-2)^{(d)}\otimes H^{2}(\mu''-\alpha):L(\nu)]
  =[H^{2}(\mu-\alpha):L(\nu)].
\end{multline*}
Donc la partie \( i=1 \) dans \( \mathcal{P}_{\ell} \) est vraie. 

 Soit \( \mu=(m,-n-2) \) tel que \(
 \widetilde{d}_{2}(\mu)=\ell+1 \). Notons \( k=k_{2} \) pour alléger la notation. Si \( k=0 \), alors \( p\nmid n+2
 \) et \( I_{\beta}(\mu)=0 \). Si \( k\geq 1 \),  comme \( \mu \)
 ne vérifie pas les hypothèses de la \autoref{prop:bordnul1}, alors
  \(m=ap^{d}+r\)  et \(n=ap^{d}+s\) avec \( 0\leq r\leq p^{d}-1 \) et \(s<r<s+p^{k}\) (a priori \( s
 \) peut être négatif). Mais comme \( d=k+\ell+1\geq k+1 \), on a \(
 k=v_{p}(n+2)=v_{p}(s+2) \). Si \( s<0 \), alors \(s+2\leq 1  \), d'où
 \( s+2\leq -p^{k} \) car \( v_{p}(s+2)=k\geq 1 \). Par conséquent, on
 a \( r<s+p^{k}\leq -2 \), contradiction avec \(
 r\geq 0 \). Donc \( 0\leq s<r\leq p^{d}-1 \). Or \( v_{p}(s+2)=k\leq
 d-1 \), donc \( s+2\leq p^{d}-p^{k} \) et \( r<s+p^{k}\leq p^{d}-2
 \). D'après la \autoref{cor:filtimage}, on a
  \[\ch I_{\beta}(\mu)=\ch L(0,a)^{(d)}\ch I_{\beta}(\mu')+\ch
    L(0,a-2)^{(d)}\ch I_{\beta}(\mu'')\]
  où \( \mu'=(r,-s-2) \) et \( \mu''=(-p^{d}+r,p^{d}-s-2) \). Posons
  \( \tau\mu''=(p^{d}-s-2,-p^{d}+r) \), alors \(
  \widetilde{d}_{2}(\mu')\leq d-1-k=\ell \) et \(
  \widetilde{d}_{1}(\tau\mu'')\leq d-1-k=\ell \) car \( v_{p}(s+2)=v_{p}(p^{d}-s-2)=k \).

  D'après le \autoref{lemma:poidsrestreints}, tout plus haut poids d'un
  facteur de composition de \( H^{2}(\mu'-\beta) \) ou de \(
  H^{2}(\mu''-\beta) \) est \( p^{d} \)-restreint. Donc
  \[
\KF(I_{\beta}(\mu))=L(0,a)^{(d)}\otimes\KF(I_{\beta}(\mu'))\amalg
L(0,a-2)^{(d)}\otimes \KF(I_{\beta}(\mu'')).
\] 
Soit \(L(\nu)\in \KF(I_{\beta}(\mu))\), alors
\(\nu=\nu^{1}p^{d}+\nu^{0}\) où \(\nu^{1}=(0,a)\) ou \((0,a-2)\) et
\(\nu^{0}\) est \(p^{d}\)-restreint. Si \(\nu^{1}=(0,a)\), alors
\(L(\nu^{0})\in \KF(I_{\beta}(\mu'))\). Donc
\begin{multline*}
  [I_{\beta}(\mu):L(\nu)]=[I_{\beta}(\mu'):L(\nu^{0})]
                          =[H^{2}(\mu'-\beta):L(\nu^{0})]\\
                          =[L(0,a)^{(d)}\otimes H^{2}(\mu'-\beta):L(\nu)]=[H^{2}(\mu-\beta):L(\nu)]
\end{multline*}
où la deuxième égalité résulte de l'hypothèse de récurrence pour
\(\mu'\) et la dernière égalité résulte du \autoref{thm2} et  du
\autoref{lemma:poidsrestreints} 
appliqués à \( \mu-\beta \).

Si \(\nu^{1}=(0,a-2)\), alors \( L(\nu^{0})\in \KF(I_{\beta}(\mu''))
\). Donc
\begin{multline*}
  [I_{\beta}(\mu):L(\nu)]=[I_{\beta}(\mu''):L(\nu^{0})]
  =[I_{\alpha}(\tau\mu''):L(\tau\nu^{0})]
   =[H^{2}(\tau\mu''-\alpha):L(\tau\nu^{0})]\\
                    =[H^{2}(\mu''-\beta):L(\nu^{0})]
                          =[L(0,a-2)^{(d)}\otimes H^{2}(\mu''-\beta):L(\nu)]
  =[H^{2}(\mu-\beta):L(\nu)].
\end{multline*}

Donc \( \mathcal{P}_{\ell+1} \) est vraie. 
Ceci termine la preuve du \autoref{lemma:saturé}.

\end{proof}

\begin{thm}\label{cor:imagesum}
 Soit \( \mu=(m,-n-2) \) avec \( m>n\geq 0 \). Si \(M\) est un sous-module de
  \(H^{2}(\mu-\delta)\) qui vérifie  \(\ch M=\ch I_{\delta}(\mu)\),
  alors \(M=I_{\delta}(\mu)\).

 Par conséquent, si \(m=ap^{d}+r\) et \(n=ap^{d}+s\) et si l'on pose
    \(\mu'=(r,-s-2)\) et \(\mu''=(-p^{d}+r,p^{d}-s-2)\), alors
  \begin{enumerate}[(i)]
  \item Si \( 0\leq s<r\leq p^{d}-1 \) , on a
   \( I_{\alpha}(\mu)=L(0,a)^{(d)}\otimes I_{\alpha}(\mu')\bigoplus
      L(0,a-2)^{(d)}\otimes I_{\alpha}(\mu'') \).
  \item Si \( -1\leq s<r\leq p^{d}-2 \), on a 
      \( I_{\beta}(\mu)=L(0,a)^{(d)}\otimes I_{\beta}(\mu')\bigoplus
      L(0,a-2)^{(d)}\otimes I_{\beta}(\mu'') \).
  \end{enumerate}
\end{thm}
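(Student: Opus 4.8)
The plan is to establish first the general uniqueness statement, then deduce the two decomposition formulas. For the uniqueness, the idea is entirely formal once we know \autoref{lemma:saturé}: let $M\subset H^{2}(\mu-\delta)$ be a submodule with $\ch M=\ch I_{\delta}(\mu)$. I want to show $M=I_{\delta}(\mu)$. The key point is that \autoref{lemma:saturé} says that for every simple $L(\nu)$ occurring in $I_{\delta}(\mu)$ we have $[I_{\delta}(\mu):L(\nu)]=[H^{2}(\mu-\delta):L(\nu)]$; that is, $I_{\delta}(\mu)$ already contains \emph{all} composition factors of type $L(\nu)$ present in the ambient module $H^{2}(\mu-\delta)$. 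In particular, the quotient $H^{2}(\mu-\delta)/I_{\delta}(\mu)$ has no composition factor in common with $I_{\delta}(\mu)$, i.e. $\KF(I_{\delta}(\mu))\cap\KF(H^{2}(\mu-\delta)/I_{\delta}(\mu))=\varnothing$. Now take any submodule $M$ with the same character as $I_{\delta}(\mu)$. Then $\KF(M)=\KF(I_{\delta}(\mu))$, so $M$ and $H^{2}(\mu-\delta)/I_{\delta}(\mu)$ also have no common composition factor. Consider the composite $M\hookrightarrow H^{2}(\mu-\delta)\twoheadrightarrow H^{2}(\mu-\delta)/I_{\delta}(\mu)$; its image is a subquotient of $M$ and of $H^{2}(\mu-\delta)/I_{\delta}(\mu)$, hence is zero. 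Therefore $M\subseteq I_{\delta}(\mu)$, and since $\ch M=\ch I_{\delta}(\mu)$ and both are finite-dimensional, $M=I_{\delta}(\mu)$.

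For part (i), apply \autoref{cor:filtimage}: when $0\leq s<r\leq p^{d}-1$ we have the character identity
\[
\ch I_{\alpha}(\mu)=\ch L(0,a)^{(d)}\ch I_{\alpha}(\mu')+\ch L(0,a-2)^{(d)}\ch I_{\alpha}(\mu'').
\]
Now $I_{\alpha}(\mu')$ is a submodule of $H^{2}(\mu'-\alpha)$ and $I_{\alpha}(\mu'')$ is a submodule of $H^{2}(\mu''-\alpha)$, so the external tensor product
\[
N:=L(0,a)^{(d)}\otimes I_{\alpha}(\mu')\ \oplus\ L(0,a-2)^{(d)}\otimes I_{\alpha}(\mu'')
\]
is a $G$-submodule of $L(0,a)^{(d)}\otimes H^{2}(\mu'-\alpha)\oplus L(0,a-2)^{(d)}\otimes H^{2}(\mu''-\alpha)$. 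By the \autoref{cor:2étages} applied to $\mu-\alpha=(ap^{d}+r-2,-ap^{d}-s-1)$ (whose parameters satisfy $-1\leq r-2,s-1<p^{d}-1$, so the hypotheses hold), together with \autoref{lemma:extension} and \autoref{lemma:poidsrestreints} as in \S\ref{subsection:2etages}, the module $H^{2}(\mu-\alpha)$ contains $L(0,a)^{(d)}\otimes H^{2}(\mu'-\alpha)$ as a submodule with quotient filtered by $L(0,a-2)^{(d)}\otimes H^{2}(\mu''-\alpha)$ and $L(0,a-1)^{(d)}\otimes V(\cdot)$; the $\Ext^{1}$-vanishing from \autoref{lemma:extension} gives in fact a direct sum $L(0,a)^{(d)}\otimes H^{2}(\mu'-\alpha)\oplus L(0,a-2)^{(d)}\otimes H^{2}(\mu''-\alpha)$ as a submodule of $H^{2}(\mu-\alpha)$, so $N$ embeds in $H^{2}(\mu-\alpha)$. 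Since $\ch N=\ch I_{\alpha}(\mu)$ by the displayed identity, the uniqueness statement just proved forces $N=I_{\alpha}(\mu)$, which is exactly (i). Part (ii) is identical with $\alpha$ replaced by $\beta$: one uses the range $-1\leq s<r\leq p^{d}-2$, the identity from \autoref{cor:filtimage} for $I_{\beta}$, and \autoref{cor:2étages} applied to $\mu-\beta=(ap^{d}+r+1,-ap^{d}-s-4)$, whose parameters $0<r+1,s+2\leq p^{d}-1$ meet the hypotheses.

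The main obstacle, and the place to be careful, is the embedding $N\hookrightarrow H^{2}(\mu-\delta)$ in the deduction of (i) and (ii): one must genuinely invoke the $\Ext^{1}$-vanishing of \autoref{lemma:extension} (via \autoref{lemma:poidsrestreints}, which guarantees the relevant composition factors lie in the appropriate $p^{d}$-restricted region so that Ye's classification applies) to know that the two "bottom" pieces $L(0,a)^{(d)}\otimes H^{2}(\mu'-\delta)$ and $L(0,a-2)^{(d)}\otimes H^{2}(\mu''-\delta)$ together span an actual submodule of $H^{2}(\mu-\delta)$ rather than merely a subquotient. Once that is in place the uniqueness lemma does all the remaining work, and the character bookkeeping is routine.
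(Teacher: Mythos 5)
Your proposal is correct and follows essentially the same route as the paper, whose proof is the one-line \og il suffit d'appliquer le \autoref{lemma:saturé} et la \autoref{cor:filtimage}\fg{}: you simply make explicit the two steps this hides, namely that the saturation property of \autoref{lemma:saturé} forces $\KF(I_{\delta}(\mu))\cap\KF(H^{2}(\mu-\delta)/I_{\delta}(\mu))=\varnothing$ and hence the uniqueness of a submodule with the character of $I_{\delta}(\mu)$, and that the candidate direct sum embeds in $H^{2}(\mu-\delta)$ via the submodule structure of the \autoref{cor:2étages} applied to $\mu-\alpha$ (resp. $\mu-\beta$), with the character identity supplied by the \autoref{cor:filtimage}. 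Your parameter checks ($-1\leq r-2,\,s-1\leq p^{d}-1$ and $0<r+1,\,s+2\leq p^{d}-1$) match those the paper itself uses in the proof of the \autoref{cor:filtimage}, so no gap remains.
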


\begin{proof}
  Il suffit d'appliquer  le \autoref{lemma:saturé} et la \autoref{cor:filtimage}.
\end{proof}

\subsubsection{\( I_{\delta}(\mu) \) est sans multiplicité}

\begin{proposition}\label{prop:sansmultiplicité}
  Soit \(\mu=(m,-n-2)\) avec \(m>n\geq 0\). Alors pour \(\delta\in
  \{\alpha,\beta\}\),
  \(I_{\delta}(\mu)\) est un \(T\)-module sans multiplicité. C'est-à-dire, pour tout
  poids \(\nu\in X\), on a \(\dim(I_{\delta}(\mu)_{\nu})\leq 1\).
\end{proposition}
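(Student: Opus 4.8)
The plan is to argue by induction on the relative degree \(\widetilde{d}_{i}(\mu)=d-k_{i}\) introduced in the proof of the \autoref{lemma:saturé} (with \(i=1\) if \(\delta=\alpha\) and \(i=2\) if \(\delta=\beta\)), treating \(\alpha\) and \(\beta\) simultaneously, and to propagate the absence of multiplicity along the direct sum decomposition of \(I_{\delta}(\mu)\) furnished by the \autoref{cor:imagesum}. First I would dispose of the cases where \(I_{\delta}(\mu)=0\), for which the statement is trivial: this happens when \(p\nmid\langle\mu,\delta^{\vee}\rangle\) (then \(\mu-\delta\notin W_{p}\cdot\mu\) and \(\partial_{\delta}=0\) by the Strong Linkage Principle) and when \(\mu\) satisfies the hypotheses of the \autoref{prop:bordnul1}. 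Hence one may assume \(p\mid\langle\mu,\delta^{\vee}\rangle\) and that the \autoref{prop:bordnul1} does not apply, so that \(I_{\delta}(\mu)\) is the image of the boundary map \(\partial_{\delta}\colon H^{1}(\mu)\to H^{2}(\mu-\delta)\), in particular a sub-\(T\)-module of \(H^{2}(\mu-\delta)\).

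For the base case \(\widetilde{d}_{i}(\mu)=0\) I would argue exactly as at the beginning of the proof of the \autoref{lemma:saturé}: \(\mu\) is then of one of the two explicit shapes \((ap^{d},-(a-1)p^{d}-s-2)\) (if \(\delta=\alpha\)) or \((ap^{d}+r,-ap^{d})\) (if \(\delta=\beta\)), the \autoref{prop:Eparticulier} gives \(H^{2}(E_{\delta}(\mu))=0\), hence \(I_{\delta}(\mu)=H^{2}(\mu-\delta)\) by the long exact sequence. The weight \(\mu-\delta\), with its leading \(p^{d}\)-digit, falls under the \autoref{cor:2étages}, and one checks that the quotients of the resulting filtration are of the form \(L(0,c)^{(d)}\otimes V(\eta)\) with \(\eta\) having a vanishing coordinate --- so that \(V(\eta)\) is a symmetric power of the standard module, hence without multiplicity --- or \(L(0,c)^{(d)}\otimes H^{2}(\nu)\) with \(\nu\) of strictly smaller degree (and all Weyl modules occurring recursively also have a vanishing coordinate, because of the maximal digit \(p^{d}-2\) at play). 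An auxiliary induction on the degree (the degree-\(0\) case being vacuous, the relevant \(H^{2}\) vanishing by the \autoref{rmk:Griffithcondition}) together with the separation statement below then yields that \(H^{2}(\mu-\delta)\) is without multiplicity.

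For the inductive step, assume the result holds whenever \(\widetilde{d}_{i}\le\ell\) and let \(\widetilde{d}_{i}(\mu)=\ell+1\). As in the proof of the \autoref{lemma:saturé}, \(\mu=(ap^{d}+r,-ap^{d}-s-2)\) with \(0\le s<r\) and \(r,s\) in the range making the \autoref{cor:imagesum} applicable; writing \(\mu'=(r,-s-2)\) and \(\mu''=(-p^{d}+r,p^{d}-s-2)\) one gets
\[
I_{\delta}(\mu)=L(0,a)^{(d)}\otimes I_{\delta}(\mu')\ \oplus\ L(0,a-2)^{(d)}\otimes I_{\delta}(\mu'').
\]
Both \(\mu'\) and \(\tau\mu''\) (with \(\tau(x,y)=(y,x)\) as in \S\ref{subsection:surlemur}) have relative degree \(\le\ell\), and \(I_{\delta}(\mu'')\cong I_{\tau\delta}(\tau\mu'')^{\tau}\); so by the induction hypothesis (and the \(\alpha\leftrightarrow\beta\) symmetry for \(\mu''\)) the modules \(I_{\delta}(\mu')\) and \(I_{\delta}(\mu'')\) are without multiplicity. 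It then remains to see that tensoring by \(L(0,c)^{(d)}\) and forming the direct sum of the two pieces preserves this property.

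This last point --- which I expect to be the main obstacle --- rests on a weight-support analysis. By the Strong Linkage Principle and the estimates in the proof of the \autoref{lemma:poidsrestreints}, every composition factor \(L(\eta)\) of \(H^{2}(\mu'-\delta)\) or \(H^{2}(\mu''-\delta)\) satisfies \(\langle\eta,\gamma^{\vee}\rangle<p^{d}\); by Steinberg's theorem \(L(\eta)=L(\eta^{0})\otimes L(\eta^{1})^{(d)}\) with \(\eta^{1}\in\{(0,a),(0,a-2)\}\) and \(\langle\eta^{0},\gamma^{\vee}\rangle<p^{d}\), so every weight of \(L(\eta)\) is \(p^{d}\zeta+\xi\) with \(\zeta\) a weight of \(L(\eta^{1})\) and \(|\langle\xi,\alpha^{\vee}\rangle|,|\langle\xi,\beta^{\vee}\rangle|,|\langle\xi,\gamma^{\vee}\rangle|<p^{d}\). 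One must show that \((\zeta,\xi)\mapsto p^{d}\zeta+\xi\) is injective on the relevant domain and that the supports of \(L(0,a)^{(d)}\otimes I_{\delta}(\mu')\) and \(L(0,a-2)^{(d)}\otimes I_{\delta}(\mu'')\) are disjoint; since \(L(0,c)=k[x,y,z]_{c}\) is without multiplicity, the proposition follows. The separation is immediate when \(p\neq 3\), using that \([X(T):\mathbb{Z}R]=3\) together with the pairing bounds above; when \(p=3\) the index-\(3\) argument degenerates and one has to invoke instead the finer description of \(H^{2}(\mu'-\delta)\) and \(H^{2}(\mu''-\delta)\) provided by the \autoref{cor:2étages} to locate their weight supports precisely, which is the delicate part of the argument.
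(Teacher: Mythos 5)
Your skeleton is essentially the paper's: eliminate the trivial cases via linkage and the \autoref{prop:bordnul1}, treat the boundary cases where \(I_{\delta}(\mu)=H^{2}(\mu-\delta)\) by exhibiting that cohomology group as built from multiplicity-free pieces (your nested induction in the base case re-derives the content of the paper's \autoref{lemma:H2sansmultiplicité}, and the \(\beta\)-cases via quotients of \(V(0,\cdot)\)), then propagate through the decomposition of the \autoref{cor:imagesum}; the paper inducts on the degree \(d\) rather than on the relative degree, but that difference is inessential.

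The genuine gap is exactly at the step you yourself call the main obstacle, and your proposed resolution of it cannot work. By the \autoref{cor:imagesum}, \(L(0,a)^{(d)}\otimes I_{\delta}(\mu')\) and \(L(0,a-2)^{(d)}\otimes I_{\delta}(\mu'')\) are \(G\)-submodules of \(H^{2}(\mu-\delta)\), so \emph{all} their weights lie in the single coset \(\mu-\delta+\mathbb{Z}R\); hence no argument through \(X(T)/\mathbb{Z}R\cong\mathbb{Z}/3\) can separate their supports, for any \(p\). Concretely, the congruence \(p^{d}\zeta_{1}+\xi_{1}\equiv p^{d}\zeta_{2}+\xi_{2}\pmod{\mathbb{Z}R}\) (with \(\zeta_{1},\zeta_{2}\) weights of \(L(0,a)\), \(L(0,a-2)\) and \(\xi_{1},\xi_{2}\) weights of \(I_{\delta}(\mu')\), \(I_{\delta}(\mu'')\)) holds identically, because \(\mu''-\mu'=p^{d}(-1,1)\) and \((0,2)-(-1,1)=\gamma\in\mathbb{Z}R\); this has nothing to do with \(p=3\) versus \(p\neq3\). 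So the dichotomy you propose is illusory: the \(p\neq3\) case is not proved, and the \(p=3\) case you explicitly leave open, so the key separation is missing for every \(p\). The index-\(3\) structure is only relevant in the \(p\)-independent form ``a nonzero element of \(\mathbb{Z}R\) pairs with some coroot to absolute value \(\geq 2\)'', which, combined with the bound \(|\langle\xi,\delta^{\vee}\rangle|<p^{d}\) furnished by the \autoref{lemma:poidsrestreints}, yields the injectivity of \((\zeta,\xi)\mapsto p^{d}\zeta+\xi\) within each summand. For the disjointness of the two summands the paper argues uniformly in \(p\) and without congruences: all weights occurring in \(I_{\delta}(\mu')\) and \(I_{\delta}(\mu'')\) are \(p^{d}\)-restricted (again the \autoref{lemma:poidsrestreints}), and \(L(0,a)\) and \(L(0,a-2)\) have no common weight because \((0,2)\notin\mathbb{Z}R\) (in the base case one uses \(L(0,a)\), \(L(0,a-1)\) and \((0,1)\notin\mathbb{Z}R\), as in the \autoref{lemma:H2sansmultiplicité}). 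You would need to supply a separation statement of this kind; as written, your argument does not establish the proposition for any characteristic.
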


Avant de montrer cette proposition, on montre d'abord le lemme utile suivant
:
\begin{lemma}\label{lemma:H2sansmultiplicité}
Soit \( \mu=(ap^{d}+p^{d}-2,-ap^{d}-s-1) \) avec \( d\geq 0 \), \(
a\in\{1,2,\cdots,p-1\} \) et \( s\leq p^{d}-1 \) (\( s \) n'est pas
nécessairement positif). Alors \( H^{2}(\mu) \) est un \( T \)-module
sans multiplicité. 
\end{lemma}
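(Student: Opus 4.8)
The statement asserts that for $\mu=(ap^{d}+p^{d}-2,-ap^{d}-s-1)$ (so $\mu$ is of the form appearing as $\mu-\delta$ in the constructions above, with $r=p^{d}-2$ close to the top of the box and $n$ replaced by $ap^{d}+s-1$), the $G$-module $H^{2}(\mu)$ has all weight spaces of dimension at most $1$. The natural strategy is induction on $d$, mirroring the recursive structure already exploited in \autoref{cor:2étages}, \autoref{thm2} and \autoref{cor:filtimage}. For $d=0$ one has $\mu=(a+p-2,-a-s-1)$ and, depending on $s$, either $H^{2}(\mu)=0$ (if $\mu$ is not in the Griffith region, by \autoref{rmk:Griffithcondition}) or $H^{2}(\mu)$ is governed by Borel--Weil--Bott and is a single Weyl module $V(\lambda)$ for a $p$-restricted $\lambda$; in all base cases $H^{2}(\mu)$ is a simple module or a Weyl module for a small weight, hence multiplicity free (the Weyl modules for $\SL_{3}$ with $p$-restricted highest weight inside the relevant range have multiplicity-free characters by the Weyl character formula, since the weight multiplicities of $V(\lambda)$ for $\SL_{3}$ are $1+\min(\langle\lambda,\alpha^{\vee}\rangle,\langle\lambda,\beta^{\vee}\rangle,\dots)$-type expressions bounded here).

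For the inductive step I would write $ap^{d}+p^{d}-2 = ap^{d}+r$ with $r=p^{d}-2$ and $-ap^{d}-s-1=-ap^{d}-s'-2$ with $s'=s-1$, so that $\mu=(ap^{d}+r,-ap^{d}-s'-2)$ with $-1\leq s'\leq p^{d}-2$ and $r=p^{d}-2$. Then $-1\leq s',r\leq p^{d}-1$, so \autoref{cor:2étages} applies and gives the two short exact sequences displayed there, with $\mu'=(r,-s'-2)=(p^{d}-2,-s)$, $\mu''=(-p^{d}+r,p^{d}-s'-2)=(-2,p^{d}-s)$, $\lambda=(s',p^{d}-r-2)=(s-1,0)$ and $^{t}\lambda=(r,p^{d}-s'-2)=(p^{d}-2,p^{d}-s)$. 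The key observation is that $\mu''=(-2,p^{d}-s)$ lies in an $H^{1}$-chamber near the wall (indeed $\mu''=s_{\alpha}\cdot(\text{something})$), and in fact $H^{2}(\mu'')$ either vanishes or is again of the special form of the lemma but of strictly smaller degree (after applying the $\alpha$--$\beta$ symmetry and the substitution $-p^{d}+r-2=-p^{d}$); similarly $\mu'=(p^{d}-2,-s)$ is of exactly the form of the lemma with parameters $(a',d')$ where $d'<d$, or has a $d$-restricted multiplicity-free $H^{2}$ by a further application of the same recursion. Thus by induction $H^{2}(\mu')$ and $H^{2}(\mu'')$ are multiplicity free, and since they are tensored with $L(0,a)^{(d)}$, $L(0,a-2)^{(d)}$ respectively — modules whose weights are $p^{d}$-separated — the middle term $M=L(0,a)^{(d)}\otimes H^{2}(\mu')\oplus L(0,a-2)^{(d)}\otimes H^{2}(\mu'')$ of the first short exact sequence of \autoref{cor:2étages} is multiplicity free (here one uses \autoref{lemma:poidsrestreints} to see the two summands have disjoint weights, their highest weights lying in distinct $p^{d}$-cosets).

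It then remains to control the extension
\[
\begin{tikzcd}
0\ar[r]&M\ar[r]&H^{2}(\mu)\ar[r]&L(0,a-1)^{(d)}\otimes V(\lambda)\ar[r]&0,
\end{tikzcd}
\]
with $\lambda=(s-1,0)$, so $V(\lambda)=V(s-1,0)=L(s-1,0)$ is simple and multiplicity free (a symmetric power), and $L(0,a-1)^{(d)}\otimes V(\lambda)$ has weights in the $p^{d}$-coset of $(0,a-1)$, i.e.\ disjoint from those of $M$ (whose weight-cosets are those of $(0,a)$ and $(0,a-2)$). Since the three ``blocks'' $L(0,a)^{(d)}\otimes H^{2}(\mu')$, $L(0,a-2)^{(d)}\otimes H^{2}(\mu'')$ and $L(0,a-1)^{(d)}\otimes V(\lambda)$ have pairwise disjoint sets of weights modulo $p^{d}$ (the cosets $0\cdot p^{d}+(0,a)$, $(0,a-1)$, $(0,a-2)$ being distinct as $1\le a\le p-1$), the character of $H^{2}(\mu)$ is simply the sum of the three multiplicity-free characters, each supported on its own coset, hence $H^{2}(\mu)$ is multiplicity free. \textbf{The main obstacle} is the careful bookkeeping showing that $\mu'$ and $\mu''$ do fall under either the induction hypothesis (same special shape, smaller degree) or under the already-established multiplicity-freeness of small Weyl/induced modules, including the degenerate cases $s\le 0$, $s=p^{d}-1$, $a=1$ (where $L(0,a-2)=0$), and the wall cases where one invokes \autoref{prop:Eparticulier} or Borel--Weil--Bott to kill a term; once the recursion is set up the disjointness-of-weights argument is routine.
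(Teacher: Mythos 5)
Your plan is essentially the paper's own proof: induction on the degree $d$, the filtration of \autoref{thm2}/\autoref{cor:2étages} with $\lambda=(s-1,0)$ and vanishing of the $\mu''$-block, identification of $\mu'$ as a lower-degree instance of the lemma (with $a'=p-1$ and $s''=s-(p-1)p^{d-1}\leq p^{d-1}-1$), and separation of the blocks' weights via \autoref{lemma:poidsrestreints} together with the fact that $L(0,a)$, $L(0,a-1)$, $L(0,a-2)$ have no common weight. Only minor repairs are needed: $\mu'=(p^{d}-2,-s-1)$ and $\mu''=(-2,p^{d}-s-1)$ (your formulas are off by one), the case $s\leq 0$ and the whole $\mu''$-block are killed outright by \autoref{rmk:Griffithcondition} (so no hedge about $\mu''$ being a smaller instance is needed), and the disjointness should be argued as in the paper (restricted weights plus disjoint weights of the $L(0,a-j)$'s) rather than via "distinct cosets modulo $p^{d}$", since all weights of $H^{2}(\mu)$ lie in a single class and that phrasing is not literally correct.
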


\begin{proof}
 
  Raisonnons par récurrence sur \( d \). Si \( d=0 \), alors \(
  \mu=(a-1,-a-s-1) \) avec \( s\leq 0 \). Donc \( H^{2}(\mu)=0 \)
  d'après la \autoref{rmk:Griffithcondition}. Supposons que \(
  \mu=(ap^{d+1}+p^{d+1}-2,-ap^{d+1}-s-1) \) pour un \( d\geq 0 \) et
  \( s\leq p^{d+1}-1 \). Si
  \( s\leq 0 \), alors \( H^{2}(\mu)=0 \). Si \( s>0 \),  alors d'après le
  \autoref{thm2}, on sait que \( H^{2}(\mu) \) est filtré par
    \(E_{1}=L(0,a-1)^{(d+1)}\otimes V(s-1,0)\) et
    \(E_{2}=L(0,a)^{(d+1)}\otimes H^{2}(p^{d+1}-2,-s-1)\) car \( H^{2}(\mu'')=H^{2}(-2,p^{d+1}-s-1)=0 \). Comme tout poids de
    \(V(s-1,0)\) et de \(H^{2}(p^{d+1}-2,-s-1)\) est
    \(p^{d+1}\)-restreint, et comme \(L(0,a)\) et \(L(0,a-1)\) n'ont pas de
    poids commun, \(E_{1}\) et \(E_{2}\) n'ont pas de poids commun. D'après
    l'hypothèse de récurrence, \(H^{2}(p^{d+1}-2,-s-1)=H^{2}((p-1)p^{d}+p^{d}-2,-(p-1)p^{d}-(s-(p-1)p^{d})-1)\) n'a pas de
    multiplicité comme  \(T\)-module car \( s-(p-1)p^{d}\leq
    p^{d+1}-1-p^{d+1}+p^{d}=p^{d}-1 \).
    On sait aussi que \(V(s-1,0)\) n'a pas de
    multiplicité comme  \(T\)-module. Par conséquent, \(H^{2}(\mu)\) n'a pas de
    multiplicité non plus.
\end{proof}

Démontrons maintenant la \autoref{prop:sansmultiplicité}.
\begin{proof}
Comme \( m\geq 1 \), il existe \( d\geq 0 \) et \(
a\in\{1,2,\cdots,p-1\} \) tels que
\( ap^{d}\leq m<(a+1)p^{d} \).

Écrivons \( m=ap^{d}+r \) et \( n=ap^{d}+s \), alors \( 0\leq r\leq
p^{d}-1 \) et \( s<r \) (\( s \) peut être négatif).

Raisonnons par récurrence sur \( d \). Si \( d=0 \), alors \(
\mu=(a,-a-s-2) \) avec \( s\leq -1 \). Donc \( \mu-\alpha=(a-2,-a-s-1) \)
et \( \mu-\beta=(a+1,-a-s-4) \), d'où \( H^{2}(\mu-\delta)=0 \) pour
tout \( \delta\in\{\alpha,\beta\} \). Par conséquent, \(
I_{\delta}(\mu)=0 \) car \( I_{\delta}(\mu)\subset H^{2}(\mu-\delta)
\) et l'énoncé est trivial.

Maintenant supposons \( m=ap^{d+1}+r \) et \( n=ap^{d+1}+s \) avec \(
0\leq r\leq p^{d+1}-1 \) et \( s<r \). 

Supposons d'abord que \( \delta=\alpha \). \fbox{Si \( s\leq 0 \) et
  \( r\geq 1 \)}, alors  \[
H^{2}(\mu-\alpha)=H^{2}(ap^{d+1}+r-2,-ap^{d+1}-s-1)=0 \] car \(
ap^{d+1}+r-2\geq ap^{d+1}-1 \) et \( ap^{d+1}+s-1\leq ap^{d+1}-1 \). Donc \(
I_{\alpha}(\mu)=0 \) et le résultat est trivial.

\fbox{Si \( s\leq 0 \) et \( r=0 \)}, alors \( s\leq -1 \) car \( s<r
\). Donc on a \( \mu-\alpha=(ap^{d+1}-2,-ap^{d+1}-s-1) \) et \(
H^{2}(\mu-\alpha) \) n'a pas de multiplicité comme  \( T \)-module
d'après le \autoref{lemma:H2sansmultiplicité}. Donc l'énoncé est vrai
car \( I_{\alpha}(\mu)\subset H^{2}(\mu-\alpha)  \).

\fbox{Si \( s>0 \)}, alors \( 0<s<r\leq p^{d+1}-1 \), et d'après la
\autoref{cor:filtimage}, on a
\begin{displaymath}
 \ch I_{\alpha}(\mu)=\ch L(0,a)^{(d+1)}\ch I_{\alpha}(\mu')+\ch L(0,a-2)^{(d+1)}\ch I_{\alpha}(\mu'') ,
\end{displaymath}
où \( \mu'=(r,-s-2) \) et \( \mu''=(-p^{d+1}+r,p^{d+1}-s-2) \). 
Comme \((0,2)\notin \mathbb{Z}\alpha+\mathbb{Z}\beta\), alors
\(L(0,a)\) et \(L(0,a-2)\) n'ont pas de poids commun. D'après le
\autoref{lemma:poidsrestreints}, tout poids de \(
I_{\alpha}(\mu')\subset H^{2}(\mu'-\alpha) \)
et de \( I_{\alpha}(\mu'') \subset H^{2}(\mu''-\alpha)\) est \( p^{d+1} \)-restreint, donc
\(L(0,a)^{(d+1)}\otimes  I_{\alpha}(\mu')  \) et \(
L(0,a-2)^{(d+1)}\otimes I_{\alpha}(\mu'') \) n'ont pas de poids
commun. Par conséquent,  \(
I_{\alpha}(\mu) \) n'a pas de multiplicité comme  \( T \)-module car
\( I_{\alpha}(\mu') \) et \( I_{\alpha}(\mu'') \) n'ont pas de
multiplicité d'après l'hypothèse de récurrence.

Supposons maintenant que \( \delta =\beta\). \fbox{Si \( s\leq -3 \)},
alors \( \mu-\beta=(ap^{d+1}+r+1,-ap^{d+1}-s-4) \) avec \( r+1\geq 1
\) et \( s+2\leq -1 \), d'où \( H^{2}(\mu-\beta)=0 \). En particulier,
\( I_{\beta}(\mu)=0 \) et l'énoncé est trivial.

\fbox{Si \( s=-2 \)}, alors \( \mu=(ap^{d+1}+r,-ap^{d+1}) \) avec \(
r\geq 0 \). Donc \( H^{2}(E_{\beta}(\mu))=0 \) d'après la
\autoref{prop:Eparticulier}, et par conséquent on a
\begin{displaymath}
  I_{\beta}(\mu)=H^{2}(\mu-\beta)=H^{2}(ap^{d}+r+1,-ap^{d}-2).
\end{displaymath}
Si \( r=p^{d}-1 \), alors \( H^{2}(ap^{d}+r+1,-ap^{d}-2)=0 \) et
l'énoncé est trivial. Si \( r\leq p^{d}-2 \), alors d'après le
\autoref{thm2}, on sait que  \( H^{2}(ap^{d}+r+1,-ap^{d}-2) \) est un
quotient de \( V(0,ap^{d}-r-3) \) car \( H^{2}(r+1,-2)=0 \). Comme \(
V(0,ap^{d}-r-3) \) n'a pas de multiplicité, l'énoncé est vrai dans ce
cas.

\fbox{Si \( s=-1 \)}, alors \( p\nmid ap^{d+1}+s+2 \), donc \(
\mu-\beta\notin W_{p}\cdot \mu \) et en particulier on a \(
I_{\beta}(\mu)=0 \).

\fbox{Si \( 0\leq s\leq p^{d+1}-3 \) et \( r=p^{d+1}-1 \)}, alors on a
\begin{displaymath}
H^{2}(\mu-\beta)=H^{2}((a+1)p^{d+1},-ap^{d+1}-s-4)=0
\end{displaymath}
car \( s+2\leq p^{d+1}-1 \), d'où \( I_{\beta}(\mu)=0 \).

\fbox{Si \( s=p^{d+1}-2 \) et \( r=p^{d+1}-1 \)}, alors
\begin{displaymath}
 H^{2}(E_{\beta}(\mu))=H^{2}(E_{\beta}((a+1)p^{d+1}-1,-(a+1)p^{d+1}))=0
\end{displaymath}
d'après la \autoref{prop:Eparticulier}. Donc
\begin{displaymath}
  I_{\beta}(\mu)=H^{2}(\mu-\beta)=H^{2}((a+1)p^{d+1},-(a+1)p^{d+1}-2)
\end{displaymath}
qui est un quotient de \( V(0,(a+1)p^{d+1}-2) \) d'après le
\autoref{thm2} car \( H^{2}(0,-2)=0 \). Comme \( V(0,(a+1)p^{d+1}-2)
\) n'a pas de multiplicité comme  \( T \)-module, le résultat en
découle.

\fbox{Si \( 0\leq s<r\leq p^{d+1}-2 \)}, alors d'après le
\autoref{cor:filtimage} on a
\begin{displaymath}
 \ch I_{\beta}(\mu)=\ch L(0,a)^{(d+1)}\ch I_{\beta}(\mu')+\ch L(0,a-2)^{(d+1)}\ch I_{\beta}(\mu'') ,
\end{displaymath}
où \( \mu'=(r,-s-2) \) et \( \mu''=(-p^{d+1}+r,p^{d+1}-s-2) \). 
Comme \((0,2)\notin \mathbb{Z}\beta+\mathbb{Z}\beta\), alors 
\(L(0,a)\) et \(L(0,a-2)\) n'ont pas de poids commun. D'après le
\autoref{lemma:poidsrestreints}, tout poids de \(
I_{\beta}(\mu')\subset H^{2}(\mu'-\beta) \)
et de \( I_{\beta}(\mu'')\subset H^{2}(\mu''-\beta) \) est \( p^{d+1} \)-restreint, donc
\(L(0,a)^{(d+1)}\otimes  I_{\beta}(\mu')  \) et \(
L(0,a-2)^{(d+1)}\otimes I_{\beta}(\mu'') \) n'ont pas de poids
commun. Par conséquent,  \(
I_{\beta}(\mu) \) n'a pas de multiplicité comme  \( T \)-module car
\( I_{\beta}(\mu') \) et \( I_{\beta}(\mu'') \) n'ont pas de
multiplicité d'après l'hypothèse de récurrence.
Ceci termine la preuve de la \autoref{prop:sansmultiplicité}.
\end{proof}

\subsection{Retour à  la \( p \)-\( H^{i}
  \)-D-filtration}\label{subsection:revenir}

Le but de ce paragraphe est d'écrire en détail la \( p \)-\( H^{i}
  \)-D-filtration où \( i\in\{1,2\} \) et \( \mu\notin C\cup
  w_{0}\cdot C \). On verra aussi que le
\autoref{cor:imagesum} est suffisant pour décrire tous les modules
inconnus de la forme \(H^{i}(E_{\delta}(\nu))\) dans le
\autoref{thm:pHifiltration}. Supposons maintenant que \( \mu\notin C\cup w_{0}\cdot C \). Alors il
existe \( m,n\in\mathbb{N} \) tels que \(\mu=(m,-n-2)  \) ou \(
\mu=(-n-2,m) \). D'après la symétrie entre \( \alpha \) et \( \beta
\), on peut supposer que \( \mu=(m,-n-2) \) sans perte de 
généralité. D'après la dualité de Serre, il suffit de considérer \(
H^{1}(\mu)=H^{1}(m,-n-2) \) et \( H^{2}(\mu)=H^{2}(m,-n-2) \) lorsque \(
m\geq n \)
(c'est-à-dire, \( \mu\in s_{\beta}\cdot C \)).

Si \(n\leq  m\leq p-1 \), alors \( H^{2}(m,-n-2)=0 \) et
\[
H^{1}(m,-n-2)\cong H^{0}(s_{\beta}\cdot\mu)=H^{0}(m-n-1,n)
\]
d'après le théorème de Borel-Weil-Bott (cf. \cite{Jan03} II.5.5).

Si \( m\geq p \), alors il existe \( d\geq 1 \) et \(
a\in\{1,2,\cdots,p-1\} \) tels que \( ap^{d}\leq m<(a+1)p^{d} \).
Écrivons \( m=ap^{d}+Rp+r \)  et \( n=ap^{d}+Sp+s \) avec \( 0\leq
r,s\leq p-1 \) (\( S \) peut être négatif mais \( S\geq -ap^{d-1} \)
car \( n\geq 0 \)), alors on a \( 0\leq R\leq
p^{d-1}-1 \) et \( S\leq R \). Notons \( m^{1}=ap^{d-1}+R \) et \(
n^{1}=ap^{d-1}+S \).

Pour  \( \nu=p\nu^{1}+\nu^{0} \) où \( \nu^{0}\in X_{1}(T) \), posons
\[ \mathcal{H}_{\delta}^{i}(\nu)=L(\nu^{0})\otimes
  H^{i}(E_{\delta}(\nu^{1}))^{(1)} \]
où \( \delta\in\{0,\alpha,\beta\} \). Notons aussi \(
\mathcal{H}^{i}(\nu)=\mathcal{H}_{0}^{i}(\nu) \).

\begin{rmk}
  En utilisant les résultats de ce paragraphe, on peut obtenir une
  autre démonstration de la proposition de Kühne-Hausmann \cite{KH84}
  6.3.2 (voir aussi \cite{DS88} 5.3) et préciser les conditions pour
  que \( \lambda \) soit \og générique\fg{}.
\end{rmk}

\subsubsection{Type \( \Delta \)}
Supposons que \( \mu \) est de type \( \Delta \), c'est-à-dire \(
0\leq s<r\leq p-2 \). Les neuf facteurs simples de \( \widehat{Z}(\mu) \)
sont donnés par la figure suivante  (où \( \nu_{1}=\mu \))
:
\begin{figure}[H]
  \centering
  \begin{tikzpicture}[scale=0.7]
    \draw (0,0)--(3,0)--(1.5,-3*sin{60})--cycle; \draw
    (0.5,-sin{60})--(1.5,sin{60})--(2.5,-sin{60})--cycle; \draw
    (1,0)--(2.5,-3*sin{60})--(3,-2*sin{60})--(0,-2*sin{60})--(0.5,-3*sin{60})--(2,0);
    \node[font=\tiny] at (1.5,0.3){\( \nu_{1} \)}; \node[font=\tiny] at (0.5,-0.3){\( \nu_{6} \)}; \node[font=\tiny] at
    (1.5,-0.3){\( \nu_{2} \)}; \node[font=\tiny] at (2.5,-0.3){\( \nu_{4} \)}; \node[font=\tiny] at
    (1,-sin{60}+0.3){\( \nu_{7} \)}; \node[font=\tiny] at (2,-sin{60}+0.3){\( \nu_{8} \)}; \node[font=\tiny] at
    (1,-sin{60}-0.3){\( \nu_{3} \)}; \node[font=\tiny] at (2,-sin{60}-0.3){\( \nu_{5} \)}; \node[font=\tiny] at
    (1.5,-2*sin{60}-0.3){\( \nu_{9} \)};
  \end{tikzpicture}.
\end{figure}

D'après
le \autoref{thm:pHifiltration}, on sait
que pour \( i\in\{1,2\} \), il existe une filtration de  \( H^{i}(\mu) \)
dont les quotients sont les suivants (l'ordre peut être différent)
\begin{displaymath}
  \mathcal{H}^{i}(\nu_{1}),\mathcal{H}^{i}(\nu_{2}),\mathcal{H}_{\alpha}^{i}(\nu_{4}),
  \mathcal{H}_{\beta}^{i}(\nu_{6}), \mathcal{H}^{i}(\nu_{7}),\mathcal{H}^{i}(\nu_{8}),\mathcal{H}^{i}(\nu_{9}).
\end{displaymath}

On sait que \( H^{0}(\nu_{4}^{1})=H^{0}(m^{1}+1,-n^{1}-2)=0 \) et \(
H^{3}(\nu_{3}^{1})=H^{3}(m^{1}-1,-n^{1}-1)=0 \) car \( m^{1},n^{1}\geq
0\), donc \( \mathcal{H}^{0}(\nu_{4})=\mathcal{H}^{3}(\nu_{3})=0 \).
Donc il existe une suite exacte longue
\begin{equation}
  \label{eq:eef5a3485d232fdb}
    0\to \mathcal{H}^{1}(\nu_{3})\to\mathcal{H}_{\alpha}^{1}(\nu_{4})\to
    \mathcal{H}^{1}(\nu_{4})\xrightarrow{\partial_{\alpha}}
    \mathcal{H}^{2}(\nu_{3})\to\mathcal{H}_{\alpha}^{2}(\nu_{4})\to
    \mathcal{H}^{2}(\nu_{4})\to
    0.
\end{equation}

De même, comme \( H^{3}(\nu_{5}^{1})=H^{3}(m^{1},-n^{1}-2)=0 \), on a
une suite exacte longue
\begin{equation}
  \label{eq:a1aac2f2b9fe0e5e}
    \cdots\to
    \mathcal{H}^{0}(\nu_{6})\xrightarrow{\partial_{\beta}^{0}}\mathcal{H}^{1}(\nu_{5})\to
    \mathcal{H}_{\beta}^{1}(\nu_{6})\to 
    \mathcal{H}^{1}(\nu_{6})\xrightarrow{\partial_{\beta}^{1}}
   \mathcal{H}^{2}(\nu_{5})\to\mathcal{H}_{\beta}^{2}(\nu_{6})\to
    \mathcal{H}^{2}(\nu_{6})\to 0.
\end{equation}

\fbox{Si \( n^{1}=0 \),} alors \( H^{2}(\mu)=0 \) car \( n\leq p-1 \)
et \( m\geq n \). On a aussi \( \mathcal{H}^{2}(\nu_{3})=0 \) car \(
\nu_{3}^{1}=(m^{1}-1,-n^{1}-1)=(m^{1}-1,-1) \). Donc d'après
\eqref{eq:eef5a3485d232fdb}, on sait que \( \mathcal{H}_{\alpha}^{1}(\nu_{4})
\) est juste une extension de \( \mathcal{H}^{1}(\nu_{4}) \) par \(
\mathcal{H}^{1}(\nu_{3}) \).

Or on a \( H^{i}(E_{\beta}(\nu_{6}^{1}))=H^{i}(E_{\beta}(m^{1}-1,0))=0
\) pour tout \( i \) d'après le \autoref{lemmaE0}, donc d'après
\eqref{eq:a1aac2f2b9fe0e5e}, \( \partial_{\beta}^{0} \) induit un isomorphisme
\( \mathcal{H}^{0}(\nu_{6})\cong\mathcal{H}^{1}(\nu_{5}) \).
Par conséquent, non seulement le facteur \( \mathcal{H}^{1}(\nu_{6})
\) n'apparaît pas, mais le facteur \( \mathcal{H}^{1}(\nu_{5}) \) est \og
effacé\fg{} dans la filtration de \( H^{1}(\mu) \), c'est-à-dire,  le
\( G \)-module \(
H^{1}(\mu) \) admet une filtration dont les quotients sont
\( \{\mathcal{H}^{1}(\nu_{i})|i=1,2,3,4,7,8,9\} \).La
situation est visualisée par la 
figure suivante, où la droite en gras est le mur entre \( C \) et \(
s_{\beta}\cdot C \), i.e. \( \{\mu\in
X(T)|\langle\mu+\rho,\beta\rangle=0\} \):
\begin{figure}[H]
  \centering
  \begin{tikzpicture}[scale=0.7]
    \draw (0,0)--(3,0)--(1.5,-3*sin{60})--cycle; \draw
    (0.5,-sin{60})--(1.5,sin{60})--(2.5,-sin{60})--cycle; \draw
    (1,0)--(2.5,-3*sin{60})--(3,-2*sin{60})--(0,-2*sin{60})--(0.5,-3*sin{60})--(2,0);
    \node[font=\tiny] at (1.5,0.3){\( \nu_{1} \)};  \node[font=\tiny] at
    (1.5,-0.3){\( \nu_{2} \)}; \node[font=\tiny] at (2.5,-0.3){\( \nu_{4} \)}; \node[font=\tiny] at
    (1,-sin{60}+0.3){\( \nu_{7} \)}; \node[font=\tiny] at (2,-sin{60}+0.3){\( \nu_{8} \)}; \node[font=\tiny] at
    (1,-sin{60}-0.3){\( \nu_{3} \)};  \node[font=\tiny] at
    (1.5,-2*sin{60}-0.3){\( \nu_{9} \)};
    \draw [line width=0.5mm] (1.6,1.2*sin{60})--(-0.5,-3*sin{60});
  \end{tikzpicture}.
\end{figure}

\fbox{Si \( n^{1}\geq 1 \) et \( \mu \notin\widehat{\Gr}\) ,} c'est-à-dire \( 1-ap^{d-1}\leq S\leq -1 \), alors
on a \( H^{2}(\mu)=0 \). De plus, on a
\(   H^{0}(\nu_{6}^{1})=H^{0}(m^{1}-1,-n^{1})=0 \), 
\( H^{2}(\nu_{3}^{1})=H^{2}(m^{1}-1,-n^{1}-1)=0 \)
et
\( H^{2}(\nu_{5}^{1})=H^{2}(m^{1},-n^{1}-2)=0 \).
Donc d'après \eqref{eq:eef5a3485d232fdb} et
\eqref{eq:a1aac2f2b9fe0e5e}, \( \mathcal{H}_{\alpha}^{1}(\nu_{4}) \)
est juste une extension de \( \mathcal{H}^{1}(\nu_{4}) \) par \(
\mathcal{H}^{1}(\nu_{3}) \), et \( \mathcal{H}_{\beta}^{1}(\nu_{6}) \)
est juste une extension de \( \mathcal{H}^{1}(\nu_{6}) \) par \(
\mathcal{H}^{1}(\nu_{4}) \). Donc dans ce cas, \( H^{2}(\mu)=0 \) et
\( H^{1}(\mu) \) admet une filtration dont les quotients sont
les \(\mathcal{H}^{1}(\nu_{i}) \) pour \( i\in\{1,2,\cdots,9\} \).

\fbox{Si \( n^{1}\geq 1 \) et \( \mu \in\widehat{\Gr}\),} c'est-à-dire \(
S\geq 0 \), alors \( H^{0}(\nu_{6}^{1})=H^{0}(m^{1}-1,-n^{1})=0
\). Donc \eqref{eq:a1aac2f2b9fe0e5e} devient:
\begin{equation}
  \label{eq:28734564018fcb40}
   0\to\mathcal{H}^{1}(\nu_{5})\to\mathcal{H}_{\beta}^{1}(\nu_{6})\to
    \mathcal{H}^{1}(\nu_{6})\xrightarrow{\partial_{\beta}^{1}}
   \mathcal{H}^{2}(\nu_{5})\to\mathcal{H}_{\beta}^{2}(\nu_{6})\to
    \mathcal{H}^{2}(\nu_{6})\to   0.
\end{equation}

\underline{Si \( S\geq 0 \) et  \(R=p^{d-1}-1  \),}  alors
\[
H^{2}(\nu_{5}^{1})=H^{2}(m^{1},-n^{1}-2)=H^{2}((a+1)p^{d-1},-ap^{d-1}-S-2)=0.
\]
En particulier, on a \( \partial_{\beta}^{1} =0\) dans \eqref{eq:28734564018fcb40}, donc pour \(
i\in\{1,2\} \), le \( G \)-module \( \mathcal{H}_{\beta}^{i}(\nu_{6})
\) est juste une extension de \( \mathcal{H}^{i}(\nu_{6}) \) par \(
\mathcal{H}^{i}(\nu_{5}) \).

D'autre part, on a
\[
  H^{2}(E_{\alpha}(\nu_{4}^{1}))=H^{2}(E_{\alpha}(m^{1}+1,-n^{1}-2))=H^{2}(E_{\alpha}(ap^{d-1},-ap^{d-1}-S-2))=0 \]
d'après la \autoref{prop:Eparticulier}. Donc \(
\mathcal{H}_{\alpha}^{2}(\nu_{4})=0 \) et,  d'après
\eqref{eq:eef5a3485d232fdb}, on a une suite exacte
\begin{equation}
  \label{eq:b6600c1a438f8a65}
  \begin{tikzcd}
   0\ar[r]&\mathcal{H}^{1}(\nu_{3})\ar[r]&\mathcal{H}_{\alpha}^{1}(\nu_{4})\ar[r,"f"]&
    \mathcal{H}^{1}(\nu_{4})\ar[r,"\partial_{\alpha}"]
    &\mathcal{H}^{2}(\nu_{3})\ar[r]&0.  
  \end{tikzcd}
\end{equation}

Notons \( \mathcal{Q}_{4}\subset \mathcal{H}^{1}(\nu_{4}) \) l'image
de \( f \), alors on a
\begin{displaymath}
  \begin{tikzcd}
    0\ar[r]&\mathcal{H}^{1}(\nu_{3})\ar[r]&\mathcal{H}_{\alpha}^{1}(\nu_{4})\ar[r,"f"]&\mathcal{Q}_{4}\ar[r]&0
  \end{tikzcd}
\end{displaymath}
et
\begin{displaymath}
  \begin{tikzcd}
    0\ar[r]&\mathcal{Q}_{4} \ar[r]&\mathcal{H}^{1}(\nu_{4})\ar[r,"\partial_{\alpha}"]
    &\mathcal{H}^{2}(\nu_{3})\ar[r]&0.
  \end{tikzcd}
\end{displaymath}

Donc dans ce cas, le facteur \( \mathcal{H}^{2}(\nu_{3}) \) est \og
effacé\fg{} dans la filtration de \( H^{1}(\mu) \) et de \( H^{2}(\mu)
\). Plus précisément, \( H^{2}(\mu) \) admet une filtration dont les
quotients sont
\( \{\mathcal{H}^{2}(\nu_{i})|i=1,2,4,5,6,7,8,9\}  \)
et \( H^{1}(\mu) \) admet une filtration dont les quotients sont
 \(  \{\mathcal{H}^{1}(\nu_{i})|i=1,2,3,5,6,7,8,9\}\cup\{\mathcal{Q}_{4}\} \)
où \( \mathcal{Q}_{4}\subset \mathcal{H}^{1}(\nu_{4}) \) est tel que
\( \mathcal{H}^{1}(\nu_{4})/\mathcal{Q}_{4}\cong \mathcal{H}^{2}(\nu_{3}) \).

\smallskip
\underline{De même, si \( S=0 \) et \( 0\leq R\leq p^{d-1}-1 \),}
alors le facteur \( \mathcal{H}^{2}(\nu_{5}) \) est \og effacé\fg{}
dans la filtration de \( H^{1}(\mu) \) et \( H^{2}(\mu)
\). Plus précisément, \( H^{2}(\mu) \) admet une filtration dont les
quotients sont
\(  \{\mathcal{H}^{2}(\nu_{i})|i=1,2,3,4,6,7,8,9\}  \)
et \( H^{1}(\mu) \) admet une filtration dont les quotients sont
  \( \{\mathcal{H}^{1}(\nu_{i})|i=1,2,3,4,5,7,8,9\}\cup\{\mathcal{Q}_{6}\} \)
où \( \mathcal{Q}_{6}\subset \mathcal{H}^{1}(\nu_{6}) \) est tel que
\( \mathcal{H}^{1}(\nu_{6})/\mathcal{Q}_{6}\cong \mathcal{H}^{2}(\nu_{5}) \).

\underline{Si \( 1\leq S\leq R\leq p^{d-1}-2 \),} alors
\[ \nu_{4}^{1}=(m^{1}+1,-n^{1}-2)=(ap^{d-1}+R+1,-ap^{d-1}-S-2) \]
avec \( 1\leq S<R+1\leq p^{d-1}-1 \). Donc \( \nu_{4}^{1} \) vérifie
l'hypothèse du
\autoref{cor:imagesum} pour \( \delta=\alpha \). D'autre part,
\[
\nu_{6}^{1}=(m^{1}-1,-n^{1})=(ap^{d-1}+R-1,-ap^{d-1}-(S-2)-2)
\]
avec
\( -1\leq S-2<R-1\leq p^{d}-3 \), donc \( \nu_{6}^{1} \) vérifie
l'hypothèse du
\autoref{cor:imagesum} pour \( \delta=\beta \).

Donc pour \( i\in\{1,2\} \), \( H^{i}(\mu) \) admet une filtration
dont les quotients sont
\begin{displaymath}
\{ \mathcal{H}^{i}(\nu_{1}),\mathcal{H}^{i}(\nu_{2}),\mathcal{H}_{\alpha}^{i}(\nu_{4}),
  \mathcal{H}_{\beta}^{i}(\nu_{6}),
  \mathcal{H}^{i}(\nu_{7}),\mathcal{H}^{i}(\nu_{8}),\mathcal{H}^{i}(\nu_{9})\}.
\end{displaymath}

De plus, on a des suites exactes longues:
\[    0\to\mathcal{H}^{1}(\nu_{3})\to\mathcal{H}_{\alpha}^{1}(\nu_{4})\to
    \mathcal{H}^{1}(\nu_{4})\xrightarrow{\partial_{\alpha}}
    \mathcal{H}^{2}(\nu_{3})\to\mathcal{H}_{\alpha}^{2}(\nu_{4})\to
    \mathcal{H}^{2}(\nu_{4})\to
    0.
\]
et
\[   0\to\mathcal{H}^{1}(\nu_{5})\to\mathcal{H}_{\beta}^{1}(\nu_{6})\to
    \mathcal{H}^{1}(\nu_{6})\xrightarrow{\partial_{\beta}}
  \mathcal{H}^{2}(\nu_{5})\to\mathcal{H}_{\beta}^{2}(\nu_{6})\to
    \mathcal{H}^{2}(\nu_{6})\to  0
\]
où  \( \image(\partial_{\alpha})\cong L(\nu_{4}^{0})\otimes
  I_{\alpha}(\nu_{4}^{1})^{(1)}  \)
et
\( \image(\partial_{\beta})\cong L(\nu_{6}^{0})\otimes
  I_{\beta}(\nu_{6}^{1})^{(1)} \),
qui peuvent être calculés récursivement par le \autoref{cor:imagesum}.

\subsubsection{Type \( \nabla \)}
Si \( \mu \) est de type \( \nabla \), c'est-à-dire \( r<s \), alors
on a forcément \( m^{1}>n^{1} \) et \( R>S \) car \( m\geq n \). Les neuf facteurs simples de \( \widehat{Z}(\mu) \)
sont donnés par la figure suivante (où \( \nu_{1}=\mu \)):
\begin{figure}[H]
  \centering
 \begin{tikzpicture}[scale=0.7]
    \draw (0,0)--(3,0)--(1.5,-3*sin{60})--cycle; \draw
    (0.5,-sin{60})--(1.5,sin{60})--(2.5,-sin{60})--cycle; \draw
    (1,0)--(2.5,-3*sin{60})--(3,-2*sin{60})--(0,-2*sin{60})--(0.5,-3*sin{60})--(2,0);
    \node[font=\tiny] at (1.5,-0.3){\( \nu_{1} \)}; \node[font=\tiny] at (1,-sin{60}+0.3){\( \nu_{2} \)}; \node[font=\tiny] at
    (2,-sin{60}+0.3){\( \nu_{3} \)}; \node[font=\tiny] at (1,-sin{60}-0.3){\( \nu_{7} \)}; \node[font=\tiny] at
    (2,-sin{60}-0.3){\( \nu_{5} \)}; \node[font=\tiny] at (1.5,-2*sin{60}+0.3){\( \nu_{9} \)}; \node[font=\tiny] at
    (1.5,-2*sin{60}-0.3){\( \nu_{8} \)}; \node[font=\tiny] at (0.5,-2*sin{60}-0.3){\( \nu_{4} \)}; \node[font=\tiny]
    at (2.5,-2*sin{60}-0.3){\( \nu_{6} \)};
  \end{tikzpicture}. 
\end{figure}

D'après
le \autoref{thm:pHifiltration}, on sait
que pour \( i\in\{1,2\} \), il existe une filtration de  \( H^{i}(\mu) \)
dont les quotients sont les suivants (l'ordre peut être différent) :
\begin{displaymath}
  \mathcal{H}^{i}(\nu_{1}),\mathcal{H}^{i}(\nu_{2}),\mathcal{H}^{i}(\nu_{3}),
  \mathcal{H}_{\alpha}^{i}(\nu_{5}), \mathcal{H}_{\beta}^{i}(\nu_{7}),\mathcal{H}^{i}(\nu_{8}),\mathcal{H}^{i}(\nu_{9}).
\end{displaymath}

On a  \( H^{0}(\nu_{5}^{1})=H^{0}(m^{1},-n^{1}-2)=0 \)  et
\( H^{3}(\nu_{4}^{1})=H^{3}(m^{1}-2,-n^{1}-1)=0 \) car \( m^{1}\geq
n^{1}+1\geq 1 \). Donc \( \mathcal{H}^{0}(\nu_{5})=0 \) et \(
\mathcal{H}^{3}(\nu_{4})=0 \), d'où une suite exacte
\begin{equation}
  \label{eq:7868a10a035b4ef8}
    0\to\mathcal{H}^{1}(\nu_{4})\to\mathcal{H}_{\alpha}^{1}(\nu_{5})\to
    \mathcal{H}^{1}(\nu_{5})\xrightarrow{\partial_{\alpha}}\mathcal{H}^{2}(\nu_{4})
    \to\mathcal{H}_{\alpha}^{2}(\nu_{5})\to
    \mathcal{H}^{2}(\nu_{5})\to
    0.
\end{equation}

De même, on a  \( H^{0}(\nu_{7}^{1})=H^{0}(m^{1}-1,-n^{1}-1)=0 \)  et
\( H^{3}(\nu_{6}^{1})=H^{3}(m^{1},-n^{1}-3)=0 \),
d'où une suite exacte
\begin{equation}
  \label{eq:2d989fff77716ffa}
    0\to\mathcal{H}^{1}(\nu_{6})\to\mathcal{H}_{\beta}^{1}(\nu_{7})\to
    \mathcal{H}^{1}(\nu_{7})\xrightarrow{\partial_{\beta}}
    \mathcal{H}^{2}(\nu_{6})\to\mathcal{H}_{\beta}^{2}(\nu_{7})\to
    \mathcal{H}^{2}(\nu_{7})\to
    0. 
\end{equation}

\fbox{Si \( S\leq -2 \) et \( R\geq 1 \),} alors
\[
H^{2}(\nu_{4}^{1})=H^{2}(m^{1}-2,-n^{1}-1)=H^{2}(ap^{d-1}+R-2,-ap^{d-1}-(S-1)-2)=0
\]
et
\[
H^{2}(\nu_{6}^{1})=H^{2}(m^{1},-n^{1}-3)=H^{2}(ap^{d-1}+R,-ap^{d-1}-(S+1)-2)=0.
\]
En particulier, on a \( \partial_{\alpha}=\partial_{\beta}=0 \). Donc
dans ce cas, \( H^{2}(\mu)=0 \) et \( H^{1}(\mu) \) admet une
filtration dont les quotients sont
\( \{\mathcal{H}^{1}(\nu_{i})|i=1,2,\cdots,9\} \).

\fbox{Si \( S\leq -2 \) et \( R=0 \),} alors on a encore
\[
H^{2}(\nu_{6}^{1})=H^{2}(m^{1},-n^{1}-3)=H^{2}(ap^{d-1}+R,-ap^{d-1}-(S+1)-2)=0,
\]
d'où \( \partial_{\beta}=0 \). D'autre part, on a
\[
H^{2}(E_{\alpha}(\nu_{5}^{1}))=H^{2}(E_{\alpha}(m^{1},-n^{1}-2))=H^{2}(E_{\alpha}(ap^{d-1},-ap^{d-1}-S-2))=0
\]
d'après la \autoref{prop:Eparticulier}, donc
\eqref{eq:7868a10a035b4ef8} devient
\begin{displaymath}
    0\to \mathcal{H}^{1}(\nu_{4})\to \mathcal{H}_{\alpha}^{1}(\nu_{5})\xrightarrow{f_{\alpha}}
    \mathcal{H}^{1}(\nu_{5})\xrightarrow{\partial_{\alpha}}
    \mathcal{H}^{2}(\nu_{4})\to 0.
\end{displaymath}

Dans ce cas, le facteur \( \mathcal{H}^{2}(\nu_{4}) \) est \og
effacé\fg{} dans la filtration de \( H^{1}(\mu) \) et de \( H^{2}(\mu)
\). Plus précisément, notons \( \mathcal{Q}_{5} \) l'image de \( f_{\alpha} \),
alors \( H^{1}(\mu) \) admet une filtration dont les quotients sont
\( \{\mathcal{H}^{1}(\nu_{i})|i=1,2,3,4,6,7,8,9\}\cup \{\mathcal{Q}_{5}\} \)
où \( \mathcal{Q}_{5}\subset \mathcal{H}^{1}(\nu_{5}) \) est tel que
\( \mathcal{H}^{1}(\nu_{5})/\mathcal{Q}_{5}\cong \mathcal{H}^{2}(\nu_{4}) \).
De plus, \(H^{2}(\mu)=0  \) même si \( \mathcal{H}^{2}(\nu_{4})\neq 0 \).

\fbox{Si \( S=-1 \) et \( R\geq 1 \), } alors
\[
H^{2}(\nu_{4}^{1})=H^{2}(m^{1}-2,-n^{1}-1)=H^{2}(ap^{d-1}+R-2,-ap^{d-1}-(S-1)-2)=0,
\]
d'où \( \partial_{\alpha}=0 \).
D'autre part, on a
\[
H^{2}(E_{\beta}(\nu_{7}^{1}))=H^{2}(E_{\beta}(m^{1}-1,-n^{1}-1))=H^{2}(E_{\beta}(ap^{d-1}+R-1,-ap^{d-1}))=0
\]
d'après la \autoref{prop:Eparticulier}. Donc
\eqref{eq:2d989fff77716ffa} devient
\begin{displaymath}
    0\to\mathcal{H}^{1}(\nu_{6})\to\mathcal{H}_{\beta}^{1}(\nu_{7})\xrightarrow{f_{\beta}}
    \mathcal{H}^{1}(\nu_{7})\xrightarrow{\partial_{\beta}}
    \mathcal{H}^{2}(\nu_{6})\to
    0.
\end{displaymath}

Dans ce cas, le facteur \( \mathcal{H}^{2}(\nu_{6}) \) est \og
effacé\fg{} dans la filtration de \( H^{1}(\mu) \) et de \( H^{2}(\mu)
\). Plus précisément, notons \( \mathcal{Q}_{7} \) l'image de \( f
_{\beta}\), alors \( H^{1}(\mu) \) admet une filtration dont les quotients sont
\( \{\mathcal{H}^{1}(\nu_{i})|i=1,2,3,4,5,6,8,9\}\cup \{\mathcal{Q}_{7}\} \)
où \( \mathcal{Q}_{7}\subset \mathcal{H}^{1}(\nu_{7}) \) est tel que
\( \mathcal{H}^{1}(\nu_{7})/\mathcal{Q}_{7}\cong \mathcal{H}^{2}(\nu_{6}) \).
De plus, \(H^{2}(\mu)=0  \) même si \( \mathcal{H}^{2}(\nu_{6}) \)
n'est pas forcément nul.

\smallskip
\fbox{De même, si \( S=-1 \) et \( R=0 \),} alors le facteur \(
\mathcal{H}^{2}(\nu_{4}) \) et le facteur \( \mathcal{H}^{2}(\nu_{6})
\) sont tous les deux \og effacés\fg{}. C'est-à-dire, \( H^{1}(\mu) \) admet une filtration dont les quotients sont
\( \{\mathcal{H}^{1}(\nu_{i})|i=1,2,3,4,6,8,9\}\cup \{\mathcal{Q}_{5},\mathcal{Q}_{7}\} \)
où
\( \mathcal{Q}_{5}\subset \mathcal{H}^{1}(\nu_{5}) \) et \(
\mathcal{Q}_{7}\subset \mathcal{H}^{1}(\nu_{7}) \) sont tels que
\( \mathcal{H}^{1}(\nu_{5})/\mathcal{Q}_{5}\cong \mathcal{H}^{2}(\nu_{4}) \)
et
\( \mathcal{H}^{1}(\nu_{7})/\mathcal{Q}_{7}\cong \mathcal{H}^{2}(\nu_{6}) \).
De plus, \(H^{2}(\mu)=0  \) même si \( \mathcal{H}^{2}(\nu_{4}) \) et
\(  \mathcal{H}^{2}(\nu_{6})\) ne sont
 pas  nuls.

\fbox{Si \( S\geq 0 \),} alors on a \( 0\leq S<R<p^{d-1}-1 \). Dans ce
cas,  \[ \nu_{5}^{1}
  =(m^{1},-n^{1}-2)=(ap^{d-1}+R,-ap^{d-1}-S-2)\] vérifie l'hypothèse
du \autoref{cor:imagesum}
pour \( \delta=\alpha \).

D'autre part,
\[\nu_{7}^{1}=(m^{1}-1,-n^{1}-1)=(ap^{d-1}+R-1,-ap^{d-1}-(S-1)-2)\]
avec \( -1\leq S-1<R-1\leq p^{d-1}-2 \). Donc \( \nu_{7}^{1} \)
vérifie l'hypothèse du \autoref{cor:imagesum} pour \(
\delta=\alpha \). Donc  pour \( i\in\{1,2\} \), \( H^{i}(\mu) \) admet une filtration
dont les quotients sont
\[
\{ \mathcal{H}^{i}(\nu_{1}),\mathcal{H}^{i}(\nu_{2}),\mathcal{H}^{i}(\nu_{3}),
  \mathcal{H}_{\alpha}^{i}(\nu_{5}), \mathcal{H}_{\beta}^{i}(\nu_{7}),\mathcal{H}^{i}(\nu_{8}),\mathcal{H}^{i}(\nu_{9})\}.
\]
De plus, on a des suites exactes longues:
\[
    0\to\mathcal{H}^{1}(\nu_{4})\to\mathcal{H}_{\alpha}^{1}(\nu_{5})\to
    \mathcal{H}^{1}(\nu_{5})\xrightarrow{\partial_{\alpha}}
    \mathcal{H}^{2}(\nu_{4})\to\mathcal{H}_{\alpha}^{2}(\nu_{5})\to
    \mathcal{H}^{2}(\nu_{5})\to
    0
\]
et
\[
   0\to\mathcal{H}^{1}(\nu_{6})\to\mathcal{H}_{\beta}^{1}(\nu_{7})\to
    \mathcal{H}^{1}(\nu_{7})\xrightarrow{\partial_{\beta}}
   \mathcal{H}^{2}(\nu_{6})\to\mathcal{H}_{\beta}^{2}(\nu_{7})\to
    \mathcal{H}^{2}(\nu_{7})\to  0
\]
où  \( \image(\partial_{\alpha})\cong L(\nu_{5}^{0})\otimes
  I_{\alpha}(\nu_{5}^{1})^{(1)}  \)
et
\( \image(\partial_{\beta})\cong L(\nu_{7}^{0})\otimes
  I_{\beta}(\nu_{7}^{1})^{(1)} \),
qui peut être calculés récursivement par le \autoref{cor:imagesum}.

\subsubsection{Cas \texorpdfstring{\( \alpha \)-singulier}{alpha-singulier}
  }
Supposons que \( \mu \) est \( \alpha \)-singulier, c'est-à-dire \(
0\leq s<r=p-1 \). Les quatre facteurs simples de \( \widehat{Z}(\mu) \)
sont donnés par la figure suivante (où \( \nu_{1}=\mu \)):
\begin{figure}[H]
  \centering
  \begin{tikzpicture}[scale=0.7]
    \draw (0,0)--(3,0)--(1.5,-3*sin{60})--cycle; \draw
    (0.5,-sin{60})--(1.5,sin{60})--(2.5,-sin{60})--cycle; \draw
    (1,0)--(2.5,-3*sin{60})--(3,-2*sin{60})--(0,-2*sin{60})--(0.5,-3*sin{60})--(2,0);
    \node[font=\tiny] at (1.8,0.4*sin{60}){\( \bullet \)}; \node[font=\tiny] at
    (2,0.4*sin{60}+0.2){\( \nu_{1} \)}; \node[font=\tiny] at (2.4,0){\( \bullet \)}; \node[font=\tiny] at
    (2.4,0.3){\( \nu_{3} \)}; \node[font=\tiny] at (1.8,-0.4*sin{60}){\( \bullet \)}; \node[font=\tiny] at
    (2,-0.4*sin{60}-0.2){\( \nu_{4} \)}; \node[font=\tiny] at (0.9,-sin{60}){\( \bullet \)};
    \node[font=\tiny] at (0.9,-sin{60}+0.3){\( \nu_{2} \)};
  \end{tikzpicture}.
\end{figure}

D'après
le \autoref{thm:pHifiltration}, on sait
que pour \( i\in\{1,2\} \), il existe une filtration de  \( H^{i}(\mu) \)
dont les quotients sont 
 \(  \mathcal{H}^{i}(\nu_{1}) \), \( \mathcal{H}_{\alpha}^{i}(\nu_{3})
 \), et
   \( \mathcal{H}^{i}(\nu_{4}) \).

On sait que \( H^{0}(\nu_{3}^{1})=H^{0}(m^{1}+1,-n^{1}-2)=0 \) et \(
H^{3}(\nu_{2}^{1})=H^{3}(m^{1}-1,-n^{1}-1)=0 \) car \( m^{1},n^{1}\geq
0\), donc \( \mathcal{H}^{0}(\nu_{3})=\mathcal{H}^{3}(\nu_{2})=0 \).
Donc il existe une suite exacte longue

\begin{equation}
    0\to\mathcal{H}^{1}(\nu_{2})\to\mathcal{H}_{\alpha}^{1}(\nu_{3})\to
    \mathcal{H}^{1}(\nu_{3})\xrightarrow{\partial_{\alpha}}
    \mathcal{H}^{2}(\nu_{2})\to\mathcal{H}_{\alpha}^{2}(\nu_{3})\to
    \mathcal{H}^{2}(\nu_{3})\to
    0.\label{eq:e548c476b8511c8e}
\end{equation}

\fbox{Si \( \mu \notin\widehat{\Gr}\) ,} c'est-à-dire \( S\leq -1 \), alors
on a \( H^{2}(\mu)=0 \). De plus, on a
\[
H^{2}(\nu_{2}^{1})=H^{2}(m^{1}-1,-n^{1}-1)=H^{2}(ap^{d-1}+R-1,-ap^{d-1}-S-1)=0.
\]

Donc d'après \eqref{eq:e548c476b8511c8e} , \( \mathcal{H}_{\alpha}^{1}(\nu_{3}) \)
est juste une extension de \( \mathcal{H}^{1}(\nu_{3}) \) par \(
\mathcal{H}^{1}(\nu_{2}) \). Donc dans ce cas, \( H^{2}(\mu)=0 \) et
\( H^{1}(\mu) \) admet une filtration dont les quotients sont
\( \{\mathcal{H}^{1}(\nu_{i})|i=1,2,3,4\} \).

\fbox{Si \( \mu\in\widehat{\Gr} \) et \( R=p^{d-1}-1 \),} c'est-à-dire \( S\geq 0 \)  et  \( R=p^{d-1}-1
  \),
 alors
 on a
\[
  H^{2}(E_{\alpha}(\nu_{3}^{1}))=H^{2}(E_{\alpha}(m^{1}+1,-n^{1}-2))=H^{2}(E_{\alpha}(ap^{d-1},-ap^{d-1}-S-2))=0 \]
d'après la \autoref{prop:Eparticulier}. Donc \(
\mathcal{H}_{\alpha}^{2}(\nu_{3})=0 \) et  d'après
\eqref{eq:e548c476b8511c8e}, on a une suite exacte
\begin{displaymath}
  \begin{tikzcd}
   0\ar[r]&\mathcal{H}^{1}(\nu_{2})\ar[r]&\mathcal{H}_{\alpha}^{1}(\nu_{3})\ar[r,"f"]&
    \mathcal{H}^{1}(\nu_{3})\ar[r,"\partial_{\alpha}"]
    &\mathcal{H}^{2}(\nu_{2})\ar[r]&0.  
  \end{tikzcd}
\end{displaymath}

Donc dans ce cas, le facteur \( \mathcal{H}^{2}(\nu_{2}) \) est \og
effacé\fg{} dans la filtration de \( H^{1}(\mu) \) et \( H^{2}(\mu)
\). Plus précisément, notons \( \mathcal{Q}_{3}\subset
\mathcal{H}^{1}(\nu_{3}) \) l'image de \( f \), alors \( H^{2}(\mu) \) admet une filtration dont les
quotients sont
\( \{\mathcal{H}^{2}(\nu_{i})|i=1,3,4\}  \)
et \( H^{1}(\mu) \) admet une filtration dont les quotients sont
 \(  \{\mathcal{H}^{1}(\nu_{i})|i=1,2,4\}\cup\{\mathcal{Q}_{3}\} \)
où \( \mathcal{Q}_{3}\subset \mathcal{H}^{1}(\nu_{3}) \) est tel que
\( \mathcal{H}^{1}(\nu_{3})/\mathcal{Q}_{3}\cong \mathcal{H}^{2}(\nu_{2}) \).

\fbox{Si \( 0\leq S\leq R\leq p^{d-1}-2 \),} alors
\[ \nu_{3}^{1}=(m^{1}+1,-n^{1}-2)=(ap^{d-1}+R+1,-ap^{d-1}-S-2) \]
avec \( 1\leq S<R+1\leq p^{d-1}-1 \). Donc \( \nu_{3}^{1} \) vérifie
l'hypothèse du
\autoref{cor:imagesum}  pour \( \delta=\alpha \).

Donc pour \( i\in\{1,2\} \), \( H^{i}(\mu) \) admet une filtration
dont les quotients sont
\( \{ \mathcal{H}^{i}(\nu_{1}),\allowbreak \mathcal{H}_{\alpha}^{i}(\nu_{3}),\allowbreak
   \mathcal{H}^{i}(\nu_{4})\} \).
De plus, on a une suite exacte longue:
\[
    0\to\mathcal{H}^{1}(\nu_{2})\to\mathcal{H}_{\alpha}^{1}(\nu_{3})\to
    \mathcal{H}^{1}(\nu_{3})\xrightarrow{\partial_{\alpha}}
    \mathcal{H}^{2}(\nu_{2})\to\mathcal{H}_{\alpha}^{2}(\nu_{3})\to
    \mathcal{H}^{2}(\nu_{3})\to
    0
\]

où \( \image(\partial_{\alpha})\cong L(\nu_{3}^{0})\otimes
  I_{\alpha}(\nu_{3}^{1})^{(1)}  \),
qui peut être calculé récursivement par le \autoref{cor:imagesum}.

\subsubsection{Cas \texorpdfstring{\( \beta \)-singulier}{beta-singulier}}
Si \( \mu \) est \(\beta  \)-singulier, c'est-à-dire \( 0\leq r<s=p-1 \), alors
on a forcément \( m^{1}>n^{1} \) et \( R>S \) car \( m\geq n \). Les quatre facteurs simples de \( \widehat{Z}(\mu) \)
sont donnés par la figure suivante (où \( \nu_{1}=\mu \)):
\begin{figure}[H]
  \centering
 \begin{tikzpicture}[scale=0.7]
    \draw (0,0)--(3,0)--(1.5,-3*sin{60})--cycle; \draw
    (0.5,-sin{60})--(1.5,sin{60})--(2.5,-sin{60})--cycle; \draw
    (1,0)--(2.5,-3*sin{60})--(3,-2*sin{60})--(0,-2*sin{60})--(0.5,-3*sin{60})--(2,0);
    \node[font=\tiny] at (1.2,0.4*sin{60}){\( \bullet \)}; \node[font=\tiny] at
    (1,0.4*sin{60}+0.2){\( \nu_{1} \)}; \node[font=\tiny] at (0.6,0){\( \bullet \)}; \node[font=\tiny] at
    (0.6,0.3){\( \nu_{3} \)}; \node[font=\tiny] at (1.2,-0.4*sin{60}){\( \bullet \)}; \node[font=\tiny] at
    (1,-0.4*sin{60}-0.2){\( \nu_{4} \)}; \node[font=\tiny] at (2.1,-sin{60}){\( \bullet \)};
    \node[font=\tiny] at (2.1,-sin{60}+0.3){\( \nu_{2} \)};
  \end{tikzpicture}.
\end{figure}

D'après
le \autoref{thm:pHifiltration}, on sait
que pour \( i\in\{1,2\} \), il existe une filtration de  \( H^{i}(\mu) \)
dont les quotients sont 
   \( \mathcal{H}^{i}(\nu_{1}) \), \( \mathcal{H}_{\beta}^{i}(\nu_{3})
   \), et
   \( \mathcal{H}^{i}(\nu_{4}) \).

On  a \(  H^{0}(\nu_{3}^{1})=H^{0}(m^{1}-1,-n^{1}-1)=0 \)  et
\( H^{3}(\nu_{2}^{1})=H^{3}(m^{1},-n^{1}-3)=0 \),
d'où une suite exacte

\begin{equation}
    0\to\mathcal{H}^{1}(\nu_{2})\to\mathcal{H}_{\beta}^{1}(\nu_{3})\to
    \mathcal{H}^{1}(\nu_{3})\xrightarrow{\partial_{\beta}}
    \mathcal{H}^{2}(\nu_{2})\to\mathcal{H}_{\beta}^{2}(\nu_{3})\to
    \mathcal{H}^{2}(\nu_{3})\to
    0.\label{eq:ed30cc3b92235750}
\end{equation}

\fbox{Si \( S\leq -2 \),} alors

\[
H^{2}(\nu_{2}^{1})=H^{2}(m^{1},-n^{1}-3)=H^{2}(ap^{d-1}+R,-ap^{d-1}-(S+1)-2)=0.
\]
En particulier, on a \(\partial_{\beta}=0 \). Donc
dans ce cas, \( H^{2}(\mu)=0 \) et \( H^{1}(\mu) \) admet une
filtration dont les quotients sont
\( \{\mathcal{H}^{1}(\nu_{i})|i=1,2,3,9\} \).

\fbox{Si \( S=-1 \), } alors
 on a
\[
H^{2}(E_{\beta}(\nu_{3}^{1}))=H^{2}(E_{\beta}(m^{1}-1,-n^{1}-1))=H^{2}(E_{\beta}(ap^{d-1}+R-1,-ap^{d-1}))=0
\]
d'après la \autoref{prop:Eparticulier}. Donc
\eqref{eq:ed30cc3b92235750} devient
\begin{displaymath}
  \begin{tikzcd}
    0\ar[r]&\mathcal{H}^{1}(\nu_{2})\ar[r]&\mathcal{H}_{\beta}^{1}(\nu_{3})\ar[r,"f_{\beta}"]&
    \mathcal{H}^{1}(\nu_{3})\ar[r, "\partial_{\beta}"]{dll}
    &\mathcal{H}^{2}(\nu_{2})\ar[r]&
    0.
  \end{tikzcd} 
\end{displaymath}

Dans ce cas, le facteur \( \mathcal{H}^{2}(\nu_{2}) \) est \og
effacé\fg{} dans la filtration de \( H^{1}(\mu) \) et \( H^{2}(\mu)
\). Plus précisément, notons \( \mathcal{Q}_{3} \) l'image de \( f
_{\beta}\), alors \( H^{1}(\mu) \) admet une filtration dont les quotients sont
\( \{\mathcal{H}^{1}(\nu_{i})|i=1,2,4\}\cup \{\mathcal{Q}_{3}\} \)
où \( \mathcal{Q}_{3}\subset \mathcal{H}^{1}(\nu_{3}) \) est tel que
\( \mathcal{H}^{1}(\nu_{3})/\mathcal{Q}_{3}\cong \mathcal{H}^{2}(\nu_{2}) \).
De plus, \(H^{2}(\mu)  \) admet une filtration dont les quotients sont
\( \{\mathcal{H}^{2}(\nu_{i})|i=1,3,4\} \).

\fbox{Si \( S\geq 0 \),} alors on a \( 0\leq S<R<p^{d-1}-1 \). Dans ce
cas, on a 
\[\nu_{3}^{1}=(m^{1}-1,-n^{1}-1)=(ap^{d-1}+R-1,-ap^{d-1}-(S-1)-2)\]
avec \( -1\leq S-1<R-1\leq p^{d-1}-2 \). Donc \( \nu_{3}^{1} \)
vérifie l'hypothèse du \autoref{cor:imagesum} pour \(
\delta=\beta \). Donc  pour \( i\in\{1,2\} \), \( H^{i}(\mu) \) admet une filtration
dont les quotients sont
\( \{ \mathcal{H}^{i}(\nu_{1}),\mathcal{H}_{\beta}^{i}(\nu_{3}),
   \mathcal{H}^{i}(\nu_{4})\} \).
De plus, on a une suite exacte longue:
\[
   0\to\mathcal{H}^{1}(\nu_{2})\to\mathcal{H}_{\beta}^{1}(\nu_{3})\to
    \mathcal{H}^{1}(\nu_{3})\xrightarrow{\partial_{\beta}}
   \mathcal{H}^{2}(\nu_{3})\to\mathcal{H}_{\beta}^{2}(\nu_{3})\to
    \mathcal{H}^{2}(\nu_{3})\to   0
\]
où 
\( \image(\partial_{\beta})\cong L(\nu_{3}^{0})\otimes
  I_{\beta}(\nu_{3}^{1})^{(1)} \),
qui peuvent être calculés récursivement par le \autoref{cor:imagesum}.

\subsubsection{Cas \texorpdfstring{\( \gamma \)-singulier}{gamma-singulier} ou
  \texorpdfstring{\( \alpha \)-\( \beta \)-singulier}{alpha-beta-singulier}}

Si \( \mu \) est \( \gamma \)-singulier ou \( \alpha \)-\( \beta
\)-singulier, alors il n'y a pas de \( E_{\alpha} \) ou \( E_{\beta} \)
dans la filtration. Donc d'après le \autoref{thm:pHifiltration}, si \(
\mu \) est \( \gamma \)-singulier, alors pour \( j\in\{1,2\} \), \(
H^{i}(\mu )\) admet une filtration dont les quotients sont
\( \{\mathcal{H}^{j}(\nu_{i})| i=1,2,3,4\} \),
où la valeur de \( \nu_{i} \) est donnée par la figure suivante (où \( \nu_{1}=\mu \)):
\begin{figure}[H]
  \centering
  \begin{tikzpicture}[scale=0.7]
    \draw (0,0)--(3,0)--(1.5,-3*sin{60})--cycle; \draw
    (0.5,-sin{60})--(1.5,sin{60})--(2.5,-sin{60})--cycle; \draw
    (1,0)--(2.5,-3*sin{60})--(3,-2*sin{60})--(0,-2*sin{60})--(0.5,-3*sin{60})--(2,0);
    \node[font=\tiny] at (1.5,0){\( \bullet \)}; \node[font=\tiny] at (1.5,0.3){\( \nu_{1} \)}; \node[font=\tiny] at
    (2.25,-0.5*sin{60}){\( \bullet \)}; \node[font=\tiny] at
    (2.05,-0.5*sin{60}-0.2){\( \nu_{3} \)}; \node[font=\tiny] at
    (1.5,-2*sin{60}){\( \bullet \)}; \node[font=\tiny] at (1.5,-2*sin{60}+0.3){\( \nu_{4} \)};
    \node[font=\tiny] at (0.75,-0.5*sin{60}){\( \bullet \)}; \node[font=\tiny] at
    (0.95,-0.5*sin{60}-0.2){\( \nu_{2} \)};
  \end{tikzpicture}.
\end{figure}

Si \( \mu \) est \( \alpha \)-\( \beta \)-singulier, alors \(
\mu=(m^{1}p+p-1,-n^{1}p-p-1) \) et pour \( i\in\{1,2\} \), on a 
\[H^{i}(\mu)\cong L(p-1,p-1)\otimes  H^{i}(m^{1},-n^{1}-2)^{(1)}.\]

\def\refname{References}


\begin{thebibliography}{BNPS19}

\bibitem[AH19]{AH19} Pramod Achar and William Hardesty, Calculations
  with graded perverse-coherent sheaves, Q.J. Math. {\bf 70} (2019),
  no.4, 1327-1352.

\bibitem[And79]{And79} Henning Haahr Andersen, The first cohomology group of a line bundle on \(G/B\), 
Invent. Math. {\bf 51} (1979), 287-296. 



\bibitem[And86a]{And86a} Henning Haahr Andersen, Torsion in the
  cohomology of line bundles on homogeneous spaces for Chevalley
  groups, Proc. Amer. Math. Soc. {\bf 96} (1986), no. 4, 537–544.
\bibitem[And86b]{And86b} Henning Haahr Andersen, On the generic structure of  cohomology modules for 
semi-simple algebraic groups, Trans. Amer. Math. Soc.  {\bf 295}
(1986), 397-415.

\bibitem[BNPS19]{BNPS19} Christopher P. Bendel, Daniel K. Nakano,
  Cornelius Pillen, Paul Sobaje, Counterexamples to the Tilting and
  (p,r)-Filtration Conjectures, arXiv:1901.06687 [math.RT].



\bibitem[Don02]{Don02}  Stephen Donkin, A note on the characters of
  the cohomology of induced vector bundles on \( G/B \) in
  characteristic \( p \), Special issue in celebration of Claudio
  Procesi's 60th birthday, J. Algebra {\bf 258} (2002), no. 1, 255–274.

\bibitem[Don06]{Don06} Stephen Donkin, The cohomology of lines bundles on the three-dimensional flag 
variety, J. Algebra {\bf 307} (2006), 570-613.



\bibitem[DS88]{DS88} Stephen R. Doty and John B. Sullivan, On the structure of the higher cohomology modules of line bundles on \(G/B\), J. Algebra {\bf 114} (1988), 286-332. 

\bibitem[Gri80]{Gri80} Walter Lawrence Griffith, Cohomology of flag varieties in characteristic \(p\), Illinois J. Math. {\bf 24} (1980), no. 3, 
452-461. 

\bibitem[Har16]{Har16} William Hardesty, Support varieties of line
  bundle cohomology groups for SL3(k),  
J. Algebra {\bf 448} (2016), 127-173.

\bibitem[Hum86]{Hu86} James E. Humphreys, Cohomology of \(G/B\) in characteristic \(p\), Adv. in Math. 
  {\bf 59} (1986), 170-183.

\bibitem[Irv86]{Irv86} Ronald S. Irving,  The structure of certain
  highest weight modules for \( \SL_{3} \), J. Algebra {\bf 99} (1986), no. 2, 438–457.
 
\bibitem[Jan77]{Jan77} Jens Carsten Jantzen, Darstellungen
  halbeinfacher Gruppen und kontravariante Formen. J. Reine
  Angew. Math {\bf 290} (1977), 117–141.
\bibitem[Jan80]{Jan80} Jens Carsten Jantzen, Darstellungen halbeinfacher Gruppen und ihrer Fro\-be\-nius-Kerne, 
J. reine angew. Math {\bf 317} (1980), 157-199. 

\bibitem[Jan03]{Jan03}  Jens Carsten Jantzen, Representations of algebraic groups. Second edition. Mathematical Surveys and Monographs, 107. American Mathematical Society, Providence, RI, 2003. xiv+576 pp. ISBN: 0-8218-3527-0 

\bibitem[KH85]{KH84} Kerstin  Kühne-Hausmann,
Zur Untermodulstruktur der Weylmoduln fur SL3, Bonner math. Schr {\bf
  162} (1985).
  
\bibitem[Lin90]{Lin90} Zongzhu Lin, Structure of  cohomology of lines bundles on \(G/B\) for semi simple groups, 
J. Algebra {\bf 134} (1990), 225-256. 

\bibitem[Lin91]{Lin91} Zongzhu Lin, Socle series of  cohomology groups of lines bundles on \(G/B\), 
J. Pure Applied Algebra {\bf 72} (1991), 275-294. 


\bibitem[Ye82]{Ye82} Samy El Badawy Yehia, Extensions of simple modules for the universal Chevalley groups and its parabolic subgroups, Ph. D. thesis, University of Warwick 1982.



\end{thebibliography}
\end{document}